\documentclass[10pt,a4paper]{amsart}

\usepackage{graphicx}
\usepackage{tikz}
\usepackage[all]{xy}
\usepackage{amsthm}
\usepackage{amssymb} 
\usepackage{amsmath,amscd} 
\usepackage{mathrsfs}
\usepackage{color}
\usepackage{enumerate}

\newcommand{\agl}[1]{\langle #1 \rangle}
\newcommand{\mukaiagl}[1]{\langle #1 \rangle_{\mathsf{Mukai}}}

\newcommand{\bmukaiagl}[1]{\langle #1 \rangle_{\mathsf{Shk\textup{-}Mukai}}}
\newcommand{\cwmukaiagl}[1]{\langle #1 \rangle_{\mathsf{CW\textup{-}Mukai}}}
\newcommand{\ERagl}[1]{\langle #1 \rangle_{\mathsf{ER}}}

\newcommand{\serreagl}[1]{\langle #1 \rangle_{\mathsf{Serre}}}
\newcommand{\invserreagl}[1]{\langle #1 \rangle_{\mathsf{Serre}^{-1}}}
\newcommand{\isagl}[1]{\langle #1 \rangle_{\mathsf{Serre}^{-1}}}

\newcommand{\lunit}{ {}^{\vartriangleleft}\!\eta}
\newcommand{\lcounit}{ {}^{\vartriangleleft}\!\epsilon}

\newcommand{\runit}{{}^{\vartriangleright}\!\eta}
\newcommand{\rcounit}{{}^{\vartriangleright}\!\epsilon}

\newcommand{\vunit}{{}^{\Bbb{V}}\!\eta}
\newcommand{\vcounit}{{}^{\Bbb{V}}\!\epsilon}

\newcommand{\dunit}{{}^{\textup{D}}\!\eta}
\newcommand{\dcounit}{{}^{\textup{D}}\!\epsilon}

\def\DPic{{\textup{DPic}}}
\def\Euch{{\chi}}
\def\Euvect{{\underline{\chi}}}

\def\comp{{\mathsf{comp}}}
\def\flip{{\mathsf{flip}}}

\def\ch{{\textup{ch}}}

\def\turn!{\textup{!`}}

\def\sfAR{\mathsf{AR}}

\def\op{\textup{op}}

\def\Tr{\operatorname{Tr}}

\def\ind{\mathop{\mathrm{ind}}\nolimits}

\def\mre{\mathrm{e}}

\def\lvvee{\vartriangleleft} 
\def\rvvee{\vartriangleright} 

\def\kk{{\mathbf k}}

\def\FF{{\Bbb F}}

\def\HHH{{\Bbb H}\textup{H}}

\def\ZZ{{\Bbb Z}}

\def\cA{{\mathcal A}}

\def\cC{{\mathcal C}}
\def\cD{{\mathcal D}}

\def\sfa{{\mathsf{a}}}
\def\sfb{{\mathsf{b}}}
\def\sfc{{\mathsf{c}}}
\def\sfd{{\mathsf{d}}}
\def\sfe{{\mathsf{e}}}

\def\sfD{{\mathsf{D}}}

\def\sfS{{\mathsf{S}}}

\def\tuD{{\textup{D}}}

\def\tuH{{\textup{H}}}
\def\tuHH{{\textup{HH}}}

\def\id{\operatorname{id}}
\def\op{\operatorname{op}}

\def\mod{\operatorname{mod}}
\def\Mod{\operatorname{Mod}}

\def\Hom{\operatorname{Hom}}

\def\cpxHom{\operatorname{Hom}^{\bullet}}

\def\End{\operatorname{End}}
\def\ResEnd{\operatorname{ResEnd}}

\newcommand{\RHom}{\operatorname{\Bbb{R}Hom}}

\newcommand{\lotimes}{\otimes^{\Bbb{L}}}

\newcommand{\cone}{\operatorname{\mathsf{cn}}}

\def\RHom{\operatorname{\mathbb{R}Hom}}

\def\perf{\operatorname{\mathsf{perf}}}

\newtheorem{lemma}{Lemma}[section]
\newtheorem{proposition}[lemma]{Proposition}
\newtheorem{theorem}[lemma]{Theorem}
\newtheorem{corollary}[lemma]{Corollary}

\newtheorem{remark}[lemma]{Remark}
\newtheorem{example}[lemma]{Example}

\newtheorem{definition}[lemma]{Definition}
\def\vvee{\Bbb{V}}

\def\ppi{{\pi}}
\def\rrho{{\varrho}}

\def\Pa{A}

\def\rad{\operatorname{rad}}

\def\PPi{\Pi}

\title[Serre, Mukai and Auslander--Reiten]
{Serre duality, 
Mukai pairing and universal Auslander--Reiten triangle}
\date{}

\author{Hiroyuki Minamoto
}

\dedicatory{Dedicated to the memory of Idun Reiten}
\begin{document}
\maketitle

\begin{abstract}
We study the relationship between Serre duality and the Mukai pairing for smooth and proper dg-algebras.
We introduce an alternative definition of the Mukai pairing and prove that it coincides with the Mukai pairings
defined by Căldăraru--Willerton and by Shklyarov.
Our construction places both the Mukai pairing and Serre duality within a unified framework based on an
elementary pairing between Hochschild homology and Hochschild cohomology.
As a consequence, the adjointness of the boundary--bulk and bulk--boundary maps follows naturally. 

As an application, we investigate Auslander--Reiten theory for the perfect derived category of a non-positive smooth
and proper dg-algebra. 
We construct  an exact triangle of dg-$A$-$A$-bimodules,  called a universal Auslander--Reiten triangle 
in the sense that 
the derived tensor product of this triangle with an indecomposable dg-$A$-module $M$  yields 
an Auslander--Reiten triangle starting from $M$. 
In particular, this provides a functorial construction of Auslander--Reiten triangles.
In the case of path algebras of quivers, our construction recovers the universal Auslander--Reiten triangle
associated with quiver Heisenberg algebras.
\end{abstract}

%

\section{Introduction}

\subsection{Serre duality and Mukai pairing}

The Serre functor, introduced by Bondal and Kapranov \cite{BK} as a categorical reformulation of Serre duality in algebraic geometry, encodes a fundamental symmetry of triangulated categories and therefore plays a central role in categorical algebraic geometry, the representation theory of algebras, and mathematical physics.

The Mukai pairing was first introduced by Mukai \cite{Mukai} as a pairing on the cohomology groups of a K3 surface, in the course of his study of the moduli spaces of sheaves on K3 surfaces.
Later, Căldăraru and Willerton \cite{CW} generalized the Mukai pairing to a pairing on the Hochschild homology of smooth algebraic varieties $X$.
Independently, Shklyarov \cite{Shklyarov} introduced a pairing on the Hochschild homology of smooth and proper dg-algebras $A$. 
This pairing is sometimes referred to as the Mukai pairing (e.g. \cite{Sheridan}), or simply as a pairing (e.g. \cite{Petit, PV}). 
Like Serre duality, the Mukai pairing has emerged as an important structure in
categorical algebraic geometry,  representation theory of algebras, and mathematical physics \cite{Caldararu, CW, Carqueville, Han, PV, Sheridan}.
We point out that Serre duality and Mukai pairing are related via 
the boundary–bulk maps and the bulk–boundary maps (see Section \ref{202602012158} for the definitions),  
and that this relationship plays a key role in topological field theory (see e.g. \cite{Carqueville}).

As a basic issue, we explain that there are two definitions of Mukai pairing. 
The definition due to Căldăraru--Willerton is formulated in terms of derived categories and thus makes sense for dg-algebras.
Similarly, Shklyarov's definition applies naturally to  algebraic varieties. 
Ramadoss \cite{Ramadoss} proved that these two pairings coincide, at the level of cohomology groups, in the case of an algebraic variety $X$.
This raises the natural question of whether these two definitions agree in general.
It appears to be folklore that these two pairings coincide in general (see \cite[Remark 1.2.2]{PV}).
In this paper, first we propose an alternative definition of the Mukai pairing $\mukaiagl{-,+}$ 
and prove that three Mukai pairings coincide with each other. 
A precise statement of one of our main theorems is as follows. 

\begin{theorem}[{Theorem \ref{202508231520}}]\label{2025082315200}
The three Mukai pairings coincide with each other:
\[
\mukaiagl{-,+} = \bmukaiagl{-,+} =\cwmukaiagl{-,+}
\]
where $\bmukaiagl{-,+}$ and $\cwmukaiagl{-,+}$ are the Mukai pairings in the sense of Shklyarov and of Căldăraru–Willerton, respectively.
\end{theorem}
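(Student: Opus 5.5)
We want to show that our pairing $\mukaiagl{-,+}$ agrees separately with $\bmukaiagl{-,+}$ and with $\cwmukaiagl{-,+}$; the third equality then follows. The common device is an elementary pairing
\[
\agl{-,+}\colon \tuHH_\bullet(A)\otimes \tuHH^\bullet(A, M)\to \tuHH_\bullet(M)
\]
between Hochschild homology and Hochschild cohomology. Here $\tuHH_\bullet(A)=\tuH(A\lotimes_{A^{\mre}}A)$ and $\tuHH^\bullet(A,M)=\tuH(\RHom_{A^{\mre}}(A,M))$. The pairing is induced by applying $-\lotimes_{A^{\mre}}A$ to a morphism $A\to M$. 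We specialize to the inverse dualizing bimodule $M=A^{\vee}=\RHom_{A^{\mre}}(A,A^{\mre})$ and to $M=\tuD A$. Our Mukai pairing is then defined as follows. An element of $\tuHH_\bullet(A)$ is transported to $\tuHH^\bullet(A,\tuD A)$ by the Serre-type isomorphism $\tuHH_\bullet(A)^* \cong \tuHH^\bullet(A,\tuD A)$, or to $\tuHH^\bullet(A^{\vee},A)$ using smoothness. It is then paired with a second class, and the result is evaluated via the trace $\tuHH_\bullet(\tuD A\text{ or }A^{\vee}\lotimes A)\to \kk$. The first step is to record this carefully, including the canonical isomorphism $A^{\vee}\lotimes_{A^{\mre}} - \cong \RHom_{A^{\mre}}(A,-)$ coming from smoothness, together with its compatibility with tensor products.

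\textbf{Comparison with Căldăraru--Willerton.} Their pairing is defined purely 2-categorically. A class $x\in\tuHH_\bullet(A)$ is regarded as a morphism $A^{\vee}\to A$ of bimodules, via smoothness and the identification $\tuHH_\bullet(A)=\Hom(\Delta_!,\Delta_*)$. A class $y$ is regarded as $A\to \tuD A$ via properness. The pairing is the trace of the composite, i.e.\ they compose and then apply the Serre trace. I would show that our elementary pairing realizes exactly this composition of bimodule morphisms. This is a naturality statement: pairing with a Hochschild cohomology class is post-composition, because $-\lotimes_{A^{\mre}}A$ is a functor. It then remains to match the two trace maps, which reduces to the unit/counit (zig-zag) identities of the adjunctions $A^{\vee}\lotimes - \dashv -$ and the Serre duality adjunction. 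I expect this part to be formal but sign- and shift-sensitive.

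\textbf{Comparison with Shklyarov.} Shklyarov's pairing is
\[
\tuHH_\bullet(A)\otimes\tuHH_\bullet(A^{\op})\to \tuHH_\bullet(A\otimes A^{\op})=\tuHH_\bullet(A^{\mre})\to\kk,
\]
given by the Künneth map followed by the functional induced by the diagonal bimodule $A$ as a perfect $A^{\mre}$-module, i.e.\ a Chern-character/trace functional. The plan is as follows.
\begin{enumerate}[(i)]
\item Express the functional $\tuHH_\bullet(A^{\mre})\to\kk$ as the Hochschild-homology trace of the endofunctor $A\lotimes_{A^{\mre}}-$. Equivalently, it is the image of the class $\id_A\in\tuHH^0(A,A)$ under the elementary pairing, using the adjoint bimodule $A^\vee$.
\item Use the Künneth isomorphism and the identity $\tuHH_\bullet(A)\cong\tuHH_\bullet(A^{\op})$ to rewrite the Künneth product as our pairing applied to $x$ and to the image of $y$ in $\tuHH^\bullet$.
\end{enumerate}
The key tool is a "Morita invariance with coefficients" lemma: for perfect bimodules $P$, the trace of $P\otimes-$ on Hochschild homology equals the evaluation of $\ch(P)$.

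The main obstacle is this Shklyarov comparison. It mixes explicit Hochschild complexes (the Künneth map and the Chern character of $A$ over $A^{\mre}$) with the derived-categorical description, and signs and Koszul conventions must be tracked. I would first try to reduce it to the Căldăraru--Willerton comparison by proving that both functionals on $\tuHH_\bullet(A^{\mre})$ are the categorical trace of the same 1-morphism in the bicategory of dg-algebras and bimodules. Then I would invoke the uniqueness of traces in that bicategory (Ponto--Shulman style functoriality), checking only that the normalizations agree on $A=\kk$.
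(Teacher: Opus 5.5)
\textbf{There is a genuine gap in the Shklyarov comparison.} Your Căldăraru--Willerton outline is broadly in the paper's spirit. The Shklyarov comparison, which you yourself flag as the main obstacle, has no working mechanism.

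Step (i) is not well-posed. $A\lotimes_{A^{\mre}}-$ is not an endofunctor. Pairing $\id_A\in\HHH^{0}(A;A)$ with $\HHH_{\bullet}(A)$ under your cap-type pairing just returns the identity of $\HHH_{\bullet}(A)$, not a functional $\HHH_{\bullet}(A^{\mre})\to\kk$. Step (ii) restates what is to be proved.

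The fallback also fails. If you could show that both functionals are the trace of the same $1$-morphism, they would be equal outright, but no argument for that is offered. There is no ``uniqueness of traces'' principle letting you conclude equality for all $A$ from agreement at $A=\kk$: Ponto--Shulman functoriality gives compatibility of traces with composition, not rigidity. A Lefschetz/HRR-type lemma about $\ch(P)$ is itself phrased via some pairing. In any case it only controls values on Chern characters, which in general do not span $\HHH_{\bullet}(A)$ (e.g.\ classes in nonzero degrees). The theorem, by contrast, is an equality of morphisms on all of $\HHH_{\bullet}(A)\otimes\HHH_{\bullet}(A)$.

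\textbf{The missing idea.} No traces or Chern characters are needed. Shklyarov's functional is $\Phi_{A^{\op}}=(\lcounit_{A^{\op}})_{\natural}(\lunit_{A^{\op}})_{\natural}$ for $A^{\op}$ viewed in $\perf(\kk^{\op}\otimes(A^{\mre})^{\op})$. This unit and counit are exactly $(\dunit_{A})^{\op}$ and $(\dcounit_{A})^{\op}$ (Lemma \ref{202509151654}). On the other side, $\mukaiagl{-,+}$ is obtained as follows: replace one copy of $A$ by $A^{\vvee}\lotimes_{A}\tuD(A)$ via the inverse of the isomorphism $A^{\vvee}\lotimes_{A}\tuD(A)\cong A$, then apply $\vcounit_{A}$ and $\dcounit_{A}$. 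That isomorphism is itself assembled from these same units and counits through uniqueness of adjoints (Lemma \ref{202508201805}). One string-diagram chase then reduces $\mukaiagl{-,+}=\bmukaiagl{-,+}$ to the triangle identity $\dcounit_{A}(\dunit_{A})^{\op}=\id$.

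\textbf{The Căldăraru--Willerton side.} The pairing, as the paper uses it, is not merely ``compose, then trace''. After a flip it does the following:
\begin{enumerate}[(a)]
\item whiskers the two classes by $\tuD(A)$ on opposite sides (the maps $l$ and $r$);
\item forms the horizontal product $(-\lotimes_{A}-)_{\RHom}$, giving $A\to\tuD(A)\lotimes_{A}\tuD(A)\cong\tuD(A^{\mre})\lotimes_{A^{\mre}}A$;
\item applies the Serre trace $T_{A}$ over $A^{\mre}$.
\end{enumerate}
You correctly anticipate matching $T_{A}\circ r$ with $\dcounit_{A}$, via the triangle identity for $(A,\tuD(A)\lotimes_{A}A^{\vvee})$. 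You also need the interchange step: the horizontal product of $\psi\colon A\to\tuD(A)$ and $\phi\colon A^{\vvee}\to A$ must be identified with the composite $\psi\circ\phi$, which the paper checks on cofibrant/fibrant models, and the flip must be tracked. Your sketch omits both.
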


The main feature  of  our Mukai pairing $\mukaiagl{-,+}$ is that it 
arises as a special case of an elementary pairing between Hochschild homology and Hochschild cohomology.
Within the same framework, Serre duality is also obtained as another special case of this elementary pairing. 
As a result, the relationship between Serre duality and the Mukai pairing is made transparent, and the adjointness of the boundary–bulk and bulk–boundary maps follows immediately 
(Theorem \ref{202508171830}).

\subsection{Universal Auslander-Reinten triangle}
This research grew out of an effort to understand and generalize the trace formula (Corollary \ref{2025083017451}) and the universal Auslander--Reiten triangle for the path algebra $\kk Q$ of a quiver,  
obtained  in the author’s previous joint work with M. Herschend on quiver Heisenberg algebras \cite{QHA}. 

Auslander--Reiten (AR) theory is of fundamental theoretical importance and serves as a central tool in  representation theory of algebras. 
In recent years, it has also become influential in categorical algebraic geometry and mathematical physics, 
due to the close interactions among these areas. 
AR theory assigns to a $\Hom$-finite $\kk$-linear category $\cC$ a quiver $\Gamma(\cC)$, called the Auslander--Reiten quiver, 
which can be regarded as a skeleton of the category $\cC$.

Furthermore, for the module category $\mod \Lambda$ of a finite-dimensional algebra $\Lambda$, 
AR theory provides the Auslander--Reiten translation $\tau_{1}$ 
and Auslander--Reiten sequences, 
which can be effectively used in practice to compute the AR quiver $\Gamma(\mod \Lambda)$; 
they also reveal the rich internal structure of the category \cite{ARS, ASS}.

The same story holds for   a $\Hom$-finite triangulated category $\cD$ that possesses  a Serre functor, 
namely  Happel's Auslander--Reiten theory for triangulated categories 
\cite{Happel Book}. 
In this context, the Serre functor is also called the Nakayama functor and denoted by $\nu$. 
In Happel's AR-theory, AR-translation is given by the shifted Nakayama functor $\nu_{1}:= \nu[-1]$ and 
 the notion of AR-triangles is introduced. 
 These notions play the same important role as described above.

Happel’s AR-theory applies to the perfect derived category $\perf A$ of a smooth and proper dg-algebra $A$, 
which is the main case of interest in the literature. 
Our second main result, the universal Auslander--Reiten triangle,  shows that 
in this situation, if moreover the dg-algebra $A$ is assumed to be  non-positive, 
 AR-triangles are induced from an  exact triangle of dg-$A$-$A$-bimodules via derived tensor products, 
and thus, in particular, AR-triangles admit a functorial construction.

For an element  $v \in K_{0}(A)_{\kk}:= K_{0}(A)\otimes_{\ZZ} \kk$ of the extended Grothendieck group,  
we associate the following  exact triangle ${}^{v}\!\sfAR$  of dg-$A$-$A$-bimodules:
\[
{}^{v}\!\sfAR: A \xrightarrow{  \ {}^{v}\!\varrho \ } {}^{v}\Xi \xrightarrow{ \ {}^{v}\!\pi \ }\Sigma A^{\vvee} \xrightarrow{ \ - {}^{v}\!\theta \ } \Sigma A
\]
where $A^{\vvee} := \RHom_{A^{\mre}}(A, A^{\mre})$ is the bimodule dual of $A$. 
We recall that the derived tensor product functor $\Sigma A^{\vvee}\lotimes_{A}- $ coincides with the inverse  $\nu^{-1}_{1}$ of the AR-translation on $\perf A$.

\begin{theorem}[{Universal Auslander--Reiten triangle (Corollary \ref{202509171554})}]\label{202602041047}
Let $A$ be a non-positive smooth and proper dg-algebra. 
Assume that  $v \in K_{0}(A)_{\kk}$ is  regular (Definition \ref{202509171540}). 
Then for any indecomposable object 
$M \in \perf A$, the exact triangle ${}^{v}\!\sfAR_{M}:= {}^{v}\!\sfAR \lotimes_{A} M $ is an Auslander--Reiten triangle. 
\[
{}^{v}\!\sfAR_{M} : M  \xrightarrow{  \ {}^{v}\!\rho_{M} \ } {}^{v}\Xi \lotimes_{A} M 
\xrightarrow{ \ {}^{v}\!\pi_{M}  \ } \nu_{1}^{-1}(M) \xrightarrow{ \ - {}^{v}\!\theta_{M}  \ } \Sigma  M. 
\] 
\end{theorem}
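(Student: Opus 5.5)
We have to guess the construction: presumably ${}^{v}\theta \in \Hom_{A^{\mre}}(\Sigma A^{\vvee},\Sigma A) = \Hom(A^{\vvee},A)$. This group is identified with Hochschild homology $\tuHH_0(A)$, since $A^{\vvee}\lotimes_{A^{\mre}}-$ computes Hochschild homology and $\RHom_{A^{\mre}}(A^{\vvee},A)\simeq A\lotimes_{A^{\mre}}A$ by smoothness. The element $v\in K_0(A)_\kk$ is sent to $\tuHH_0(A)$ by the Chern character, giving ${}^{v}\theta$, and ${}^{v}\Xi$ is defined as the cocone of $-{}^{v}\theta$. So the triangle ${}^{v}\!\sfAR_M$ is exact, being the image of an exact triangle under the exact functor $-\lotimes_A M$. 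By Happel's characterization, for indecomposable $M$ (so $\End(M)$ is local) it then suffices to show that ${}^{v}\theta_M \colon \nu_1^{-1}M \to \Sigma M$ is nonzero and lies in the socle of $\Hom(\nu_1^{-1}M,\Sigma M)$ as a (say right) $\End(M)$-module. This socle is one-dimensional by Serre duality, $\Hom(\nu_1^{-1}M,\Sigma M)\cong \tuD\End(M)$.

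I would verify this through the Serre duality pairing. For any $f\in\End(M)$, the composite ${}^{v}\theta_M\circ \nu_1^{-1}(f)$ equals $f$ composed with ${}^{v}\theta_M$ by naturality of ${}^{v}\theta_M$ in $M$, so we must compute the functional $f\mapsto \serreagl{f,{}^{v}\theta_M}$ on $\End(M)$. Using the unified framework, where Serre duality and the Mukai pairing are both specializations of the elementary pairing between $\tuHH_\bullet$ and $\tuHH^\bullet$, I expect to identify
\[
\serreagl{f,{}^{v}\theta_M} = \mukaiagl{\ch(v), \tau(f)},
\]
where $\tau(f)\in\tuHH_0(A)$ is the boundary–bulk image of $f$ (the Hochschild class/trace of $f$). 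This uses the adjointness of the boundary–bulk and bulk–boundary maps (Theorem \ref{202508171830}) together with the identification of ${}^{v}\theta_M$ as the bulk–boundary image of $\ch(v)$. This is the trace formula of Corollary \ref{2025083017451}. The main technical step is checking that the tensor-product construction ${}^{v}\theta\lotimes_A M$ really matches the bulk–boundary map under the Serre identification, including signs and shifts.

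Next, non-positivity enters. Since $A$ is non-positive, $\perf A$ carries a t-structure whose heart is $\mod \tuH^0(A)$, and $K_0(A)_\kk$ has the basis of simple (or projective) classes. For indecomposable $M$, the radical $\rad\End(M)$ consists of nilpotent endomorphisms, whose Hochschild traces $\tau(f)$ vanish. I would prove this by showing that nilpotent $f$ has vanishing trace through a filtration or Euler-characteristic argument, using that $\tau$ is additive on triangles. Hence the functional kills the radical and factors through $\End(M)/\rad \to \kk$, so ${}^{v}\theta_M$ lies in the socle. For non-vanishing, take $f=\id_M$: then $\tau(\id_M)=\ch[M]$, and $\mukaiagl{\ch v,\ch[M]}=\chi(v,M)$ is the Euler form. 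The regularity of $v$ (Definition \ref{202509171540}) should exactly guarantee that $\chi(v,[M])\neq 0$ for every indecomposable $M$, assuming the residue field of $\End(M)$ is $\kk$, or after handling the trace to $\kk$.

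The obstacle I expect is the vanishing of traces on $\rad\End(M)$ for a dg setting, and matching regularity with nonvanishing for all indecomposables. If regularity is only a condition on simples of $\tuH^0 A$, one also needs that $[M]$ pairs with $v$ nontrivially. I would try to handle this by reducing modulo the radical to the top of $M$ in the heart, possibly using the description of $\Hom$ spaces on indecomposables via non-positivity.
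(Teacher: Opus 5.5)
Your outline follows the paper's proof. You apply Happel's criterion (Theorem~\ref{Happel's criterion}) to $s={}^{v}\!\theta_{M}$ and identify ${}^{v}\!\theta_{M}$ with $\sigma_{M}(\ch_{v})$ (Lemma~\ref{202508171844}). You use the adjointness of Theorem~\ref{202508171830} to turn $\invserreagl{f,s}$ into a Mukai pairing, and HRR to get $\invserreagl{\id_{M},s}={}^{v}\!\Euch(M)$. That value is nonzero by the definition of regularity itself. Definition~\ref{202509171540} is a condition on all indecomposables, so you need no reduction to simples or to tops, and no assumption on $\ResEnd_{A}(M)$.

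The real difference is how you kill $\rad\End_{A}(M)$. The paper never proves $\tau_{M}(f)=0$. It applies the endomorphism version of HRR (Theorem~\ref{202508261418}) with $N=P_{i}$ and $g=\id$, which gives $\mukaiagl{\tau_{M}(f),\ch_{P_{i}}}=\Tr[e_{i}\tuH(f)]$. This yields the trace formula $\invserreagl{f,{}^{v}\!\theta_{M}}={}^{v}\!\Tr(f)$ (Corollary~\ref{2025083017451}). For nilpotent $f$, every $e_{i}\tuH^{n}(f)$ is a nilpotent linear map with zero trace, and the argument ends there.

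You instead aim at the stronger statement that $\tau_{M}(f)=0$ in $\tuHH_{0}(A)$ for all nilpotent $f$. That is true in this setting, and your filtration idea can be completed. It needs more than you wrote:
\begin{enumerate}
\item[(i)] You need $\perf A=\sfD_{\mathrm{fd}}(A)$, by smoothness and properness. Then the truncations of $M$ for the standard t-structure, the modules $\tuH^{n}(M)$, and the submodules $\ker(\tuH^{n}(f)^{i})$ are all perfect, so $\tau$ is defined on them.
\item[(ii)] Proposition~\ref{202508301431} only asserts that \emph{some} completion $h$ satisfies additivity. You must observe that the completion is unique for truncation triangles and for short exact sequences in the heart, because $\Hom_{A}(L[1],N)=0$ in both cases.
\item[(iii)] You need $\tau_{L[1]}(f[1])=-\tau_{L}(f)$, which reduces $\tau_{M}(f)$ to $\sum_{n}(-1)^{n}\tau_{\tuH^{n}(M)}(\tuH^{n}(f))$. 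Then the kernel filtration of the nilpotent $\tuH^{n}(f)$ finishes, since it induces $0$ on each subquotient.
\end{enumerate}
Your route gives an intrinsic vanishing result in Hochschild homology, at the price of a t-structure argument. The paper's route is a one-line linear-algebra check once HRR with endomorphisms is available, and it uses non-positivity only to set up the basis $\{[P_{i}]\}$ and the weights.

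Two small corrections.
\begin{itemize}
\item The Mukai pairing is not symmetric: $\mukaiagl{\ch_{M},\ch_{N}}=\ERagl{[N],[M]}$. Theorem~\ref{202508171830} reads $\invserreagl{f,\sigma_{M}(\phi)}=\mukaiagl{\tau_{M}(f),\phi}$, so the correct identity is $\invserreagl{f,{}^{v}\!\theta_{M}}=\mukaiagl{\tau_{M}(f),\ch_{v}}$. With your order you would get $\sum_{i}v_{i}\ERagl{[M],[P_{i}]}$ rather than ${}^{v}\!\Euch(M)$. Your final quantity $\chi(v,M)$ is nonetheless the right one.
\item The socle of $\Hom_{A}(\nu^{-1}M,M)\cong\tuD\End_{A}(M)$ is $\tuD\ResEnd_{A}(M)$. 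It is one-dimensional only when $\dim\ResEnd_{A}(M)=1$. This is harmless, because Happel's sufficient condition only needs some nonzero $s$ annihilated by the radical.
\end{itemize}
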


The proof of this theorem proceeds by using the trace formula
(Corollary \ref{2025083017451}), which is obtained from the  Hirzebruch--Riemann--Roch theorem and the adjointness between the boundary–bulk and bulk–boundary maps, in order to apply Happel’s criterion for AR-triangles (Theorem \ref{Happel's criterion}).

We note that in the case where the dg-algebra $A=\kk Q$ is the path algebra of a quiver $Q$, 
the dg-bimodule ${}^{v}\Xi$  coincides with  the degree $1$-part ${}^{v}\!\Lambda(Q)_{1}$ of the quiver Heisenberg algebra 
${}^{v}\!\Lambda(Q)$ of $Q$ \cite{QHA}.

After establishing the universal AR triangles, 
we investigate  their  behavior  under the action of the derived Picard group $\DPic_{\kk}(A)$ as well as  $\kk$-duality $\tuD(-)$. 
In the case where $A = \kk Q$ is the path algebra, these results play crucial role in the study of the quiver Heisenberg algebras.

In \cite{Minamoto: Smith's theorem} for a smooth dg-algebra $A$ (which is not necessarily non-positive nor proper) 
we investigate the exact triangle ${}^{v}\!\sfAR$ and show that 
the dg-bimodule ${}^{v}\Xi$ constitutes the degree $1$ part  ${}^{v}\!\Lambda(A)_{1}$ of the generalized quiver Heisenberg algebra 
${}^{v}\!\Lambda(A)$ of $A$. 
Thus, it is  natural to expect that 
in the case where $A$ is smooth and proper,  the relationships of the universal AR triangles with the derived Picard group action and $\kk$-duality 
play important roles in the  study of the generalized quiver Heisenberg algebras.

\subsection{Related work and perspective}

An important result about the Mukai pairing is the Hirzebruch--Riemann--Roch (HRR) theorem for  smooth and proper dg-algebras \cite{CW, Petit, PV, Shklyarov},  
which we recall in Theorem \ref{202508261418}. 
A generalization of the HRR theorem from the viewpoint of noncommutative motives is given by Cisinski and Tabuada \cite{CT} . 
It is natural to ask whether the adjointness of the boundary–bulk and bulk–boundary maps also generalizes to this setting. 
Campbell-Ponto \cite{CP} generalized HRR theorem into the context of monoidal $2$-categories having appropriate dualities.
In the present paper, we do  not address the general theory of monoidal  $2$-categories, 
but we implicitly work with  the Morita $2$-category of dg-algebras. 
This paper does not require prior knowledge of $2$-categories. 
Nevertheless, readers will find that even when one is primarily interested in dg-algebras and their derived categories, 
it is indispensable to use the framework of $2$-categories  to handle  canonical dualities  (see Lemma \ref{202508201805}  and the subsequent discussion).
The only background on $2$-categories required to read this paper is the notion of adjoint $1$-morphisms, which is summarized in Appendix~\ref{202511221618}. 
Since the argument is carried out  in the $2$-categorical framework, it is natural to anticipate  that the adjointness of the boundary–bulk and bulk–boundary maps also extends to 
general monoidal $2$-categories, as  considered by  Campbell--Ponto. 

Concerning universal Auslander--Reiten triangles,
Auslander--Carlson \cite[Theorem 3.6]{AC} prove a related result for group algebras,
which was later generalized to Hopf algebras $H$
by Green--Marcos--Solberg \cite[Theorem 3.8]{GMS}.
They show that, by taking the internal tensor product $\otimes_{\kk}$
(i.e., the tensor product given by the monoidal structure of $H\mod$)
with an indecomposable module $M$
of an AR-sequence ending at the trivial module $\kk_{H}$,
one obtains an AR-sequence (modulo injectives) ending at $M$.
It is tempting to seek a framework that unifies this result
with the universal AR triangles.

\subsection{Organization of the paper}

In Section \ref{202602050954}, after preparing basic notations and notions about dg-algebras and dg-modules, including Hochschild (co)homology, 
we introduce string diagrams to express the derived tensor product of dg-bimodules, which are an important tool for handling complicated derived tensor products.
The formulas  in Lemma \ref{202508201805} for derived tensor products of  dg-bimodules endowed with various dualities form the technical heart of the present paper.

In Section \ref{202602050955}, we first establish the canonical pairing between Hochschild homology and cohomology. 
We show that this pairing specializes to Serre duality, and we provide  dg-enhanced versions of basic facts about the Serre functor.
By another specialization of the canonical pairing, we introduce our Mukai pairing,  and show that
the adjointness of the boundary-bulk and bulk-boundary maps (Theorem \ref{202508171830})  is a natural consequence. 
In Theorem \ref{202508231520}, we prove that three Mukai pairings coincide. 

In Section \ref{202602050956}, we establish universal Auslander–Reiten triangles in Corollary \ref{202509171554}, and study their properties.
%
%
%
%

In Appendix \ref{202602050957}, we recall Happel's criterion for AR-triangles. 
In Appendix \ref{202511221618}, we collect the necessary  basic facts about adjoint $1$-morphisms in $2$-categories,  
formulated in terms of dg-bimodules (i.e., the dg-Morita $2$-category).

\subsection{Notations and  conventions}

Throughout this paper,  the symbol $\kk$ denotes a field. 
 By a dg-algebra  means dg-$\kk$-algebra. 
Let $A$ be dg-algebra. 
Unless otherwise stated, the term dg-$A$-modules refers to  left dg-$A$-modules.  
We denote the derived category of $A$  by $\sfD(A)$ and the perfect derived category of $A$ by $\perf A$. 
Other conventions of dg-modules are given in Section \ref{202509141802}.

Suppose  that  a functor $F= X \lotimes_{A} -: \perf A \to \perf B$ (or in a similar situation) is given by the derived tensor product with a dg-$B$-$A$-bimodule $X$. 
Then we denote  by $F_{\RHom}$ the induced morphism  $\RHom_{A}(M,N) \to \RHom_{B}(F(M), F(N))$.   
.

Although this is a rough and technically imprecise usage, for the sake of exposition we sometimes regard an object $X$ of a derived category as a complex, and whenever we write $x \in X$, it is understood to mean a homogeneous cocycle. Moreover, we denote the cohomology class of $x$  by the same symbol $x$. 

Given a morphism $f: M \to N$, we sometimes denote by the same symbol $f$ 
the morphism
\[
\cdots \lotimes_{A} X  \lotimes_{B} f\lotimes_{C} Y \lotimes_{D} \cdots:
\cdots \lotimes_{A} X  \lotimes_{B} M \lotimes_{C} Y \lotimes_{D} \cdots  \ \ \to \ \  
\cdots \lotimes_{A} X  \lotimes_{B} N\lotimes_{C} Y \lotimes_{D} \cdots 
\] 
it induces on larger tensor products containing $M$ and $N$.

We note that we do not deal with dg-categories in the present paper; however, a smooth and proper dg-category $\cA$ is derived equivalent to a smooth and proper dg-algebra $A$; see  \cite[Proposition 1.45]{Tabuada}, \cite[Corollary 2.13]{TV}.

\section*{Acknowledgment}
The author would like to thank M. Herschend for stimulating discussions
on quiver Heisenberg algebras.
The author is also grateful to H. Krause for pointing out
the results of Auslander--Carlson \cite{AC}
and Green--Marcos--Solberg \cite{GMS}.

\section{Hochschild (co)homology, tensor formulas of dual dg-bimodules and string diagrams}\label{202602050954}

\subsection{Preliminaries of dg-algebras and dg-bimodules}

\subsubsection{Euler characteristic and (super) trace}\label{202509171618}

We define  the  \emph{Euler characteristic} $\Euch(V)$ of $V\in \perf \kk$ to be    
\[
\Euch(V) := \sum_{i \in \ZZ} ( -1)^{i}\dim \tuH^{i}(V). 
\]
Let $V \in \perf \kk$ and $f: V \to V$, we define the \emph{(super) trace} $\Tr(f)$ to be 
\[
\Tr(f) := \sum_{i \in \ZZ} ( -1)^{i}\Tr\left [\tuH^{i}(f):  \tuH^{i}(V) \to \tuH^{i}(V) \right]
\]
where $\Tr(\tuH^{i}(f))$ in the right hand side is the usual trace of the linear map $\tuH^{i}(f): \tuH^{i}(V) \to \tuH^{i}(V)$.  
We point out  the obvious identity $\Euch (V) = \Tr(\id_{V} )$.

\subsubsection{}\label{202509141802} 

Let $A, B$ be dg-algebras. 
We denote the opposite dg-algebra of $A$ by $A^{\op}$. 
We may identify right dg-$A$-modules with (left) dg-$A$-modules 
and also  ($\kk$-central) dg-$A$-$B$-bimodules with dg-$A\otimes B^{\op}$-modules.
The isomorphism $A\otimes B^{\op} \xrightarrow{\cong} B^{\op} \otimes (A^{\op})^{\op}, \ a\otimes b \mapsto (-1)^{|a||b|} b\otimes a$ 
of dg-algebras induces an isomorphism $(-)^{\op}: \sfD (A\otimes B^{\op}) \xrightarrow{\cong } \sfD (B^{\op} \otimes (A^{\op})^{\op})$ 
of triangulated categories,  which identifies dg-$A$-$B$-bimodules with dg-$B^{\op}$-$A^{\op}$-bimodules in the canonical way. 
Whenever it is necessary to make  the relevant dg-algebras explicit, we denote  $(-)^{\op}_{A-B}$.

We denote by  $A^{\mre} =A\otimes A^{\op}$  the enveloping dg-algebra of $A$. 
Applying the above argument in  the case $B =A$, 
we obtain an isomorphism $(-)^{\op}_{A-A}: \sfD (A^{\mre}) \xrightarrow{\cong } \sfD ( (A^{\op})^{\mre})$ 
of triangulated categories. 
We note that the functor $(-)^{\op}_{A-A}$ sends the diagonal dg-$A$-bimodule $A \in \sfD(A^{\mre}) $ to 
the diagonal dg-$A^{\op}$-bimodule $A^{\op} \in \sfD((A^{\op})^{\mre}) $. 
In what follows, 
we identify 
the dg-algebras $(A^{\mre})^{\op}$ and $(A^{\op})^{\mre}$  via the 
canonical isomorphism 
$(A^{\mre})^{\op} \xrightarrow{ \cong } (A^{\op})^{\mre}, a\otimes b \mapsto a \otimes b$ which is the identity map of the underlying complexes. 
Thus we may regard  
$\sfD((A^{\mre})^{\op}) =\sfD((A^{\op})^{\mre})$.

%

\subsubsection{}
Recall that a dg-algebra $A$ is said to be  \emph{proper} if $\sum_{ i \in \ZZ} \dim \tuHH^{i}(A) < \infty$. 
We also recall that  a dg-algebra $A$ is said to be  \emph{smooth} if the (quasi-isomorphism class of) dg-$A$-$A$-bimodules $A$ belongs to the perfect derived category $\perf A^{\mre}$.
Note that  $A$ and $B$ are smooth and proper, then so is $A\otimes B^{\op}$.

\subsection{Hochschild (co)homology complex}

\begin{definition}\label{202509031449}
Let $A$ be a dg-algebra and $X \in \sfD(A^{\mre})$. 
We define the \emph{Hochschild cohomology complex} $\HHH^{\bullet}(A; X)$ and 
the \emph{Hochschild homology complex} $\HHH^{\bullet}(A; X)$ with   coefficient in $X$ 
are defined as follows:
\[
\begin{split}
\HHH^{\bullet}(A; X) & := \RHom_{A^{\mre}}(A, X),\\
\HHH_{\bullet}(A; X) &:= A\lotimes_{A^{\mre}} X.
\end{split}
\]

\end{definition}

We follow the standard abbreviation $\HHH^{\bullet}(A) =\HHH^{\bullet}(A;A)$ and $\HHH_{\bullet}(A) =\HHH_{\bullet}(A; A)$. 
We sometimes use the notation $X_{\natural} := \HHH_{\bullet}(A;X)$ for $X \in \sfD(A^{\mre})$. 
We note that for $X \in \sfD(A \otimes B^{\op})$ and $Y\in \sfD(B \otimes A^{\op})$, 
we have the following canonical isomorphism 
\begin{equation}\label{202509031502}
\HHH_{\bullet}(A; X\lotimes_{B} Y) = (X \lotimes_{B} Y)_{\natural} \cong (Y \lotimes_{A} X)_{\natural} = \HHH_{\bullet}(B;Y \lotimes_{A} X)
\end{equation}
which sends $x \lotimes y$ to $(-1)^{|x||y|}y \lotimes x$. 

In the case $B=\kk$, the above isomorphism specialized to the isomorphism 
\begin{equation}\label{202509151945}
\HHH_{\bullet}(A; M \otimes N) \cong N \lotimes_{A} M 
\end{equation}
for $M \in \sfD(A), \ N \in \sfD(A^{\op})$.

\subsection{Representation of derived tensor products by string diagrams}

In this paper, 
we need to manipulate dg-modules arising from  derived tensor products in complicated forms. 
For this purpose, we use string diagrams to represent these derived tensor products.

\begin{remark}
An important remark is that our string diagrams are different from those  used 
in the theory of $2$-categories (see e.g., \cite{Asashiba:book, NY}).
A string diagram in $2$-category theory consists of  pointed strings  on a plane (or a surface). 
In such a diagram, points, strings and region of a plane represent $2$-morphisms. $1$-morphisms and objects of a $2$-category, respectively. 
 Roughly speaking, our string diagram is obtained from such diagrams by cutting  the plane at a certain level. 
 \end{remark}

We represent a single dg-bimodule $X \in \sfD(A\otimes B^{\op})$ by the diagram 
\[
\textup{-} X \textup{-}
\] 
in which the left arm corresponds to the left dg-$A$-modules structure of $X$ and 
the right arm corresponds to the right dg-$B$-modules structure of $X$. 
Derived tensor products are represented by connecting   strings that emanating  from the relevant dg-bimodules. 
For example,  for $X \in \sfD(A\otimes B^{\op})$ and $Y \in \sfD(B\otimes C^{\op})$, we represent   $X \lotimes_{B} Y$ as $\textup{-} X \textup{-} Y \textup{-}$. 
So for example, the canonical isomorphisms 
$X \lotimes_{B} Y \cong A\lotimes_{A} X \lotimes_{B} Y \cong X \lotimes_{B} B \lotimes_{B} Y \cong X \lotimes_{B} Y \lotimes_{C} C$ 
are shown as 
\[
\textup{-} X \textup{-} Y \textup{-}\cong 
\textup{-}A \textup{-} X \textup{-} Y \textup{-} \cong \textup{-} X \textup{-} B \textup{-} Y \textup{-}
\cong \textup{-} X \textup{-} Y \textup{-} C\textup{-}.
\]

Hochschild homology complex 
$\HHH_{\bullet}(A;X) = X_{\natural}$ of $X \in \sfD(A^{\mre})$ is represented  as 
\centerline{
\begin{tikzpicture} 
 \draw[very thick] (0.1,0) arc [start angle=-70, end angle=250, radius=0.3];
\draw(0,0) node[midway]{$X$};
\end{tikzpicture}.}
%
By the canonical isomorphism \eqref{202509031502}, 
 for $X \in \sfD(A\otimes B^{\op})$ and $Y \in \sfD(B\otimes C^{\op})$, 
 the Hochschild homology  $\HHH_{\bullet}(A; X \lotimes_{B}Y) =(X\lotimes_{B} Y)_{\natural}$ has two representations 
 by  string diagrams 
 
 {\begin{center}{
\begin{tikzpicture} 
\draw (0, 0) node {$X$};
\draw (1, 0) node {$Y$};
\draw[very thick] (-0.1, 0) .. controls (-0.3, 0.125) and (-0.3, 0.375) .. (-0.1, 0.5);
\draw[very thick] (-0.1,0.5) -- (1.1,0.5);
\draw[very thick] (1.1, 0) .. controls (1.3, 0.125) and (1.3, 0.375) .. (1.1, 0.5);
\draw[very thick] (0.1,0) -- (0.9,0);

\draw[->] (1.4, 0.2) -- node[auto]{$\cong$} (2,0.2);

\begin{scope}[shift={(2.5,0)}]
\draw (0, 0) node {$Y$};
\draw (1, 0) node {$X$};
\draw[very thick] (-0.1, 0) .. controls (-0.3, 0.125) and (-0.3, 0.375) .. (-0.1, 0.5);
\draw[very thick] (-0.1,0.5) -- (1.1,0.5);
\draw[very thick] (1.1, 0) .. controls (1.3, 0.125) and (1.3, 0.375) .. (1.1, 0.5);
\draw[very thick] (0.1,0) -- (0.9,0);
\end{scope}
\end{tikzpicture}
}\end{center}}
%

As an exception, $\kk$-module structures are not represented in string diagrams. 
Moreover the $\kk$-module $\kk$ is often suppressed in string diagrams.  
Elementary examples are given in before Lemma \ref{202508201805} and in the proof of the same lemma.

We remark that our string diagrams only represent how dg-bimodules are connected by the derived tensor product. 
Thus, in particular, the way that  two strings cross,  is not an essential information.  
It is shown only to clarify how  the dg-bimodules are connected.

{\begin{center}{
\begin{tikzpicture}
\node (s) at (0,0) {}; 
\node (t) at (1.2,0) {};
\node (u) at (0,-0.8) {}; 
\node (v) at (1.2,-0.8) {};

\draw[very thick] (s) -- (v);
\draw[preaction={draw=white,line width=6pt}, very thick]
(u) --(t);

\draw (1.4, -0.35) -- (1.8, -0.35);
\draw (1.4, -0.45) -- (1.8, -0.45);

\begin{scope}[shift={(2,0)}]
\node (s) at (0,0) {}; 
\node (t) at (1.2,0) {};
\node (u) at (0,-0.8) {}; 
\node (v) at (1.2,-0.8) {};

\draw[very thick] (u) -- (t);
\draw[preaction={draw=white,line width=6pt}, very thick]
(s) --(v);
\end{scope}
\end{tikzpicture}
}\end{center}}

When we draw a string diagram,  
we identify $\sfD(A\otimes B^{\op})$ and $\sfD(B^{\op} \otimes (A^{\op})^{\op})$ 
via the isomorphism $(-)^{\op}$. 
This,  in particular, means that for $X \in \sfD(A\otimes B^{\op})$, 
 the corresponding object $X^{\op} \in \sfD(B^{\op} \otimes (A^{\op})^{\op})$ 
is depicted by the same diagram $ \textup{-} X \textup{-}$ as  that of $X$.

\subsection{Dual of dg-bimodules and their derived tensor products}

Let $A, B$ be smooth and proper dg-algebras, and  $X \in \perf (A\otimes B^{\op})$. 
We denote the left $A$-duality of $X$ by $X^{\lvvee} :=\RHom_{A}(X, A)$, which belongs to $\perf(B\otimes A^{\op})$. 
We denote by $\lcounit_{X}: X \lotimes_{B}X^{\lvvee} \to A, \ x \otimes \xi \mapsto (-1)^{|x||\xi|}\xi(x)$ the evaluation map. 
We denote by $\lunit_{X} : B \to X^{\lvvee} \lotimes_{A} X$  the composite morphism 
of the canonical morphism $B \to \RHom_{A}(X,X)$, which is  induced by the right dg-$B$-module structure of $X$, 
followed by the inverse  of 
 the canonical  isomorphism $ X^{\lvvee}\lotimes_{A} X = \RHom_{A}(X, A) \lotimes_{A} X \xrightarrow{\cong }\RHom_{A}(X, X)$. 
It is well-known and strightforward to check that these morphisms satisfies the triangle identities 
$(\lcounit_{X} \lotimes_{A} X)(X \lotimes_{B} \lunit_{X}) = \id_{X}$ and 
$(X^{\lvvee} \lotimes_{A} \lcounit_{X})(\lunit_{X}\lotimes_{B}X^{\lvvee}) = \id_{X^{\lvvee}}$.
\[
\begin{split}
&\id_{X} = \left[ \ X \xrightarrow{ \ X \lotimes_{B} \lunit_{X} \ } X \lotimes_{B} X^{\lvvee} \lotimes_{A} X \xrightarrow{ \ \lcounit_{X} \lotimes_{A} X \ } X  \ \right]\\ 
& \id_{X^{\lvvee}} = \left[ \  X^{\lvvee} \xrightarrow{ \ \lunit_{X} \lotimes_{B} X^{\lvvee} \ } X^{\lvvee} \lotimes_{A} X \lotimes_{B} X^{\lvvee} 
\xrightarrow{ \ X^{\lvvee} \lotimes_{A} \lcounit_{X} \ } X^{\lvvee} \ \right]
\end{split}
\]

We recall that 
these identities imply that 
the functor $X^{\lvvee}\lotimes_{A} -: \perf A \to \perf B$ is a right adjoint to  the functor $X\lotimes_{B}-: \perf B \to \perf A$. 
In the framework of Morita $2$-category, this simply says that 
the pair $(X, X^{\lvvee})$ (or more precisely the quadruple $(X, X^{\lvvee}, \lunit_{X}, \lcounit_{X})$) 
is an adjoint pair. 
We note that, as is recalled in Appendix \ref{202511221618} (in particular Proposition \ref{202509221237}), 
the triangle  identities characterize the object $X^{\lvvee}$ of $\perf (B\lotimes A^{\op})$.

Whenever it is necessary to make  the relevant dg-algebras explicit, we indicate this by the notation $\lunit_{X}^{A-B}$ and $\lcounit_{X}^{A-B}$.

\begin{example}\label{202511031429}
Let $X \in \perf (A\otimes B^{\op})$. 
To give an explicit description of the unit morphism $\lunit: B \to X^{\lvvee} \lotimes_{A} X$, 
we assume that $X$ is (represented by)  a cofibrant $A \otimes B^{\op}$-module 
whose  underlying  graded $A \otimes B^{\op}$-module is  graded free of finite rank. 
We take a $A\otimes B^{\op}$ basis $E =\{ e_{i} \}$ of $X$. 
For each $e_{i}$, let $e_{i}^{\lvvee}$ denotes the dual element belonging to $X^{\lvvee}$. 

We recall an explicit description of the canonical isomorphism 
$h: X^{\rvvee} \lotimes_{A} X \xrightarrow{ \cong } 
\RHom_{A}(X, X)$.  
Let $f \otimes x \in X^{\lvvee} \lotimes_{A} X$ be a homogeneous element. 
Then the image $h(f \otimes x)  \in \RHom_{A}(X, X)$ is the endomorphism of $X$  given by 
$y \mapsto  (-1)^{|x||y|}f(y)x$. 
Thus, under this isomorphism $h$, 
the homogeneous element $\sum_{i} (-1)^{|e_{i}|} e_{i}^{\lvvee} \otimes e_{i} \in X^{\lvvee} \lotimes_{A} X$ corresponds to the identity morphism $\id_{X} 
\in \RHom_{A}(X, X)$. 
Therefore,  the unit morphism $\lunit_{X} : B \to X^{\lvvee} \lotimes_{A} X$ is given by 
$\lunit_{X}(1):= \sum_{i} (-1)^{|e_{i}|} e_{i}^{\lvvee} \otimes e_{i}$.
\end{example}

We denote 
the right $B$-duality of $X$ by $X^{\rvvee}:=\RHom_{B^{\op}}(X, B)$, which belongs to $\perf(B\otimes A^{\op})$. 
Similarly, we have canonical morphisms 
$\runit_{X}: A \to X \otimes_{B} X^{\rvvee}$ and $\rcounit_{X}: X^{\rvvee} \lotimes_{A} X \to B$ that 
satisfies the triangle identities. 
Thus in particular, the functor $X^{\rvvee}\lotimes_{A}- : \perf A \to \perf B$ is a left adjoint functor to the functor $X \lotimes_{B}- : \perf B \to \perf A$. 
\[
\begin{xymatrix}@R=40pt{
\perf B \ar[d]_{X \lotimes_{B} - }\\
\perf A \ar@/^45pt/[u]^{X^{\rvvee} \lotimes_{A}- } \ar@/_45pt/[u]_{ X^{\lvvee} \lotimes_{A}- }
}\end{xymatrix}
\]

We collect a well-known result that shows that 
the inverse of an invertible complex $X$ is given by its left dual $X^{\lvvee}$ as well as by its right dual $X^{\rvvee}$. 
We note this follows from a $2$-categorical argument (see Proposition \ref{202509221331}).

\begin{lemma}\label{202509031414}
Let $A, B$ be  smooth and proper dg-algebras 
and $X \in \perf (A \otimes B^{\op})$. 
Assume that there exists $Y \in \perf(B \otimes A^{\op})$ such that $X \lotimes_{A} Y \cong A$ in $\perf(A^{\mre})$ 
and that $Y \lotimes_{A} X \cong B $ in $\perf(B^{\mre})$. 
Then 
$Y $ is canonically isomorphic to $X^{\lvvee}$ and to $X^{\rvvee}$ respectively. 
In particular, $X^{\lvvee}$ and $X^{\rvvee}$ are canonically isomorphic to each other. 
\end{lemma}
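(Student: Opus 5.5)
We first fix the typographical point: the hypothesis should read $X \lotimes_{B} Y \cong A$ in $\perf(A^{\mre})$ and $Y \lotimes_{A} X \cong B$ in $\perf(B^{\mre})$, and we argue with that. The plan is the standard $2$-categorical one: upgrade the pair of isomorphisms to an \emph{adjoint equivalence} $(X, Y, \eta, \epsilon)$ in the Morita $2$-category. Then use the uniqueness of adjoints, characterized by the triangle identities (Proposition \ref{202509221237}), to identify $Y$ with $X^{\lvvee}$. For $X^{\rvvee}$ we run the same argument with the roles of left and right reversed.

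\textbf{Step 1 (adjoint equivalence).} Choose isomorphisms $\alpha: X \lotimes_{B} Y \xrightarrow{\cong} A$ and $\beta: B \xrightarrow{\cong} Y \lotimes_{A} X$. Set $\epsilon := \alpha$ and keep $\beta$ as a provisional unit. Then replace $\beta$ by the corrected unit
\[
\eta := (Y \lotimes_{A} \epsilon \lotimes_{B} X)\circ(Y \lotimes_{A} \epsilon^{-1} \lotimes_{B} X) \circ \cdots,
\]
that is, the usual formula $\eta = \beta \circ (\text{correction})$, where the correction is built from $\beta^{-1}$, $\epsilon^{-1}$ and $\epsilon$ exactly as in the classical proof that every equivalence can be promoted to an adjoint equivalence. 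This is a formal computation with the interchange law, valid in any $2$-category, so it transfers verbatim to dg-bimodules and derived tensor products. I expect it to be the main (though routine) obstacle: one must check carefully that the corrected $\eta$ satisfies both triangle identities $(\epsilon \lotimes_{A} X)(X \lotimes_{B} \eta) = \id_X$ and $(Y\lotimes_{A}\epsilon)(\eta \lotimes_{B} Y) = \id_Y$. Here it is cleanest to verify one identity directly and to deduce the other from invertibility of all the $2$-cells involved. Presumably this is what Proposition \ref{202509221331} packages, and I would simply invoke it if it is stated in that form.

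\textbf{Step 2 (uniqueness of adjoints).} Now $(X, Y, \eta, \epsilon)$ and $(X, X^{\lvvee}, \lunit_X, \lcounit_X)$ are both adjoint pairs with $X$ as left adjoint. By Proposition \ref{202509221237}, an object satisfying the triangle identities with $X$ is unique up to a unique compatible isomorphism. Explicitly, the isomorphism is
\[
Y \xrightarrow{\lunit_X \lotimes_{B} Y} X^{\lvvee}\lotimes_{A} X \lotimes_{B} Y \xrightarrow{X^{\lvvee}\lotimes_{A}\epsilon} X^{\lvvee},
\]
and its inverse is built symmetrically from $\eta$ and $\lcounit_X$. The triangle identities give that the two composites are identities.

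\textbf{Step 3 (right dual).} Symmetrically, $(Y, X, \epsilon^{-1}, \eta^{-1})$ exhibits $Y$ as a \emph{left} adjoint of $X$, i.e.\ $X$ as right adjoint of $Y$. Comparing this with $(X^{\rvvee}, X, \rcounit_X, \runit_X)$-type data through the same uniqueness statement gives $Y \cong X^{\rvvee}$. Composing the two isomorphisms yields the canonical isomorphism $X^{\lvvee}\cong X^{\rvvee}$.
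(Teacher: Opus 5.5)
Your proposal is correct and follows the paper's own route. The paper obtains this lemma from Proposition \ref{202509221331} (upgrading the equivalence to an adjoint equivalence) combined with the uniqueness of right and left adjoints in Proposition \ref{202509221237}, exactly as in your Steps 1--3.

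Two small slips are harmless. The ``corrected unit'' you display in Step 1 is garbled: as written the composite is an identity, and the subscript should be $\lotimes_{A} X$. Simply invoking Proposition \ref{202509221331}, or its dual form fixing $\epsilon$, disposes of that step. In Step 3 the comparison data should read $(X^{\rvvee}, X, \runit_{X}, \rcounit_{X})$, following the paper's unit-then-counit convention.
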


Let $C$ be another smooth and proper dg-algebra and $Y \in \perf (B\otimes C^{\op})$. 
Then it follows from a uniqueness of a right (resp. left) adjoint functor  that 
there are canonical isomorphisms 
\begin{equation}\label{202508261311}
(X \lotimes_{B} Y)^{\lvvee} \cong Y^{\lvvee} \lotimes_{B} X^{\lvvee}, \ \ 
(X \lotimes_{B} Y)^{\rvvee} \cong Y^{\rvvee} \lotimes_{B} X^{\rvvee}
\end{equation}
in $\perf(C \otimes A^{\op})$. 
We may identify $X$ with $X^{\lvvee\rvvee}$ (resp. $X^{\rvvee\lvvee}$) via the canonical isomorphism. 
We note that under these isomorphisms, we have $(\lcounit_{X})^{\rvvee} = \runit_{X}, \ (\lunit_{X})^{\rvvee} =\rcounit_{X}$ 
(resp. $(\rcounit_{X})^{\lvvee} =\lunit_{X}, \ (\runit_{X})^{\lvvee}=\lcounit_{X}$).
The precise statement is given in the following lemma. 

\begin{lemma}\label{202508211758}
We define  morphisms $\eta'_{X}: A \to X \lotimes_{B} X^{\rvvee}$ and $\epsilon'_{X}: X^{\rvvee} \lotimes_{A} X \to B$ 
to be 
\[
\begin{split}
&\eta'_{X}: A\cong A^{\rvvee} \xrightarrow{ \ (\lcounit_{X} )^{\rvvee} \ } (X \lotimes_{B} X^{\lvvee})^{\rvvee} \cong X\lotimes_{B} X^{\rvvee}, \\
& \epsilon'_{X}: X^{\rvvee} \lotimes_{A} X\cong (X^{\lvvee} \lotimes_{A} X)^{\rvvee} 
\xrightarrow{ \ (\lunit_{X})^{\rvvee} \ } B^{\rvvee} \cong B.
\end{split}
\]
Then we have $\eta'_{X} = \runit_{X}$ and $\epsilon'_{X} = \rcounit_{X}$.
\end{lemma}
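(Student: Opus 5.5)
The quickest route is the uniqueness of adjunction data recalled in Appendix \ref{202511221618}: by Proposition \ref{202509221237}, the triangle identities characterize $X^{\rvvee}$. More precisely, once the right adjoint $1$-morphism $X^{\rvvee}$ (of $X$ on the appropriate side) and one of the two structure maps are fixed, the other structure map is uniquely determined. I will therefore show that $(X, X^{\rvvee}, \eta'_{X}, \epsilon'_{X})$ satisfies the two triangle identities
\[
(X \lotimes_{B} \epsilon'_{X})(\eta'_{X}\lotimes_{A} X) = \id_{X}, \qquad (\epsilon'_{X}\lotimes_{B} X^{\rvvee})(X^{\rvvee}\lotimes_{A}\eta'_{X}) = \id_{X^{\rvvee}},
\]
and then identify $\eta'_{X}$ with $\runit_{X}$ directly.

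The triangle identities for the primed data come from dualizing. The functor $(-)^{\rvvee}$ is a contravariant equivalence between the perfect bimodule categories, with inverse $(-)^{\lvvee}$. It is compatible with derived tensor products through the isomorphisms \eqref{202508261311}, which reverse the order of the factors. So I apply $(-)^{\rvvee}$ to the triangle identities for $(X, X^{\lvvee}, \lunit_{X}, \lcounit_{X})$.

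Applying it to $(X^{\lvvee}\lotimes_{A}\lcounit_{X})(\lunit_{X}\lotimes_{B}X^{\lvvee}) = \id_{X^{\lvvee}}$ gives
\[
(\lunit_{X}\lotimes X^{\lvvee})^{\rvvee}\circ(X^{\lvvee}\lotimes\lcounit_{X})^{\rvvee} = \id.
\]
Under \eqref{202508261311} and $X^{\lvvee\rvvee}\cong X$, this reads $(\epsilon'_{X}\lotimes_{B}X)\circ(X\lotimes_{A}\eta'_{X})$ in the appropriate order. It should be exactly the first triangle identity for the primed data. The other identity is obtained in the same way.

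To carry this out I must check that the isomorphisms \eqref{202508261311} are natural in each variable and compatible with $A^{\rvvee}\cong A$ and $B^{\rvvee}\cong B$. I also need them to be associative, in the sense that the two ways of dualizing a triple tensor product agree. These facts are themselves consequences of uniqueness of adjoints, so they hold canonically.

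Uniqueness then gives an automorphism $\phi$ of $X^{\rvvee}$ with $\eta'_{X} = (X\lotimes\phi)\runit_{X}$ and $\epsilon'_{X} = \rcounit_{X}(\phi^{-1}\lotimes X)$. It remains to show $\phi = \id$, and this is the main obstacle. The plan is to show that the canonical identification $X^{\lvvee\rvvee}\cong X$ is itself built from $\runit$ and $\lcounit$, namely as the mate of the evaluation. Then $\eta'_{X}$ unwinds to $\runit_{X}$ tautologically.

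If the abstract route gets tangled, I fall back on the explicit model of Example \ref{202511031429}. I take $X$ cofibrant and graded free with basis $\{e_{i}\}$, and compute both sides on the generator $1\in A$:
\[
\runit_{X}(1) = \sum_{i} \pm\, e_{i}\otimes e_{i}^{\rvvee}.
\]
I then dualize $\lcounit_{X}$, evaluated on $e_{i}\otimes e_{j}^{\lvvee}$, to see that it produces the same sum. The only delicate point in this comparison is tracking the Koszul signs.

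Since $\eta'_{X} = \runit_{X}$ forces $\phi=\id$, the equality $\epsilon'_{X} = \rcounit_{X}$ follows.
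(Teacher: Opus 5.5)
Your proposal has a genuine gap at its central step; the route is otherwise sound. The paper gives no written proof of this lemma, so what follows judges the proposal on its own terms.

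\textbf{Where the gap is.} Your first two steps are correct, but they say nothing about the point that has to be proved. Triangle identities cannot detect the automorphism $\phi$. If $(\eta,\epsilon)$ is unit/counit data for $X^{\rvvee}\dashv X$, then so is $((X\lotimes_B\phi)\eta,\ \epsilon(\phi^{-1}\lotimes_A X))$ for every automorphism $\phi$ of $X^{\rvvee}$; for instance $(c\,\runit_X,\ c^{-1}\rcounit_X)$ with $c\in\kk^{\times}$.

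So after dualizing the triangle identities and invoking uniqueness, you have only reduced the lemma to the equality $\eta'_X=\runit_X$, which is its whole content. Your deduction of $\epsilon'_X=\rcounit_X$ from that equality is fine. But the equality itself is not proved. Saying that $X^{\lvvee\rvvee}\cong X$ is ``the mate of the evaluation'' and that $\eta'_X$ then ``unwinds tautologically'' names no mechanism. It also addresses only one of the identifications entering $\eta'_X$.

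The fallback computation has the same hole. The isomorphisms $A^{\rvvee}\cong A$ and \eqref{202508261311} are defined through uniqueness of adjoints, not by formulas. Comparing them with naive evaluation maps is exactly where signs appear (compare Lemma \ref{202511031307}). The fallback also needs a single finite basis of $X$ that works over $A$ on the left and over $B$ on the right, and such a basis need not exist. Two minor points: $X^{\rvvee}$ is the left adjoint of $X$, not the right adjoint. And the dualized identity should read $(X\lotimes_B\epsilon'_X)(\eta'_X\lotimes_A X)=\id_X$.

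\textbf{The missing idea: $(-)^{\rvvee}$ on morphisms is the mate construction.} Let $f\colon Z\to W$ be a morphism in $\perf(C\otimes D^{\op})$. Evaluation is dinatural, so $\rcounit_Z(f^{\rvvee}\lotimes Z)=\rcounit_W(W^{\rvvee}\lotimes f)$. Combining this with a triangle identity for $Z$ gives
\[
f^{\rvvee}=(\rcounit_W\lotimes Z^{\rvvee})\circ(W^{\rvvee}\lotimes f\lotimes Z^{\rvvee})\circ(W^{\rvvee}\lotimes\runit_Z).
\]
By construction, the identifications $A^{\rvvee}\cong A$, $B^{\rvvee}\cong B$, $(X\lotimes_B Y)^{\rvvee}\cong Y^{\rvvee}\lotimes_B X^{\rvvee}$ and $X^{\lvvee\rvvee}\cong X$ are the unique isomorphisms of left adjoints. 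Hence they carry:
\begin{itemize}
\item the units and counits of $A$ and $B$ to the unitors;
\item $\runit_{X\lotimes_B X^{\lvvee}}$ to the composite unit $(X\lotimes\lunit_X\lotimes X^{\rvvee})\runit_X$;
\item $\rcounit_{X^{\lvvee}\lotimes_A X}$ to the composite counit $\rcounit_X(X^{\rvvee}\lotimes\lcounit_X\lotimes X)$.
\end{itemize}
Substituting $f=\lcounit_X$ and $f=\lunit_X$ into the mate formula gives
\[
\eta'_X=(\lcounit_X\lotimes X\lotimes X^{\rvvee})(X\lotimes\lunit_X\lotimes X^{\rvvee})\runit_X,\qquad \epsilon'_X=\rcounit_X(X^{\rvvee}\lotimes\lcounit_X\lotimes X)(X^{\rvvee}\lotimes X\lotimes\lunit_X).
\]
Both collapse, to $\runit_X$ and $\rcounit_X$ respectively, by the single triangle identity $(\lcounit_X\lotimes_A X)(X\lotimes_B\lunit_X)=\id_X$. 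This proves both equalities directly. The triangle identities for the primed data and the automorphism $\phi$ are then not needed at all.
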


It is clear that the isomorphism $(-)^{\op}$ of the first paragraph of Section \ref{202509141802} restricts to the isomorphism 
 $(-)^{\op}: \perf (A\otimes B^{\op}) \xrightarrow{\cong } \perf (B^{\op} \otimes (A^{\op})^{\op})$. 
 For $X \in \perf(A \otimes B^{\op})$ and $Y \in \perf(B \otimes C^{\op})$, there is a canonical isomorphism 
 $(X\lotimes_{B} Y)^{\op} \cong Y^{\op}\lotimes_{B^{\op}} X^{\op} $ in $\perf(C^{\op} \otimes (A^{\op})^{\op})$. 
 We also have canonical isomorphisms $(X^{\op})^{\lvvee} \cong (X^{\rvvee})^{\op}$ and $(X^{\op})^{\rvvee} \cong (X^{\lvvee})^{\op}$ 
 which satisfy obvious compatibility with the unit and co-unit morphisms. 
 Thus, recalling that we are identifying right dg-$A$-modules with dg-$A^{\op}$-modules, 
 we obtain the following  adjoint pairs: 
 \[
\begin{xymatrix}@R=40pt{
\perf A^{\op} \ar[d]_{- \lotimes_{A} X }\\
\perf B^{\op} \ar@/^45pt/[u]^{  -\lotimes_{B}X^{\lvvee} } \ar@/_45pt/[u]_{ - \lotimes_{B}X^{\rvvee} }
}\end{xymatrix}
\]

\subsubsection{}

We denote  the $A\otimes B^{\op}$-dulaity of $X$ by $X^{\vvee} := \RHom_{A\otimes B^{\op}} (X, A \otimes B)$, 
which belongs to $\perf (B\otimes A^{\op})$ 
and the $\kk$-duality of $X$ by $\tuD(X) :=\RHom_{\kk}(X, \kk)$. 
Since $A \otimes B^{\op} \cong (A\otimes B^{\op}) \otimes \kk^{\op}$, 
we may regard $X$ as an object of $\perf ((A\otimes B^{\op}) \otimes \kk^{\op})$. 
Thus applying to the above consideration, we obtain 
adjoint pairs $(\tuD(X),  X)$ and $(X,  X^{\vvee})$. 
\[
\begin{xymatrix}@R=40pt{
\perf \kk \ar[d]_{X \lotimes_{\kk} - }\\
\perf (A \otimes B^{\op}) \ar@/^45pt/[u]^{\tuD(X)  \lotimes_{A\otimes B^{\op} }- } \ar@/_45pt/[u]_{ X^{\vvee} \lotimes_{A\otimes B^{\op}}- }
}\end{xymatrix}
\]
We set the notations of the unit and the counit morphisms for these adjoint pairs as follows:
\[
\begin{split}
& \dunit_{X}:= \runit_{X}^{A\otimes B^{\op} -\kk}: A\otimes B^{\op} \to X  \otimes \tuD(X),\\
\ \ &\dcounit_{X}:= \rcounit_{X}^{A\otimes B^{\op} -\kk}: \tuD(X) \lotimes_{A\otimes B^{\op}} X \to \kk, \\
& \vunit_{X}:= \lunit_{X}^{A\otimes B^{\op} -\kk}: \kk \to X^{\vvee} \lotimes_{A\otimes B^{\op}} X, \\ 
&\vcounit_{X}:= \lcounit_{X}^{A\otimes B^{\op} -\kk}: X \otimes X^{\vvee} \to A \otimes B^{\op}. 
\end{split}
\]
 
One of the triangle identities 
 $(X \otimes \dcounit_{X})( \dunit_{X}  \lotimes_{A\otimes B^{\op}} X ) = \id_{X}$ of the adjoint pair $(\tuD(X), X)$ 
  tells that the following composition 
coincides with the identity morphism:
\[
\begin{split}
{}_{1}X_{2} & \cong {}_{1}A_{3} \lotimes_{A} {}_{3}X_{4} \lotimes_{B}{}_{4}B_{2} \cong ({}_{1}A_{3} \otimes {}_{4}B_{2}) \lotimes_{A\otimes B^{\op}} {}_{3} X_{4} \\
& \xrightarrow{ \ \ \dunit_{X}  \lotimes_{A\otimes B^{\op}} X \ \ } ({}_{1}X_{2} \otimes {}_{4}\tuD(X)_{3}) \lotimes_{A \otimes B^{\op}} {}_{3}X_{4}
\cong {}_{1}X_{2} \otimes ({}_{4}\tuD(X)_{3} \lotimes_{A \otimes B^{\op}} {}_{3}X_{4}) \\
&\xrightarrow{ \ \ (X \otimes \dcounit_{X}) \ \ } {}_{1}X_{2} \otimes \kk \cong {}_{1}X_{2}
\end{split}
\]
where where the indices $1,2,3,4$ indicate the places that are (potentially) connected by the derived tensor products.

Using a string diagram,   the above  composition is written in the following way: 

\begin{tikzpicture}

\begin{scope}[shift={(-1.5,-0.3)}]
\node (s) at (0,0) {$X$}; 

\draw[very thick] (-0.4,0) -- (s);
\draw[very thick] (s)  --  (0.4, 0);
\end{scope}

 \node at (-0.7,-0.3){$\cong$};

\node (s) at (0,0) {$A$}; 
\node (t) at (1,0) {$X$};
\node (u) at (0,-0.6) {$B$};

\draw[very thick] (s) --  (t);
\draw[very thick] (-0.5,0)--(s);
\draw[very thick, rounded corners=10pt] (t) -- (1.6, -0.45) -- (1,-1) -- (0, -1) -- (-0.5,-0.8) -- (u);
\draw[very thick] (u) --(1, -0.6);

\draw[->] (2,-0.3) --node[auto] {$\dunit_{X}$} (3, -0.3);

\begin{scope}[shift={(4,0)}]
\node (s) at (0,0) {$X$}; 
\node (t) at (1.2,0) {$X$};
\node (u) at (0,-0.8) {$\tuD(X)$};

\draw[very thick] (u) -- (t);
\draw[very thick] (s)  --  (1, -0. 6);
\draw[preaction={draw=white,line width=3pt}, very thick]
(s) --(1,-0.6);
\draw[very thick] (-0.5,0)--(s);
\draw[very thick, rounded corners=6pt] (t) -- (1.6, -0.45) -- (1.2,-1.1) -- (-0.5, -1.1) -- (-1,-0.98) -- (u);
\end{scope}

\draw[->] (6,-0.3) --node[auto] {$\dcounit_{X}$} (7, -0.3);

\begin{scope}[shift={(8,-0.3)}]
\node (s) at (0,0) {$X$}; 

\draw[very thick] (-0.4,0) -- (s);
\draw[very thick] (s)  --  (0.4, 0);
\end{scope}

\end{tikzpicture}
%
%
%
%
%
%
%
%
%
%
%
%
%
%
%
%
%
%
%

We note that in the rightmost term of the above diagram, 
we identify $X \otimes \kk$ with $X$. 
As noted earlier,  we will adopt this  identification tacitly.  

The canonical isomorphisms given in the next lemma constitute the technical heart of the paper.
\begin{lemma}\label{202508201805}
Let $A, B, C$ be smooth and proper dg-algebras and $X \in \perf (A\otimes B^{\op}), \ Y \in \perf(B \otimes C^{\op})$. 
Then there are canonical isomorphisms 
\[
\begin{split} 
& Y^{\vvee} \lotimes_{B} X^{\lvvee} \cong (X \lotimes_{B} Y)^{\vvee} \cong Y^{\rvvee} \lotimes_{B} X^{\vvee}, \\  
& Y^{\lvvee} \lotimes_{B} \tuD(X) \cong \tuD(X  \lotimes_{B} Y) \cong \tuD(Y) \lotimes_{B} X^{\rvvee}, \\ 
& \tuD(Y) \lotimes_{B} X^{\vvee} \cong (X \lotimes_{B} Y)^{\lvvee}, \ \ 
Y^{\vvee} \lotimes_{B} \tuD(X) \cong (X\lotimes_{B} Y)^{\rvvee}
\end{split}
\]
in $\perf(C \otimes A^{\op})$. 
\end{lemma}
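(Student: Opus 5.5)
Each of the claimed isomorphisms identifies two objects of $\perf(C\otimes A^{\op})$ that represent the same adjoint, and I would prove them all by the uniqueness of adjoints recorded in Appendix~\ref{202511221618} (Proposition~\ref{202509221237}). The strategy is not to compute $\RHom$'s directly. Instead, each of $(-)^{\vvee}$, $\tuD(-)$, $(-)^{\lvvee}$, $(-)^{\rvvee}$ is recast as a left or right dual of a suitable $1$-morphism in the Morita $2$-category. Then \eqref{202508261311} (duals of composites are composites of duals in reverse order), together with a few "reindexing" isomorphisms, gives each formula.

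The key observation concerns how a bimodule can be regarded. The bimodule $X\in\perf(A\otimes B^{\op})$ can be viewed as an $A$-$B$ bimodule, as an $(A\otimes B^{\op})$-$\kk$ bimodule, or as a $\kk$-$(B\otimes A^{\op})^{\op}$ bimodule after applying $(-)^{\op}$. Likewise $X\lotimes_B Y$ can be rewritten as a composite in the Morita $2$-category:
\[
X\lotimes_B Y\;\cong\;(X\otimes Y)\lotimes_{B\otimes B^{\op}} B ,
\]
regarded as an $(A\otimes C^{\op})$-$\kk$ bimodule. Alternatively it can be written as $(X\otimes C)\lotimes_{B\otimes C^{\op}\otimes C}(\cdots)$; in string diagrams this is just bending the middle $B$-string.

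For the first line, I would write $X\lotimes_B Y\cong (X\otimes C^{\op})\lotimes_{B\otimes C^{\op}} Y$, with $X\otimes C^{\op}$ an $(A\otimes C^{\op})$-$(B\otimes C^{\op})$ bimodule and $Y$ a $(B\otimes C^{\op})$-$\kk$ bimodule. Then \eqref{202508261311} gives
\[
(X\lotimes_BY)^{\vvee}\cong Y^{\vvee}\lotimes_{B\otimes C^{\op}}(X\otimes C^{\op})^{\lvvee}.
\]
Next, $(X\otimes C^{\op})^{\lvvee}\cong X^{\lvvee}\otimes C$, because a left dual of an external tensor product with an identity is the tensor of duals. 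This is checked by tensoring the unit and counit of $X$ with $\id_C$, which still satisfy the triangle identities. The $C$-strings then cancel, yielding $Y^{\vvee}\lotimes_B X^{\lvvee}$. The other expression $Y^{\rvvee}\lotimes_B X^{\vvee}$ is obtained symmetrically by grouping $X$ with $A$ and using right duals.

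The $\tuD$ line is the same argument with the adjoint pairs $(\tuD(X),X)$ in place of $(X,X^{\vvee})$, that is, with right duals for the $\kk$-direction. The last line regards $X\lotimes_BY$ as an $A$-$C$ bimodule factored through $\kk$. For example, using $X\lotimes_BY\cong X\lotimes_{B}(\ldots)$ via the string-diagram rewriting $X\lotimes_B Y\cong (X\otimes Y)\lotimes_{B^{\mre}}B$ and bending strings so that the $A$-$C$ dual reads $\tuD(Y)\lotimes_B X^{\vvee}$. Concretely, I would verify that $\tuD(Y)\lotimes_BX^{\vvee}$, equipped with the evident unit (built from $\vunit_X$ and $\dunit_Y$) and counit (built from $\vcounit_X$ and $\dcounit_Y$), satisfies the triangle identities for $X\lotimes_BY$. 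Proposition~\ref{202509221237} then identifies it with $(X\lotimes_BY)^{\lvvee}$.

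The main obstacle is the step where $B$-strings are bent: turning a $B\otimes B^{\op}$-linear contraction into a $B$-linear tensor product. This is where the dualities for different base algebras ($\kk$ versus $A\otimes B^{\op}$) must be intertwined, and the triangle identities hold only up to the signs and $(-)^{\op}$ identifications of Section~\ref{202509141802}. I would handle it by drawing each candidate unit/counit as a string diagram. Each triangle identity then reduces, after isotopy of strings, to a single triangle identity of $X$ or $Y$ already known. Signs would be tracked using the explicit basis description of Example~\ref{202511031429}.
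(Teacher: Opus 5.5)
Your proposal is correct and follows the paper's route. The paper proves only $\tuD(Y)\lotimes_B X^{\vvee}\cong (X\lotimes_B Y)^{\lvvee}$ in detail, and it does so exactly as in your ``concretely'' step: the unit comes from $\vunit_X$ and $\dunit_Y$, the counit from $\vcounit_X$ and $\dcounit_Y$, the triangle identities are checked, and Proposition~\ref{202509221237} is invoked, with the remaining isomorphisms said to follow ``in the same manner''. Your reduction of the first two lines to \eqref{202508261311}, via $X\lotimes_B Y\cong (X\otimes C^{\op})\lotimes_{B\otimes C^{\op}}Y$ and its mirror (which needs $(Y^{\op})^{\lvvee}\cong (Y^{\rvvee})^{\op}$), is just a more modular packaging of that same unit/counit argument. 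One small correction: each triangle identity of the composite uses one triangle identity of $X$ \emph{and} one of $Y$, not a single identity of one of them.
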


\begin{remark}
Even if we drop some assumptions on $A, B, C$ and $X, Y$, the same isomorphisms 
in the above lemma and related results given in the sequel  hold (at least in part), 
provided that the relevant dualities are defined. 
\end{remark}

Although  the isomorphisms  in the lemma above can be proved  using basic isomorphisms involving $\lotimes$ and $\RHom$, 
we give an alternative proof based on the uniqueness of adjoint functors. 
This approach works in the context of 
$2$-categories and guarantees the canonicity of the isomorphisms, as explained below.

 We emphasize that  one advantage of use of $2$-categorical language  is  the uniqueness of adjoint functors (Proposition \ref{202509221237}) 
 and  as a consequence, for example, 
 we can verify that two isomorphisms  
 $(X \lotimes_{B} Y)^{\vvee} \xrightarrow{ \cong } Y^{\vvee} \lotimes_{B} X^{\lvvee}  \xrightarrow{ \cong } Y^{\rvvee} \lotimes_{B} B^{\vvee} 
 \lotimes_{B} X^{\lvvee}$ 
 and 
 $(X \lotimes_{B} Y)^{\vvee} \xrightarrow{ \cong } Y^{\rvvee} \lotimes_{B} X^{\vvee}  \xrightarrow{ \cong } Y^{\rvvee} \lotimes_{B} B^{\vvee} 
 \lotimes_{B} X^{\lvvee}$ constructed from  isomorphisms of Lemma \ref{202508201805} coincide.  
 More precisely, 
 the following diagram  is commutative: 
 \[
 \begin{xymatrix}{
 (X \lotimes_{B} Y)^{\vvee} \ar[rr]^-{\cong} \ar[dr]^{\cong} \ar[ddd]_{\cong}  &&
 Y^{\rvvee} \lotimes_{B} X^{\vvee} \ar[d]^{\cong} \\
&
( (X \lotimes_{B}  B) \lotimes_{B}  Y)^{\vvee} \ar[d]^-{\cong} \ar[r]^-{\cong}&
 Y^{\rvvee} \lotimes_{B} (X \lotimes_{B}B)^{\vvee} \ar[d]^{\cong} \\
&(X \lotimes_{B} ( B \lotimes_{B}  Y))^{\vvee}  \ar[d]_{\cong}  &
 Y^{\rvvee}\lotimes_{B}  ( B^{\vvee} \lotimes_{B} X^{\lvvee} )   \ar[d]_{\cong} \\
 Y^{\vvee} \lotimes_{B} X^{\lvvee} \ar[r]_-{\cong } &
 (B \lotimes_{B} Y)^{\vvee} \lotimes_{B} X^{\lvvee} \ar[r]_-{\cong } &
(Y^{\rvvee}\lotimes_{B}  B^{\vvee} )\lotimes_{B} X^{\lvvee}
 }\end{xymatrix}
 \]
 In the rest of the paper, we use such equalities of isomorphisms tacitly.

\begin{proof}[Proof of Lemma \ref{202508201805}]
We only deal with  the isomorphism $\tuD(Y) \lotimes_{B} X^{\vvee} \cong (X \lotimes_{B} Y)^{\lvvee}$. 
The other isomorphisms are proved in the same manner: we construct morphisms satisfying the triangle identities from the unit and counit associated with the factors of the derived tensor product.

To prove the existence of  the isomorphism, 
we construct 
morphisms  $\eta': C \to (\tuD(Y) \lotimes_{B}X^{\vvee}) \lotimes_{A} (X \lotimes_{B} Y)$ 
 and $\epsilon':   (X \lotimes_{B} Y) \lotimes_{C} (\tuD(Y) \lotimes_{B}X^{\vvee}) \to A$, 
 and then we show that these morphisms satisfy the triangle identities 
 \[
 \begin{split}
& (\epsilon' \lotimes (X \lotimes Y))((X\lotimes Y) \lotimes \eta') = \id_{X \lotimes Y}, \\  
& ((\tuD(Y) \lotimes_{B}X^{\vvee} )\lotimes \epsilon')(\eta' \lotimes (\tuD(Y) \lotimes_{B}X^{\vvee})) = \id_{\tuD(Y) \lotimes_{B}X^{\vvee}}.  
\end{split}
 \]
 Then the uniqueness of the right adjoint yields the desired canonical  isomorphism $\tuD(Y) \lotimes_{B} X^{\vvee} \cong (X \lotimes_{B} Y)^{\lvvee}$. 
 
A morphism $\eta': C \to (\tuD(Y) \lotimes_{B}X^{\vvee}) \lotimes_{A} (X \lotimes_{B} Y)$ 
is defined as follows 
\[
\begin{split}
C   \cong \kk  \otimes C &\xrightarrow{ \ \vunit_{X} \ }  (X^{\vvee} \lotimes_{B} X)_{\natural } \otimes C
  \cong  ( {}_{1}\!B_{2} \lotimes_{B} {}_{2}\! X^{\vvee}_{3} \lotimes_{A} {}_{3}\!X_{1}    )_{\natural } \otimes {}_{4}\!C_{5}\\
  & \xrightarrow{ \ \dunit_{Y} \ }  ( {}_{1}\!Y_{5} \  \ {}_{4}\!\tuD(Y)_{2} \lotimes_{B}   {}_{2}\! X^{\vvee}_{3} \lotimes_{A} {}_{3}\!X_{1}   )_{\natural } 
   \cong (\tuD(Y) \lotimes_{B} X^{\vvee}) \lotimes_{A} (X \lotimes_{B} Y)
\end{split}
\]
where the indices $1,2,3,4,5$ indicate the places that are (potentially) connected by the derived tensor products.
The string diagram of this morphisms is 

\vspace{8pt} 

\begin{tikzpicture}

\begin{scope}[shift={(-2.5,0.4)}]
\node (u) at (0,-0.8) {$C$}; 
\node (v) at (0.7,-0.8) {}; 
\node (v') at (-0.7,-0.8) {};  
\draw[very thick] (u) -- (v);
\draw[very thick] (v') -- (u);

\draw[->] (1,-0.8) -- node[auto] {$\vunit_{X}$} (1.6,-0.8);
\end{scope}

\node (s) at (0,0) {$B$}; 
\node (t) at (1.2,0) {$X^{\vvee}$};
\node (r) at (2.4,0) { $X$};

\node (u) at (0,-0.8) {$C$}; 
\node (v) at (0.7,-0.8) {}; 
\node (v') at (-0.7,-0.8) {}; 

\draw[very thick] (s) -- (t);
\draw[very thick] (t) -- (r);
\draw[very thick] (u) -- (v);
\draw[very thick] (v') -- (u);

%
\draw [very thick, rounded corners=10pt] (s) -- (-0.7,0.3) -- (0,0.6) --(2.4,0.6) --(3.1,0.3) --  (r);

\draw[->] (3.2,-0.4) --node [auto]{$\dunit_{Y}$} (4,-0.4);

\begin{scope}[shift={(5,0)}]
\node (s) at (0,0) {$Y$}; 
\node (t) at (1.2,0) {$X^{\vvee}$};
\node (r) at (2.4,0) { $X$};

\node (u) at (0,-0.8) {$\tuD(Y)$}; 
\node (v) at (1.2,-0.8) {}; 
\node (v') at (-1,-0.8) {}; 
%

\draw[very thick] (t) -- (r);
\draw[very thick] (v') -- (u);

\draw[very thick] (u)  --  (t);
\draw[preaction={draw=white,line width=2.5pt}, very thick](s) --(v);
\draw [very thick, rounded corners=10pt] (s) -- (-0.7,0.3) -- (0,0.6) --(2.4,0.6) --(3.1,0.3) --  (r);

\end{scope}
\end{tikzpicture}

We give the definition 
of  a morphism 
$\epsilon':   (X \lotimes_{B} Y) \lotimes_{C} (\tuD(Y) \lotimes_{B}X^{\vvee}) \to A$ 
in the string diagram below: 

\vspace{8pt} 

\begin{tikzpicture}
\node (s) at (0,0) {$\tuD(Y)$}; 
\node (t) at (1.2,0) {$X$};
\node (r) at (2.4,0) { $Y$};

\node (v) at (1.2,-0.8) {$X^{\vvee}$}; 

\draw[very thick] (t) -- (r);

\draw[very thick] (2.0, -0.8) --(v);
\draw[very thick] (0.76,0) --(t);

\draw[preaction={draw=white,line width=2.5pt}, very thick](s) --(v);
\draw [very thick, rounded corners=10pt] (s) -- (-0.7,0.3) -- (0,0.6) --(2.4,0.6) --(3.1,0.3) --  (r);

%
%

\draw[->] (3.4,-0.4) --node [auto]{$\vcounit_{X}$} (4.2,-0.4);

\begin{scope}[shift={(5,0)}]
\node (s) at (0,0) {$\tuD(Y)$}; 
\node (t) at (1.2,0) {$A$};
\node (r) at (2.4,0) { $Y$};

\node (v) at (1.2,-0.8) {$B$}; 
\node (w) at (2.4,-0.8) {};

\draw[very thick] (s) -- (v);
\draw[very thick] (v) -- (r);

\draw[very thick] (0.76,0) --(t);

\draw[preaction={draw=white,line width=3pt}, very thick](t) --(w);
\draw [very thick, rounded corners=10pt] (s) -- (-0.7,0.3) -- (0,0.6) --(2.4,0.6) --(3.1,0.3) --  (r);

%
%

\draw[->] (3.4,-0.4) --node [auto]{$\dcounit_{Y}$} (4.2,-0.4);
\end{scope}

\begin{scope}[shift={(9,0.4)}]
\node (v) at (1.2,-0.8) {$A$}; 

\draw[very thick] (v) -- (1.7,-0.8);
\draw[very thick] (0.7, -0.8) --(v);
\end{scope}

\end{tikzpicture}

Now it is tedious but straightforward to verify the triangle identities. 
\end{proof}

Applying  of Lemma \ref{202508201805} for the case 
$B=A, C= \kk, X=A$ and $Y =M$ regarded as an object of $\perf (A \otimes \kk^{\op})$, 
we obtain the following corollary. 

\begin{corollary}\label{202508181452}
Let $A$ be a smooth and proper dg-algebra. 
For a perfect dg-module $M \in \perf A$, we have the following canonical isomorphisms 
\[
\tuD(M) \lotimes_{A} A^{\vvee} \cong M^{\lvvee}, \ \ M^{\lvvee} \lotimes_{A} \tuD(A) \cong \tuD(M)
\]
in $\perf A^{\op}$ which are natrual in $M$. 
\end{corollary}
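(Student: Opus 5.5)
The plan is to specialize Lemma \ref{202508201805} exactly as indicated: take $B=A$, $C=\kk$, $X=A\in\perf(A\otimes A^{\op})$ (the diagonal bimodule, which is perfect because $A$ is smooth) and $Y=M\in\perf(A\otimes\kk^{\op})=\perf A$. Then $X\lotimes_{B}Y = A\lotimes_{A} M\cong M$. All terms land in $\perf(C\otimes A^{\op})=\perf A^{\op}$, i.e. right dg-$A$-modules, as the statement requires.

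For the first isomorphism I will use the third line of the lemma, $\tuD(Y)\lotimes_{B}X^{\vvee}\cong (X\lotimes_{B}Y)^{\lvvee}$. It specializes to
\[
\tuD(M)\lotimes_{A} A^{\vvee}\cong (A\lotimes_{A} M)^{\lvvee}\cong M^{\lvvee}.
\]
For the second isomorphism I will use the second line, $Y^{\lvvee}\lotimes_{B}\tuD(X)\cong\tuD(X\lotimes_{B}Y)$, which gives
\[
M^{\lvvee}\lotimes_{A}\tuD(A)\cong\tuD(A\lotimes_{A}M)\cong\tuD(M).
\]
In both cases the last step is induced by the unitor $A\lotimes_A M\cong M$. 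Since the $\lvvee$- and $\tuD$-dualities are functorial for isomorphisms, this is harmless.

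The main point requiring care is naturality in $M$. The isomorphisms of Lemma \ref{202508201805} come from the uniqueness of adjoints (Proposition \ref{202509221237}): both sides carry units and counits satisfying the triangle identities, and the comparison map is built from these. I would argue as follows. Given $f\colon M\to N$, the map $f^{\lvvee}$ is the mate of $f$ with respect to the adjunctions $(M,M^{\lvvee})$ and $(N,N^{\lvvee})$. Likewise $\tuD(f)\lotimes_A A^{\vvee}$ is the mate of $f$ with respect to the adjunctions constructed in the proof of the lemma. That proof builds $\eta'$ and $\epsilon'$ from $\vunit_{A}$, $\dunit_{M}$ and $\vcounit_A$, $\dcounit_M$. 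Of these, only the $M$-dependent pieces $\dunit_{M}$ and $\dcounit_M$ vary, and they are dinatural in $M$.

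The canonical comparison between two right adjoints of the same $1$-morphism is compatible with taking mates. This is a standard $2$-categorical fact, which I would check directly from the triangle identities. It follows that the square
\[
\begin{xymatrix}{
\tuD(N)\lotimes_A A^{\vvee} \ar[r]^-{\cong} \ar[d] & N^{\lvvee} \ar[d] \\
\tuD(M)\lotimes_A A^{\vvee} \ar[r]^-{\cong} & M^{\lvvee}
}\end{xymatrix}
\]
commutes. The same argument, applied to the adjunctions for $\tuD$, gives naturality of the second isomorphism. I expect this mate-compatibility check to be the only step with real content; the rest is direct substitution into Lemma \ref{202508201805}.
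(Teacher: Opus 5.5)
Your proposal is correct and is the paper's argument: the corollary is obtained by specializing Lemma~\ref{202508201805} to $B=A$, $C=\kk$, $X=A$, $Y=M$, using the isomorphisms $\tuD(Y)\lotimes_{B}X^{\vvee}\cong(X\lotimes_{B}Y)^{\lvvee}$ and $Y^{\lvvee}\lotimes_{B}\tuD(X)\cong\tuD(X\lotimes_{B}Y)$, exactly as you do. The paper leaves naturality in $M$ tacit. Your mate-compatibility argument fills this in soundly: it uses dinaturality of $\dunit_{M}$ and $\dcounit_{M}$, together with the compatibility of mates with the canonical comparison of adjoints.
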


A consequence of Lemma \ref{202508201805} together with the isomorphism $A^{\lvvee} \cong  A \cong A^{\rvvee}$, 
 is a well-known fact that  
$A^{\vvee}$ and $\tuD(A)$ are the inverse to each other with respect to the derived tensor product.

\begin{corollary}\label{202508181504}
Let $A$ be a smooth and proper dg-algebra. 
Then we have canonical isomorphisms 
\[
A^{\vvee} \lotimes_{A} \tuD(A) \xrightarrow{ \ f  \  \ \cong \ }  A, \ \tuD(A) \lotimes_{A} A^{\vvee} \xrightarrow{ \ g  \ \ \cong \ }  A
\]
in $\sfD(A^{\mre})$. 
Moreover we have $\id_{\tuD(A)} = (g \lotimes \tuD(A) )(\tuD(A) \lotimes f^{-1}),$ \  
 $\id_{A^{\vvee}} = (A^{\vvee} \lotimes g)(f^{-1} \lotimes A^{\vvee})$.
\[
\begin{split}
\id_{\tuD(A) } &= \left[ \tuD(A)  \xrightarrow{ \ \tuD(A)  \lotimes f^{-1}  \ }
 \tuD(A) \lotimes_{A} A^{\vvee} \lotimes_{A} \tuD(A) 
\xrightarrow{ \ g \lotimes \tuD(A)  \ } \tuD(A) \right], \\
\id_{A^{\vvee} } &= \left[ A^{\vvee}  \xrightarrow{ \ f^{-1} \lotimes A^{\vvee}  \ } A^{\vvee}  \lotimes_{A} \tuD(A) \lotimes_{A} A^{\vvee} 
\xrightarrow{ \ A^{\vvee} \lotimes g \ } A^{\vvee}  \right].
\end{split}
\]
\end{corollary}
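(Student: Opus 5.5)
Apply Lemma \ref{202508201805} with $B=C=A$ and $X=Y=A$, regarded as objects of $\perf(A\otimes A^{\op})$. The last two isomorphisms there give
\[
\tuD(A)\lotimes_{A} A^{\vvee} \cong (A\lotimes_{A} A)^{\lvvee} \cong A^{\lvvee}, \qquad
A^{\vvee}\lotimes_{A} \tuD(A) \cong (A\lotimes_{A} A)^{\rvvee} \cong A^{\rvvee}.
\]
These are composed with the canonical identifications $A^{\lvvee}=\RHom_{A}(A,A)\cong A$ and $A^{\rvvee}\cong A$. The diagonal bimodule $A$ is the identity $1$-morphism, with unit and counit given by the multiplication isomorphisms, so by Lemma \ref{202509031414} (or Proposition \ref{202509221237}) these identifications are the canonical ones. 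This defines $g$ and $f$. Since all isomorphisms in Lemma \ref{202508201805} were built from units and counits, $f$ and $g$ are isomorphisms in $\sfD(A^{\mre})$, not merely one-sided ones.

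For the two identities, the cleanest route is to interpret $f$ and $g$ $2$-categorically. Put $P:=A^{\vvee}$ and $Q:=\tuD(A)$; we have $P\lotimes_A Q\cong A$ via $f$ and $Q\lotimes_A P\cong A$ via $g$. The equation $(g\lotimes Q)(Q\lotimes f^{-1})=\id_Q$ is exactly one triangle identity for the quadruple $(Q,P,f^{-1},g)$. Here $f^{-1}$ plays the role of a unit $A\to P\lotimes_A Q$ and $g$ the role of a counit $Q\lotimes_A P\to A$. Similarly, the second equation $(P\lotimes g)(f^{-1}\lotimes P)=\id_P$ is the other triangle identity.

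So the task reduces to showing that $(f^{-1},g)$ form an adjunction. For an arbitrary pair of inverse isomorphisms this fails in general; one must use an adjoint equivalence. The plan is to trace the construction in the proof of Lemma \ref{202508201805}. There, $g$ corresponds, under $\tuD(A)\lotimes_A A^{\vvee}\cong A^{\lvvee}$, to the counit $\lcounit_{A}$. Explicitly, its string diagram is $\vcounit_A$ followed by $\dcounit_A$. Likewise, the unit $\eta'$ for $A^{\lvvee}$ is $\vunit_A$ followed by $\dunit_A$.

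I would then check that $f^{-1}$ equals this unit after the identification $A^{\vvee}\lotimes_A\tuD(A)\cong A^{\rvvee}\cong A^{\lvvee}$ (via Lemma \ref{202509031414}). This is the main obstacle: showing that the two independently constructed isomorphisms $f$ (through the right dual) and $g$ (through the left dual) are compatible in this way. I would attack it by computing both composites in string diagrams. In each composite, the zig-zag $\dunit_A$–$\dcounit_A$ and the zig-zag $\vunit_A$–$\vcounit_A$ straighten by the triangle identities of the adjoint pairs $(\tuD(A),A)$ and $(A,A^{\vvee})$. Lemma \ref{202508211758} is used to match the right-dual unit and counit with the duals of the left ones.

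If this direct matching proves too messy, there is a fallback. Standard $2$-category theory lets one replace $f$ by the unique $f'$ making $(f'^{-1},g)$ an adjoint equivalence, namely the mate of $g^{-1}$. One then verifies $f'=f$ by the uniqueness statement in Proposition \ref{202509221237}, since both are built from the same unit and counit data. Once the adjunction is established, the first identity follows, and the second follows symmetrically.
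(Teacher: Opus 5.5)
Your first step is the paper's argument. The corollary comes from Lemma~\ref{202508201805} with $B=C=A$, $X=Y=A$, combined with $A^{\lvvee}\cong A\cong A^{\rvvee}$. The paper does not spell out the two identities; it leaves them implicit in the explicit units and counits built in the proof of that lemma.

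Your write-up goes astray in the reduction you set up for these identities. The unit $\eta'$ you describe is the unit of the adjoint pair $(A,\tuD(A)\lotimes_A A^{\vvee})$. After $A\lotimes_A A\cong A$ it is simply $g^{-1}$, since that is what it means for $g$ to be the canonical isomorphism of Proposition~\ref{202509221237}. So ``$f^{-1}$ equals this unit after the identification $A^{\vvee}\lotimes_A\tuD(A)\cong A^{\rvvee}\cong A^{\lvvee}$'' only unwinds the definitions of $f$ and $g$. It is true, but it says nothing about the triangle identities. The fallback is circular: Proposition~\ref{202509221237} can identify $f$ with the corrected $f'$ only once you know that $f^{-1}$ and $g$ satisfy one of its compatibility equations, and that is exactly the statement to be proved. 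Lemma~\ref{202508211758} is not needed anywhere.

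The string-diagram computation you sketch at the end is the right argument, but it needs the correct description of $f^{-1}$ and one extra move.

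\textbf{Description of $f^{-1}$.} Use the isomorphism $Y^{\vvee}\lotimes_B\tuD(X)\cong(X\lotimes_B Y)^{\rvvee}$ of Lemma~\ref{202508201805}, built like the one proved there. Then $f^{-1}$ is the unit of the adjoint pair $(A^{\vvee}\lotimes_A\tuD(A),A)$, namely $\vunit_A$ followed by $\dunit_A$, landing directly in $A^{\vvee}\lotimes_A\tuD(A)$. By the same reasoning, $g$ is $\vcounit_A$ followed by $\dcounit_A$.

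\textbf{The extra move.} In $(g\lotimes\tuD(A))(\tuD(A)\lotimes f^{-1})$ the zig-zags are not adjacent: the $\dunit_A$ from $f^{-1}$ sits between $\vunit_A$ and the $\vcounit_A$ from $g$. However, this $\dunit_A$ and $\vcounit_A$ act on disjoint tensor factors, so you may interchange them. After that:
\begin{itemize}
\item $\vcounit_A\circ\vunit_A$ collapses by the triangle identity of $(A,A^{\vvee})$ on $A$;
\item $\dcounit_A\circ\dunit_A$ collapses by the triangle identity of $(\tuD(A),A)$ on $\tuD(A)$.
\end{itemize}
The identity on $A^{\vvee}$ is proved the same way, with the same interchange, using the other two triangle identities: that of $(A,A^{\vvee})$ on $A^{\vvee}$ and that of $(\tuD(A),A)$ on $A$.
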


Combining  Lemma \ref{202508201805}  with the isomorphism $X^{\lvvee\rvvee} \cong  X \cong X^{\lvvee\lvvee}$ 
for an invertible dg-$A\otimes B^{\op}$-module $X$, 
we obtain the following result.

\begin{corollary}\label{202509031431}
Let $A, B$ be  smooth and proper dg-algebras 
and $X \in \perf (A \otimes B^{\op})$. 
Assume that $X$ is invertible, i.e.,  there exists $Y \in \perf(B \otimes A^{\op})$ such that $X \lotimes_{A} Y \cong A$ in $\perf(A^{\mre})$ 
and that $Y \lotimes_{A} X \cong B $ in $\perf(B^{\mre})$. 
Then there are the following  canonical isomorphisms
\[
\gamma_{X}: A^{\vvee} \lotimes_{A} X \xrightarrow{ \ \cong \ } X \lotimes_{B} B^{\vvee}, \ \  
\gamma'_{X}:  \tuD(A) \lotimes_{A} X \xrightarrow{ \ \cong \ } X \lotimes_{B} \tuD(B).  
\]
\end{corollary}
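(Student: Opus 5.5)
We derive both isomorphisms from Lemma \ref{202508201805} applied to $X$ and its inverse, combined with the identifications of Lemma \ref{202509031414}, which give $X^{\lvvee}\cong Y\cong X^{\rvvee}$.

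For $\gamma_{X}$, we apply the first line of Lemma \ref{202508201805} twice. First take the triple $(A,B,B)$ with bimodules $X$ and $B$. This gives
\[
(X\lotimes_{B} B)^{\vvee}\cong B^{\vvee}\lotimes_{B}X^{\lvvee}.
\]
Next take the triple $(A,A,B)$ with bimodules $A$ and $X$. This gives
\[
(A\lotimes_{A} X)^{\vvee}\cong X^{\rvvee}\lotimes_{A} A^{\vvee}.
\]
Since $X\lotimes_{B}B\cong X\cong A\lotimes_{A}X$, we obtain a canonical isomorphism
\[
B^{\vvee}\lotimes_{B} X^{\lvvee}\cong X^{\rvvee}\lotimes_{A}A^{\vvee}
\]
in $\perf(B\otimes A^{\op})$. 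By Lemma \ref{202509031414}, both $X^{\lvvee}$ and $X^{\rvvee}$ are canonically isomorphic to $Y$, so this reads $B^{\vvee}\lotimes_{B}Y\cong Y\lotimes_{A}A^{\vvee}$.

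Now tensor with $X$ on the left over $B$ and with $X$ on the right over $A$, using $X\lotimes_{B}Y\cong A$ and $Y\lotimes_{A}X\cong B$. This yields
\[
X\lotimes_{B}B^{\vvee}\lotimes_{B}Y\lotimes_{A}X\cong X\lotimes_{B}B^{\vvee},\qquad X\lotimes_{B}Y\lotimes_{A}A^{\vvee}\lotimes_{A}X\cong A^{\vvee}\lotimes_{A}X,
\]
and hence the isomorphism $\gamma_{X}$. Equivalently, one could use the composite $X\lotimes_{B} B^{\vvee}\lotimes_{B}X^{\lvvee}\lotimes_{A}X$ together with the evaluation $\lcounit_{X}$ and the unit $\lunit_{X}$, both of which are isomorphisms when $X$ is invertible. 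Running the same argument with the second line of Lemma \ref{202508201805} in place of the first gives
\[
\tuD(B)\lotimes_{B}X^{\rvvee}\cong\tuD(X)\cong X^{\lvvee}\lotimes_{A}\tuD(A),
\]
and the same tensoring-and-cancelling step produces $\gamma'_{X}$.

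The main obstacle is canonicity. The isomorphism $X\lotimes_{B}Y\cong A$ is only assumed to exist, so it must not enter the construction. I would therefore write $\gamma_X$ intrinsically, using only the units and counits $\lunit_X,\lcounit_X$, which are isomorphisms for invertible $X$ by Proposition \ref{202509221331}. Concretely,
\[
\gamma_{X}:\ A^{\vvee}\lotimes_A X\xrightarrow{\ \lunit_X\ } A^{\vvee}\lotimes_A X\lotimes_B X^{\lvvee}\lotimes_A X\cong X\lotimes_B(X^{\rvvee}\lotimes_A A^{\vvee}\lotimes_A X)\cong X\lotimes_B B^{\vvee}.
\]
Here the middle step uses the identification $X^{\lvvee}\cong X^{\rvvee}$ of Lemma \ref{202509031414}. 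The last step uses $X^{\rvvee}\lotimes_A A^{\vvee}\lotimes_A X\cong (A\lotimes_A X)^{\vvee}\lotimes_A X\cong B^{\vvee}\lotimes_B X^{\lvvee}\lotimes_A X$, followed by $\lunit_X^{-1}$. Independence of choices then follows from the uniqueness of adjoints (Proposition \ref{202509221237}) and the compatibility diagrams noted after Lemma \ref{202508201805}.
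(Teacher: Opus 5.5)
Your argument is correct, but it reaches $\gamma_{X}$ by a detour compared with the paper.

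\textbf{The paper's route.} The paper applies the first line of Lemma \ref{202508201805} not to $X$ but to its dual $X^{\lvvee}\in\perf(B\otimes A^{\op})$, once with $A$ as the second factor and once with $B$ as the first factor. This gives
\[
A^{\vvee}\lotimes_{A}X^{\lvvee\lvvee}\cong (X^{\lvvee})^{\vvee}\cong X^{\lvvee\rvvee}\lotimes_{B}B^{\vvee}.
\]
It then uses $X^{\lvvee\rvvee}\cong X$, which holds for all perfect $X$, and $X^{\lvvee\lvvee}\cong X$, which holds for invertible $X$. This lands directly in $\perf(A\otimes B^{\op})$ and involves no tensoring-and-cancelling. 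It is canonical at once, since invertibility enters only through the identification of a double dual.

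\textbf{Your route.} You apply the lemma to $X$ itself and get $B^{\vvee}\lotimes_{B}X^{\lvvee}\cong X^{\vvee}\cong X^{\rvvee}\lotimes_{A}A^{\vvee}$ for every perfect $X$. This is in effect the paper's isomorphism for the bimodule $X^{\lvvee}$. You then have to transport it to $X$ by conjugation. That costs the invertibility of the units and counits of $X$, plus extra bookkeeping for canonicity. What you gain is that $\gamma_{X}$ is displayed explicitly as the conjugate (mate) of a choice-free isomorphism.

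\textbf{A step to repair.} Your ``intrinsic'' formula needs fixing. The middle isomorphism does not follow from $X^{\lvvee}\cong X^{\rvvee}$ alone. Moving $A^{\vvee}$ across $X\lotimes_{B}X^{\lvvee}$ requires contracting by $\lcounit_{X}$ and re-expanding by $\runit_{X}$. After that, your initial $\lunit_{X}$ cancels by the triangle identity. The clean canonical version is
\[
A^{\vvee}\lotimes_{A}X\xrightarrow{\ \runit_{X}\ }X\lotimes_{B}X^{\rvvee}\lotimes_{A}A^{\vvee}\lotimes_{A}X\cong X\lotimes_{B}B^{\vvee}\lotimes_{B}X^{\lvvee}\lotimes_{A}X\xrightarrow{\ \lunit_{X}^{-1}\ }X\lotimes_{B}B^{\vvee}.
\]
This uses only that $\runit_{X}$ and $\lunit_{X}$ are isomorphisms, which follows from Propositions \ref{202509221331} and \ref{202509221237}. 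It does not need the identification $X^{\lvvee}\cong X^{\rvvee}$. Likewise, $\gamma'_{X}$ is the conjugate of $\tuD(B)\lotimes_{B}X^{\rvvee}\cong X^{\lvvee}\lotimes_{A}\tuD(A)$ by $\lcounit_{X}^{-1}$ and $\rcounit_{X}$.
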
 

It will be  shown in Section \ref{202511031303} that  
these isomorphisms are dg-enhanced version of well-known commutativity 
between the (inverse of) Serre functors and exact equivalences.  

Let $\Sigma$ denote the shift functor of dg-modules. 
For a homogeneous element $x \in X$ of dg-$A$-$B$-bimodule $X$, 
we denote by $\downarrow \! x$ (resp. $\uparrow\! x $) the corresponding element of $\Sigma X$ (resp. $\Sigma^{-1} A$). 
We note that $|\!\!\downarrow \! x | = |x| -1$ and $|\! \! \uparrow \! x| = |x| +1$.  
We denote the canonical isomorphism  by $\delta_{X} : X \lotimes_{B} \Sigma B \xrightarrow{ \ \cong \ } \Sigma A \lotimes_{A} X \cong \Sigma X, \ x \otimes \! \downarrow \! 1 
\mapsto (-1)^{|x| }\!\! \downarrow\! x$. 
By abusing notation, we denote the induced natural transformation $X \lotimes_{B} \Sigma(-) \to \Sigma X\lotimes_{B}-$ by $\delta_{X}$. 
Then, the pair $(X\lotimes_{A} -, \delta_{X})$ is an exact functor from  $\perf B$ to $\perf A$.

 The following lemma is a dg-enhanced version of a result, due to Van den Bergh (see \cite[Appendix A]{Bocklandt}), concerning the canonical commutation isomorphisms between the Serre functor 
$\sfS$ and the shift functor $\Sigma$.

\begin{lemma}\label{202511031307}
We have the equality $\gamma_{\Sigma A} = - \delta_{A^{\vvee}}$
of the morphisms from  $A^{\vvee} \lotimes_{A} \Sigma A $ to $\Sigma A \lotimes_{A} A^{\vvee}$.
\end{lemma}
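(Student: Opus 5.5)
We prove the equality by unwinding the construction of $\gamma_{X}$ in Corollary \ref{202509031431} for $X=\Sigma A$ (with $B=A$) and comparing it with $\delta_{A^{\vvee}}$ on an explicit generator. The isomorphism $\gamma_{X}$ is built from Lemma \ref{202508201805} and the identification $X^{\lvvee\rvvee}\cong X$:
\[
A^{\vvee}\lotimes_{A} X \cong (X^{\lvvee})^{\rvvee}\lotimes_{A} A^{\vvee}\lotimes_{A}\cdots ,
\]
more precisely $A^{\vvee}\lotimes_{A} X \cong A^{\vvee}\lotimes_A X\lotimes_A X^{\lvvee}\lotimes_A X \cong (X^{\lvvee})^{\vvee}$-type manipulations via the isomorphism $(X^{\lvvee}\lotimes_{A}A)^{\vvee}\cong A^{\rvvee\ldots}$. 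Concretely, I will write $\gamma_{X}$ as the composite
\[
A^{\vvee}\lotimes_A X \xrightarrow{\;\runit_{X}\;} X\lotimes_A X^{\rvvee}\lotimes_A A^{\vvee}\lotimes_A X \cong X\lotimes_A (X^{\lvvee}\lotimes_A A)^{\vvee}\lotimes_A X \cong X\lotimes_A A^{\vvee}\lotimes_A X^{\lvvee}\lotimes_A X \xrightarrow{\;\lcounit\text{-type}\;} X\lotimes_A A^{\vvee},
\]
where the middle isomorphisms are the two descriptions of $(X^{\lvvee})^{\vvee}$ given by Lemma \ref{202508201805}. Since all maps are defined through units and counits, the whole composite is determined by the units and counits of the adjoint pairs $(X,X^{\lvvee})$, $(X^{\rvvee},X)$ and $(A,A^{\vvee})$ over $A^{\mre}$.

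Next I specialize to $X=\Sigma A$. Here $X$ is free of rank one with basis $e=\downarrow 1$ of degree $-1$. By Example \ref{202511031429}, $(\Sigma A)^{\lvvee}=\Sigma^{-1}A$ with dual basis $e^{\lvvee}=\uparrow 1$. The unit is $\lunit(1)=(-1)^{|e|}e^{\lvvee}\otimes e=-\uparrow 1\otimes\downarrow 1$, and the analogous formula holds for $\runit$. The counit is evaluation $\lcounit(x\otimes\xi)=(-1)^{|x||\xi|}\xi(x)$. Because every bimodule in sight is a shift of $A$ or $A^{\vvee}$, the Lemma \ref{202508201805} isomorphisms reduce to Koszul-sign identifications of $\Sigma^{\pm1}$ with $A^{\vvee}$. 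I will trace a generic element $\phi\otimes\downarrow 1$, with $\phi\in A^{\vvee}$ homogeneous, through each step and record the sign it picks up. I will then compare the result with $\delta_{A^{\vvee}}(\phi\otimes\downarrow1)=(-1)^{|\phi|}\downarrow\phi$, read through $\Sigma A\lotimes_A A^{\vvee}\cong\Sigma A^{\vvee}$.

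The main obstacle is making the isomorphisms of Lemma \ref{202508201805} explicit. They are defined only through the uniqueness of adjoints, so I first need to extract a concrete formula for the case $X=\Sigma A$. To do this, I will check directly that the candidate sign-twisted identification $\Sigma^{-1}A^{\vvee}\cong(\Sigma A)^{\vvee}$ satisfies the triangle identities with the units and counits constructed in the proof of Lemma \ref{202508201805}. Proposition \ref{202509221237} then identifies it with the canonical isomorphism. After that, the sign count is routine. The net sign should be $-1$: it comes from the factor $(-1)^{|e|}=-1$ in the unit, while the remaining Koszul signs $(-1)^{|\phi|}$ match those in $\delta$. This gives $\gamma_{\Sigma A}=-\delta_{A^{\vvee}}$.
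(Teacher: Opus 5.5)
Your overall plan is the same as the paper's: unwind $\gamma_{\Sigma A}$, make the canonical isomorphisms explicit via triangle identities and Proposition~\ref{202509221237}, and evaluate on $\phi\otimes\downarrow 1$. However, the concrete formula your computation rests on is wrong. In the composite you write for $\gamma_X$, the step $X\lotimes_A X^{\rvvee}\lotimes_A A^{\vvee}\lotimes_A X\cong X\lotimes_A (X^{\lvvee}\lotimes_A A)^{\vvee}\lotimes_A X$ is ill-typed. For $X=\Sigma A$ one has $X^{\rvvee}\lotimes_A A^{\vvee}\cong\Sigma^{-1}A^{\vvee}$, while $(X^{\lvvee})^{\vvee}=(\Sigma^{-1}A)^{\vvee}\cong\Sigma A^{\vvee}$, so the two sides differ by $\Sigma^{2}$. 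The next step fails for the same reason. The last arrow, out of $X^{\lvvee}\lotimes_A X$, cannot be a counit of $(X,X^{\lvvee})$; it would have to be $\lunit_X^{-1}$, which exists because $X$ is invertible.

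What Lemma~\ref{202508201805} actually gives for the middle piece is $X^{\rvvee}\lotimes_A A^{\vvee}\cong(A\lotimes_A X)^{\vvee}=X^{\vvee}\cong A^{\vvee}\lotimes_A X^{\lvvee}$. The object $(X^{\lvvee})^{\vvee}$ is the correct intermediate for $\gamma_X$ itself, not for a piece sandwiched between two copies of $X$.

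Even after this repair, the conjugation formula built from $\runit_X$ and $\lunit_X^{-1}$ is not the $\gamma_X$ of Corollary~\ref{202509031431}. With $Y=\Sigma^{-1}A$ the inverse of $X$, that map is $A^{\vvee}\lotimes_A X\cong A^{\vvee}\lotimes_A Y^{\lvvee}\cong(Y\lotimes_A A)^{\vvee}=(A\lotimes_A Y)^{\vvee}\cong Y^{\rvvee}\lotimes_A A^{\vvee}\cong X\lotimes_A A^{\vvee}$. You would have to prove the two constructions agree, and for a statement that is purely about a sign this cannot be taken for granted.

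Second, the sign computation, which is the entire content of the lemma, is only announced, and your heuristic for it is unreliable. You say the analogue of $\lunit(1)=(-1)^{|e|}e^{\lvvee}\otimes e$ holds for $\runit$. The repaired composite uses both $\runit_X$ and $\lunit_X^{-1}$, so if both carried $(-1)^{|e|}$ the factors would cancel and your explanation of the $-1$ would collapse. In the paper the two sides genuinely differ. The identification $f:\Sigma A\cong(\Sigma^{-1}A)^{\lvvee}$ has $f(\downarrow)=-\uparrow^{\lvvee}$, which is the sign from Example~\ref{202511031429}. The identification $g:\Sigma A\cong(\Sigma^{-1}A)^{\rvvee}$ has $g(\downarrow)=\uparrow^{\rvvee}$, with no sign.

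The factor $(-1)^{|\phi|}$ matching $\delta_{A^{\vvee}}$ is not automatic either. It comes from the explicit forms of the two isomorphisms of Lemma~\ref{202508201805}: $h(\phi\otimes\uparrow^{\lvvee})(\uparrow\otimes x)=\phi(x)$ and $k(\uparrow^{\rvvee}\otimes\phi)(x\otimes\uparrow)=(-1)^{|x|+|\phi|}\phi(x)$. These give $k^{-1}h(\phi\otimes\uparrow^{\lvvee})=(-1)^{|\phi|}\uparrow^{\rvvee}\otimes\phi$. Hence $\gamma_{\Sigma A}(\phi\otimes\downarrow)=(g^{-1}\otimes A^{\vvee})\,k^{-1}h\,(\phi\otimes f(\downarrow))=(-1)^{|\phi|+1}\downarrow\otimes\phi$.

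Those explicit formulas are exactly what you defer as routine. Without them, and with an incorrect description of $\gamma_X$, the proposal does not establish the lemma.
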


\begin{proof}
For notational simplicity, we set $\downarrow := \downarrow\! 1 \in \Sigma A$ and 
$\uparrow := \uparrow \! 1 \in \Sigma^{-1}A$. 
The  pair $(\eta', \epsilon')$ of morphisms given below satisfies the triangle identities
\[
\begin{split}
&\eta': A \to  (\Sigma^{-1} A ) \lotimes_{A}(\Sigma A), \ 1 \mapsto \  \uparrow  \otimes \downarrow,\\
&\epsilon':  (\Sigma A ) \lotimes_{A}(\Sigma^{-1} A) \to A, \  \uparrow \otimes \downarrow  \ \mapsto 1.\\
\end{split}
\]
On the other hand, the unit morphism $\lunit_{\Sigma^{-1}A}: A \to (\Sigma^{-1} A)^{\lvvee} \lotimes_{A} (\Sigma^{-1} A)$ is 
given by $1 \mapsto - \uparrow ^{\lvvee} \otimes \uparrow$ (see Example \ref{202511031429}). 
According to 
 the proof of Proposition \ref{202509221237},  
 the canonical isomorphism $f: \Sigma A \xrightarrow{ \cong } (\Sigma^{-1} A)^{\lvvee}$ between 
the right adjoint bimodules of $\Sigma^{-1} A$
 is given by the composition 
\[
f: \Sigma A \xrightarrow{ \ \lunit_{\Sigma A}\lotimes \Sigma A \ } 
(\Sigma^{-1} A)^{\lvvee} \lotimes_{A} (\Sigma^{-1} A)\lotimes_{A} (\Sigma A) \xrightarrow{ \ (\Sigma^{-1} A)^{\lvvee} \lotimes \epsilon' \ } 
(\Sigma^{-1} A)^{\lvvee}.
\]
It is easy to see that we have $f(\downarrow ) = - \uparrow^{\lvvee}$. 

In the same way, we can show that 
the canonical isomorphism $g: \Sigma A \xrightarrow{ \cong } (\Sigma^{-1} A)^{\rvvee}$ of the left adjoint bimodules of $\Sigma^{-1}A$ 
is given by $g(\downarrow) = \uparrow^{\rvvee}$. 

Again, using the construction of the canonical isomorphism of the adjoint objects given in the proof of Proposition \ref{202509221237}, 
it is tedious but straightforward to check that for any perfect dg-$A^{\mre}$-modules $X$, 
the canonical morphisms 
\[
h: X^{\vvee} \lotimes_{A} (\Sigma^{-1} A)^{\lvvee} \xrightarrow{ \cong } (\Sigma^{-1}A \lotimes_{A} X)^{\vvee}, \ \ 
k: (\Sigma^{-1} A)^{\rvvee} \lotimes_{A} X^{\vvee} \xrightarrow{ \cong } (X \lotimes_{A} \Sigma^{-1} X)^{\vvee} 
\]
are roughly given by 
\[
\begin{split}
&h(\phi \otimes \uparrow^{\lvvee}): \Sigma^{-1}A \lotimes_{A} X \to A\otimes A, \ \uparrow \! \otimes x \mapsto \phi(x),\\
&k(\uparrow^{\rvvee}\! \otimes \phi):  X \lotimes_{A} \Sigma^{-1}A  \to A\otimes A, \ x \otimes \! \uparrow \  \mapsto (-1)^{|x| + |\phi|}\phi(x).\\
\end{split}
\]
Therefore applying above isomorphisms in the case $X = A$, 
the canonical isomorphism 
$\gamma': A^{\vvee} \lotimes_{A} (\Sigma^{-1} A)^{\lvvee} \xrightarrow{ h } (\Sigma^{-1}A)^{\vvee} \xrightarrow{ k^{-1} } (\Sigma^{-1} A)^{\rvvee} \lotimes_{A} A^{\vvee} $ is given by $\gamma'(\phi\otimes \! \uparrow^{\lvvee} ) = (-1)^{|\phi|} \uparrow^{\rvvee} \!\otimes \phi$. 
Since $\gamma_{\Sigma A} = (g^{-1} \otimes A^{\vvee})\gamma' (A^{\vvee} \otimes f)$, 
we see that $\gamma_{\Sigma A}( \phi \otimes \downarrow) = (-1)^{|\phi| +1}\downarrow \otimes \phi$ as desired. 
\end{proof}

\subsubsection{} 

Later we use the following lemma.

\begin{lemma}\label{202509010738}
The following morphisms coincide to each other 
\[
\begin{split}
(A\lotimes_{A} A) \otimes A^{\vvee} \xrightarrow{ \ {}_{l}\vcounit_{A} \ } A^{\mre}\lotimes_{A} A\cong A^{\mre},  \\
(A\lotimes_{A} A) \otimes A^{\vvee} \xrightarrow{ \ {}_{r}\vcounit_{A} \ } A\lotimes_{A} A^{\mre}\cong A^{\mre}
\end{split}
\]
where ${}_{l}\vcounit_{A}$ (resp. ${}_{r}\vcounit_{A}$) apply $\vcounit_{A}$ to the left (resp. right ) factor of 
$A \lotimes_{A} A$ and  $A^{\vvee}$. 
By a string diagram, the commutativity is explained as follows: 

\begin{tikzpicture}

\node (q) at (-1,0) {};
\node (s) at (0,0) {$A$}; 
\node (t) at (1,0) {$A$};
\node (r) at (2,0) { };

\node (v') at (-1,-0.8) {}; 
\node (u) at (0,-0.8) {$A^{\vvee}$}; 
\node (v) at (2,-0.8) {}; 
\node (w) at (2,-0.8) {};

\draw[very thick] (q) -- (s);
\draw[very thick] (s) -- (t);
\draw[very thick] (t) -- (r);
\draw[very thick] (u) -- (w);
\draw[very thick] (v') -- (u);
\draw[very thick] (v) -- (w);

%

\draw[->] (0.6,-1.2) --node [auto]{$\vcounit_{A}$} (0.6,-1.7);

\draw[->] (2.3,-0.4) -- node[auto]{$\cong$}(6.8, -0.4);

\begin{scope}[shift={(8,0)}]

\node (q) at (-1,0) {};
\node (s) at (0,0) {$A$}; 
\node (t) at (1,0) {$A$};
\node (r) at (2,0) { };

\node (v') at (-1,-0.8) {}; 
\node (u) at (0,-0.8) {}; 
\node (v) at (1,-0.8) {$A^{\vvee}$}; 
\node (w) at (2,-0.8) {};

\draw[very thick] (q) -- (s);
\draw[very thick] (s) -- (t);
\draw[very thick] (t) -- (r);
\draw[very thick] (v') -- (v);
\draw[very thick] (v) -- (w);

%

\draw[->] (0.6,-1.2) --node [auto]{$\vcounit_{A}$} (0.6,-1.7);
\end{scope}

\begin{scope}[shift={(0,-2)}]
\node (q) at (-1,0) {};
\node (s) at (0,0) {$A$}; 
\node (t) at (1,0) {$A$};
\node (r) at (2,0) { };

\node (v') at (-1,-0.8) {}; 
\node (u) at (0,-0.8) {$A$}; 
\node (v) at (1,-0.8) {}; 
\node (w) at (2,-0.8) {};

\draw[very thick] (q) -- (s);
\draw[very thick] (t) -- (r);
\draw[very thick] (v') -- (u);

\draw[very thick] (u)  --  (t);
\draw[preaction={draw=white,line width=6pt}, very thick, rounded corners=3pt](s) --(1,-0.8)--(w);

\draw[->] (2.3,-0.4) --node [auto]{$\cong$} (2.8,-0.4);

\end{scope}

\begin{scope}[shift={(4,-2)}]
\node (q) at (-1,0) {};
\node (s) at (0.4,0) {$A$}; 
\node (t) at (1,0) {};
\node (r) at (2,0) { };

\node (v') at (-1,-0.8) {}; 
\node (u) at (0.4,-0.8) {$A$}; 
\node (v) at (1,-0.8) {}; 
\node (w) at (2,-0.8) {};

\draw[very thick] (q) -- (s);
\draw[very thick] (t) -- (r);
\draw[very thick] (v') -- (u);

\draw[very thick, rounded corners=3pt] (u)  --  (1,0)--(r);
\draw[preaction={draw=white,line width=6pt}, very thick, rounded corners=3pt](s) --(1,-0.8)--(w);

\end{scope}

\begin{scope}[shift={(8,-2)}]
\node (q) at (-1,0) {};
\node (s) at (0,0) {$A$}; 
\node (t) at (1,0) {$A$};
\node (r) at (2,0) { };

\node (v') at (-1,-0.8) {}; 
\node (u) at (0,-0.8) {}; 
\node (v) at (1,-0.8) {$A$}; 
\node (w) at (2,-0.8) {};

\draw[very thick] (q) -- (s);
\draw[very thick] (s) -- (t);
\draw[very thick] (v') -- (v);

\draw[very thick, rounded corners=3pt] (v)  --  (1.5,0)--(r);
\draw[preaction={draw=white,line width=6pt}, very thick, rounded corners=3pt](t) --(1.5,-0.8)--(w);

\draw[<-] (-1.8,-0.4) --node [auto]{$\cong$} (-1.3,-0.4);

\end{scope}

\end{tikzpicture}

where the left vertical arrow is $ {}_{l}\vcounit_{A}$ and the right is $ {}_{r}\vcounit_{A}$.

\end{lemma}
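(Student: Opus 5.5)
Both morphisms are maps out of $(A\lotimes_{A} A)\otimes A^{\vvee}$, and this is a derived tensor product. The plan is therefore to reduce the statement to a comparison of two explicit formulas on representing elements, after fixing a convenient model. Since $A$ is smooth, I take a cofibrant replacement $P \to A$ in $\perf A^{\mre}$ whose underlying graded module is free of finite rank over $A^{\mre}$, with basis $\{e_i\}$ and dual basis $\{e_i^{\vvee}\}\subset P^{\vvee}=A^{\vvee}$. In this model $\vcounit_{A}: A\otimes A^{\vvee}\to A^{\mre}$ is the evaluation $x\otimes\phi\mapsto \pm\phi(x)$, where $\phi(x)\in A\otimes A$ is regarded as an element of $A^{\mre}$ through the outer bimodule structure. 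This is consistent with Example \ref{202511031429} applied to the pair $(\tuD,\vvee)$ over $A^{\mre}$ versus $\kk$. For the two copies of $A$ in $A\lotimes_{A}A$, I will use the model $P\otimes_{A} A \cong P$ on the left and $A\otimes_{A}P\cong P$ on the right, and then compare both with the identity $A\lotimes_A A\cong A$ given by multiplication.

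The key steps are as follows.
\begin{enumerate}
\item Rewrite both composites as morphisms $P\otimes A^{\vvee}\to A^{\mre}$, using the multiplication isomorphisms $\mu_l:A\lotimes_AA\cong A$, obtained by contracting the left factor into the right, and $\mu_r$, obtained by contracting the right factor into the left. In $\sfD(A^{\mre})$ these are the same isomorphism, since both are inverse to the unitor $A\to A\lotimes_A A$. This is the point that needs care: the unit isomorphism $A\cong A\lotimes_AA$ of the Morita $2$-category has two realizations, $\lambda_A$ and $\rho_A$, and their equality is the standard coherence $\lambda_I=\rho_I$ for a monoidal unit. Here I would invoke the general $2$-categorical facts of Appendix \ref{202511221618}, or verify directly that multiplication $A\otimes_AA\to A$ represents both.
\item Track the string diagrams. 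In ${}_{l}\vcounit_{A}$ the output $A^{\mre}$ has its left $A$-factor coming from $\phi(x)$, and the surviving factor $A$ is then tensored through the crossing. In ${}_{r}\vcounit_{A}$ the roles are mirrored. After the final isomorphisms $A^{\mre}\lotimes_A A\cong A^{\mre}$ and $A\lotimes_AA^{\mre}\cong A^{\mre}$ (these absorb the extra $A$ into the inner bimodule structure of $A^{\mre}$), both composites are shown to send $a\otimes b\otimes \phi$ to the same element, namely $\phi(ab)$ with the inner action inserted.
\item Check the Koszul signs. The crossings in the diagrams correspond to the swap $a\otimes b\mapsto (-1)^{|a||b|}b\otimes a$. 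I expect the signs on the two sides to agree because each side performs the same total permutation of the degree-carrying symbols $a,b,\phi$.
\end{enumerate}

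I expect the main obstacle to be step 1 together with the sign bookkeeping in step 3. The conceptual content is that $\vcounit_A$ is $A^{\mre}$-bilinear: it is a morphism of $A^{\mre}$-bimodules, where $A^{\mre}$ carries both its outer and inner structures. Contracting the extra $A$ before applying $\vcounit_A$ or after applying it therefore gives the same result.

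As a fallback that avoids explicit elements, I would argue by naturality. Both maps equal $\vcounit_A$ precomposed with the multiplication $A\lotimes_AA\to A$, once one uses that the identifications $A^{\mre}\lotimes_AA\cong A^{\mre}\cong A\lotimes_AA^{\mre}$ are those induced by the inner $A$-bimodule structure of $A^{\mre}$. Naturality of $\vcounit$ with respect to the bimodule map $A\lotimes_AA\to A$ then reduces the claim to the equality $\lambda_A=\rho_A$ from step 1.
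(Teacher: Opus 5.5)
Your argument is essentially the paper's proof. The paper checks, for general $X\in\perf A^{\mre}$, that the unitors $A\lotimes_A X\cong X\cong X\lotimes_A A$ commute with $\vcounit_X$ (your naturality step), then specializes to $X=A$ and observes that $A\lotimes_A A\cong A\cong A\lotimes_A A$ is the identity (your $\lambda_A=\rho_A$). So your fallback is exactly that proof, and the element computations in steps 2--3 are unnecessary.

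One slip to fix: $\vcounit_A$ carries the bimodule structure of $A$ to the outer (left $A^{\mre}$-module) structure of $A^{\mre}$. Hence the isomorphisms $A^{\mre}\lotimes_A A\cong A^{\mre}\cong A\lotimes_A A^{\mre}$ absorb the extra $A$ through the outer structure, not the inner one. Correspondingly, the chain-level formulas are $\pm\phi(pb)$ and $\pm\phi(ap)$ with $p\in P$; ``$\phi(ab)$'' does not make sense, since $\phi$ is defined on $P$.
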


\begin{proof}
Let $X \in \perf A^{\mre}$. 
It is easy to see that the following diagram is commutative
\[
\begin{xymatrix}@R=10pt{
(A\lotimes_{A} X) \otimes X^{\vvee} \ar[r]^-{\cong} \ar[d]_{\vcounit_{X}} & 
X \otimes X^{\vvee} \ar[d]_{\vcounit} \ar[r]^-{\cong} & 
(X \lotimes_{A} A)\otimes X^{\vvee} \ar[d]^{\vcounit_{X}} \\
A\lotimes_{A} A^{\mre} \ar[r]_{\cong} &
A \ar[r]_{\cong} & A^{\mre} \lotimes_{A} A
}\end{xymatrix}
\]
Observe  that  the morphism $A \lotimes_{A} A \xrightarrow{ \cong }A \xrightarrow{ \cong }A \lotimes_{A} A$ 
which is a factor of the upper horizontal morphism in the above diagram of the case $X =A$, is the identity morphism of $A \lotimes_{A} A$.  
This observation completes the proof.
\end{proof}

\subsubsection{}
To simplify notation, we set $(-)^{\op^{2}} = ((-)^{\op})^{\op}$. 
Let $A, B$ be smooth and proper dg-algebras 
and $X \in \perf (A \otimes B^{\op})$. 
Observe that if  we regard $X \in \perf ( (A\otimes B^{\op})\otimes \kk^{\op})$, the $X^{\op}_{A-B}$ corresponds to 
$X^{\op}_{(A\otimes B^{\op})-\kk}$ 
under the isomorphism $\perf (B^{\op} \otimes A^{\op^{2}}) \cong \perf (\kk^{\op} \otimes  (A\otimes B^{\op})^{\op^{2}})$ 
induced from the canonical isomorphism 
$(B^{\op} \otimes A^{\op^{2}}) \cong \kk^{\op} \otimes  (A\otimes B^{\op})^{\op^{2}}$ of dg-algebras.

We identify $\perf (A \otimes B^{\op})^{\op \mre}$ 
with $ \perf (B \otimes A^{\op})^{\mre}$ 
via  the functor induced from the
 canonical isomorphism $(A \otimes B^{\op})^{\op} \cong B \otimes A^{\op}$ of dg-algebras. 
Thus we denote by 
$(-)^{\op}: \perf (A \otimes B^{\op})^{\mre}  \to \perf (B \otimes A^{\op})^{\mre}$ 
the functor corresponding under this identification, 
 to 
$(-)^{\op}_{(A \otimes B^{\op}) -(A \otimes B^{\op})}: 
 \perf (A \otimes B^{\op})^{\mre}  \to \perf (A \otimes B^{\op})^{\op \mre} $.

For the proof of Theorem \ref{202508231520}, 
we need to point out the following lemma which is a special case of the argument  given  after Lemma \ref{202508211758}.

\begin{lemma}\label{202509151654}

Under the identifications 
\[B \otimes A^{\op} \cong (A \otimes B^{\op})^{\op}, \ \ (X^{\op}_{(A\otimes B^{\op})-\kk})^{\lvvee} \cong \tuD(X)^{\op}_{\kk -(A\otimes B^{\op})},\] 
we have 
 $\lunit_{X^{\op}}^{\kk^{\op} - (B\otimes A^{\op})} = (\dunit_{X})^{\op}$
 and  $\lcounit_{X^{\op}}^{\kk^{\op} - (B\otimes A^{\op})} = (\dcounit_{X})^{\op}$. 
\[
\begin{xymatrix}@R=10pt@C=18pt{
B \otimes A^{\op} \ar[dd]_{\cong} \ar[rr]^-{ \lunit_{X^{\op}}^{\kk^{\op} - (B\otimes A^{\op})} } && 
(X^{\op})^{\lvvee} \otimes X^{\op}  \ar[d]^{\cong} &
X^{\op} \lotimes_{B\otimes A^{\op}} (X^{\op})^{\lvvee}   \ar[d]^{\cong} 
\ar[rr]^-{ \lcounit_{X^{\op}}^{\kk^{\op} - (B\otimes A^{\op})} }
& & 
\kk^{\op} \ar[dd]^{\cong} \\
&& 
\tuD(X)^{\op} \otimes X^{\op} \ar[d]^{\cong} &
X^{\op} \lotimes_{B\otimes A^{\op}} \tuD(X)^{\op}  \ar[d]^{\cong} & &\\
(A\otimes B^{\op})^{\op} \ar[rr]_-{(\dunit_{X})^{\op}} & &
(X \otimes \tuD(X))^{\op} & 
(\tuD(X) \lotimes_{A\otimes B^{\op}} X)^{\op} \ar[rr]_-{(\dcounit_{X})^{\op}} &&
\kk^{\op}
}\end{xymatrix}
\]

Thus in particular if we identify $\perf (\kk^{\op})^{\mre} $ with $\perf \kk^{\mre} \cong \perf \kk$ via the canonical isomorphism, 
 then 
 $\lcounit_{X^{\op}}^{\kk^{\op} - (B\otimes A^{\op})} = \dcounit_{X}$. 

\end{lemma}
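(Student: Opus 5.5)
The plan is to derive Lemma \ref{202509151654} as a special case of the general compatibility between $(-)^{\op}$ and dualities noted after Lemma \ref{202508211758}: $(-)^{\op}$ exchanges left and right duals and carries unit/counit to unit/counit. Concretely, regard $X$ as an object of $\perf((A\otimes B^{\op})\otimes \kk^{\op})$. Then $(\tuD(X),X,\dunit_X,\dcounit_X)$ is precisely the right-duality datum $(X^{\rvvee},\runit_X,\rcounit_X)$ for the pair $(A\otimes B^{\op})$--$\kk$. Applying $(-)^{\op}_{(A\otimes B^{\op})-\kk}$, which is a monoidal anti-equivalence for the Morita composition, reverses the order of derived tensor products via $(X\lotimes Y)^{\op}\cong Y^{\op}\lotimes X^{\op}$. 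It sends the triangle identities
\[
(X\otimes\dcounit_X)(\dunit_X\lotimes X)=\id_X,\qquad (\dcounit_X\lotimes\tuD(X))(\tuD(X)\otimes\dunit_X)=\id_{\tuD(X)}
\]
to the triangle identities of a left-duality datum for $X^{\op}\in\perf(\kk^{\op}\otimes(B\otimes A^{\op}))$, with unit $(\dunit_X)^{\op}$ and counit $(\dcounit_X)^{\op}$.

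The first step is to check that $(-)^{\op}$ is compatible with the unit isomorphisms $A\lotimes_A X\cong X$ and the associators, including the Koszul signs coming from $a\otimes b\mapsto(-1)^{|a||b|}b\otimes a$. Then the $\op$-images of the triangle identities are genuinely triangle identities for the pair $(X^{\op},\tuD(X)^{\op})$. The second step is to invoke the uniqueness of adjoints, Proposition \ref{202509221237}. This yields a unique isomorphism $(X^{\op})^{\lvvee}\cong\tuD(X)^{\op}$ intertwining $(\lunit_{X^{\op}},\lcounit_{X^{\op}})$ with $((\dunit_X)^{\op},(\dcounit_X)^{\op})$. We take this as the identification in the statement, which makes both squares in the diagram commute by construction.

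One should also confirm that this identification agrees with the naive isomorphism $\RHom_{\kk^{\op}}(X^{\op},\kk^{\op})\cong \tuD(X)^{\op}$. To do so, compare the explicit descriptions of the unit from Example \ref{202511031429} on a cofibrant, finitely generated free model: both units send $1$ to $\sum_i(-1)^{|e_i|}e_i^{\lvvee}\otimes e_i$ up to the sign of the symmetry isomorphism. The last assertion then follows by composing with the canonical identification $\perf(\kk^{\op})^{\mre}\cong\perf\kk$, under which $\kk^{\op}\mapsto\kk$ and $(-)^{\op}$ acts as the identity on complexes.

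The main obstacle is the sign bookkeeping. We must verify that the symmetry isomorphisms used in $(X\otimes \tuD(X))^{\op}\cong \tuD(X)^{\op}\otimes X^{\op}$ and $(\tuD(X)\lotimes X)^{\op}\cong X^{\op}\lotimes\tuD(X)^{\op}$ introduce matching signs. Otherwise the two triangle identities would hold only up to $-1$. I would settle this on the explicit free model by evaluating both composites on basis elements $e_i$.
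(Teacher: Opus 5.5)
Your proposal is correct and is essentially the paper's own argument. The paper gives no separate proof: it presents the lemma as the special case (right-hand algebra $\kk$) of the remark after Lemma \ref{202508211758} that $(-)^{\op}$ interchanges left and right duals ``with obvious compatibility with the unit and co-unit morphisms''. Your derivation is exactly how that compatibility is justified in the paper's $2$-categorical framework: you apply $(-)^{\op}$ to the triangle identities of $(\tuD(X), X, \dunit_{X}, \dcounit_{X})$ to get an adjoint pair $(X^{\op}, \tuD(X)^{\op}, (\dunit_{X})^{\op}, (\dcounit_{X})^{\op})$, and then invoke uniqueness of right adjoints (Proposition \ref{202509221237}).

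Two small points. First, $X^{\op}$ is a $\kk^{\op}$-$(B\otimes A^{\op})$-bimodule, so it lies in $\perf(\kk^{\op}\otimes(B\otimes A^{\op})^{\op})$, not $\perf(\kk^{\op}\otimes(B\otimes A^{\op}))$. Second, the feared factor $-1$ in the triangle identities cannot occur. The Koszul flip $(X\lotimes Y)^{\op}\cong Y^{\op}\lotimes X^{\op}$ is coherent with associators and unitors, so preservation of the triangle identities follows from coherence and needs no basis computation.
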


\section{Mukai pairing and Serre duality}\label{202602050955}

\subsection{A canonical duality of Hochschild (co)homology group, Serre duality and Mukai pairing}

\subsubsection{A canonical duality of Hochschild (co)homology group}\label{202509191836}

Our starting point is the following duality of Hochschild homology and Hochschild cohomology, which is an immediate cunsequence of $\lotimes-\RHom$-adjunction.  

\begin{proposition}
\label{202508171520}
Let $A$ be a dg-algebra and $X \in \sfD(A^{\mre})$.
We have an isomorphism 
\[
\tuD\HHH_{\bullet}(A;X) \cong \HHH^{\bullet}(A; \tuD(X))
\]
which is natrual in $X$. 
\end{proposition}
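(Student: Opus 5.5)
The isomorphism is a direct application of the derived tensor--hom adjunction over $A^{\mre}$. The only care needed is to keep track of which side $\kk$-duality acts on, and to check naturality.

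First I unwind the definitions. By Definition \ref{202509031449} we have $\HHH_{\bullet}(A;X) = A\lotimes_{A^{\mre}} X$. Here $A$ is regarded as a right dg-$A^{\mre}$-module via the identification $\sfD((A^{\mre})^{\op}) = \sfD((A^{\op})^{\mre})$ of Section \ref{202509141802}, under which the diagonal bimodule is sent to $A^{\op}$. Hence
\[
\tuD\HHH_{\bullet}(A;X) = \RHom_{\kk}(A\lotimes_{A^{\mre}} X, \kk).
\]

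Next I apply the derived adjunction $\RHom_{\kk}(N\lotimes_{R} M, \kk) \cong \RHom_{R}(M, \RHom_{\kk}(N,\kk))$, which holds for a dg-algebra $R$, a right dg-$R$-module $N$ and a left dg-$R$-module $M$. I use it in the form
\[
\RHom_{\kk}(A\lotimes_{A^{\mre}} X, \kk) \cong \RHom_{A^{\mre}}(A, \RHom_{\kk}(X,\kk)) = \RHom_{A^{\mre}}(A,\tuD(X)).
\]
The roles of $A$ and $X$ are exchanged here relative to the displayed form of the adjunction. This is harmless: $A\lotimes_{A^{\mre}}X \cong X\lotimes_{(A^{\mre})^{\op}}A$ via the symmetry $a\otimes x\mapsto (-1)^{|a||x|}x\otimes a$, and $(A^{\mre})^{\op}\cong A^{\mre}$ under the fixed identification, so $X$ may be viewed on the ``right'' side.

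The left $A^{\mre}$-structure on $\tuD(X)$ is the one induced from the right $A^{\mre}$-structure of $X$ through this identification, which is exactly the bimodule $\tuD(X)\in\sfD(A^{\mre})$ used in the statement. Concretely, on a cofibrant replacement $P\to A$ over $A^{\mre}$, the map is $\phi \mapsto (p\mapsto (x\mapsto \phi(p\otimes x)))$, up to the Koszul sign $(-1)^{|p||x|}$. This map is already an isomorphism of complexes
\[
\cpxHom_{\kk}(P\otimes_{A^{\mre}}X,\kk)\cong \cpxHom_{A^{\mre}}(P,\cpxHom_{\kk}(X,\kk)).
\]
No replacement of $X$ is needed, since $\kk$ is a field and $\cpxHom_{\kk}(-,\kk)$ is exact.

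Naturality in $X$ is then immediate, because the underived map is natural in $X$ for a fixed cofibrant $P$.

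The only step that needs genuine care is the sign and bimodule bookkeeping: one must check that the $A^{\mre}$-action on $\cpxHom_{\kk}(X,\kk)$ produced by the adjunction agrees, with the correct Koszul signs, with the paper's convention for $\tuD(X)$ as a dg-$A$-$A$-bimodule, namely $(a\phi b)(x) = \pm\phi(bxa)$. I would verify this directly on homogeneous elements, using the sign in the isomorphism $A\otimes B^{\op}\cong B^{\op}\otimes (A^{\op})^{\op}$ from Section \ref{202509141802}.
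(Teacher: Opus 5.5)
Your proposal is correct and follows the same route as the paper. The paper also obtains the isomorphism directly from the adjunction $\tuD(N\lotimes_{R}M)\cong\RHom_{R}(M,\tuD(N))$, natural in $M$ and $N$, with $R=A^{\mre}$, $M=A$, $N=X$; it tacitly uses the same left/right swap of $X$ (via $A^{\mre}\cong(A^{\mre})^{\op}$) that you make explicit, and your cofibrant-replacement description and naturality argument just spell out details the paper leaves implicit. (The paper additionally sketches an alternative derivation from Lemma~\ref{202508201805} when $A$ is smooth and proper and $X$ is perfect.)
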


\begin{proof}
Let $R$ be a dg-algebra and $M \in \sfD(R), \ N \in \sfD(R^{\op})$. 
We have the following isomorphism 
\[
\tuD(N\lotimes_{R} M ) = \RHom_{\kk}(N \otimes M, \kk) \cong 
\RHom_{R}(M, \RHom_{\kk}(N, \kk)) = \RHom_{R}(M, \tuD(N)) 
\]
which is natural in $M$ and $N$.   
Substituting $R, M, N$ with $A^{\mre}, A, X$, we complete the proof. 

We note that in the case that $A$ is smooth and proper and $X \in \perf A^{\mre}$, 
we can deduce the desired isomorphism from Lemma \ref{202508201805} as follows
\[
\tuD(\HHH_{\bullet}(A; X)) =\tuD(A \lotimes_{A^{\mre}} X)\cong \tuD(X)  \lotimes_{A^{\mre}} A^{\vvee} \cong \HHH^{\bullet}(A; \tuD(X))
\]
where we regard $A \in \perf(\kk \otimes (A^{\mre})^{\op}) $ and $X \in \perf(A^{\mre} \otimes \kk^{\op})$. 
\end{proof}

We denote by 
\[
\agl{-,+}_{X}:  \HHH_{\bullet}(A, X) \otimes \HHH^{\bullet}(A, \tuD(X))  \to \kk
\]
the non-degenerate pairing induced from the isomorphism of Proposition \ref{202508171520}.

We draw the string diagram of the pairing $\agl{-,+}_{X}$  in the case that $A$ is smooth and proper and $X \in \perf A^{\mre}$, 

\vspace{4pt}

\begin{equation}\label{202509010636}
\begin{tikzpicture}
\node (s) at (0,0) {$A$}; 
\node (t) at (1.2,0) {$X$};
\node (u) at (0,-0.8) {$A^{\vvee}$}; 
\node (v) at (1.2,-0.8) {$\tuD(X)$};

\draw[very thick] (s) -- (t);
\draw[very thick] (u) -- (v);

\draw [very thick, rounded corners=10pt] (s) -- (-0.7,0.3) -- (0,0.6) --(1.2,0.6) --(1.9,0.3) --  (t);
\draw [very thick, rounded corners=10pt] (u) -- (-0.7,-1.1) -- (0,-1.4) --(1.2,-1.4) --(1.9,-1.1) --  (v);

\draw[->] (2,-0.4) --node [auto]{$\vcounit_{A}$} (3,-0.4);

\begin{scope}[shift={(4,0)}]

\node (s) at (0,0) {$A$}; 
\node (t) at (1.2,0) {$X$};
\node (u) at (0,-0.8) {$A$}; 
\node (v) at (1.2,-0.8) {$\tuD(X)$};

%

\draw[very thick] (u)  --  (t);
\draw[preaction={draw=white,line width=3pt}, very thick](s) --(v);
\draw [very thick, rounded corners=10pt] (s) -- (-1,0.3) -- (0,0.6) --(1.2,0.6) --(2.2,0.3) --  (t);
\draw [very thick, rounded corners=10pt] (u) -- (-1,-1.1) -- (0,-1.4) --(1.2,-1.4) --(2.2,-1.1) --  (v);
\end{scope}

\draw[->] (6.5,-0.4) --node [auto]{$\dcounit_{X}$} (7.5,-0.4);

\node at (8,-0.4){$\kk$};

\end{tikzpicture}
\end{equation}
%
%
%
%
%
%
%

It follows from the naturality of the isomorphism that 
for a morphism $f: X \to Y $ in $\sfD(A^{\mre})$, 
 there exists the following commutative diagram
\begin{equation}\label{202508171549}
\begin{xymatrix}@C=60pt{
\HHH_{\bullet}(A; X) \otimes \HHH^{\bullet}(A; \tuD(Y)) \ar[r]^-{ f_{*} \otimes \id } \ar[d]_-{\id \otimes f^{*}} &
\HHH_{\bullet}(A; Y) \otimes \HHH^{\bullet}(A; \tuD(Y))  \ar[d]^{\agl{-,+}_{Y}} \\
\HHH_{\bullet}(A; X) \otimes \HHH^{\bullet}(A; \tuD(X))  \ar[r]_-{\agl{-,+}_{X}} & \kk 
}\end{xymatrix}
\end{equation}
where we set 
$f_{*} := \HHH_{\bullet}(A; f)$ and $f^{*}:= \HHH^{\bullet}(A; \tuD(f))$. 
Roughly speaking, this shows that we have the identity 
\[
\agl{\xi, f^{*}(\theta)}_{X} = \agl{f_{*}(\xi), \theta}_{Y}
\]
for $\xi \in \HHH_{\bullet}(A;X), \ \theta \in \HHH^{\bullet}(A; \tuD(Y))$. 
Thus, we refer the above commutative diagram the reciprocity law of the canonical pairing.

We point out the other reciprocity law. 
For this purpose, we assume that $A$ and $B$ are smooth and proper. 
Let $X \in \perf (A \otimes B^{\op})$ and $Y \in \perf B^{\mre}$. 
For simplicity, we set ${}^{X}Y := X \lotimes_{B} Y \lotimes_{B} X^{\lvvee}$. 
We define a morphism $\tilde{\Phi}_{X, Y}: \HHH_{\bullet}(B; Y) \to \HHH_{\bullet}(A; {}^{X}Y)$ to  be 
the composition 
\[
\begin{split}
\tilde{\Phi}_{X, Y}: 
\HHH_{\bullet}(B;Y) =  B\lotimes_{B^{\mre}} Y  
\xrightarrow{ \ \lunit_{X} \lotimes \id \ } \ 
 & (X^{\lvvee} \lotimes_{A} X) \lotimes_{B^{\mre}} Y \\
\cong &  A \lotimes_{A^{\mre}} ( X \lotimes_{B} Y \lotimes_{B} X^{\lvvee} ) \\
 = & \HHH_{\bullet}(A;  {}^{X}Y).
\end{split}
\]
We define a morphism $\tilde{\Psi}_{X, Y}: \HHH^{\bullet}(A; \tuD({}^{X}Y)) 
\to \HHH^{\bullet}(B;X) $  to be the composition 
\[
\begin{split}
\tilde{\Psi}_{X, Y}: \HHH^{\bullet}(A; \tuD({}^{X}Y))    
&= \RHom_{A^{\mre}}(A, \tuD(  X \lotimes_{B} Y \lotimes_{B} X^{\lvvee}) ) \\
&\cong \RHom_{A^{\mre}}(A, X^{\lvvee\lvvee} \lotimes_{B} \tuD(Y) \lotimes_{B} X^{\rvvee}) \\
& \cong \RHom_{B^{\mre}}(X^{\lvvee} \lotimes_{A} X ,\tuD(Y)) \\
 & \xrightarrow{ \ \RHom(\lunit_{X}, \tuD(Y))  \ } 
\RHom_{B^{\mre}}(B, \tuD(Y)) =
 \HHH^{\bullet}(B;X)
\end{split}
\]
where for the second isomorphism we use Lemma \ref{202508201805} and 
the third is deduced from the adjoint properties. 

Since the isomorphisms of Lemma \ref{202508201805} are induced from the   adjoint properties 
of relevant dg-bimodules, it is straightforward to check the following commutativity.

\begin{proposition}\label{202509191815}
In the above situation, we have the following commutative diagram: 
\[\begin{xymatrix}@C=60pt{
\HHH_{\bullet}(B, Y) \otimes \HHH^{\bullet}(A, \tuD({}^{X}Y)) \ar[r]^-{ \id  \otimes \tilde{\Psi}_{X,Y} } \ar[d]_-{\tilde{\Phi}_{X, Y}  \otimes \id } &
\HHH_{\bullet}(B, Y) \otimes \HHH^{\bullet}(B, \tuD(Y))  \ar[d]^{\agl{-,+}_{Y}} \\
\HHH_{\bullet}(A, {}^{X}Y) \otimes \HHH^{\bullet}(A, \tuD({}^{X} Y))  \ar[r]_-{\agl{-,+}_{ {}^{X}Y } } & \kk 
}\end{xymatrix}
\]
\end{proposition}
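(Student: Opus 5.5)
We prove the commutativity by writing both composites as explicit morphisms
\[
(B\lotimes_{B^{\mre}} Y)\otimes \RHom_{A^{\mre}}(A,\tuD({}^{X}Y))\to \kk
\]
and comparing them through the adjunctions that define the pairings. (Note that the target of $\tilde{\Psi}_{X,Y}$ should read $\HHH^{\bullet}(B;\tuD(Y))$.)

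First we reduce the pairing $\agl{-,+}_{Z}$, for any $Z\in\sfD(A^{\mre})$, to evaluation. By Proposition \ref{202508171520} it is the composite
\[
(A\lotimes_{A^{\mre}}Z)\otimes\RHom_{A^{\mre}}(A,\tuD(Z)) \xrightarrow{\ \theta\mapsto \theta\lotimes Z\ } \tuD(Z)\lotimes_{A^{\mre}}Z\xrightarrow{\ \dcounit_{Z}\ }\kk .
\]
Equivalently, $\agl{\xi,\theta}_{Z}=\mathrm{ev}(\theta_{*}\xi)$, where $\theta_{*}:A\lotimes_{A^{\mre}}Z\to \tuD(Z)\lotimes_{A^{\mre}}Z$ and $\mathrm{ev}$ is the tautological evaluation. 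In this form both reciprocity laws become statements about naturality of evaluation.

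Next we transport $\theta\in\RHom_{A^{\mre}}(A,\tuD({}^{X}Y))$ along the isomorphisms used in $\tilde\Psi_{X,Y}$. The isomorphism of Lemma \ref{202508201805},
\[
\tuD(X\lotimes_B Y\lotimes_B X^{\lvvee})\cong X^{\lvvee\lvvee}\lotimes_B\tuD(Y)\lotimes_B X^{\rvvee},
\]
is built from units and counits of $(X,X^{\lvvee})$ and $(\tuD(Y),Y)$. The subsequent adjunction isomorphism into $\RHom_{B^{\mre}}(X^{\lvvee}\lotimes_A X,\tuD(Y))$ uses $\lcounit_X$ and $\lunit_X$ on both sides. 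Hence $\theta$ corresponds to a $B$-bimodule map $\theta^{\flat}:X^{\lvvee}\lotimes_AX\to\tuD(Y)$ characterised by
\[
\theta = \bigl(X\lotimes\theta^{\flat}\lotimes X^{\lvvee}\bigr)\circ\bigl(\text{unit } A\to X\lotimes_B X^{\lvvee}\lotimes_A X\lotimes_B X^{\lvvee}\bigr),
\]
followed by the identification with $\tuD({}^{X}Y)$ through Lemma \ref{202508201805}. We then have $\tilde\Psi_{X,Y}(\theta)=\theta^{\flat}\circ\lunit_X$.

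With these descriptions the two composites can be drawn as closed string diagrams. The right/top path is
\[
(B\lotimes_{B^{\mre}}Y)\xrightarrow{\ \lunit_X\ }(X^{\lvvee}\lotimes_AX)\lotimes_{B^{\mre}}Y\xrightarrow{\ \theta^{\flat}\ }\tuD(Y)\lotimes_{B^{\mre}}Y\xrightarrow{\ \dcounit_Y\ }\kk .
\]
The left/bottom path first applies the same $\lunit_X$ (this is $\tilde\Phi_{X,Y}$). It then rewrites $(X^{\lvvee}\lotimes_AX)\lotimes_{B^{\mre}}Y$ as $A\lotimes_{A^{\mre}}{}^{X}Y$, applies $\theta$, and finally applies $\dcounit_{{}^{X}Y}$.

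The claim reduces to the following identity. Under the isomorphism of Lemma \ref{202508201805}, the counit $\dcounit_{{}^{X}Y}$ equals the composite of $\dcounit_Y$ with the counits $\lcounit_X$ (applied twice), contracting $X^{\lvvee\lvvee}$ against $X^{\lvvee}$ and $X^{\rvvee}$ against $X$. This is the dual form of the construction in the proof of Lemma \ref{202508201805}, where the counit of a composite adjunction is the composite of counits. Substituting it, the left path becomes: $\lunit_X$, then the unit defining $\theta$ from $\theta^{\flat}$, then the extra counits. The triangle identities for $(X,X^{\lvvee})$ cancel the extra unit–counit pair (a zig-zag in the string diagram). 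What remains is exactly the right path.

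The main obstacle will be this identification of $\dcounit_{{}^{X}Y}$ with the composite of counits, including matching the $X^{\lvvee\lvvee}$/$X^{\rvvee}$ factors with Lemma \ref{202508211758} and Lemma \ref{202509151654}, while keeping the Koszul signs from the cyclic isomorphism \eqref{202509031502}. I would handle it by invoking the uniqueness of adjoints (Proposition \ref{202509221237}): both sides are counits of adjunctions with the same left adjoint ${}^{X}Y$, and the isomorphism of Lemma \ref{202508201805} is by construction the canonical comparison between them. This avoids sign computations, except that one must check the cyclic rotation in \eqref{202509031502} agrees with the one implicit in the string diagrams.
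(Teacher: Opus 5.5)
Your argument is essentially the paper's. The paper only remarks that the isomorphisms of Lemma \ref{202508201805} are induced by adjunctions, so the commutativity is a straightforward check. Transporting $\theta$ to its adjunct $\theta^{\flat}$, identifying $\dcounit_{{}^{X}Y}$ with a composite of counits via uniqueness of adjoints, and cancelling zig-zags is exactly that check spelled out. You are also right that the target of $\tilde{\Psi}_{X,Y}$ should be $\HHH^{\bullet}(B;\tuD(Y))$.

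One slip needs correcting: the adjunction data are mislabeled.
\begin{enumerate}[(1)]
\item $\theta$ is recovered from $\theta^{\flat}$ via the units $\lunit_{X^{\lvvee}}\colon A\to X^{\lvvee\lvvee}\lotimes_{B}X^{\lvvee}$ and $\runit_{X}\colon A\to X\lotimes_{B}X^{\rvvee}$. There is no unit $A\to X\lotimes_{B}X^{\lvvee}$, and $X\lotimes_{B}\tuD(Y)\lotimes_{B}X^{\lvvee}$ is not $\tuD({}^{X}Y)$ in general.
\item The counits inside $\dcounit_{{}^{X}Y}$ are $\lcounit_{X^{\lvvee}}$ and $\rcounit_{X}$, not $\lcounit_{X}$ applied twice.
\item The zig-zags cancel by the triangle identities of $(X^{\lvvee},X^{\lvvee\lvvee})$ and $(X^{\rvvee},X)$, not those of $(X,X^{\lvvee})$.
\end{enumerate}
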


We give a description of $\tilde{\Phi}_{X, Y}$ under the canonical identifications  
%
%
%
$\HHH_{\bullet}(B; Y) \cong \RHom_{B^{\mre}}(B^{\vvee}, Y)$ and $\HHH_{\bullet}(A; {}^{X} Y) \cong \RHom_{A^{\mre}}(A^{\vvee}, {}^{X}Y)$. 
Observe that by Lemma Lemma \ref{202508201805}, we have ${}^{X}(B^{\vvee}) \cong (X \lotimes_{B} X^{\lvvee})^{\vvee}$.  
Now, the morphims $\tilde{\Phi}_{X, Y}$ coincides with the following composition 
\begin{equation}\label{202509201657}
\begin{split}
\HHH_{\bullet}(B;Y ) 
 \cong \RHom_{B^{\mre}}(B^{\vvee}, Y ) 
 \xrightarrow{ \  ({}^{X}(-) )_{\RHom} \ } & \RHom_{A^{\mre}}({}^{X}(B^{\vvee}), {}^{X} Y) \\
 \cong \ \ \ \ \ \ &\RHom_{A^{\mre}}((X \lotimes_{B} X^{\lvvee})^{\vvee}, {}^{X}Y) \\
 \xrightarrow{ \ \RHom( \lcounit_{X}^{\vvee}, {}^{X}Y) \ } & \RHom_{A^{\mre}}(A^{\vvee}, {}^{X} Y ) \\
 \cong \ \ \ \ \ \ & \HHH_{\bullet}(A; {}^{X}Y).
\end{split}
\end{equation}

\subsubsection{Serre duality}\label{202511031303}

In this section, we show that the isomorphism of Proposition \ref{202508171520} specializes to Serre duality, 
and we discuss its basic properties. 
In particular, Corollary \ref{202509201321} and Lemma \ref{202511031307}, proved earlier,  provide dg-enhanced versions of basic properties 
of the Serre functor.

\begin{proposition}[{Serre duality}]\label{202508171742}
Let $A$ be a smooth and proper dg-algebra and $M \in \perf A,\ N \in \sfD(A)$. 
Then there exists an isomorphism 
\[
\tuD\RHom_{A}(N, \tuD(A)\lotimes_{A} M) \cong \RHom_{A}(M, N)
\]
which is natrual in $M$ and $N$. 
\end{proposition}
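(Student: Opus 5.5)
The plan is to obtain Serre duality as the specialization of Proposition \ref{202508171520} to the bimodule $X := M \otimes \tuD(N)$, and then to rewrite both sides using the isomorphism \eqref{202509151945} and Corollary \ref{202508181452}.

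Concretely, I view $X := N' \otimes M$ as an object of $\sfD(A^{\mre})$, where $M \in \perf A$ is a left module and $N' := \tuD(\tuD(A)\lotimes_{A} M)$ or a similar right module still to be chosen. It is cleaner to run the computation from the Hom side.

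\textbf{Step 1.} Express $\RHom_{A}(M,N)$ as a Hochschild homology. Since $M$ is perfect, $\RHom_{A}(M,N) \cong M^{\lvvee}\lotimes_{A} N$. By \eqref{202509151945} this equals $\HHH_{\bullet}(A; N \otimes M^{\lvvee})$, where $N \otimes M^{\lvvee} \in \sfD(A^{\mre})$.

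\textbf{Step 2.} Apply Proposition \ref{202508171520}:
\[
\tuD\RHom_{A}(M,N) \cong \tuD\HHH_{\bullet}(A; N\otimes M^{\lvvee}) \cong \HHH^{\bullet}(A;\tuD(N\otimes M^{\lvvee})).
\]
Then identify $\tuD(N \otimes M^{\lvvee}) \cong \tuD(M^{\lvvee}) \otimes \tuD(N)$, where the finite-dimensionality coming from properness and perfectness of $M^{\lvvee}$ makes $\tuD$ commute with the tensor product over $\kk$ without finiteness assumptions on $N$. This gives
\[
\HHH^{\bullet}(A; \tuD(M^{\lvvee})\otimes \tuD(N)) = \RHom_{A^{\mre}}(A, \RHom_{\kk}(N, \tuD(M^{\lvvee}))) \cong \RHom_{A}(N, \tuD(M^{\lvvee})),
\]
by $\lotimes$--$\RHom$ adjunction.

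\textbf{Step 3.} Identify $\tuD(M^{\lvvee}) \cong \tuD(A)\lotimes_{A} M$. From Corollary \ref{202508181452} we have $M^{\lvvee}\lotimes_{A}\tuD(A)\cong \tuD(M)$. Applying this to $M^{\lvvee}\in\perf A^{\op}$ by symmetry, or equivalently dualizing Lemma \ref{202508201805} with $X = A$, $Y = M$, gives $\tuD(M^{\lvvee}) \cong \tuD(A)\lotimes_{A} M^{\lvvee\rvvee} \cong \tuD(A)\lotimes_{A} M$.

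Combining the steps yields $\tuD\RHom_{A}(M,N)\cong \RHom_{A}(N,\tuD(A)\lotimes_{A} M)$. Applying $\tuD$ and using that $\RHom_{A}(M,N)$ is reflexive when it lies in $\perf\kk$ (for instance when $N$ is perfect) gives the stated form. In general I would instead run the argument in the dual direction, applying Proposition \ref{202508171520} with $X = \tuD(\tuD(A)\lotimes_A M)\otimes N$ and using $\HHH_{\bullet}(A;X) \cong N\lotimes_{A}$-type formulas, so that $\tuD$ lands directly on $\RHom_{A}(N,\tuD(A)\lotimes_A M)$.

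Naturality in $M$ and $N$ follows because every isomorphism used is natural: Proposition \ref{202508171520}, \eqref{202509151945}, adjunction, and Corollary \ref{202508181452}.

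The main obstacle I expect is Step 3 together with the bookkeeping of which side the duals live on. The left dual of a right module $M^{\lvvee}$, its $\kk$-dual, and the identification $M^{\lvvee\rvvee}\cong M$ must be matched with the specific case of Lemma \ref{202508201805}, namely $\tuD(X\lotimes_B Y)\cong \tuD(Y)\lotimes_B X^{\rvvee}$ with $X = M^{\lvvee}$ regarded as a $\kk$-$A$-bimodule and $Y = A$. The second obstacle is handling non-perfect $N$ so that the final $\tuD$ is legitimate, which I would address by choosing the direction of the argument as indicated above.
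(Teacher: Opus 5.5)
Your Steps 1--3 are exactly the paper's proof. Like you, the paper writes $\RHom_{A}(M,N)\cong M^{\lvvee}\lotimes_{A}N\cong\HHH_{\bullet}(A;N\otimes M^{\lvvee})$ by \eqref{202509151945}. It then rewrites $\RHom_{A}(N,\tuD(A)\lotimes_{A}M)\cong\RHom_{A}(N,\tuD(M^{\lvvee}))\cong\HHH^{\bullet}(A;\tuD(N\otimes M^{\lvvee}))$ via Corollary~\ref{202508181452} and adjunction, and applies Proposition~\ref{202508171520} to $X=N\otimes M^{\lvvee}$. Your detour through $\tuD(M^{\lvvee})\otimes\tuD(N)$ is harmless but unnecessary: $\tuD(N\otimes M^{\lvvee})\cong\RHom_{\kk}(N,\tuD(M^{\lvvee}))$ is plain tensor--hom adjunction and needs no finiteness.

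Note what this argument actually yields, both yours and the paper's: $\tuD\RHom_{A}(M,N)\cong\RHom_{A}(N,\tuD(A)\lotimes_{A}M)$, with $\tuD$ on the $\RHom_{A}(M,N)$ side. You are right that the displayed form then follows by applying $\tuD$ once more whenever $\RHom_{A}(M,N)$ has degreewise finite-dimensional cohomology. This holds for instance when $N\in\perf A$, which is the case used in the rest of the paper.

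What fails is your proposed remedy for arbitrary $N\in\sfD(A)$. Proposition~\ref{202508171520} always places $\tuD$ on a Hochschild \emph{homology} complex, i.e.\ on a derived tensor product. With $X=\tuD(\tuD(A)\lotimes_{A}M)\otimes N$ you get $\HHH_{\bullet}(A;X)\cong\tuD(\tuD(A)\lotimes_{A}M)\lotimes_{A}N\cong M^{\lvvee}\lotimes_{A}N\cong\RHom_{A}(M,N)$. So the output is the same isomorphism $\tuD\RHom_{A}(M,N)\cong\RHom_{A}(N,\tuD(A)\lotimes_{A}M)$ as before. Since $\RHom_{A}(N,-)$ is not a tensor product for non-perfect $N$, there is no way to put $\tuD$ on that side.

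In fact no argument can: for $A=M=\kk$ and $N$ an infinite-dimensional vector space in degree $0$, the displayed isomorphism reads $\tuD\tuD(N)\cong N$, which is false for dimension reasons. So the statement has to be read in one of two ways: either as $\tuD\RHom_{A}(M,N)\cong\RHom_{A}(N,\tuD(A)\lotimes_{A}M)$ for all $N\in\sfD(A)$, or in the displayed form with $N$ restricted as above. With that adjustment your proof is complete and agrees with the paper's.
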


\begin{proof}
By \eqref{202509151945}, we have 
\begin{equation}\label{202508171811}
\RHom_{A}(M,N) \cong M^{\lvvee} \lotimes_{A}N \cong  \HHH_{\bullet}(A; N \otimes M^{\lvvee}). 
\end{equation}
On the other hand, 
by Lemma \ref{202508181452}, we have 
\begin{equation}\label{202508171812}
\begin{split}
\RHom_{A}(N , \tuD(A)\lotimes_{A}M) \cong & 
\RHom_{A}(N , \tuD(M^{\lvvee}))\\
 \cong &
\HHH^{\bullet}(A; \RHom_{\kk}(N, \tuD(M^{\lvvee})))\\
\cong &\HHH^{\bullet}(A; \tuD(N \otimes M^{\lvvee})). 
\end{split}
\end{equation}
Applying Proposition \ref{202508171520} to the case $X= N \otimes M^{\lvvee}$, 
we complete the proof.
\end{proof}

\begin{remark}

Proposition \ref{202508171742} holds for a dg-algebra $A$ which is not necessarily smooth or proper, since it is well known and easy to prove that there is an isomorphism 
$\tuD(A)\lotimes_{A}M \cong \tuD(M^{\lvvee})$ for $M \in \perf A$.
\end{remark}

It follows from Proposition \ref{202508171742} that 
 there exists a non-degenerate pairing 
\[
\serreagl{-,+}: \RHom_{A}(M,N) \otimes \RHom_{A}(N, \tuD(A) \lotimes_{A} M)  \to \kk,  
\]
which is called the \emph{Serre pairing}.

Assume that $A$ is smooth and proper. 
Since  $A^{\vvee} \lotimes_{A} \tuD(A) \cong A$ by Corollary \ref{202508181504},  
the above pairing induces another non-degenerate pairing 
\[
\invserreagl{-,+}: \RHom_{A}(M,N) \otimes \RHom_{A}( A^{\vvee} \lotimes_{A} N, M)  \to \kk,  
\]
which is also called the \emph{Serre pairing}. 
This shows that the endofunctor $\sfS := \tuD(A)\lotimes_{A} -$  is a Serre functor of $\perf A$ 
and $\sfS^{-1} := A^{\vvee} \lotimes -$ is its quasi-inverse. 
Whenever it is necessary to make the relevant dg-algebras explicit, we denote  $\sfS_{A}, \sfS_{A}^{-1}$. 

The following lemma immediately follows from the triangle identities of the adjoint pair $(M^{\lvvee}, M)$. 

\begin{lemma}\label{202508311741}
Let $A$ be a smooth and proper dg-algebra and $M \in \perf A$. 
We define  a morphism $T_{M}: \RHom_{A}(M, \tuD(A) \lotimes_{A}M) \to \kk$ to be the composition 
\[
 \RHom_{A}(M, \tuD(A) \lotimes_{A}M) \cong (\tuD(A) \lotimes_{A} M \otimes M^{\lvvee})_{\natural} 
 \xrightarrow{ \ \lcounit_{M} \ } (\tuD(A) \lotimes_{A} A)_{\natural} 
 \xrightarrow{ \ \dcounit_{A} \ } \kk. 
\]
Then the morphism $T_{M}$ coincides with the composition 
\[
 \begin{split} 
 \RHom_{A}(M, \tuD(A) \lotimes_{A} M) & \ \ \ \cong \ \ \  \kk \otimes \RHom_{A}(M, \tuD(A) \lotimes_{A} M) \\ 
 &\xrightarrow{ \ \lunit_{M} \ }  \RHom_{A}(M,M) \otimes \RHom_{A}(M, \tuD(A) \lotimes_{A} M) \\
 & \xrightarrow{  \ \ \serreagl{-,+} \ \  } \kk 
\end{split} 
\]
where in the first arrow we implicitly use the isomorphism $\RHom_{A}(M,M) \cong M^{\lvvee} \lotimes_{A} M$. 
\end{lemma}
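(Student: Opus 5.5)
The plan is to unwind the definition of the Serre pairing $\serreagl{-,+}$ from the proof of Proposition \ref{202508171742}, and then compare it with $T_{M}$ by naturality. By construction, $\serreagl{-,+}$ is the canonical pairing $\agl{-,+}_{X}$ for $X = N\otimes M^{\lvvee}$, transported along the identifications \eqref{202508171811} and \eqref{202508171812}. In the string-diagram form \eqref{202509010636}, this pairing is the composite of $\vcounit_{A}$ followed by $\dcounit_{X}$. Here $\dcounit_{X}$ decomposes, via Lemma \ref{202508201805} and Corollary \ref{202508181452}, into the evaluation $\dcounit$ on $N$ together with the duality between $M^{\lvvee}$ and $\tuD(M^{\lvvee})\cong \tuD(A)\lotimes_{A}M$.

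In our case $N=M$, and the first factor is $\lunit_{M}(1)\in \RHom_{A}(M,M)\cong M^{\lvvee}\lotimes_{A}M$, the element corresponding to $\id_{M}$. I would use the reciprocity law \eqref{202508171549}, in the form of Serre-pairing naturality in the first variable. This says that pairing $\id_{M}$ against $\theta\in\RHom_{A}(M,\tuD(A)\lotimes_{A}M)$ equals evaluating a ``trace'' on $\theta$. That trace is obtained by contracting the copy of $M^{\lvvee}$ with $M$ through the counit $\lcounit_{M}: M\otimes M^{\lvvee}\to A$ inside Hochschild homology, and then applying $\dcounit_{A}$ to $(\tuD(A)\lotimes_{A}A)_{\natural}\cong \tuD(A)\lotimes_{A^{\mre}}A$.

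Concretely, the steps are as follows.
\begin{enumerate}[(1)]
\item Write both sides as morphisms out of $(\tuD(A)\lotimes_{A}M\otimes M^{\lvvee})_{\natural}$.
\item Insert the unit $\lunit_{M}$ into the second expression, and rewrite the Serre pairing as the diagram \eqref{202509010636} with $X=M\otimes M^{\lvvee}$.
\item Use the first triangle identity $(\lcounit_{M}\lotimes_{A}M)(M\lotimes\lunit_{M})=\id_{M}$ to cancel the inserted $\lunit_{M}$ against one $\lcounit_{M}$ coming from the decomposition of $\dcounit_{M\otimes M^{\lvvee}}$. The latter decomposition goes through Lemma \ref{202508201805} with $\tuD(M\otimes M^{\lvvee})\cong M^{\lvvee\lvvee}\otimes \tuD(M)$-type factorizations.
\item What remains is $\lcounit_{M}$ followed by $\dcounit_{A}$ (after Lemma \ref{202509010738}, which lets us move $\vcounit_{A}$ to either side), and this is exactly $T_{M}$.
\end{enumerate}

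The main obstacle is step (3). One must identify precisely how the isomorphism $\tuD(A)\lotimes_{A}M\cong\tuD(M^{\lvvee})$ of Corollary \ref{202508181452} interacts with $\dcounit$: the counit of the $\tuD$-adjunction for $M^{\lvvee}$ should factor as $\lcounit_{M}$ followed by $\dcounit_{A}$. I would prove this by using that the isomorphism in Corollary \ref{202508181452} was constructed (Lemma \ref{202508201805}) precisely so that the counit of $\tuD(M^{\lvvee})$ is the composite of $\lcounit_{M}$ and $\dcounit_{A}$, by the string diagram for $\epsilon'$ in that proof, specialized to $X=A$, $Y=M$. Uniqueness of adjoints (Proposition \ref{202509221237}) then guarantees that the comparison is canonical. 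After this identification, the remaining check is the triangle identity, together with the tacit sign conventions of \eqref{202509031502}.
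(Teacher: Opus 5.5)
Your proposal is correct and is essentially the paper's argument, which simply says the lemma follows from the triangle identities of the pair $(M, M^{\lvvee})$. Your steps (1)--(4) spell this out: insert $\lunit_{M}$ for $\id_{M}$ and cancel it against the $\lcounit_{M}$ hidden in the counit of $\tuD(M^{\lvvee})$. The appeal to the reciprocity law \eqref{202508171549} plays no role and can be dropped.

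One citation needs correcting. The factorization of that counit as $\lcounit_{M}$ followed by $\dcounit_{A}$ comes from the isomorphism $\tuD(X\lotimes_{B}Y)\cong \tuD(Y)\lotimes_{B}X^{\rvvee}$ of Lemma \ref{202508201805} with $X=M^{\lvvee}$, $Y=A$, together with $\rcounit_{M^{\lvvee}}=\lcounit_{M}$ under $M^{\lvvee\rvvee}\cong M$. It does not come from the $\epsilon'$ diagram with $X=A$, $Y=M$: that diagram instead describes $\tuD(M)\lotimes_{A}A^{\vvee}\cong M^{\lvvee}$, with counit $\vcounit_{A}$ followed by $\dcounit_{M}$.
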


Roughly speaking we have $T_{M}(f) = \serreagl{\id_{M}, f}$. 
In \cite{CW}, the morphism $T_{M}$ is called trace and denoted by $\Tr$ (but we do not follow this terminology).

We state a reciprocity law for Serre duality deduced from Proposition \ref{202509191815}. 
For this purpose, let $A, B$ be smooth and proper dg-algebras  
$X \in \perf ( A\otimes B^{\op})$ and $M ,N \in \perf B$. 
For simplicity, we denote by $F:= X \lotimes_{B}-: \perf B \to \perf A$. 
Recall that by Lemma \ref{202508201805}, there is the canonical isomorphism $A^{\vvee} \lotimes_{A} X \cong X^{\rvvee\rvvee} \lotimes_{B} B^{\vvee}$. 
We define a morphism 
$\phi_{X}: \RHom_{A}(\sfS^{-1}_{A}F (N), F(M)) \to \RHom_{B}(\sfS_{B}^{-1}N, M)$ 
to be the following composition:
\[
\begin{split}
\RHom_{A}(\sfS^{-1}_{A}F (N), F(M)) &= \RHom_{A}(A^{\vvee} \lotimes_{A} X \lotimes_{B}N ,X \otimes_{B} M) \\
& \cong \RHom_{A}(X^{\rvvee\rvvee}  \lotimes_{B} B^{\vvee}  \lotimes_{B}N ,X \otimes_{B} M)\\
& \cong \RHom_{B}(X^{\rvvee} \lotimes_{A} X^{\rvvee\rvvee}  \lotimes_{B}B^{\vvee}  \lotimes_{B}N ,M)\\
& \xrightarrow{ \ \RHom(\runit_{X^{\rvvee}}\otimes \id, M) \ } \RHom_{B}( B^{\vvee}  \lotimes_{B}N,  M)\\
& =\RHom_{B}(\sfS^{-1}_{B}(N), M)
\end{split}
\]

\begin{proposition}\label{20250920136}
In the above setting, we have the following commutative diagram: 
\[\begin{xymatrix}@C=20pt{
{}_{B}(M,N) \otimes {}_{A}(\sfS^{-1}_{A}F (N), F(M))  
 \ar[r]^-{ \id  \otimes \phi_{X} } \ar[d]_-{F_{\RHom} \otimes \id} &
{}_{B}(M,N)  \otimes {}_{B}(\sfS_{B}^{-1}(N), M)  \ar[d]^{\invserreagl{-,+}} \\
{}_{A}(F(M), F(N) )  \otimes {}_{A}(\sfS^{-1}_{A}F (N), F(M))   
 \ar[r]_-{\invserreagl{-,+}} & \kk 
}\end{xymatrix}
\]
where ${}_{A}(-,+)$ and ${}_{B}(-,+)$ are the abbreviations for $\RHom_{A}(-,+)$ and $\RHom_{B}(-,+)$, respectively. 
%
\end{proposition}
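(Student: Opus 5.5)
The plan is to deduce the diagram from the Hochschild reciprocity law, Proposition \ref{202509191815}, applied to $Y := N \otimes M^{\lvvee} \in \perf B^{\mre}$. This is exactly the way Proposition \ref{202508171742} was obtained from Proposition \ref{202508171520}. The work is to identify each of the four corners and arrows of the diagram of Proposition \ref{202509191815} with the corresponding corners and arrows of the diagram in the statement.

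\textbf{Step 1: the homology corners.} By \eqref{202508171811} we have
\[
\RHom_{B}(M,N) \cong \HHH_{\bullet}(B; N \otimes M^{\lvvee}) = \HHH_{\bullet}(B;Y).
\]
Using \eqref{202508261311}, $(X\lotimes_B M)^{\lvvee} \cong M^{\lvvee}\lotimes_B X^{\lvvee}$, so
\[
{}^{X}Y = X\lotimes_B N \otimes M^{\lvvee}\lotimes_B X^{\lvvee} \cong F(N)\otimes F(M)^{\lvvee},
\]
and therefore $\HHH_{\bullet}(A;{}^{X}Y) \cong \RHom_A(F(M),F(N))$. I then check that $\tilde{\Phi}_{X,Y}$ becomes $F_{\RHom}$ under these identifications. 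The cleanest route is the description \eqref{202509201657}: $\tilde\Phi$ is induced by applying ${}^{X}(-)$ to morphisms and precomposing with $\lcounit_X^{\vvee}$. Alternatively, I verify it directly: the element $\varphi\in M^{\lvvee}\lotimes_B N$ is sent to the element of $M^{\lvvee}\lotimes_B X^{\lvvee}\lotimes_A X\lotimes_B N$ obtained by inserting $\lunit_X$. Under $h$ of Example \ref{202511031429}, this element is precisely $X\lotimes_B\varphi$.

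\textbf{Step 2: the cohomology corners.} Here I use the inverse Serre pairing rather than $\serreagl{-,+}$. By Corollary \ref{202508181504} and Corollary \ref{202508181452}, I identify
\[
\HHH^{\bullet}(B;\tuD(Y)) \cong \RHom_B(N,\tuD(B)\lotimes_B M) \cong \RHom_B(\sfS_B^{-1}N, M),
\]
as in \eqref{202508171812}, and similarly on the $A$-side with $F(M),F(N)$. With these identifications, the bottom and right arrows of the diagram in Proposition \ref{202509191815} become $\invserreagl{-,+}$ by definition.

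\textbf{Step 3: matching $\tilde\Psi_{X,Y}$ with $\phi_X$.} This is the main obstacle. The map $\tilde\Psi$ uses the isomorphism
\[
\tuD(X\lotimes Y\lotimes X^{\lvvee}) \cong X^{\lvvee\lvvee}\lotimes \tuD(Y)\lotimes X^{\rvvee}
\]
together with adjunction and $\lunit_X$. The map $\phi_X$ instead uses $A^{\vvee}\lotimes_A X\cong X^{\rvvee\rvvee}\lotimes_B B^{\vvee}$, the adjunction for $X^{\rvvee}$, and $\runit_{X^{\rvvee}}$. I reconcile the two as follows:
\begin{itemize}
\item rewrite everything through the Hom-form of Hochschild cohomology and Lemma \ref{202508201805};
\item identify $X^{\rvvee\rvvee}$ with $X$ via the canonical isomorphisms, noting $X^{\rvvee\lvvee}\cong X$;
\item use Lemma \ref{202508211758} (in its symmetric version $(\rcounit)^{\lvvee}=\lunit$, $(\runit)^{\lvvee}=\lcounit$) to convert $\runit_{X^{\rvvee}}$ into $\lunit_X$ after dualizing.
\end{itemize}
The coherence of all the canonical isomorphisms involved is guaranteed by the uniqueness of adjoints (Proposition \ref{202509221237}), in the form of the commutative diagram displayed after Lemma \ref{202508201805}. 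So I expect this step to reduce to a string-diagram check that both composites equal the same zig-zag built from $\lunit_X$ and the counits $\vcounit$ and $\dcounit$.

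Once the three identifications are natural and compatible, the commutative square of Proposition \ref{202509191815} transports to the square in the statement, which proves Proposition \ref{20250920136}.
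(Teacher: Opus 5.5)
Your reduction is the one the paper intends: it presents Proposition~\ref{20250920136} simply as a consequence of Proposition~\ref{202509191815}, with no further argument, and taking $Y=N\otimes M^{\lvvee}$ is the natural choice. Steps 1 and 2 are fine. The problem is Step 3, which you rightly identify as the real content, and specifically its second bullet, which is false.

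There is no canonical isomorphism $X^{\rvvee\rvvee}\cong X$ for a general $X\in\perf(A\otimes B^{\op})$. The canonical isomorphisms are $X^{\rvvee\lvvee}\cong X\cong X^{\lvvee\rvvee}$. By Lemma~\ref{202508201805} and Corollary~\ref{202508181504}, $X^{\rvvee\rvvee}\cong A^{\vvee}\lotimes_A X\lotimes_B\tuD(B)$, which is the bimodule of $\sfS_A^{-1}F\sfS_B$, the left adjoint of the left adjoint of $F$. For example, when $B=\kk$ and $X=A$ this is $A^{\vvee}$, which is not $A$ in general. It does agree with $X$ when $X$ is invertible, since then $X^{\rvvee}\cong X^{\lvvee}$ by Lemma~\ref{202509031414}. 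That is exactly the special case the paper treats after the proposition via $\gamma_X$.

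If you identify $X^{\rvvee\rvvee}$ with $X$, then $\runit_{X^{\rvvee}}$ would have to become a map $B\to X^{\rvvee}\lotimes_A X$, and no such map exists in general. So Step 3 as written does not reach the general statement.

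What you actually need is that $\phi_X$ is the Serre conjugate of $\tilde\Psi_{X,Y}$. Write $F^R=X^{\lvvee}\lotimes_A-$ and $F^L=X^{\rvvee}\lotimes_A-$.
\begin{itemize}
\item $\tilde\Psi$ sends $\psi\colon FN\to\sfS_AFM\cong F^{RR}\sfS_BM$ to the composite of $\lunit_X$ (the unit of $F\dashv F^R$), then $F^R\psi$, then $\lcounit_{X^{\lvvee}}$ (the counit of $F^R\dashv F^{RR}$).
\item $\phi_X$ sends $g\colon\sfS_A^{-1}FN\cong F^{LL}\sfS_B^{-1}N\to FM$ to the composite of $\runit_{X^{\rvvee}}$ (the unit of $F^{LL}\dashv F^L$), then $F^Lg$, then $\rcounit_X$ (the counit of $F^L\dashv F$).
\end{itemize}
Use the isomorphisms $X^{\rvvee}\cong B^{\vvee}\lotimes_B X^{\lvvee}\lotimes_A\tuD(A)$, $X\cong A^{\vvee}\lotimes_A X^{\lvvee\lvvee}\lotimes_B\tuD(B)$ and $X^{\rvvee\rvvee}\cong A^{\vvee}\lotimes_A X\lotimes_B\tuD(B)$, all from Lemma~\ref{202508201805}. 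Under these you must check two things:
\begin{itemize}
\item $(\runit_{X^{\rvvee}},\rcounit_{X^{\rvvee}})$ is the conjugate, by $\tuD(A),\tuD(B),A^{\vvee},B^{\vvee}$, of $(\lunit_X,\lcounit_X)$;
\item $(\runit_X,\rcounit_X)$ is likewise the conjugate of $(\lunit_{X^{\lvvee}},\lcounit_{X^{\lvvee}})$.
\end{itemize}
Both follow from Proposition~\ref{202509221237}, because the conjugated units and counits still satisfy the triangle identities by Corollary~\ref{202508181504}.

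Your third bullet can be kept in this corrected form. Two applications of Lemma~\ref{202508211758} give $(\runit_{X^{\rvvee}})^{\lvvee\lvvee}=\lunit_X$. On $\perf B^{\mre}$ the double dual $Z\mapsto Z^{\lvvee\lvvee}$ is naturally $\tuD(B)\lotimes_B Z\lotimes_B B^{\vvee}$, i.e.\ conjugation by the Serre functor, not the identity. With this compatibility, and with the Serre-duality identifications $\RHom(-,\sfS(-))\cong\RHom(\sfS^{-1}(-),-)$ matched on both sides, transporting the square of Proposition~\ref{202509191815} goes through as you describe.
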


Now assume further that $X \in \perf(A\otimes B^{\op})$ is invertible 
i.e., satisfies the condition of Lemma \ref{202509031431}. 
Then functor $F = X \lotimes_{B}-: \perf B \to \perf A$ is an equivalence 
and the morphism $\phi_{X}$ is an isomorphism. 
The inverse norphism $\phi_{X}^{-1}$ is given as follows. 
By Lemma \ref{202509031431}, there is a canonical isomorphism 
$\gamma_{X}:   A^{\vvee} \lotimes_{A} X \xrightarrow{ \ \cong  \ } X \lotimes_{B}B^{\vvee} $. 
We denote the induced natural isomorphism $\gamma_{X} : \sfS^{-1}_{A}F \to F\sfS^{-1}_{B}$ by the same symbol. 
Now the inverse $\phi^{-1}_{X}$ is given by the composition: 
\[
\begin{split}
\RHom_{B}(\sfS_{B}^{-1}(N), M) 
& \xrightarrow{ \ F_{\RHom} \ } \RHom_{A}(F\sfS_{B}^{-1}(N), F(M)) \\
& \xrightarrow{ \ \RHom(\gamma_{X, N} , F(M))\ } \RHom_{A}(\sfS_{A}^{-1}F(N), F(M)). \\
\end{split}
\]

It is well-known that Serre functors are compatible with equivalences of triangulated categories.  
The following corollary shows that if we deal with derived categories of  dg-modules, 
the compatibility is induced from the dg-enhancement

\begin{corollary}\label{202509201321}
In the above situation, the following diagram is commutative: 
\[
\begin{xymatrix}@R=15pt@C=40pt{
\RHom_{B}(M,N)  \otimes \RHom_{B}(\sfS_{B}^{-1}(N), M) \ \ \ \ \ 
\ar[rd]^-{ \invserreagl{-,+} } \ar[d]_{ \ F_{\RHom} \otimes F_{\RHom} \ }& \\ 
\RHom_{A}(F(M), F(N))  \otimes \RHom_{A}(F\sfS_{B}^{-1}(N), F(M)) \ \ \ \ \
 \ar[d]_{ \ \id \otimes \RHom(\gamma_{X, N} , F(M))  \ }  
& \kk \\
\RHom_{A}(F(M), F(N) )  \otimes \RHom_{A}(\sfS^{-1}_{A}F (N), F(M))   \ \ \ \ \ 
\ar[ru]_-{\invserreagl{-,+}} & 
 }\end{xymatrix}
 \]
 \end{corollary}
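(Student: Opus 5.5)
The plan is to deduce Corollary \ref{202509201321} directly from Proposition \ref{20250920136} together with the explicit description of $\phi_X^{-1}$ given just before the corollary. Fix $f \in \RHom_{B}(M,N)$ and $g \in \RHom_{B}(\sfS_{B}^{-1}(N), M)$. Going down the left column and then along the lower diagonal arrow gives
\[
\invserreagl{F_{\RHom}(f),\ \RHom(\gamma_{X,N},F(M))\bigl(F_{\RHom}(g)\bigr)}.
\]
The composite $\RHom(\gamma_{X,N},F(M))\circ F_{\RHom}$ appearing in the second factor is, by the description above, exactly $\phi_X^{-1}(g)$. So the lower route equals $\invserreagl{F_{\RHom}(f), \phi_X^{-1}(g)}$.

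Next I apply the commutative square of Proposition \ref{20250920136} with the element $h := \phi_X^{-1}(g) \in \RHom_{A}(\sfS_A^{-1}F(N),F(M))$. That square gives
\[
\invserreagl{F_{\RHom}(f), h} = \invserreagl{f, \phi_X(h)} = \invserreagl{f, g},
\]
which is the upper diagonal arrow. At the level of complexes, I would express this as the equality of morphisms $\invserreagl{-,+}\circ(\id\otimes\phi_X)\circ(\id\otimes \phi_X^{-1}) = \invserreagl{-,+}\circ(F_{\RHom}\otimes \id)\circ(\id\otimes\phi_X^{-1})$. I would then observe that $(F_{\RHom}\otimes\id)(\id\otimes\phi_X^{-1}) = (\id\otimes\phi_X^{-1})\circ(F_{\RHom}\otimes\id)$ is precisely the left column of the corollary's diagram. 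This avoids any element-level sloppiness.

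The only genuine content, and the main obstacle, is justifying that the displayed composite really is $\phi_X^{-1}$, i.e.\ that $\phi_X\circ \RHom(\gamma_{X,N},F(M))\circ F_{\RHom} = \id$. To check this, I would write $\phi_X$ as a composite of adjunction isomorphisms for the pair $(X^{\rvvee}, X)$ and precomposition with $\runit_{X^{\rvvee}}$. By Lemma \ref{202509031414}, invertibility lets me identify $X^{\rvvee\rvvee}$, $X$ and $X^{\lvvee}$ compatibly with units and counits. I would also check that $\gamma_X$ is the isomorphism $A^{\vvee}\lotimes_A X \cong X^{\rvvee\rvvee}\lotimes_B B^{\vvee} \cong X \lotimes_B B^{\vvee}$ of Lemma \ref{202508201805} used in defining $\phi_X$, and that the two constructions agree by the uniqueness of adjoints (Proposition \ref{202509221237}).

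With these identifications, the composite reduces to
\[
g \mapsto \rcounit\circ (X^{\rvvee}\lotimes F(g))\circ(\runit_{X^{\rvvee}}\lotimes \id),
\]
and the triangle identity for $(X^{\rvvee},X)$ collapses this to $g$. Since $\phi_X$ is an isomorphism, a one-sided inverse suffices. Everything else is formal.
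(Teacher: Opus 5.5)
Your proposal is correct and is essentially the paper's argument: the paper deduces the corollary by substituting the stated formula $\phi_X^{-1}=\RHom(\gamma_{X,N},F(M))\circ F_{\RHom}$ into the commutative square of Proposition~\ref{20250920136}, exactly as you do. The paper asserts that formula without proof, so your verification of it is extra. One correction: its last step is not literally a triangle identity for $(X^{\rvvee},X)$. It is the equality $\rcounit_{X}\circ(X^{\rvvee}\lotimes j)\circ\runit_{X^{\rvvee}}=\id_{B}$, which holds because the identification $j\colon X^{\rvvee\rvvee}\cong X$ is the one given by uniqueness of adjoints (Proposition~\ref{202509221237}), namely the one that sends $\runit_{X^{\rvvee}}$ to $\rcounit_{X}^{-1}$.
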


Roughly speaking, this corollary shows that $\invserreagl{f, g} = \invserreagl{F(f), F(g) \gamma_{X, N} }$ 
for $f \in \RHom_{B}(M, N) $ and $g \in \RHom_{B}(\sfS_{B}(N), M)$.

\subsubsection{Mukai pairing}
In this section, we define our Mukai pairing by specializing the isomorphism of Proposition \ref{202508171520}, and we discuss its basic properties.
\begin{proposition}[{Mukai pairing}]\label{202508171603}
Let $A$ be a smooth and proper dg-algebra. 
Then we have an isomorphism 
\[
\tuD\HHH_{\bullet}(A) \cong \HHH_{\bullet}(A). 
\]
\end{proposition}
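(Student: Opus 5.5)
We will obtain the isomorphism as a specialization of Proposition \ref{202508171520}, exactly as Serre duality was obtained in Proposition \ref{202508171742}. Take $X = A^{\vvee}$ in Proposition \ref{202508171520}; this gives
\[
\tuD\HHH_{\bullet}(A; A^{\vvee}) \cong \HHH^{\bullet}(A; \tuD(A^{\vvee})).
\]
It therefore suffices to identify both sides with $\tuD\HHH_{\bullet}(A)$ and $\HHH_{\bullet}(A)$ respectively, up to swapping the roles, by canonical isomorphisms.

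For the right-hand side we use the duality of the adjoint pair $(\tuD(X),X)$ together with Corollary \ref{202508181504}. Since $\tuD(A)$ is inverse to $A^{\vvee}$ and $\tuD$ is an involution on $\perf A^{\mre}$, applying Lemma \ref{202508201805} to $A^{\vvee}\lotimes_{A}\tuD(A)\cong A$ identifies $\tuD(A^{\vvee})$ with $\tuD(A)\lotimes_{A}A\lotimes_A \tuD(A)$. Here we use $\tuD(X\lotimes_B Y)\cong Y^{\lvvee}\lotimes_B\tuD(X)$ and $A^{\lvvee}\cong A$, after regarding $A^{\vvee}$ appropriately as a bimodule. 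In short, $\tuD(A^{\vvee})\cong \tuD(A)\lotimes_A\tuD(A)$. Then, using $\RHom_{A^{\mre}}(A,Z)\cong A^{\vvee}\lotimes_{A^{\mre}}Z$ (valid since $A$ is smooth), we obtain
\[
\HHH^{\bullet}(A;\tuD(A)\lotimes_A\tuD(A)) \cong (A^{\vvee}\lotimes_A\tuD(A)\lotimes_A\tuD(A))_{\natural}.
\]
Applying the cyclic isomorphism \eqref{202509031502} and Corollary \ref{202508181504} collapses this to $\tuD(A)_{\natural}$.

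Alternatively, and more cleanly, we can avoid computing $\tuD(A^{\vvee})$ by taking $X = A$ directly. Proposition \ref{202508171520} gives $\tuD\HHH_{\bullet}(A)\cong\HHH^{\bullet}(A;\tuD(A))$, and
\[
\HHH^{\bullet}(A;\tuD(A)) = \RHom_{A^{\mre}}(A,\tuD(A)) \cong A^{\vvee}\lotimes_{A^{\mre}}\tuD(A)\cong (A^{\vvee}\lotimes_A\tuD(A))_{\natural}\cong A_{\natural}=\HHH_{\bullet}(A).
\]
The second isomorphism is the smoothness identification $\RHom_{A^{\mre}}(A,-)\cong A^{\vvee}\lotimes_{A^{\mre}}-$. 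The third is \eqref{202509151945}-type rewriting of $\lotimes_{A^{\mre}}$ as a Hochschild homology of a tensor product over $A$. The last uses the isomorphism $f$ of Corollary \ref{202508181504}. We will adopt this second route. The resulting non-degenerate pairing $\HHH_{\bullet}(A)\otimes\HHH_{\bullet}(A)\to\kk$ is then the composite of $\agl{-,+}_{A}$ with the above identification, and we will record its string diagram: glue $A^{\vvee}$ against $\tuD(A)$ via $f$, then apply $\vcounit_A$ and $\dcounit_A$ as in \eqref{202509010636}.

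The existence of the isomorphism is thus formal. The main care needed is bookkeeping: checking that the identification $A^{\vvee}\lotimes_{A^{\mre}}\tuD(A)\cong (A^{\vvee}\lotimes_A\tuD(A))_{\natural}$ uses the correct bimodule structures under the conventions of Section \ref{202509141802}, namely $(-)^{\op}$ and the identification $(A^{\mre})^{\op}=(A^{\op})^{\mre}$. It also requires fixing the signs coming from \eqref{202509031502}, since these will matter later when comparing with the Shklyarov and Căldăraru--Willerton pairings. I expect this step to be the main obstacle. I would handle it by working with string diagrams and invoking the uniqueness of adjoints (Proposition \ref{202509221237}) to guarantee that the isomorphisms are canonical.
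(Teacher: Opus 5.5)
Your adopted second route is correct and is essentially the paper's proof: both apply Proposition~\ref{202508171520} with $X=A$ and then identify $\HHH^{\bullet}(A;\tuD(A))\cong\HHH_{\bullet}(A)$ using Corollary~\ref{202508181504} and smoothness. The only difference is that the paper invokes smoothness at the end, via $\RHom_{A^{\mre}}(A,\tuD(A))\cong\RHom_{A^{\mre}}(A^{\vvee}\lotimes_{A}\tuD(A),\tuD(A))\cong\RHom_{A^{\mre}}(A^{\vvee},A)\cong\HHH_{\bullet}(A)$ (the form it uses later for ${}^{v}\!\theta$ and the bulk--boundary map), whereas you convert $\RHom_{A^{\mre}}(A,-)$ into $A^{\vvee}\lotimes_{A^{\mre}}-$ at the start.

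You were right to drop the $X=A^{\vvee}$ route: since $\HHH_{\bullet}(A;A^{\vvee})\cong\HHH^{\bullet}(A)$ is Hochschild cohomology, that route only yields $\tuD\HHH^{\bullet}(A)\cong\HHH_{\bullet}(A;\tuD(A))$, not the statement.
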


\begin{proof}
By Corollary \ref{202508181504}, 
we have $A \cong A^{\vvee}\lotimes_{A} \tuD(A)$ in $\tuD(A^{\mre})$. 
Thus we have an isomorphism $\HHH^{\bullet}(A; \tuD(A)) \cong \HHH_{\bullet}(A)$
as follows:
\begin{equation}\label{202508171817}
\HHH^{\bullet}(A; \tuD(A)) \cong  \RHom_{A^{\mre}}(A^{\vvee}\lotimes_{A} \tuD(A), \tuD(A))
 \cong \RHom_{A^{\mre}}(A^{\vvee}, A )\cong \HHH_{\bullet}(A). 
\end{equation}
Applying Proposition \ref{202508171520} to the case $X =A$, we complete the proof.
\end{proof}

\begin{definition}[{Mukai pairing}]\label{202602050936}
We define the \emph{Mukai pairing} $\mukaiagl{-,+}$ to be the non-degenerate pairing 
associated with the isomorphism of Proposition \ref{202508171603}: 
\[
\mukaiagl{-,+}: \HHH_{\bullet}(A) \otimes \HHH_{\bullet}(A) \to \kk,
\]
\end{definition}

We provide another description of Mukai pairing, 
which follows immediately from  Lemma \ref{202509010738} combined with the description \eqref{202509010636} of the pairing. 

\begin{lemma}\label{202509010639}
Let $A$ be a smooth and proper dg-algebra. 
Then the Mukai pairing $\mukaiagl{-,+}$ coincides with the following composition: 
\[
\begin{split}
\HHH_{\bullet}(A) 
\otimes \HHH_{\bullet}(A) 
\ \ \ \ \cong \ \ \ \  
&\RHom_{A^{\mre}}(A^{\vvee}, A) \otimes  \RHom_{A^{\mre}}(A^{\vvee}, A) \\
\xrightarrow{ \  \id \otimes (\tuD(A)\lotimes -)_{\RHom} \ } 
&\RHom_{A^{\mre}}(A^{\vvee}, A) \otimes\RHom_{A^{\mre}}(A, \tuD(A)) \\
\xrightarrow{ \ \ \comp \ \  } &\RHom_{A^{\mre}}(A^{\vvee}, \tuD(A)) \cong \tuD(A)\lotimes_{A^{\mre}} A \\
\xrightarrow{ \ \dcounit_{A} \ } & \ \ \kk
\end{split}
\]
where the morphism $\comp$ is the morphism 
which composes morphisms $f: A^{\vvee} \to A$ and $g: A \to \tuD(A)$. 
\end{lemma}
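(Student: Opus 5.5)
We have to prove Lemma \ref{202509010639}: the Mukai pairing equals the composite of identifying both factors with $\RHom_{A^{\mre}}(A^{\vvee},A)$, pushing the second factor through $(\tuD(A)\lotimes_A -)_{\RHom}$, composing, and applying $\dcounit_A$. The plan is to start from the string-diagram description \eqref{202509010636} of $\agl{-,+}_X$ with $X=A$. Then identify the second argument with $\HHH_{\bullet}(A)$ via \eqref{202508171817}, and finally use Lemma \ref{202509010738} to move the evaluation $\vcounit_A$ so that the diagram becomes the stated composition.

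\textbf{Step 1: the first factor.} By \eqref{202509010636}, for $\xi\in A\lotimes_{A^{\mre}}A$ and $\theta\in\RHom_{A^{\mre}}(A,\tuD(A))\cong \tuD(A)\lotimes_{A^{\mre}}A^{\vvee}$, the pairing $\agl{\xi,\theta}_A$ is computed as follows. Tensor $\xi\otimes\theta$, apply $\vcounit_A$ to the pair $(A,A^{\vvee})$ coming from the homology factor $A\lotimes_{A^{\mre}}A$, and then apply $\dcounit_A$. The homology class $\xi$ corresponds to a morphism $f_\xi\colon A^{\vvee}\to A$ through $A\lotimes_{A^{\mre}}A\cong A\lotimes_{A^{\mre}} A^{\vvee\vvee}\cong\RHom_{A^{\mre}}(A^{\vvee},A)$. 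This isomorphism is itself induced by $\vcounit_A$, because $\RHom(A^{\vvee},-)\cong A^{\vvee\vvee}\lotimes-$ is the right adjoint coming from the adjoint pair $(A^{\vvee},A)$. The evaluation step in \eqref{202509010636} therefore turns $\xi\otimes\theta$ into $\theta\circ f_\xi$, viewed as an element of $\RHom_{A^{\mre}}(A^{\vvee},\tuD(A))\cong\tuD(A)\lotimes_{A^{\mre}}A$. Here I need that, of the two copies of $A$ in $A\lotimes_{A^{\mre}}A$, the evaluation may be taken against either one. This is exactly the content of Lemma \ref{202509010738}: ${}_l\vcounit_A={}_r\vcounit_A$. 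Applying $\dcounit_A$ afterwards produces the last arrow of the claimed composition.

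\textbf{Step 2: the second factor.} We must check that the identification $\HHH_{\bullet}(A)\cong\HHH^{\bullet}(A;\tuD(A))$ of \eqref{202508171817} sends $g\colon A^{\vvee}\to A$ to $\tuD(A)\lotimes_A g\colon A\cong\tuD(A)\lotimes_AA^{\vvee}\to\tuD(A)\lotimes_AA=\tuD(A)$. Here $A\cong\tuD(A)\lotimes_AA^{\vvee}$ is the isomorphism $g$ of Corollary \ref{202508181504}. The isomorphism \eqref{202508171817} is the composite of $\RHom(A^{\vvee}\lotimes_A\tuD(A),\tuD(A))\cong\RHom(A^{\vvee},A)$, which is the adjunction for the invertible bimodule $\tuD(A)$, with precomposition by $f^{-1}$. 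Tracing $g$ through these maps gives $(\tuD(A)\lotimes g)$ composed with canonical isomorphisms. The only thing to confirm is that the left and right inverse isomorphisms match, and this uses the triangle identities displayed in Corollary \ref{202508181504}; one should also take care to apply $\tuD(A)$ on the correct side.

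\textbf{Main obstacle.} The hardest point is Step 1: matching the string-diagram evaluation with the composition $\comp$, including the placement of the $A^{\vvee\vvee}\cong A$ identifications. For this I would rely on the uniqueness of adjoints (Proposition \ref{202509221237}) rather than on elementwise signs. All maps involved are units and counits of the pairs $(A^{\vvee},A)$ and $(\tuD(A),A)$, so the two composites agree once Lemma \ref{202509010738} lets us slide $\vcounit_A$ to the correct strand.
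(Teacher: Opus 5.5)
Your Step~1 is the paper's proof. The lemma is read off from the string-diagram description \eqref{202509010636} of $\agl{-,+}_A$, and Lemma~\ref{202509010738} lets $A^{\vvee}$ be evaluated on either copy of $A$ in $A\lotimes_{A^{\mre}}A$. The evaluation then becomes composition followed by $\dcounit_A$. That part is fine.

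The gap is in Step~2. Write $\phi\in\RHom_{A^{\mre}}(A^{\vvee},A)$ for your $g$, and let $f\colon A^{\vvee}\lotimes_A\tuD(A)\to A$ and $g\colon\tuD(A)\lotimes_A A^{\vvee}\to A$ be the isomorphisms of Corollary~\ref{202508181504}.

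\begin{itemize}
\item Your reading of \eqref{202508171817} is: adjunction for $-\lotimes_A\tuD(A)$, then precomposition with $f^{-1}$. This sends $\phi$ to $R(\phi):=(\phi\lotimes\tuD(A))\circ f^{-1}$.
\item The lemma instead needs $L(\phi):=(\tuD(A)\lotimes\phi)\circ g^{-1}$.
\item The triangle identities do not identify $R$ and $L$. Using the interchange law and $(\tuD(A)\lotimes f)(g^{-1}\lotimes\tuD(A))=\id$, they give
\[
R(\phi)=L(c(\phi)),\qquad c(\phi):=f\circ(A^{\vvee}\lotimes\phi\lotimes\tuD(A))\circ(A^{\vvee}\lotimes f^{-1}).
\]
So $c(\phi)$ is $\phi$ conjugated by the invertible bimodule $A^{\vvee}$.
\item Up to central units coming from the identifications, $c$ is the action of $A^{\vvee}\in\DPic_{\kk}(A)$ on $\tuHH_0(A)$. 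By Corollary~\ref{202509171827} this action sends $\ch_M$ to $\ch_{\sfS^{-1}M}$.
\item Hence $c$ is not the identity in general. For example, for $A=\kk Q$ with $Q$ of type $A_2$, $\sfS^{-1}$ does not act on $K_0(A)$ by a scalar.
\end{itemize}

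So with your right-sided reading, Step~1 would prove the variant of the lemma with $(-\lotimes\tuD(A))_{\RHom}$ in place of $(\tuD(A)\lotimes-)_{\RHom}$. By nondegeneracy, that is a genuinely different pairing.

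The side must be fixed by convention; the printed middle term of \eqref{202508171817} does suggest your reading. The paper fixes it in the proof of Lemma~\ref{202508171844}, where the relevant part of \eqref{202508171817} is $(A^{\vvee}\lotimes-)_{\RHom}$, i.e.\ $\theta\mapsto f\circ(A^{\vvee}\lotimes\theta)$. Its inverse is exactly $L$, and this is where a triangle identity is genuinely used:
\[
f\circ(A^{\vvee}\lotimes L(\phi))=\phi\circ(f\lotimes A^{\vvee})\circ(A^{\vvee}\lotimes g^{-1})=\phi,
\]
because $\id_{A^{\vvee}}=(A^{\vvee}\lotimes g)(f^{-1}\lotimes A^{\vvee})$. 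With this reading Step~2 is immediate, and your argument becomes the paper's.
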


The following corollary seems to be well-known for specialists. 

\begin{corollary}\label{202508180111}
Let $A$ be a smooth and proper dg-algebra concentrated in non-positive degree.  
Then the Hochschild homology $\tuHH_{*}(A)$ is  concentrated in the zero-th degree.  
\end{corollary}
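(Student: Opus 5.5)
The idea is to bound the Hochschild homology from two sides: non-positivity of $A$ bounds it from above, and the Mukai duality of Proposition \ref{202508171603} turns that into a bound from below.

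First, the upper bound. Since $A$ is concentrated in non-positive degrees, so are $A^{\op}$ and $A^{\mre}=A\otimes A^{\op}$. The Hochschild homology $\HHH_{\bullet}(A)=A\lotimes_{A^{\mre}}A$ can be computed by the bar resolution, i.e.\ by the (total complex of the) Hochschild chain complex $\bigoplus_{n\ge 0} A\otimes (A^{\otimes n})$. Here I use the cohomological grading, with the bar direction contributing degree $-n$. Every term of this complex sits in non-positive total degree, since each tensor factor does and the bar degree only lowers it. Hence $\tuHH_{i}(A):=\tuH^{-i}(\HHH_{\bullet}(A))$, or equivalently $\tuH^{j}(\HHH_{\bullet}(A))$, vanishes for $j>0$. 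To avoid convergence issues for the direct-sum totalization one may replace $A$ by a quasi-isomorphic dg-algebra that is non-positive and strictly $\kk$-flat. Over a field this is automatic: the bar complex is a direct sum of complexes bounded above in each bar degree, and the direct-sum total complex stays in non-positive degrees.

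Second, the lower bound. By Proposition \ref{202508171603} there is an isomorphism $\tuD\HHH_{\bullet}(A)\cong \HHH_{\bullet}(A)$ of complexes in $\sfD(\kk)$. Since $A$ is smooth and proper, $A\in\perf A^{\mre}$, so $\HHH_{\bullet}(A)$ is perfect over $\kk$ and finite dimensional in total. Taking cohomology gives $\tuH^{j}(\HHH_{\bullet}(A))\cong \tuH^{j}(\tuD\HHH_{\bullet}(A))\cong \tuD\tuH^{-j}(\HHH_{\bullet}(A))$. The upper bound says the right-hand side vanishes for $-j>0$, i.e.\ for $j<0$. Combined with the vanishing for $j>0$ already established, the cohomology of $\HHH_{\bullet}(A)$ is concentrated in degree $0$, which is the claim.

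The only delicate point is the first step. One has to make sure the derived tensor product over $A^{\mre}$ really is computed by a complex living in non-positive degrees. I would argue this with the standard (reduced or unreduced) Hochschild complex and the observation that the bar resolution of $A$ as an $A^{\mre}$-module is semi-free with generators in non-positive degrees. The duality step is formal and needs no sign bookkeeping, because only dimensions of cohomology groups matter.
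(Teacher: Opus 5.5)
Your proposal is correct and follows essentially the same route as the paper: non-positivity of $A$ kills $\tuH^{j}(A\lotimes_{A^{\mre}}A)$ for $j>0$, and the Mukai self-duality $\tuD\HHH_{\bullet}(A)\cong\HHH_{\bullet}(A)$ of Proposition \ref{202508171603} transfers this to vanishing for $j<0$. The only differences are that you justify the first vanishing explicitly via the semi-free bar resolution, which the paper takes for granted, and that your finite-dimensionality remark is not needed, since $\tuH^{j}(\tuD V)\cong\tuD\tuH^{-j}(V)$ holds for any complex $V$ over a field.
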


\begin{proof}
Since $A$ is concentrated in non-positive degree,  we have  $\tuHH_{i}(A) =\tuH^{-i}(A\lotimes_{A^\mre}A) = 0$ for $i < 0$. 
Since $\tuHH_{i}(A) \cong \tuD(\tuHH_{-i}(A))$ by Proposition \ref{202508171603}, 
we have $\tuHH_{i}(A) = 0$ for $i >0$.  
\end{proof}

\begin{remark}
\begin{enumerate}[(1)]
\item 
In the case $A=\kk Q/I$ is a finite dimensional algebra given by a quiver $Q$ with an admissible ideal $I$. 
Then $A$ is elementary and hence it is smooth if and only if it is of finite global dimension. 
 In this case, Corollary \ref{202508180111} was first proved by Cibilis \cite{Cibilis} under the additional assumption that $Q$ is directed. 
For the general case, see \cite[1.7]{Happel: Hochschild}, \cite[Proposition 2.5]{Keller:invariance}.

\item The same result is not necessarily true for a finite dimensional algebra $A$ of finite global dimension. 
Let $p$ be a prime number, $\kk:= \FF_{p}(t)$ a rational function field in one variable  over a  prime field of characteristic $p$ and $A = \kk(\sqrt[p]{t})$. 
Then we have $\tuHH_{i}(A) \cong A $ for $i \geq 0$.  
\end{enumerate}
\end{remark}

\subsection{Boundary-bulk map, bulk-boundary map, Chern character and the adjointness}\label{202602012158}

From now on we assume that our dg-algebra $A$ is smooth and proper.

\subsubsection{Boundary-bulk map}

Let $M \in \perf A$. 
The \emph{boundary-bulk} map $\tau_{M}$ is defined as the composition 
\[
\tau_{M}: \RHom_{A}(M,M) \cong \HHH_{\bullet}(A; M\otimes M^{\lvvee}) \xrightarrow{ (\lcounit_{M})_{*} } \HHH_{\bullet}(A)
\]
where the first isomorphism is \eqref{202508171811} for the case $N=M$.

\begin{example}\label{202508261336}
We deal with the case $A = \kk$. 
Identify $\perf \kk^{\mre}$ with $\perf \kk$ via the canonical isomorphism $\kk^{\mre}= \kk \otimes \kk \cong \kk$, 
then the functor $\HHH_{\bullet}(\kk; -)$ coincides with the identity functor of $\perf \kk$. 
For  $V \in \perf \kk$, 
the boundary-bulk map $\tau_{V}$ coincides with $\lcounit_{V}: \RHom_{\kk}(V, V) \to \kk$. 
It is straightforward to check that the $0$-th cohomology morphism $\tuH^{0}(\lcounit_{V}) : \Hom_{\kk}(V,V) \to \kk$ 
coincides with  the trace map $\Tr: \Hom_{\kk}(V,V) \to \kk$ introduced in Section \ref{202509171618}. 
\end{example}

We leave the proof of following lemma to the readers. 

\begin{lemma}\label{202508261812}
Let $M \in \perf A$. 
We have the following commutative diagram:  
\[
\begin{xymatrix}@R=20pt{
\RHom_{A}(M,M) \ar[r]^-{\tau_{M}} \ar[d]_{(-)^{\lvvee}_{\RHom} }& \HHH_{\bullet}(A) \ar[d]^{\cong} \\ 
\RHom_{A^{\op}}(M^{\lvvee},M^{\lvvee}) \ar[r]_-{\tau_{M^{\lvvee} } }  & \HHH_{\bullet}(A^{\op})
}\end{xymatrix}
\]
\end{lemma}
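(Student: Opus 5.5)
Both paths from $\RHom_{A}(M,M)$ to $\HHH_{\bullet}(A^{\op})$ should be identified with one canonical composite. The tool is the op-functor $(-)^{\op}$ of Section \ref{202509141802}, viewing $M \in \perf(A\otimes \kk^{\op})$. This functor sends $M$ to $M^{\op}\in \perf(\kk^{\op}\otimes A^{\op\op})$, which is $M$ regarded as a right $A^{\op}$-module. By the compatibilities $(X^{\op})^{\rvvee}\cong (X^{\lvvee})^{\op}$ and $(X^{\op})^{\lvvee}\cong (X^{\rvvee})^{\op}$, the dual $M^{\lvvee}$ is identified with the $A^{\op}$-side dual of $M^{\op}$. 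The units and counits match under these identifications, as in Lemma \ref{202509151654}. Recall also that the right vertical isomorphism $\HHH_{\bullet}(A)\cong\HHH_{\bullet}(A^{\op})$ is induced by $(-)^{\op}_{A-A}$ sending the diagonal bimodule $A$ to $A^{\op}$, together with $A\lotimes_{A^{\mre}}A\cong A^{\op}\lotimes_{(A^{\op})^{\mre}}A^{\op}$.

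First, rewrite both $\tau_{M}$ and $\tau_{M^{\lvvee}}$ in the form \eqref{202508171811}, namely $\RHom_{A}(M,M)\cong M^{\lvvee}\lotimes_{A}M\cong (M\otimes M^{\lvvee})_{\natural}$ followed by $(\lcounit_{M})_{*}$. For the left $A^{\op}$-module $M^{\lvvee}$, the corresponding dual is $(M^{\lvvee})^{\lvvee}\cong M$, and we have $\RHom_{A^{\op}}(M^{\lvvee},M^{\lvvee})\cong M\lotimes_{A}M^{\lvvee}\cong (M^{\lvvee}\otimes M)_{\natural}$ over $A^{\op}$. Its counit $\lcounit_{M^{\lvvee}}$ is a map $M^{\lvvee}\otimes M\to A^{\op}$ in $\perf (A^{\op})^{\mre}$.

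Second, verify that the functor $(-)^{\lvvee}_{\RHom}$ corresponds, under these identifications, to the swap isomorphism \eqref{202509031502} $M^{\lvvee}\lotimes_{A}M\cong M\lotimes_{A}M^{\lvvee}$ (with the Koszul sign). This amounts to checking on the explicit unit of Example \ref{202511031429}: $f\mapsto f^{\lvvee}$ is precomposition, and precomposition corresponds to swapping tensor factors. The identification $M^{\lvvee\lvvee}\cong M$ must be taken to be the canonical one from Lemma \ref{202508211758}.

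Third, and this is the main obstacle, show that $\lcounit_{M^{\lvvee}}$ equals $(\lcounit_{M})^{\op}$ under $(M\otimes M^{\lvvee})^{\op}\cong M^{\lvvee}\otimes M$ in $\perf(A^{\op})^{\mre}$. My first attempt is to invoke the uniqueness of adjoints (Proposition \ref{202509221237}). Applying $(-)^{\op}$ to the triangle identities of $(M,M^{\lvvee},\lunit_{M},\lcounit_{M})$ yields an adjunction in which $M^{\op}$ is adjoint to $(M^{\lvvee})^{\op}$ with counit $(\lcounit_{M})^{\op}$. This adjunction must agree with the canonical one for $M^{\lvvee}$ and its dual, via the isomorphism $M^{\lvvee\lvvee}\cong M$ fixed in the second step. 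If the sign bookkeeping becomes unclear, I will instead compute directly on a semi-free model with basis $\{e_i\}$: evaluate both composites on $\sum_i(-1)^{|e_i|}e_i^{\lvvee}\otimes m$-type elements and compare signs. Once $(\lcounit_{M})_{*}$ and $(\lcounit_{M^{\lvvee}})_{*}$ are intertwined by the op-isomorphism on Hochschild homology, which is natural by functoriality of $(-)^{\op}$, the square commutes.
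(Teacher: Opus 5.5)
The paper gives no proof of this lemma (it is left to the reader), so there is no written argument to compare with. Your proof is correct. It is also the natural route in the paper's framework: transport $(M, M^{\lvvee}, \lunit_M, \lcounit_M)$ along $(-)^{\op}$, using the compatibility of $(X^{\op})^{\rvvee}\cong (X^{\lvvee})^{\op}$ with units and counits noted after Lemma \ref{202508211758}.

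Two comments on where the work actually lies.

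First, your step 3 is formal rather than the main obstacle. By the last assertion of Proposition \ref{202509221237}, an isomorphism $M^{\op}\cong ((M^{\lvvee})^{\op})^{\lvvee}$ is the canonical one as soon as it intertwines the counits. For the evaluation isomorphism you use in step 2 this is a one-line check: $\lcounit_{M^{\lvvee}}(\xi\otimes \mathrm{ev}_x)=\xi(x)$, which is exactly $\lcounit_M$ applied to the Koszul swap of $\xi\otimes x$. So there is no tension between the identifications in steps 2 and 3, and the uniqueness argument is not even needed. After undoing $(-)^{\op}$, this is just the canonical $M\cong M^{\lvvee\rvvee}$ matching $\lcounit_M$ with $\rcounit_{M^{\lvvee}}$.

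Second, the genuine sign bookkeeping is in step 2, together with the compatibility of \eqref{202509151945} over $A$ and over $A^{\op}$ with $(-)^{\op}$. On a finitely generated semi-free model all the maps involved are strict: $h$, $f\mapsto f^{\lvvee}$, and $A\otimes_{A^{\mre}}(M\otimes M^{\lvvee})\cong M^{\lvvee}\otimes_A M$. So this is a finite computation with no higher Hochschild terms. Only $(\lcounit)_*$ requires resolving $A$, and that is handled by the naturality in $X$ of $\HHH_{\bullet}(A;X)\cong \HHH_{\bullet}(A^{\op};X^{\op})$.

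Two minor slips: $M\lotimes_{A}M^{\lvvee}$ in your first two steps should read $M\lotimes_{A^{\op}}M^{\lvvee}$. Also, the swap you need is the identification $(X\lotimes_B Y)^{\op}\cong Y^{\op}\lotimes_{B^{\op}}X^{\op}$, not the cyclic isomorphism \eqref{202509031502}.
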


The following proposition  shows that the boundary-bulk map satisfies is additive  with respect 
to a certain class of endomorphism of an exact triangle. 

\begin{proposition}\label{202508301431}
Let $L\xrightarrow{ \alpha } M \xrightarrow{ \beta } N  \xrightarrow{ h} L[1]$ be an exact triangle in $\perf A$ 
and $f: L \to L, \ g: M \to M$ morphisms in $\perf A$ such that 
$\alpha f = g \alpha$. 
Then there exists a morphism $h: N \to N$ in $\perf A$ such that 
$\tau_{L}(f) - \tau_{M}(g) + \tau_{N}(h) =0$ 
and that completes the following commutative diagram: 
\[
\begin{xymatrix}@R=15pt{
L\ar[r]^{\alpha} \ar[d]_{f}  & M \ar[r]^{\beta} \ar[d]_{g}  & N \ar[r]^{\gamma} \ar[d]^{h} & L[1] \ar[d]^{f[1]} \\ 
L\ar[r]_{\alpha} & M \ar[r]_{\beta} & N \ar[r]_{\gamma} & L[1] 
}\end{xymatrix}
\]
\end{proposition}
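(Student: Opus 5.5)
We will work at the dg level: replace the triangle by a cone construction and realize all maps on explicit cofibrant models.

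First, we arrange the models. We may assume that $L$ and $M$ are cofibrant perfect dg-$A$-modules and that $\alpha$ is a closed degree-$0$ morphism. We may further assume $N = \cone(\alpha) = M \oplus \Sigma L$, with differential twisted by $\alpha$, and that $\beta,\gamma$ are the canonical inclusion and projection; up to isomorphism of triangles this changes nothing. The relation $\alpha f = g\alpha$ holds only up to homotopy, so we choose a homotopy $s$ with $g\alpha - \alpha f = d s + s d$. We then define $h$ on $M\oplus\Sigma L$ by the matrix $\begin{pmatrix} g & s \\ 0 & f[1]\end{pmatrix}$, with the usual sign conventions. This $h$ is a closed degree-$0$ map, and by construction it makes the diagram commute up to homotopy.

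Second, we show that the boundary-bulk map is additive on such block triangular maps. Under the identification $\RHom_{A}(N,N)\cong \HHH_{\bullet}(A;N\otimes N^{\lvvee})$, we use the triangle $\Sigma L^{\lvvee}\to N^{\lvvee}\to M^{\lvvee}$. The map $\tau_N$ is $(\lcounit_N)_*$, and $\lcounit_N$ is the evaluation pairing. On the graded level $N\otimes N^{\lvvee}$ decomposes into four blocks. The evaluation, and hence $\tau_N$, vanishes on the two off-diagonal blocks $M\otimes(\Sigma L)^{\lvvee}$ and $\Sigma L\otimes M^{\lvvee}$. On the diagonal blocks it restricts to $\lcounit_M$ and $\lcounit_{\Sigma L}$. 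These restrictions are compatible with the differentials because the off-diagonal parts are killed after composing with evaluation and passing to Hochschild homology. This will be checked by the cyclic invariance \eqref{202509031502}: a term such as $\alpha\circ\xi$ has the same image under $\tau$ as $\xi\circ\alpha$.

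Hence $\tau_N(h)=\tau_M(g)+\tau_{\Sigma L}(f[1])$. It remains to prove $\tau_{\Sigma L}(f[1]) = -\tau_L(f)$. This follows from the sign of $\delta$ and the unit computation of Example \ref{202511031429}: the dual basis element of $\downarrow e$ acquires the sign $(-1)^{|e|-1}$. It matches the Koszul sign seen in Lemma \ref{202511031307}. Rearranging gives $\tau_L(f)-\tau_M(g)+\tau_N(h)=0$.

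The main obstacle is the additivity step. One must show rigorously that the chain-level formula for $\tau_N$ on a twisted direct sum equals the sum of the diagonal contributions, even though $h$ has an off-diagonal homotopy term $s$ and the complex $N\otimes N^{\lvvee}$ is not a direct sum of complexes. The approach I would try first is a model for $\HHH_{\bullet}(A;-)$: take $A$ cofibrant and use the bar resolution, so that $\tau_N$ becomes the usual supertrace-type map $\mathrm{End}_A(N)\to A\otimes_{A^{\mre}}\mathrm{Bar}(A)$ on a semifree $N$. On a semifree module the trace of a strictly upper-triangular block is visibly zero. The diagonal blocks give $\tau_M(g)$ and $\tau_{\Sigma L}(f[1])$ by naturality of the trace under the inclusion of graded summands. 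The sign bookkeeping for $\Sigma$ is routine but must be done carefully.
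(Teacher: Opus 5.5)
Your proposal matches the paper's proof in its main step; the two differ in how the details are closed.

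\textbf{Shared core.} Like the paper, you realize $N$ as $\cone(\alpha)$ with underlying graded module $M\oplus L[1]$. You take $h=\begin{pmatrix} g & H'\\ 0 & f[1]\end{pmatrix}$, built from a homotopy. You then obtain $\tau_N(h)=\tau_M(g)+\tau_{L[1]}(f[1])$ from the fact that the chain-level evaluation $\tilde\epsilon_N(l\otimes\phi)=\pm\phi(l)$ sees only the graded module structure and kills the off-diagonal blocks.

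\textbf{Justifying the split.} You are right that $N\otimes N^{\lvvee}$ is not a direct sum of complexes; the paper passes over this quickly. However, a bar-resolution twisted-trace model is more than you need. Your cyclic-invariance remark ($\alpha\xi$ versus $\xi\alpha$) only explains why the \emph{total} evaluation is a chain map, not why the diagonal pieces are chain maps separately. What makes the splitting legitimate is upper-triangularity. The $(2,1)$-entries of both $d_N$ and $h$ vanish. Hence the three blocks other than $L[1]\otimes M^{\lvvee}$ form a subcomplex containing $h$. On it, the projections onto $M\otimes M^{\lvvee}$ and $L[1]\otimes (L[1])^{\lvvee}$ are chain maps. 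This survives tensoring with a bimodule resolution of $A$, because that subcomplex is finitely filtered by semifree pieces.

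\textbf{The shift sign.} This is where you genuinely diverge. You derive $\tau_{\Sigma L}(f[1])=-\tau_L(f)$ by a direct Koszul/dual-basis computation. To make this rigorous for arbitrary $f$ on a module with twisted differential, you must produce a chain isomorphism of bimodules $\Sigma L\otimes(\Sigma L)^{\lvvee}\cong L\otimes L^{\lvvee}$. It has to match the classes of $f[1]$ and $f$, and the evaluations $\tilde\epsilon_{\Sigma L}$ and $\tilde\epsilon_L$, up to one global sign; you then show that sign is $-1$. This is doable, but it is exactly the bookkeeping the paper sidesteps.

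The paper instead applies the additivity claim to $\alpha=\id_L$, $g=f$, with zero homotopy. Since $\cone(\id_L)\simeq 0$, the complex $\RHom_A(N,N)$ is acyclic, so $\tau_N(h)=0$. The claim itself then yields $\tau_{L[1]}(f[1])=-\tau_L(f)$. Your route makes the sign visible at chain level; the paper's gets it for free from the additivity already established.
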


\begin{proof}
We denote cofibrant dg-$A$-modules that represent $L ,M$, by the same symbol $L, M$. 
We also denote  cochain maps of dg-$A$-modules that represent $\alpha, \beta, f, g$, by the same symbol. 
Then $g\alpha$ and $\alpha f$ are homotopic to each other. 
Let $H: L \to M$ be a morphism of  graded-$A$-modules of degree $-1$ such that 
$g\alpha - \alpha f= d_{M}H +Hd_{L}$. 
 We recall an explicit construction of a cone dg-$A$-module $\cone \alpha$ of $\alpha$. 
 The underlying graded modules of $\cone\alpha$ is $M \oplus L[1]$ 
 and the differential $d_{\cone \alpha}$ is given by a matrix 
$
 d_{\cone \alpha} := 
 \begin{pmatrix} d_{M} & \alpha' \\ 0 & d_{L[1]} \end{pmatrix}
$ 
 where $\alpha': L[1] \to M$ is the morphism of dg-$A$-modules of degree $1$ induced from $\alpha$. 
 Then,
 the cone $\cone \alpha$ is a cofibrant  dg-$A$-module that represents $N$, and so it is denoted by $N$. 
 We define a morphism $h: N \to N$ of dg-$A$-modules to be 
 $h:= \begin{pmatrix} g & H'    \\ 0 & f[1]\end{pmatrix}$ where $H' : L[1] \to M$ be 
 the morphism of graded $A$-modules of degree $0$ induced from $H$. 
 Then, it is well-known that $h$ completes the desired diagram in $\perf A$. 

We claim that $\tau_{N}(h)= \tau_{L[1]}(f[1])+\tau_{M}(g)$. 
Indeed, 
for a projectively cofibrant  dg-$A$-module $K$, 
the counit morphism $\lcounit_{K}: K \otimes K^{\lvvee} \to A$ is 
represented by the morphism $\tilde{\epsilon}_{K}: K \otimes \cpxHom_{A}(K,A) \to A, \ l \otimes \phi \mapsto (-1)^{|l||\phi|}\phi(l)$. 
Observe that this morphism $\tilde{\epsilon}_{K}$ is only relevant to the graded $A$-module structure of $K$ and is irrelevant to the differential $d_{K}$ of $K$. 
Since as a graded $A$-modules $N =\cone \alpha$ coincides with the direct sum $M \oplus L[1]$, 
the morphism $\tilde{\epsilon}_{N}$ is decomposed as a canonical projection 
$N \otimes \cpxHom_{A}(N, A) \to (M \otimes \cpxHom_{A}(M,A)) \oplus (L[1] \otimes\cpxHom_{A}(L[1], A))$ 
followed by $( \tilde{\epsilon}_{M} , \tilde{\epsilon}_{L[1]})$. 
Hence the claim follows.

We apply the consideration until now to the case $M=L, \alpha =\id_{L},  g=f$. 
Then $N= \cone \id_{L} = 0$ in $\perf A$. Hence $\tau_{N} (h) = 0$. 
It follows from the clam that $\tau_{L[1]}(f[1]) = - \tau_{L}(f)$. 
Combining this equation and the claim, we obtain the desired equality. 
\end{proof}

\begin{remark}
\begin{enumerate}[(1)]
\item 
In the above proof, we make  use of  dg-$A$-module structures. 
We do not know that the above proposition can be verified by  formal $2$-categorical methods.
 
 \item 
 The above proposition shows that additivity of the boundary-bulk map holds for  an endomorphism  of  exact triangle 
 that arising from dg-category of dg-modules.  
We do not know that additivity holds for arbitrary endomorphisms of exact triangles.  
(cf. \cite[Remark 1.10]{May})
 \end{enumerate}
\end{remark}

\subsubsection{Chern character}

Following \cite{CW,PV, Shklyarov} we introduce the Chern character of an object $M \in\perf A$. 

\begin{definition}\label{202508301618}
For $M \in \perf A$, 
we define an element $\ch_{M}$ of $\tuHH_{0}(A)$ which is called the \emph{Chern character of $M$}  to be 
\[
\ch_{M} :=\tau_{M}(\id_{M}).  
\]
\end{definition}

For $M \in \perf A$, 
we denote by $[M]$ the corresponding  element of the Grothendiec group $K_{0}(A)$ of $A$.

\begin{corollary}[{\cite[Proposition 13]{CW}}]\label{202508301624}
Let $L\xrightarrow{ \alpha } M \xrightarrow{ \beta } N  \xrightarrow{ h} L[1]$ be an exact triangle in $\perf A$. 
Then we have 
$\ch_{L} -\ch_{M} +\ch_{N} = 0$. 

Consequently, the Chern character induces a group homomorphism 
\[
\ch: K_{0}(A) \to \tuHH_{0}(A), \ [M] \mapsto \ch_{M}.
\]
\end{corollary}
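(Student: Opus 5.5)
The plan is to specialize Proposition \ref{202508301431} to the case $f=\id_{L}$, $g=\id_{M}$, and then show that the morphism $h$ it produces can be replaced by $\id_{N}$ without changing $\tau_{N}(h)$.

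The hypothesis $\alpha f = g\alpha$ holds trivially, so Proposition \ref{202508301431} gives $h\colon N\to N$ completing the morphism of triangles with
\[
\tau_{L}(\id_{L})-\tau_{M}(\id_{M})+\tau_{N}(h)=0 .
\]
It remains to arrange $\tau_{N}(h)=\tau_{N}(\id_{N})=\ch_{N}$.

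To do this, go back to the construction in the proof. Since $g\alpha-\alpha f=\alpha-\alpha=0$ on the chosen cofibrant representatives, we may take the homotopy $H=0$. Then
\[
h=\begin{pmatrix} \id_{M} & 0\\ 0 & \id_{L[1]}\end{pmatrix}
\]
is literally the identity of $\cone\alpha=N$. Hence $\tau_{N}(h)=\ch_{N}$, which gives $\ch_{L}-\ch_{M}+\ch_{N}=0$.

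One point needs checking: the triangle in the statement is an arbitrary exact triangle, whereas the proof of Proposition \ref{202508301431} models $N$ as the cone. Every exact triangle is isomorphic to the standard cone triangle on $\alpha$. The Chern character is invariant under isomorphism: for $\phi\colon N\xrightarrow{\cong}N'$, naturality of the counit gives $\tau_{N'}(\phi\psi\phi^{-1})=\tau_{N}(\psi)$, in particular for $\psi=\id_{N}$. So we may assume $N=\cone\alpha$. This trace-type invariance under conjugation is the only mildly nontrivial step. It follows from the identification $\RHom_{A}(N,N)\cong \HHH_{\bullet}(A;N\otimes N^{\lvvee})$ together with the cyclic isomorphism \eqref{202509031502}, which lets $\phi$ and $\phi^{-1}$ cancel.

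For the consequence, $K_{0}(A)$ is the free abelian group on isoclasses of objects of $\perf A$ modulo the relations $[L]-[M]+[N]$ for exact triangles. The map $M\mapsto\ch_{M}$ is well defined on isoclasses by the invariance above and kills these relations by the first part, so it descends to a group homomorphism $\ch\colon K_{0}(A)\to\tuHH_{0}(A)$. Finally, $\ch_{M}$ lies in $\tuHH_{0}(A)$ because $\id_{M}$ is a degree-$0$ cocycle and $\tau_{M}$ is a chain map of degree $0$.
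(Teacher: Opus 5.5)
Your proposal is correct and follows the paper's own proof: the paper also specializes Proposition \ref{202508301431} to $f=\id_{L}$, $g=\id_{M}$, takes the homotopy $H=0$, and concludes that the completing morphism is $h=\id_{N}$ on the cone model. Your extra steps make explicit what the paper leaves implicit inside that proposition's proof: replacing an arbitrary triangle by the standard cone triangle via the isomorphism-invariance of $\tau$, and the descent of $\ch$ to $K_{0}(A)$.
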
 

\begin{proof}
Consider the proof of Proposition \ref{202508301431} 
in the case $f= \id_{L}, g=\id_{M}$. 
Then,  
we may take the homotopy $H$ to be zero 
and hence $h=\id_{N}$. 
\end{proof}

\subsubsection{Bulk-boundary map}

The evaluation map $\epsilon_{M}$  induces a morphism 
\[
\begin{split}
\sigma_{M}: \HHH_{\bullet}(A) \cong \HHH^{\bullet}(A; \tuD(A)) 
\xrightarrow{(\lcounit_{M})^{*}} &\HHH^{\bullet}(A; \tuD(M \otimes M^{\lvvee})) \\ 
\cong &\RHom_{A}(M, \tuD(A) \lotimes_{A}M) \\
\cong &\RHom_{A}(A^{\vvee} \lotimes_{A} M, M)
\end{split} 
\] 
in $\sfD(A^{\mre})$, 
where the first isomorphism is \eqref{202508171817} and the third is 
\eqref{202508171812} for the case $N=M$.
The morphism $\sigma_{M}$ is called \emph{bulk-boundary map} (cf. \cite[Section 3.2.3]{Carqueville}). 

The following lemma identifies the bulk-boundary map.

\begin{lemma}\label{202508171844}
Under the canonical isomorphism $\HHH_{\bullet}(A) \cong \RHom_{A^{\mre}}(A^\vvee, A)$, 
the the bulk-boundary map $\sigma_{M}$ corresponds to the morphism 
$\RHom_{A^{\mre}}(A^{\vvee}, A) \to \RHom_{A}(A^{\vvee}\lotimes_{A} M, M)$ 
induced from the functor $-\lotimes_{A} M : \perf A^{\mre} \to \perf A$. 
\[
\begin{xymatrix}@C=40pt{
\HHH_{\bullet}(A) \ar[r]^-{\sigma_{M}} \ar[d]_{\cong} & \RHom_{A}(A^{\vvee} \lotimes_{A} M, M) \ar[d]^{\cong}\\
 \RHom_{A^{\mre}}(A^\vvee, A) \ar[r]_-{(-\lotimes_{A} M)_{\RHom}}  & \RHom_{A}(A^{\vvee}\lotimes_{A} M , A\lotimes_{A}M) \\
 }\end{xymatrix}
 \]
\end{lemma}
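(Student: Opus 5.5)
The plan is to unwind $\sigma_{M}$ step by step through the canonical isomorphisms that define it. Each of these comes from an adjunction (unit/counit) or from the identification $\tuD(A)\lotimes_{A}A^{\vvee}\cong A$. I will then show that the composite agrees with applying $-\lotimes_{A}M$ to a morphism $A^{\vvee}\to A$.

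First I rewrite the source using \eqref{202508171817}. A class $\xi\in\HHH_{\bullet}(A)\cong\RHom_{A^{\mre}}(A^{\vvee},A)$ corresponds to a morphism $f\colon A^{\vvee}\to A$. The associated element of $\HHH^{\bullet}(A;\tuD(A))=\RHom_{A^{\mre}}(A,\tuD(A))$ is
\[
\tuD(A)\lotimes f\colon A\cong \tuD(A)\lotimes_{A}A^{\vvee}\to \tuD(A)\lotimes_{A}A\cong\tuD(A),
\]
using $g$ of Corollary \ref{202508181504}. This is the same identification used in Lemma \ref{202509010639}.

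Next I compute $(\lcounit_{M})^{*}$. By Proposition \ref{202508171520} and the reciprocity square \eqref{202508171549}, $(\lcounit_{M})^{*}$ is postcomposition with $\tuD(\lcounit_{M})\colon\tuD(A)\to\tuD(M\otimes M^{\lvvee})$. Lemma \ref{202508201805} gives $\tuD(M\otimes M^{\lvvee})\cong M^{\lvvee\lvvee}\otimes\tuD(M)$, and Corollary \ref{202508181452} gives $\tuD(M)\cong M^{\lvvee}\lotimes_{A}\tuD(A)$. Under these, $\tuD(\lcounit_{M})$ becomes the mate of the counit, namely
\[
\tuD(A)\to \tuD(A)\lotimes_{A}M\otimes M^{\lvvee}
\]
induced by $\lunit_{M}$. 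The isomorphism \eqref{202508171812} then transports a morphism $A\to\tuD(A)\lotimes_{A}M\otimes M^{\lvvee}$ of bimodules into a morphism $M\to\tuD(A)\lotimes_{A}M$. This transport is the adjunction $(M,M^{\lvvee})$: tensor with $M$ on the right and apply $\lcounit$. By the triangle identity, $(\tuD(A)\lotimes f)\lotimes_{A}M$ comes out, so the image of $\xi$ in $\RHom_{A}(M,\tuD(A)\lotimes_{A}M)$ is $(\tuD(A)\lotimes f)\lotimes_{A}M$, precomposed with $M\cong\tuD(A)\lotimes A^{\vvee}\lotimes M$.

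Finally, the last isomorphism $\RHom_{A}(M,\sfS M)\cong\RHom_{A}(\sfS^{-1}M,M)$ is the adjunction coming from the mutually inverse bimodules $A^{\vvee}$ and $\tuD(A)$, with unit $f^{-1}$ and counit $g$ from Corollary \ref{202508181504}. Applying it to $(\tuD(A)\lotimes f)\lotimes M$ and using the zig-zag identities of Corollary \ref{202508181504} cancels the $\tuD(A)$ factor. What remains is $f\lotimes_{A}M\colon A^{\vvee}\lotimes_{A}M\to A\lotimes_{A}M$, which is the claim.

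The main obstacle is the middle step. There I must check that the isomorphisms of Lemma \ref{202508201805} and Corollary \ref{202508181452} carry $\tuD(\lcounit_{M})$ exactly to the map built from $\lunit_{M}$, with no sign or twist. My plan is to argue this by uniqueness of adjoints, as in Lemma \ref{202508211758}: dualizing a counit yields the corresponding unit. I would verify the compatibility with string diagrams in the style of \eqref{202509010636} rather than with elements.
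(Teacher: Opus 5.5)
Your overall route is the paper's. You identify the source via $(\tuD(A)\lotimes-)_{\RHom}$. You then show that \eqref{202508171812} composed with $(\lcounit_M)^{*}$ is simply $(-\lotimes_A M)_{\RHom}$ on $\RHom_{A^{\mre}}(A,\tuD(A))$, which is the paper's left square. Finally you cancel $\tuD(A)$ against $A^{\vvee}$, which is the paper's right square, phrased there as $(A^{\vvee}\lotimes-)_{\RHom}$ commuting with $(-\lotimes_A M)_{\RHom}$. The key input for the middle step is, as you anticipate, Lemma~\ref{202508211758}.

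However, the middle step as written uses the wrong dual, and with it the step does not go through.
\begin{itemize}
\item The $\kk$-dual $\tuD(M\otimes M^{\lvvee})$ is not $\tuD(A)\lotimes_A M\otimes M^{\lvvee}$. It is $\tuD(A)\lotimes_A M\otimes \tuD(M)$, where $\tuD(M)=M^{\rvvee}$ for $M$ regarded as an object of $\perf(A\otimes\kk^{\op})$. By Corollary~\ref{202508181452} this equals $\tuD(A)\lotimes_A M\otimes M^{\lvvee}\lotimes_A\tuD(A)$, so you dropped a factor $\lotimes_A\tuD(A)$.
\item Accordingly, the relevant unit is $\runit_M\colon A\to M\otimes\tuD(M)$. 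It is not $\lunit_M\colon \kk\to M^{\lvvee}\lotimes_A M$, which lands in the wrong object and cannot induce a bimodule map $\tuD(A)\to\tuD(A)\lotimes_A M\otimes(\cdot)$.
\item The adjunction underlying \eqref{202508171812} is $(-\lotimes_A M)\dashv(-\otimes\tuD(M))$, with counit $\rcounit_M\colon \tuD(M)\lotimes_A M\to\kk$. It is not the pair $(M,M^{\lvvee})$.
\end{itemize}
With $M^{\lvvee}$ in the last slot there is no counit $M^{\lvvee}\lotimes_A M\to\kk$ to apply. Moreover, $\HHH^{\bullet}(A;\tuD(A)\lotimes_A M\otimes M^{\lvvee})\cong \RHom_A(M,A^{\vvee}\lotimes_A\tuD(A)\lotimes_A M)\cong\RHom_A(M,M)$, not $\RHom_A(M,\tuD(A)\lotimes_A M)$. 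So the triangle-identity cancellation cannot be carried out as you describe it.

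The repair is exactly the paper's argument. Use the isomorphism $\tuD(X)\cong \tuD(A)\lotimes_A X^{\rvvee}$ of Lemma~\ref{202508201805}, which is natural in $X$, for $X=A$ and $X=M\otimes M^{\lvvee}$. Then $\tuD(\lcounit_M)$ becomes $\tuD(A)\lotimes(\lcounit_M)^{\rvvee}$, and Lemma~\ref{202508211758} identifies $(\lcounit_M)^{\rvvee}$ with $\runit_M$. Note that this lemma produces $\runit_M$, not $\lunit_M$, which confirms the correction. The triangle identity for $(\runit_M,\rcounit_M)$ then yields $\phi\mapsto\phi\lotimes_A M$, as you intended. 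Your first and last steps are fine as stated.
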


\begin{proof}
Consider the following diagram  
whose upper horizontal line is a part of \eqref{202508171817}
\[
\begin{xymatrix}@C=18pt{
\HHH^{\bullet}(A; \tuD(A)) \ar@{=}[r]  \ar[d]_{ (\epsilon_{M})_{*} } & 
\RHom_{A^{\mre}}(A, \tuD(A)) \ar[d]_{(-\lotimes M)_{\RHom} } \ar[r]^{(A^{\vvee} \lotimes - )_{\RHom} }& 
\RHom_{A^{\mre}}(A^{\vvee}, A) \ar[d]^{(-\lotimes M)_{\RHom} }\\
\HHH^{\bullet}(A; \tuD(M \otimes M^{\lvvee}))  \ar[r]_{\cong \eqref{202508171812} } &
 \RHom_{A}(M, \tuD(A) \lotimes_{A} M) \ar[r]_-{ \ \ (A^{\vvee} \lotimes -)_{\RHom} } &
 \RHom_{A}(A^{\vvee} \lotimes_{A} M, M)
}\end{xymatrix}
\]
It  is enough to show that this diagram is commutative.

The commutativity of the right square is clear. 
The commutativity of the left square deduced from the following commutative diagram 
\[
\begin{xymatrix}{
\tuD(A) \ar[d]_{\tuD(\lcounit_{M})} \ar[r]^{\cong} & 
\tuD(A) \lotimes_{A} A^{\rvvee} \ar[d]_{\tuD(A) \lotimes (\lcounit_{M})^{\rvvee}} \ar[r]^{\cong} & 
\tuD(A) \lotimes_{A} A \ar[d]^{ \tuD(A) \lotimes \runit_{M} } \\
\tuD(M \otimes M^{\lvvee}) \ar[r]_-{\cong} & 
\tuD(A) \lotimes_{A} (M \otimes M^{\lvvee})^{\rvvee} \ar[r]_-{\cong} &
\tuD(A) \lotimes_{A}M \otimes \tuD(M)
}\end{xymatrix}
\]
in which we regard $M$ as an object of $\perf(A \otimes \kk^{\op})$. 
\end{proof}

\subsubsection{The adjointness  of the boundary-bulk  and the bulk–boundary maps}
As an immediate consequence of the reciprocity law for the canonical pairing \eqref{202508171549}, we obtain the following adjointness property of the boundary–bulk and bulk–boundary maps.
On the level of cohomology, this was proved by direct computation in \cite[Proposition 11]{CW}.

\begin{theorem}[{cf. \cite[Proposition 11]{CW}}]\label{202508171830}
We have the following commutative diagram
\begin{equation}\begin{xymatrix}@C=60pt{
\RHom_{A}(M,M) \otimes \HHH_{\bullet}(A) \ar[r]^-{ \tau_{M}  \otimes \id } \ar[d]_-{\id \otimes \sigma_{M}} &
\HHH_{\bullet}(A) \otimes \HHH_{\bullet}(A) \ar[d]^{\mukaiagl{-,+}} \\
\RHom_{A}(M, M) \otimes \RHom_{A}(A^{\vvee} \lotimes_{A}M,M) \ar[r]_-{\invserreagl{-,+}} & \kk 
}\end{xymatrix}
\end{equation}
\end{theorem}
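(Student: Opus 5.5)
The plan is to realize both pairings in the diagram as specializations of the canonical pairing $\agl{-,+}_{X}$ of Proposition \ref{202508171520}, and then apply the reciprocity law \eqref{202508171549} to the morphism $f = \lcounit_{M}: M\otimes M^{\lvvee} \to A$ in $\sfD(A^{\mre})$.

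First, take $X = M \otimes M^{\lvvee}$, so that $\HHH_{\bullet}(A;X) \cong \RHom_{A}(M,M)$ by \eqref{202508171811} and $\HHH^{\bullet}(A;\tuD(X)) \cong \RHom_{A}(M,\tuD(A)\lotimes_{A}M)$ by \eqref{202508171812} (case $N=M$). By the very proof of Proposition \ref{202508171742}, under these identifications $\agl{-,+}_{M\otimes M^{\lvvee}}$ is the Serre pairing $\serreagl{-,+}$. Composing with $\RHom_{A}(M,\tuD(A)\lotimes_{A}M)\cong \RHom_{A}(A^{\vvee}\lotimes_{A}M, M)$ (from Corollary \ref{202508181504}) turns it into $\invserreagl{-,+}$, by definition of the latter. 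Second, take $X = A$. Then $\agl{-,+}_{A}$ composed with the isomorphism $\HHH^{\bullet}(A;\tuD(A))\cong\HHH_{\bullet}(A)$ of \eqref{202508171817} is, by Definition \ref{202602050936}, the Mukai pairing $\mukaiagl{-,+}$.

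Now apply \eqref{202508171549} with $f=\lcounit_{M}$. By definition, $f_{*} = (\lcounit_{M})_{*}$ composed with \eqref{202508171811} is $\tau_{M}$. Likewise $f^{*} = (\lcounit_{M})^{*}$, precomposed with \eqref{202508171817} and followed by \eqref{202508171812} and the Corollary \ref{202508181504} identification, is exactly $\sigma_{M}$. Reciprocity then gives $\agl{\xi, f^{*}\theta}_{M\otimes M^{\lvvee}} = \agl{f_{*}\xi,\theta}_{A}$. Transporting along the identifications above yields $\invserreagl{\varphi,\sigma_{M}(\theta)} = \mukaiagl{\tau_{M}(\varphi),\theta}$, which is the commutativity of the square.

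The only genuine point to check, and the main (mild) obstacle, is bookkeeping. The isomorphisms used to identify the two specializations with the Serre and Mukai pairings must be literally the same ones used in the definitions of $\tau_{M}$ and $\sigma_{M}$. No extra sign or automorphism should enter, in particular in passing from $\serreagl{-,+}$ to $\invserreagl{-,+}$ via $A^{\vvee}\lotimes_{A}\tuD(A)\cong A$. Since $\sigma_{M}$ was defined precisely through \eqref{202508171817}, \eqref{202508171812} and that same identification, I expect this to be a direct matching of definitions. If there were doubt, I would fall back on Lemma \ref{202508171844} and Lemma \ref{202509010639}, expressing both sides via composition in $\RHom_{A^{\mre}}(A^{\vvee},-)$ followed by $\dcounit_{A}$.
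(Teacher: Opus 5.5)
Your proposal is correct and is the paper's argument: the theorem is stated there as an immediate consequence of the reciprocity law \eqref{202508171549} applied to $f=\lcounit_{M}\colon M\otimes M^{\lvvee}\to A$. In that argument $\agl{-,+}_{M\otimes M^{\lvvee}}$ and $\agl{-,+}_{A}$ are identified with the (inverse) Serre pairing and the Mukai pairing through the same isomorphisms \eqref{202508171811}, \eqref{202508171812}, \eqref{202508171817} and $A^{\vvee}\lotimes_{A}\tuD(A)\cong A$ that define $\tau_{M}$ and $\sigma_{M}$. So the bookkeeping you flag is a literal matching of definitions, and the fallback via Lemma \ref{202508171844} and Lemma \ref{202509010639} is not needed.
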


Roughly speaking, this theorem shows that 
\[
\invserreagl{ f, \sigma_{M}(\phi) } = \mukaiagl{ \tau_{M}(f), \phi}
\]
for $f \in \RHom_{A}(M,M)$ and $\phi \in \HHH_{\bullet}(A)$. 

\subsection{Other Mukai pairings  }

We recall the definitions of the other pairings in the previous literature 
and show that  they coincide to each other. 

\subsubsection{The Shklyarov's Mukai pairing}

Let $A,B, C$ be a smooth and proper dg-algebras.

For $X \in \perf (A\otimes B^{\op})$,  
following 
\cite{CW, Petit, PV, Shklyarov}, we define a morphism $\Phi_{X} : \HHH_{\bullet}(B) \to \HHH_{\bullet}(A)$ as 
the composition 
\[
\Phi_{X} : \HHH_{\bullet}(B) = B_{\natural} \xrightarrow{ (\lunit_{X})_{\natural} } (X^{\lvvee} \lotimes_{A} X)_{\natural} \cong 
(X \lotimes_{B} X^{\lvvee})_{\natural} \xrightarrow{ (\lcounit_{X})_{\natural} }  A_{\natural} = \HHH_{\bullet}(A). 
\]
The string diagram of $\Phi_{X}$ is as follows: 

\vspace{4pt}

\begin{tikzpicture}
\node (s) at (0,0) {$B$}; 

\draw [very thick, rounded corners=10pt] (s) -- (-0.6,0.3) -- (0,0.6) --(0.6,0.3)  --  (s);

\draw[->] (1, 0) -- node [auto] {$(\lunit_{X})_{\natural}$}(2,0);

\begin{scope}[shift={(3,0)}]

\node (s) at (0,0) {$X^{\lvvee}$}; 
\node (t) at (1.2,0) {$X$};

\draw[very thick] (s) -- (t);
\draw [very thick, rounded corners=10pt] (s) -- (-0.7,0.3) -- (0,0.6) --(1.2,0.6) --(1.9,0.3) --  (t);
\end{scope}

\node at (5,0){$\cong$};

\begin{scope}[shift={(6,0)}]

\node (s) at (0,0) {$X$}; 
\node (t) at (1.2,0) {$X^{\lvvee}$};

\draw[very thick] (s) -- (t);
\draw [very thick, rounded corners=10pt] (s) -- (-0.7,0.3) -- (0,0.6) --(1.2,0.6) --(1.9,0.3) --  (t);

\draw[->] (1.8, 0) -- node [auto] {$(\lcounit_{X})_{\natural}$}(2.8,0);
\end{scope}

\begin{scope}[shift={(9.5,0)}]

\node (s) at (0,0) {$A$}; 

\draw [very thick, rounded corners=10pt] (s) -- (-0.6,0.3) -- (0,0.6) --(0.6,0.3)  --  (s);
\end{scope}

\end{tikzpicture}

The diagonal dg-$A^{\op}$-bimodule  $A^{\op} \in \perf( (A^{\mre})^{\op}) \cong \perf(\kk^{\op} \otimes (A^{\mre})^{\op})$ induces  a pairing 
\[
\begin{split}
\bmukaiagl{-,+}: \HHH_{\bullet}(A) \otimes \HHH_{\bullet}(A) \cong 
\HHH_{\bullet}(A) \otimes \HHH_{\bullet}(A^{\op}) \cong
\HHH_{\bullet}(A^{\mre}) \\  \xrightarrow{ \ \Phi_{A^{\op}} \ } \HHH_{\bullet}(\kk^{\op}) \cong \HHH_{\bullet}(\kk) \cong \kk, 
\end{split}\]
which is the pairing introduced by Shklyarov \cite{Shklyarov}. 
%
%

The following Lemma is easily follows from the isomorphism \eqref{202508261311}. 
So its proof is left to the readers.

\begin{lemma}\label{202508261357}
\begin{enumerate}[(1)]
\item 
Let $M \in \perf B$ and $X\in \perf (A \otimes B^{\op})$. 
Then we have the following commutative diagram 
\[
\begin{xymatrix}@R=20pt{ 
\RHom_{B}(M,M) \ar[r] \ar[d]_{\tau_{M}} & \RHom_{A}(X \lotimes_{B}M, X\lotimes_{B}M) \ar[d]^{\tau_{X\lotimes M}} \\
\HHH_{\bullet}(B) \ar[r]_{\Phi_{X}} & \HHH_{\bullet} (A) 
}\end{xymatrix}
\]
where the top horizontal arrow is induced from the functor $X\lotimes_{B}-$. 

In particular, we have $\Phi_{X}(\ch_{M}) =\ch_{X\lotimes_{B} M}$.

\item 
Let $X\in \perf(A\otimes B^{\op})$ and  $Y \in \perf (B \otimes C^{\op})$. 
Then we have $\Phi_{X \lotimes_{B} Y} = \Phi_{X} \Phi_{Y}$. 
\end{enumerate}
\end{lemma}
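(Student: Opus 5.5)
Both parts should follow from canonical isomorphisms and the triangle identities, with no cocycle-level computation.

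For part (1), I would unwind $\tau_{X\lotimes_B M}$ using \eqref{202508261311}, which gives $(X\lotimes_B M)^{\lvvee}\cong M^{\lvvee}\lotimes_B X^{\lvvee}$. Under this identification, $\RHom_A(X\lotimes_B M,X\lotimes_B M)\cong \HHH_\bullet(A; X\lotimes_B M\otimes M^{\lvvee}\lotimes_B X^{\lvvee})$. The counit $\lcounit_{X\lotimes_B M}$ becomes the composite of $\lcounit_M$, applied in the middle, followed by $\lcounit_X$; this is the standard description of the counit of a composite adjunction, and Proposition \ref{202509221237} guarantees that it matches the canonical isomorphism. Hence $\tau_{X\lotimes M}\circ (X\lotimes_B-)_{\RHom}$ is the following composite:
\begin{itemize}
\item first apply $(X\lotimes_B-)_{\RHom}$, which in Hochschild-homology terms is $\tilde{\Phi}_{X,M\otimes M^{\lvvee}}$, i.e. insertion of $\lunit_X$ via the description \eqref{202509201657};
\item then $\lcounit_M$;
\item then $\lcounit_X$.
\end{itemize}
The only real check is that $(X\lotimes_B -)_{\RHom}$, transported through the isomorphism \eqref{202508171811}, equals $\tilde{\Phi}_{X,M\otimes M^{\lvvee}}$. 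I would verify this from Example \ref{202511031429}: the induced map $\RHom_B(M,M)\cong M^{\lvvee}\lotimes_B M\to M^{\lvvee}\lotimes_B X^{\lvvee}\lotimes_A X\lotimes_B M$ is obtained by inserting $\lunit_X(1)$. Once this holds, $\lcounit_M$ acts on factors disjoint from where $\lunit_X$ was inserted, so the two maps commute by functoriality of $(-)_\natural$. The result is $(\lcounit_X)_\natural(\lunit_X)_\natural(\lcounit_M)_\natural=\Phi_X\tau_M$. Evaluating at $\id_M$ gives $\Phi_X(\ch_M)=\ch_{X\lotimes_B M}$.

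For part (2), write $\Phi_{X\lotimes Y}$ using $\lunit_{X\lotimes Y}$ and $\lcounit_{X\lotimes Y}$. By uniqueness of adjoints, these are the composites
\[
C\xrightarrow{\ \lunit_Y\ }Y^{\lvvee}Y\xrightarrow{\ \lunit_X\ }Y^{\lvvee}X^{\lvvee}XY,
\qquad
XYY^{\lvvee}X^{\lvvee}\xrightarrow{\ \lcounit_Y\ }XX^{\lvvee}\xrightarrow{\ \lcounit_X\ }A.
\]
Then
\[
\Phi_{X\lotimes Y}=(\lcounit_X\lcounit_Y)_\natural\circ(\text{cyclic iso})\circ(\lunit_X\lunit_Y)_\natural .
\]
Using naturality of the cyclic isomorphism \eqref{202509031502}, I would slide $\lcounit_Y$ past $\lunit_X$, since they act on disjoint tensor factors. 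This rearranges the composite into $(\lcounit_X)_\natural(\lunit_X)_\natural(\lcounit_Y)_\natural(\lunit_Y)_\natural=\Phi_X\Phi_Y$. In string-diagram language, the loop diagram for $X\lotimes Y$ is deformed into two successive loops.

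The main obstacle is sign and coherence bookkeeping. One must check that the cyclic isomorphisms \eqref{202509031502} used in the two composites agree, i.e. that the rotation $(Y^{\lvvee}X^{\lvvee}XY)_\natural\cong(XYY^{\lvvee}X^{\lvvee})_\natural$ factors through the intermediate rotations. I would handle this by noting that \eqref{202509031502} is the symmetric-monoidal braiding, so all such rotations compose coherently by the coherence theorem for symmetric monoidal categories. This is the same tacit equality of canonical isomorphisms discussed after Lemma \ref{202508201805}.
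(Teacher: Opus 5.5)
Your proposal is correct and follows essentially the route the paper intends. The paper leaves the proof to the reader as an easy consequence of \eqref{202508261311}: identify $(X\lotimes_{B} Y)^{\lvvee}\cong Y^{\lvvee}\lotimes_{B} X^{\lvvee}$ compatibly with the composite unit and counit (Proposition \ref{202509221237}), then commute the unit and counit insertions, which act on disjoint factors, using naturality and coherence of the cyclic isomorphisms \eqref{202509031502}. That is exactly what you do.

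One citation is loose. \eqref{202509201657} describes $\tilde{\Phi}$ through $\RHom_{B^{\mre}}(B^{\vvee},-)$, not through \eqref{202508171811}. Your separate check via Example \ref{202511031429} covers this, as does the formal fact that $(X\lotimes_{B}-)_{\RHom}$ corresponds under adjunction to postcomposition with the unit.
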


We note that in the notation of Section \ref{202509191836}, we have $\Phi_{X}=(\lcounit_{X})_{\natural}\tilde{\Phi}_{X, A}$. 
We may regard $\Phi_{X}$ as a morphism $\RHom_{B^{\mre}}(B^{\vvee}, B) \to \RHom_{A^{\mre}}(A^{\vvee}, A)$. 
It follows from  \eqref{202509201657} 
that 
for $f \in \Hom_{B^{\mre}}(B^{\vvee}, B )$ 
the image $\tuH^{0}(\Phi_{X})(f)$ is given by the following composition 
\begin{equation}\label{202509210026}
A^{\vvee} \xrightarrow{ \ \lcounit_{X}^{\vvee} \ } (X \lotimes_{B} X^{\lvvee} )^{\vvee} 
\cong {}^{X} B^{\vvee} \xrightarrow{ \ {}^{X}\!f \  \ } 
{}^{X} B =X\lotimes_{B} X^{\lvvee} \xrightarrow{ \ \lcounit_{X} \ } A
\end{equation}

Recall that the derived Picard group $\DPic_{\kk}(A)$ of $A$ is the group 
consists of quasi-isomorphism classes of invertible dg-$A$-$A$-bimodules,  
with  the multiplication is induced by the derived tensor product.
It follows from Lemma \ref{202508261357}(2) that 
the derived Picard group $\DPic_{\kk}(A)$ of $A$ acts on $\HHH_{\bullet}(A)$ 
and hence also on $\tuHH_{0}(A)$. 
For $T \in \DPic_{\kk}(A)$ and $f \in \tuHH_{0}(A)$, we denote the action of $T$ on $f$ 
by $T\cdot f :=\tuH^{0}(\Phi_{T})(f)$. 
As a corollary of Lemma \ref{202508261357}(1), we show that 
the Chern character map is equivariant with respect to the action of $\DPic_{\kk}(A)$. 
The precise statement is the following. 

\begin{corollary}\label{202509171827}
For $M \in \perf A$ and $T\in \DPic_{\kk}(A)$, we have $\ch_{T\lotimes_{A} M} = T\cdot \ch_{M}$. 
\end{corollary}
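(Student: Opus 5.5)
This is a direct specialization of Lemma \ref{202508261357}(1) to the case $A=B$ and $X=T$. I would take $M \in \perf A$ and $T \in \DPic_{\kk}(A)$, and regard $T$ as an object of $\perf(A\otimes A^{\op})$. This is legitimate because invertible bimodules over a smooth and proper dg-algebra are perfect: $T \lotimes_{A} -$ is an equivalence on $\sfD(A)$, so it preserves compact objects.

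The commutative square of Lemma \ref{202508261357}(1) gives
\[
\Phi_{T}\circ\tau_{M} = \tau_{T\lotimes_{A} M}\circ (T\lotimes_{A}-)_{\RHom}.
\]
I would evaluate both sides on $\id_{M} \in \tuH^{0}\RHom_{A}(M,M)$. The functor $T\lotimes_{A}-$ sends $\id_{M}$ to $\id_{T\lotimes_{A} M}$. By Definition \ref{202508301618}, the right-hand side therefore becomes $\ch_{T\lotimes_{A}M}$, and the left-hand side becomes $\tuH^{0}(\Phi_{T})(\ch_{M})$, which is $T\cdot\ch_{M}$ by the definition of the action. This is exactly the "in particular" clause of Lemma \ref{202508261357}(1).

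The only remaining point is that the action is well defined on quasi-isomorphism classes. If $T\cong T'$ in $\sfD(A^{\mre})$, then the adjoint pairs $(T,T^{\lvvee})$ and $(T',T'^{\lvvee})$ correspond under the induced isomorphism of duals, by the uniqueness of adjoints (Proposition \ref{202509221237}). It follows that $\Phi_{T}=\Phi_{T'}$.

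I expect no real obstacle here, since all the content already sits in Lemma \ref{202508261357}(1). Its proof, which the paper leaves to the reader, rests on the isomorphism $(X\lotimes_{B}M)^{\lvvee}\cong M^{\lvvee}\lotimes_{B}X^{\lvvee}$ of \eqref{202508261311} together with the compatibility of the units and counits with this isomorphism. That compatibility is the step that would need care if one verified the lemma itself.
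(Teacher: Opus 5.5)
Your proposal is correct and is the paper's own argument: the corollary is the ``in particular'' clause of Lemma \ref{202508261357}(1) with $B=A$ and $X=T$, combined with the definition $T\cdot f:=\tuH^{0}(\Phi_{T})(f)$. One small aside: your compactness argument only shows that $T$ is perfect as a one-sided $A$-module. Perfectness over $A^{\mre}$ also needs smoothness, for example via $T\cong T\lotimes_{A}A$ with $A\in\perf A^{\mre}$; the paper takes this for granted.
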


\subsubsection{The Mukai pairing in the sense of Căldăraru--Willerton}

Next we recall the definition of 
Mukai pairing $\cwmukaiagl{+,-}$ by Căldăraru--Willerton \cite[p. 81]{CW} in our setting. 

We define a morphism $\cwmukaiagl{+,-}: \HHH_{\bullet}(A) \otimes \HHH_{\bullet}(A) \to \kk$ as the following composition 
\[
\begin{split}
\HHH_{\bullet}(A) \otimes \HHH_{\bullet}(A) 
& \xrightarrow{ \ \flip \ } \HHH_{\bullet}(A) \otimes \HHH_{\bullet}(A) 
\cong \RHom_{A^{\mre}}(A^{\vvee}, A) \otimes \RHom_{A^{\mre}}(A^{\vvee}, A) \\
& \xrightarrow{ \ l\otimes r \ } 
\RHom_{A^{\mre}}(A, \tuD(A)) \otimes \RHom_{A^{\mre}}(A, \tuD(A)) \\
& \xrightarrow{ \ \ (-\lotimes_{A} -)_{\RHom} \ \ }  \RHom_{A^{\mre}}(A, \tuD(A)\lotimes_{A} \tuD(A)) \\
&  \cong \RHom_{A^{\mre}}(A, \tuD(A^{\mre})\lotimes_{A^{\mre}} A)\xrightarrow{  \ T_{A} \ } \kk
\end{split}
\]
where 
the first arrow flips the two factors of the tensor product,
we set the morphisms $l, r: \RHom_{A^{\mre}}(A^{\vvee}, A) \to \RHom_{A^{\mre}}(A, \tuD(A))$ to be 
$l:=(\tuD(A)\lotimes -)_{\RHom}, \ r:=(-\lotimes \tuD(A))_{\RHom}$   
and the  arrow $T_{A}$ is the morphism of Lemma \ref{202508311741} for the dg-$A^{\mre}$-modules $A$. 
Roughly speaking, the pairing $\cwmukaiagl{-,+}$ is given by 
\[
\cwmukaiagl{f,g} := (-1)^{|f||g|}\serreagl{\id_{A}, (\tuD(A)\lotimes g) \lotimes (f \lotimes \tuD(A))}
\]
for $f,g \in \HHH_{\bullet}(A)\cong \RHom_{A^{\mre}}(A^{\vvee}, A)$.

\subsubsection{Coincidence of the three Mukai pairings}

\begin{theorem}\label{202508231520}
The three Mukai parings coincide with each other:
\[
\mukaiagl{-,+} = \bmukaiagl{-,+} =\cwmukaiagl{-,+}.
\]
\end{theorem}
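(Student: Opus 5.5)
We prove the two equalities separately, reducing both to the string-diagram description \eqref{202509010636} of the canonical pairing $\agl{-,+}_{A}$ and to Lemma \ref{202509010639}.

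For $\mukaiagl{-,+} = \bmukaiagl{-,+}$, we expand Shklyarov's pairing. The map $\Phi_{A^{\op}}$ is $(\lcounit_{A^{\op}})_{\natural}\circ(\lunit_{A^{\op}})_{\natural}$, where $A^{\op}$ is regarded as an object of $\perf(\kk^{\op}\otimes (A^{\mre})^{\op})$. By Lemma \ref{202509151654} (with $A\otimes B^{\op}$ replaced by $A^{\mre}$ and $X=A$), $\lcounit_{A^{\op}}$ is identified with $\dcounit_{A}$ and $(A^{\op})^{\lvvee}$ with $\tuD(A)^{\op}$. We would then treat the unit step separately. Using $\HHH_{\bullet}(A)\otimes \HHH_{\bullet}(A^{\op})\cong \HHH_{\bullet}(A^{\mre})$, the unit $(\lunit_{A^{\op}})_{\natural}$ applied to $A_{\natural}\otimes A_{\natural}$ should be rewritten, via the isomorphism \eqref{202508171817} and the identity $\tuD(A)\lotimes_{A^{\mre}}A^{\vvee}\cong \HHH^{\bullet}(A;\tuD(A))$ from Lemma \ref{202508201805}, as the operation ``turn the second copy of $A_{\natural}$ into an element of $\RHom_{A^{\mre}}(A,\tuD(A))$''. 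That is exactly the step $\id\otimes(\tuD(A)\lotimes-)_{\RHom}$ in Lemma \ref{202509010639}. The final $\dcounit_{A}$ then matches. Concretely, we draw both composites as string diagrams and use the triangle identities for $(A,A^{\vvee})$ and $(\tuD(A),A)$, together with Corollary \ref{202508181504}, to deform one diagram into the other.

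For $\mukaiagl{-,+}=\cwmukaiagl{-,+}$, we use the roughly stated formula
\[
\cwmukaiagl{f,g}=(-1)^{|f||g|}\serreagl{\id_A,(\tuD(A)\lotimes g)\lotimes(f\lotimes\tuD(A))}
\]
and Lemma \ref{202508311741}, which identifies $T_A$ with $\dcounit_{A^{\mre}}\circ\lcounit_A$ on $\HHH_{\bullet}(A^{\mre};\tuD(A)\lotimes_A\tuD(A)\otimes A^{\lvvee})$. We rewrite $\tuD(A)\lotimes_A\tuD(A)$ through $\tuD(A^{\mre})\lotimes_{A^{\mre}}A$ using Lemma \ref{202508201805}. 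The tensor product of morphisms $(\tuD(A)\lotimes g)\lotimes(f\lotimes \tuD(A))$ factors as $(\tuD(A)\lotimes g\lotimes \tuD(A))\circ(f\lotimes \tuD(A))$, up to the sign $(-1)^{|f||g|}$ from the interchange law. Tracing the counits should collapse one $\tuD(A)$ against $A^{\vvee}$ via $g$ in Corollary \ref{202508181504}, leaving $\dcounit_A\circ\comp(f,(\tuD(A)\lotimes g)_{\RHom})$, which equals $\mukaiagl{g,f}$ by Lemma \ref{202509010639}. The flip in the definition then gives $\mukaiagl{f,g}$, with the Koszul sign of the flip cancelling $(-1)^{|f||g|}$.

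The main obstacle is the bookkeeping of canonical isomorphisms rather than any conceptual step. In particular, the identification $\tuD(A)\lotimes_A\tuD(A)\cong\tuD(A^{\mre})\lotimes_{A^{\mre}}A$ and the use of Lemma \ref{202509010738} to move $\vcounit_A$ between the left and right factors both have to be checked. We would handle this by relying on the uniqueness of adjoints (Proposition \ref{202509221237}) to guarantee that all composite isomorphisms agree, so that only the signs need explicit verification. Those signs we would check on representatives as in Example \ref{202511031429}.
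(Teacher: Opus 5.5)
Your plan is essentially the paper's proof. For $\mukaiagl{-,+}=\bmukaiagl{-,+}$, the paper uses Lemma \ref{202509151654} to write Shklyarov's composite as $(\dcounit_{A})^{\op}\circ(\dunit_{A})^{\op}$; that lemma identifies the unit as well as the counit, so the unit needs no separate treatment. It then compares this composite with the string diagram \eqref{202509010636}, after inserting $f^{-1}$ from Corollary \ref{202508181504}. The comparison uses two facts: the triangle identity $\dcounit_{A}\circ(\dunit_{A})^{\op}=\id$, and that $f^{-1}$ followed by $\vcounit_{A}$ and $\dcounit_{A}$ is the canonical identification. Note that the unit step lands in a single complex, so it cannot be \emph{exactly} the step $\id\otimes(\tuD(A)\lotimes-)_{\RHom}$. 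What actually carries the argument is your fallback of deforming string diagrams by triangle identities.

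For $\mukaiagl{-,+}=\cwmukaiagl{-,+}$, the paper commutes diagram \eqref{202511222219}. Its non-formal cells are $(-\lotimes-)_{\RHom}\circ\flip=\comp$, checked on representatives, and $T_{A}\circ r=\dcounit_{A}$, a triangle identity for the adjoint pair $(A,\tuD(A)\lotimes_{A}A^{\vvee})$. It then concludes by Lemma \ref{202509010639}. These cells correspond to your interchange and \emph{collapse} steps.

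The end of your second part, however, is wrong as written.

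\begin{itemize}
\item Lemma \ref{202509010639} gives $\dcounit_{A}\circ\comp\bigl(f,(\tuD(A)\lotimes g)_{\RHom}\bigr)=\mukaiagl{f,g}$, not $\mukaiagl{g,f}$. The first argument is the map $A^{\vvee}\to A$, which is composed first.
\item The flip in the definition of $\cwmukaiagl{-,+}$ is already used up in the formula you start from. That is why $g$ sits in the $\tuD(A)\lotimes-$ slot and $f$ in the $-\lotimes\tuD(A)$ slot. So there is no second flip to apply; your two slips cancel in the order of the arguments only by accident.
\item The factor $(-1)^{|f||g|}$ in that formula \emph{is} the Koszul sign of the flip, so the flip cannot cancel it a second time. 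It is absorbed exactly once, by the identity $(-\lotimes-)_{\RHom}\circ\flip=\comp$. That identity relates the tensor product of $(\tuD(A)\lotimes g)_{\RHom}$ and $f$ to their composite. It is the paper's middle triangle, and it is where a check on representatives is genuinely needed.
\end{itemize}

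This matters because $\mukaiagl{-,+}$ is not symmetric in general: by Corollary \ref{202508301737}, $\mukaiagl{\ch_{M},\ch_{N}}=\ERagl{[N],[M]}$. Also, $\tuHH_{*}(A)$ may have odd-degree classes, so signs are not cosmetic. You should track argument order and signs with the flip used exactly once. One way is the factorization $(l\otimes r)\circ\flip=(\id\otimes r)\circ\flip\circ(\id\otimes l)$, with $l=(\tuD(A)\lotimes-)_{\RHom}$ and $r=(-\lotimes\tuD(A))_{\RHom}$, as in \eqref{202511222219}.
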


\begin{remark}
In \cite[Theorem 1]{Ramadoss}, 
it is shown that $\bmukaiagl{-,+} =\cwmukaiagl{-,+}$ at the level of cohomology 
in the case where $A$ is derived equivalent to a variety $X$. 
\end{remark}

\begin{proof}
First we prove $\mukaiagl{-,+} = \bmukaiagl{-,+}$. 
We have the following commutative diagram where $f, g$ are  the isomorphisms of Corollary \ref{202508181504} 
and we use the notations of Lemma \ref{202509151654} in the case of $B=A$ and $X =A$. 

\vspace{8pt}

\begin{tikzpicture}
\node (s) at (0,0) {$A$}; 
\node (t) at (1.2,0) {$A$};
\node (r) at (2.4,0) { $A$};

\node (u) at (0,-0.8) {$A$}; 
\node (v) at (1.2,-0.8) {$A$}; 
\node (w) at (2.4,-0.8) {$A$};

\draw[very thick] (s) -- (t);
\draw[very thick] (t) -- (r);
\draw[very thick] (u) -- (v);
\draw[very thick] (v) -- (w);

\draw [very thick, rounded corners=10pt] (s) -- (-0.7,0.3) -- (0,0.6) --(2.4,0.6) --(3.1,0.3) --  (r);
\draw [very thick, rounded corners=10pt] (u) -- (-0.7,-1.1) -- (0,-1.4) --(2.4,-1.4) --(3.1,-1.1) --  (w);

\draw[->] (3.3,-0.4) --node [auto]{$(\dunit_{A})^{\op}$} (4.3,-0.4);

\draw[->] (1.2,-1.6) --node [auto]{$\cong  f^{-1}$} (1.2,-2.4);

\begin{scope}[shift={(5,0)}]
%
%
%
%
%

%
%
%
%
%
%

\node (s) at (0,0) {$\tuD(A)$}; 
\node (t) at (1.2,0) {$A$};
\node (r) at (2.4,0) { $A$};

\node (u) at (0,-0.8) {$A$}; 
\node (v) at (1.2,-0.8) {$A$}; 
\node (w) at (2.4,-0.8) {$A$};

\draw[very thick] (t) -- (r);
\draw[very thick] (v) -- (w);

\draw[very thick] (u)  --  (t);
\draw[preaction={draw=white,line width=2.5pt}, very thick](s) --(v);
\draw [very thick, rounded corners=10pt] (s) -- (-0.7,0.3) -- (0,0.6) --(2.4,0.6) --(3.1,0.3) --  (r);
\draw [very thick, rounded corners=10pt] (u) -- (-0.7,-1.1) -- (0,-1.4) --(2.4,-1.4) --(3.1,-1.1) --  (w);

\draw[->] (3.3,-0.4) --node [auto]{$(\dcounit_{A})^{\op}$} (4.3,-0.4);

\draw[->] (1.2,-1.6) --node [auto]{$\cong f^{-1}$} (1.2,-2.4);

\node at (5, -0.4){$\kk$};
\end{scope}

\begin{scope}[shift={(0,-3.3)}]
\node (s) at (0,0) {$A$}; 
\node (t) at (1.2,0) {$A$};
\node (r) at (2.4,0) { $A$};

\node (u) at (0,-0.8) {$A$}; 
\node (v) at (1.2,-0.8) {$A^{\vvee}$}; 
\node (w) at (2.4,-0.8) {$\tuD(A)$};

\draw[very thick] (s) -- (t);
\draw[very thick] (t) -- (r);
\draw[very thick] (u) -- (v);
\draw[very thick] (v) -- (w);

\draw [very thick, rounded corners=10pt] (s) -- (-0.7,0.3) -- (0,0.6) --(2.4,0.6) --(3.1,0.3) --  (r);
\draw [very thick, rounded corners=10pt] (u) -- (-0.7,-1.1) -- (0,-1.4) --(2.4,-1.4) --(3.1,-1.1) --  (w);

\draw[->] (3.3,-0.4) --node [auto]{$(\dunit_{A})^{\op}$} (4.3,-0.4);

\draw[->] (1.2,-1.6) --node [auto]{$\vcounit_{A}$} (1.2,-2.4);

\begin{scope}[shift={(5,0)}]

\node (s) at (0,0) {$\tuD(A)$}; 
\node (t) at (1.2,0) {$A$};
\node (r) at (2.4,0) { $A$};

\node (u) at (0,-0.8) {$A$}; 
\node (v) at (1.2,-0.8) {$A^{\vvee}$}; 
\node (w) at (2.4,-0.8) {$\tuD(A)$};

\draw[very thick] (t) -- (r);
\draw[very thick] (v) -- (w);

\draw[very thick] (u)  --  (t);
\draw[preaction={draw=white,line width=2.5pt}, very thick](s) --(v);
\draw [very thick, rounded corners=10pt] (s) -- (-0.7,0.3) -- (0,0.6) --(2.4,0.6) --(3.1,0.3) --  (r);
\draw [very thick, rounded corners=10pt] (u) -- (-0.7,-1.1) -- (0,-1.4) --(2.4,-1.4) --(3.1,-1.1) --  (w);

%
%
%
%
%
%
%
%
%
%

%
%

\draw[->] (1.2,-1.6) --node [auto]{$\vcounit_{A}$} (1.2,-2.4);
\end{scope}

\end{scope}

\begin{scope}[shift={(0,-6.6)}]
%
%
%
%
%
%

%
%
%
%


\node (s) at (0,0) {$A$}; 
\node (t) at (1.2,0) {$A$};
\node (r) at (2.4,0) { $A$};

\node (u) at (0,-0.8) {$A$}; 
\node (v) at (1.2,-0.8) {$A$}; 
\node (w) at (2.4,-0.8) {$\tuD(A)$};

\draw[very thick] (s) -- (t);
\draw[very thick] (u) -- (v);

\draw[very thick] (t)  --  (w);
\draw[preaction={draw=white,line width=3.5pt}, very thick](v) --(r);
\draw [very thick, rounded corners=10pt] (s) -- (-0.7,0.3) -- (0,0.6) --(2.4,0.6) --(3.1,0.3) --  (r);
\draw [very thick, rounded corners=10pt] (u) -- (-0.7,-1.1) -- (0,-1.4) --(2.4,-1.4) --(3.1,-1.1) --  (w);

\draw[->] (3.3,-0.4) --node [auto]{$(\dunit_{A})^{\op}$} (4.3,-0.4);

\draw[->] (1.2,-1.6) --node [auto]{$\dcounit_{A}$} (1.2,-2.4);

\draw[->] (3.3,-1.6) --node [auto]{$\id$} (4.3,-2.4);

\begin{scope}[shift={(5,0)}]
\node (s) at (0,0) {$\tuD(A)$}; 
\node (t) at (1.2,0) {$A$};
\node (r) at (2.4,0) { $A$};

\node (u) at (0,-0.8) {$A$}; 
\node (v) at (1.2,-0.8) {$A$}; 
\node (w) at (2.4,-0.8) {$\tuD(A)$};


\draw[very thick] (t)  --  (w);
\draw[preaction={draw=white,line width=3.5pt}, very thick](v) --(r);

\draw[very thick] (u)  --  (t);
\draw[preaction={draw=white,line width=3pt}, very thick](s) --(v);
\draw [very thick, rounded corners=10pt] (s) -- (-0.7,0.3) -- (0,0.6) --(2.4,0.6) --(3.1,0.3) --  (r);
\draw [very thick, rounded corners=10pt] (u) -- (-0.7,-1.1) -- (0,-1.4) --(2.4,-1.4) --(3.1,-1.1) --  (w);

%
%

\draw[->] (1.2,-1.6) --node [auto]{$\dcounit_{A}$} (1.2,-2.4);
\end{scope}

\end{scope}

\begin{scope}[shift={(5,-9.6)}]

\node (t) at (1.2,0) {$A$};

\node (u) at (0,-0.8) {$A$}; 
\node (w) at (2.4,-0.8) {$\tuD(A)$};


\draw[very thick] (t)  --  (w);
\%draw[preaction={draw=white,line width=3.5pt}, very thick](v) --(r);

\draw[very thick] (u)  --  (t);
\draw [very thick, rounded corners=10pt] (u) -- (-0.7,-1.1) -- (0,-1.4) --(2.4,-1.4) --(3.1,-1.1) --  (w);

%
%
%
%
%
%
%

%
%
\draw[->] (-1,-0) --node [auto]{$\dcounit_{A}$} (-2,-0);

\end{scope}

\node at (1.2,-9.9) {$\kk$};

\draw[rounded corners=20pt, ->] (8,-3.7) -- (8.9, -4.2) -- node[auto]{$g$} (8.9, -9.3) --(8,-9.8);
\end{tikzpicture}

\vspace{10pt} 

We note that   $(\dcounit_{A})^{\op} =\dcounit_{A}$ under the canonical identification 
$\perf \kk^{\op} \cong \perf \kk$.  
Hence $\dcounit_{A} (\dunit_{A})^{\op} = (\dcounit_{A})^{\op} (\dunit_{A})^{\op} =\id$ by the triangle identity. 
The top horizontal line  gives $\bmukaiagl{-,+}$. 
The left vertical line gives $\mukaiagl{-,+}$. 
It follows from  the construction of the isomorphisms of Corollary \ref{202508181504}  
(see the proof of Lemma \ref{202508201805}),  
the right vertical line is identity morphism.  
Consequently, we obtain the desired equality $\mukaiagl{-,+} = \bmukaiagl{-,+}$.

\vspace{8pt}

Next, we prove $\mukaiagl{-,+} = \cwmukaiagl{-,+}$. 
For this purpose,  we consider the following  diagram: 
\begin{equation}\label{202511222219}
\begin{xymatrix}{
(A^{\vvee}, A) \otimes (A^{\vvee}, A) \ar[rr]^{\flip} \ar[d]_{\id \otimes l } &&
(A^{\vvee}, A) \otimes (A^{\vvee}, A) \ar[d]_{l\otimes r} \\
  (A^{\vvee}, A) \otimes (A, \tuD(A)) \ar[r]^{\flip} \ar[dr]_{\comp} & 
  (A, \tuD(A)) \otimes (A^{\vvee}, A) \ar[r]^{\id \otimes r} \ar[d]^{(-\lotimes-)_{\RHom}}&
  (A, \tuD(A)) \otimes (A, \tuD(A)) \ar[d]^{(-\lotimes-)_{\RHom}} \\
 &   (A^{\vvee}, \tuD(A)) \ar[d]_{\cong} \ar[r]^-{r} &
 (A, \tuD(A)\lotimes_{A} \tuD(A)) \ar[d]^{T_{A}} \\
 & (A\lotimes_{A} \tuD(A))_{\natural} \ar[r]_-{\dcounit_{A}} &
 \kk 
}\end{xymatrix}
\end{equation}
where we use the abbreviation $(-,+) =\RHom_{A^{\mre}}(-,+)$ 
and for simplicity we set $l: = (\tuD(A) \lotimes-)_{\RHom}, \ r:= (-\lotimes \tuD(A))_{\RHom}$. 

We claim that this diagram is commutative. 
Commutativity of the top and the middle rectangles is clear. 
To check the commutativity of the middle triangle, we take a projectively cofibrant dg-$A^{\mre}$-module $P$ that represents $A^{\vvee}$ 
and an injectively fibrant dg-$A^{\mre}$-module $I$ that represents $\tuD(A)$.  
Let $f$ and $g$ be homogeneous elements of $\cpxHom_{A^{\mre}}(P,A)$ and $\cpxHom_{A^{\mre}}(A, I)$ respectively. 
Then for homogeneous elements $a \in A, \ p\in P$, we have 
$(g\otimes f)(a \otimes p) = (-1)^{|a||f|}g(a)f(p)=(-1)^{|a||f|}g(af(p))=gf(ap)$. 
This equation immediately yields the desired commutativity. 
Finally,   we check the commutativity of the bottom rectangle. 
Under the canonical isomorphisms $\RHom_{A^{\mre}}(A^{\vvee}, \tuD(A)) \cong \tuD(A)_{\natural}$ and 
$\RHom_{A^{\mre}}(A, \tuD(A) \lotimes_{A} \tuD(A)) \cong (A^{\vvee} \lotimes_{A} \tuD(A) \lotimes_{A} \tuD(A))_{\natural}$, 
the morphism $r$ corresponds to 
the isomorphism $\tuD(A)_{\natural} \cong (\tuD(A) \lotimes_{A} A)_{\natural} \xrightarrow{ \cong } (\tuD(A) \lotimes_{A} \tuD(A) \lotimes_{A} A^{\vvee})_{\natural}$ 
induced from the canonical isomorphism $A \cong A^{\lvvee} \cong \tuD(A) \lotimes_{A} A^{\vvee}$. 
Now the desired commutativity follows from the commutativity of the following diagram

\begin{tikzpicture}
\begin{scope}[shift={(-4.1,-0.4)}]
\node (s) at (0,0) {$\tuD(A)$}; 
\node (t) at (1.2,0) {$A$};
\node (r) at (2.4,0) {};
%

\draw[very thick] (s) -- (t);

%

\draw [very thick, rounded corners=10pt] (s) -- (-0.7,0.3) -- (0,0.6) --(1.2,0.6) --(1.9,0.3) -- (t);

\draw[->] (2,0) --node [auto]{$r$} (3,0);
\draw[->] (0.4,-0.2) --node [below]{$\id$ \ \ } (3.4,-5.6);

\end{scope}

\node (s) at (0,0) {$\tuD(A)$}; 
\node (t) at (1.2,0) {$A$};
\node (r) at (2.4,0) {};

\node (u) at (0,-0.8) {\tuD(A)}; 
\node (v) at (1.2,-0.8) {$A^{\vvee}$}; 
\node (w) at (2.4,-0.8) {};

\draw[very thick] (s) -- (t);

\draw[very thick] (u) -- (v);

\draw [very thick, rounded corners=10pt] (s) -- (-0.7,0.3) -- (0,0.6) --(2.4,0.6) --(3.1,-0.1) --  (2.4,-0.8)--(v);
\draw [very thick, rounded corners=10pt,preaction={draw=white,line width=6pt}] (u) -- (-0.7,-1.1) -- (0,-1.4) --(2.4,-1.4) --(3.1,-0.7) --  (2.4, 0)--(t);

\draw[->] (3.3,-0.4) --node [auto]{$T_{A}$} (4.6,-6.2);

\draw[->] (1.2,-1.6) --node [auto]{$\vcounit_{A}$} (1.2,-2.4);

\begin{scope}[shift={(0,-3.2)}]
\node (s) at (0,0) {$\tuD(A)$}; 
\node (t) at (1.2,0) {$A$};
\node (r) at (2.4,0) {};

\node (u) at (0,-0.8) {\tuD(A)}; 
\node (v) at (1.2,-0.8) {$A$}; 
\node (w) at (2.4,-0.8) {};

\draw[very thick] (s) -- (t);

\draw[very thick] (u) -- (v);

\draw [very thick, rounded corners=10pt] (s) -- (-0.7,0.3) -- (0,0.6) --(2.4,0.6) --(3.1,-0.1) --  (2.4,-0.8)--(t);
\draw [very thick, rounded corners=10pt,preaction={draw=white,line width=6pt}] (u) -- (-0.7,-1.1) -- (0,-1.4) --(2.4,-1.4) --(3.1,-0.7) --  (2.4, 0)--(v);

\draw[->] (3.3,-0.4) --node [below]{$\dcounit_{A^{\mre}}$ \ \ \ } (4.5,-3);

\draw[->] (1.2,-1.6) --node [auto]{$\dcounit_{A}$} (1.2,-2.4);
\end{scope}

\begin{scope}[shift={(0,-6.4)}]
\node (s) at (0,0) {$\tuD(A)$}; 
\node (t) at (1.2,0) {$A$};
\node (r) at (2.4,0) {};

\node (u) at (0,-0.8) {}; 
\node (v) at (1.2,-0.8) {}; 
\node (w) at (2.4,-0.8) {};

\draw[very thick] (s) -- (t);


\draw [very thick, rounded corners=10pt] (s) -- (-0.7,0.3) -- (0,0.6) --(2.4,0.6) --(3.1,-0.1) --  (2.4,-0.8)--(t);

\draw[->] (3.3,-0) --node [below]{$\dcounit_{A}$} (4.3,-0);

\end{scope}

\node at (4.6,-6.4){$\kk$};
\end{tikzpicture}

\noindent 
where  the commutativity of the left triangle follows from  the triangle identity 
of the adjoint pair $(A, \tuD(A)\lotimes_{A} A^{\vvee})$ (see the proof of Lemma \ref{202508201805}).

Composing the morphisms in 
the clockwise direction  from the upper-left corner to the bottom-right corner in the diagram \eqref{202511222219}  
yields the Mukai pairing $\cwmukaiagl{-,+}$ by Căldăraru--Willerton. 
On the other hand, the counterclockwise composition yields our Mukai pairing $\mukaiagl{-,+}$ by Lemma \ref{202508261812}. 
Thus we conclude that these two pairings coincide. 
\end{proof}

From now on, we identify three Mukai pairings 
\[
\mukaiagl{-,+} = \bmukaiagl{-,+} = \cwmukaiagl{-,+}.
\]

\subsection{Generalized abstract Hirzebruch–Riemann–Roch theorem}

Now we give the generalized abstract Hirzebruch–Riemann–Roch theorem for dg-algebras. 

\begin{theorem}[{\cite[Theorem 1.3.1]{PV}, cf., \cite[Theorem 5.6]{Petit}}]\label{202508261418}
Let $A$ be a smooth and proper dg-algebra and $M, N \in \perf A$. 
Let $f $ and $g$ be homogeneous elements of $\tuH(\RHom_{A}(M,M))$ and $\tuH(\RHom_{A}(N,N))$ respectively. 
We set a morphism $\RHom(f,g): \RHom_{A}(N, M) \to \RHom_{A}(N, M )$ to be 
\[
\RHom(f,g)(\phi) := (-1)^{|g||\phi|}f\circ \phi \circ g.
\]
Then we have 
\[
\mukaiagl{\tau_{M}(f), \tau_{N}(g)} = \Tr \left(  \RHom(f,g) \right). 
\]
\end{theorem}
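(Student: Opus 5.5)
We reduce the statement to the case $A=\kk$ by pushing everything forward along a single bimodule and then using functoriality of $\Phi$ and of the boundary--bulk map. The natural vehicle is the Shklyarov description $\mukaiagl{-,+}=\bmukaiagl{-,+}$ from Theorem \ref{202508231520}. This pairing is $\Phi_{A^{\op}}$ applied to the external product $\HHH_{\bullet}(A)\otimes\HHH_{\bullet}(A^{\op})\cong \HHH_{\bullet}(A^{\mre})$.

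\textbf{Step 1: transport $\tau_{N}(g)$ to $A^{\op}$.} By Lemma \ref{202508261812}, the isomorphism $\HHH_{\bullet}(A)\cong\HHH_{\bullet}(A^{\op})$ carries $\tau_{N}(g)$ to $\tau_{N^{\lvvee}}(g^{\lvvee})$, where $N^{\lvvee}\in\perf A^{\op}$.

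\textbf{Step 2: external product (Künneth).} The boundary--bulk map is compatible with external tensor products: under $\HHH_{\bullet}(A)\otimes\HHH_{\bullet}(A^{\op})\cong\HHH_{\bullet}(A^{\mre})$ we should have
\[
\tau_{M}(f)\otimes\tau_{N^{\lvvee}}(g^{\lvvee})=\pm\,\tau_{M\otimes N^{\lvvee}}(f\otimes g^{\lvvee}).
\]
This holds because $\lcounit_{M\otimes N^{\lvvee}}=\lcounit_{M}\otimes\lcounit_{N^{\lvvee}}$ under $(M\otimes N^{\lvvee})^{\lvvee}\cong M^{\lvvee}\otimes N^{\lvvee\lvvee}$, which is again a uniqueness-of-adjoints argument.

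\textbf{Step 3: push forward to $\kk$.} Apply Lemma \ref{202508261357}(1) with $X=A^{\op}$, regarded as a bimodule from $A^{\mre}$ to $\kk$. This gives
\[
\Phi_{A^{\op}}\bigl(\tau_{M\otimes N^{\lvvee}}(f\otimes g^{\lvvee})\bigr)=\tau_{V}\bigl(A^{\op}\lotimes_{A^{\mre}}(f\otimes g^{\lvvee})\bigr),
\qquad V:=A^{\op}\lotimes_{A^{\mre}}(M\otimes N^{\lvvee})\cong N^{\lvvee}\lotimes_{A}M\cong\RHom_{A}(N,M),
\]
using \eqref{202509151945}. By Example \ref{202508261336}, $\tau_{V}$ on $\kk$ is the counit $\lcounit_{V}$, whose degree-zero cohomology is the super trace $\Tr$.

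\textbf{Step 4: identify the endomorphism.} It remains to show that the induced endomorphism $N^{\lvvee}\lotimes_{A}M\to N^{\lvvee}\lotimes_{A}M$, namely $g^{\lvvee}\otimes f$, corresponds under $N^{\lvvee}\lotimes_{A}M\cong\RHom_{A}(N,M)$ to $\phi\mapsto(-1)^{|g||\phi|}f\circ\phi\circ g$. Precomposition by $g$ is dual to $g^{\lvvee}$, and postcomposition by $f$ is $f$ acting on the $M$ factor. This is a direct computation on cofibrant representatives in the spirit of Example \ref{202511031429}.

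\textbf{Main obstacle: signs.} The bookkeeping of Koszul signs is where the proof can go wrong. One must check that the sign in the Künneth identification of Step 2, the flip isomorphism \eqref{202509031502} hidden in $\bmukaiagl{-,+}$, the sign in the definition of $g^{\lvvee}$, and the identification in Step 4 together produce exactly $(-1)^{|g||\phi|}$ and no stray global sign. I would first do the case where $f$ and $g$ both have degree $0$, where all signs trivialize. Then I would track the general case on explicit semifree models with bases $\{e_i\}$ as in Example \ref{202511031429}, writing $\lunit$ and $\lcounit$ explicitly. If the sign bookkeeping through $\bmukaiagl{-,+}$ becomes unwieldy, the fallback is the Căldăraru--Willerton form $\cwmukaiagl{-,+}$, which expresses the pairing through the Serre pairing and $T_{A}$.
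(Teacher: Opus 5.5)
Your proposal is correct and is essentially the paper's own proof. The paper uses the same ladder of three commutative squares: Lemma \ref{202508261812} to pass from $N$ to $N^{\lvvee}$, the K\"unneth identification $\HHH_{\bullet}(A)\otimes\HHH_{\bullet}(A^{\op})\cong\HHH_{\bullet}(A^{\mre})$, and Lemma \ref{202508261357}(1) for the diagonal bimodule viewed as a bimodule between $A^{\mre}$ and $\kk$; it then identifies the right column with the Shklyarov form of the Mukai pairing and concludes via Example \ref{202508261336}. The paper only asserts that the K\"unneth square commutes and that the left column sends $(f,g)$ to $\RHom(f,g)$, so the Koszul sign check you flag as the main obstacle is exactly the part it leaves to the reader.
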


\begin{remark}
The reason for the difference in sign, compared to \cite[Theorem 1.3.1]{PV}, is that we are working with left dg-modules while \cite{PV} dealt with right dg-modules.
\end{remark}

We provide a proof of the theorem, following \cite{PV}, to bridge the gap in notation (and convention) between this paper and other papers.

\begin{proof}
We have the following  diagram 
\[
\begin{xymatrix}@R=20pt{
\RHom_{A}(M, M) \otimes \RHom_{A}(N, N) \ar[r]^-{ \ \tau_{M} \otimes \tau_{N} \ } \ar[d]_{\cong} &
\HHH_{\bullet}(A) \otimes \HHH_{\bullet}(A) \ar[d]^{\cong} \\ 
\RHom_{A}(M, M) \otimes \RHom_{A^{\op}}(N^{\lvvee}, N^{\lvvee}) \ar[r]^-{ \ \tau_{M} \otimes \tau_{N^{\lvvee} } \ } \ar[d]_{\cong} &
\HHH_{\bullet}(A) \otimes \HHH_{\bullet}(A^{\op}) \ar[d]^{\cong} \\ 
 \RHom_{A^{\mre}}(M\otimes N^{\lvvee}, M\otimes N^{\lvvee}) \ar[r]^-{ \ \tau_{M \otimes N^{\lvvee} } \ } \ar[d]_{(A \lotimes_{A^{\mre}}-)_{\RHom}} &
\HHH_{\bullet}(A^{\mre})  \ar[d]^{\Phi_{A}} \\ 
\RHom_{\kk}(\RHom_{A}(N, M), \RHom_{A}(N, M)) \ar[r]^-{ \ \tau_{\RHom_{A}(N, M) } \ }  &
\HHH_{\bullet}(\kk) 
 }\end{xymatrix}
 \]
 The commutativity of the first square follows from Lemma \ref{202508261812}. 
 It is straightforward to check that the second square is commutative. 
 The case that $B=A, X =A$ of   Lemma \ref{202508261357}(1) tells that the third square is commutative.

The right vertical arrow is the Mukai paring. 
The left vertical arrow sends a pair $(f,g)$ of homogeneous elements to $\RHom(f,g)$. 
Now by Example \ref{202508261336}, we obtain the desired equality. 
\end{proof}

Recall that the Grothendieck group $K_{0}(A)$ has a pairing 
\[
\ERagl{-,+}: K_{0}(A) \otimes_{\ZZ} K_{0}(A) \to \ZZ, \ \ERagl{[N], [M]} := \Euch(\RHom_{A}(N,M))
\]
which is called \emph{Euler-Ringel form}. 

As a corollary of Theorem \ref{202508261418}, 
we obtain the Hirzebruch–Riemann–Roch theorem for  dg-algebras. 
Although this formula plays no role in the rest of the paper, we include  it for  completeness.  

\begin{corollary}[{\cite[Theorem 1.3]{Shklyarov}, \cite[Theorem 14]{CW}}]\label{202508301737}
Let $A$ be a smooth and proper dg-algebra and $M, N \in \perf A$. 
Then we have 
\[
\mukaiagl{\ch_{M}, \ch_{N}} = \ERagl{[N], [M]}. 
\]
\end{corollary}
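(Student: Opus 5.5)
The plan is to specialize Theorem \ref{202508261418} to $f=\id_{M}$ and $g=\id_{N}$, both of degree $0$. By Definition \ref{202508301618}, $\tau_{M}(\id_{M})=\ch_{M}$ and $\tau_{N}(\id_{N})=\ch_{N}$, so the left-hand side becomes $\mukaiagl{\ch_{M},\ch_{N}}$.

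On the right-hand side, $\RHom(\id_{M},\id_{N})(\phi)=(-1)^{0}\,\id_{M}\circ\phi\circ\id_{N}=\phi$. Hence $\RHom(\id_{M},\id_{N})$ is the identity morphism of $\RHom_{A}(N,M)$. By the identity $\Euch(V)=\Tr(\id_{V})$ from Section \ref{202509171618}, applied to $V=\RHom_{A}(N,M)$, we get
\[
\Tr(\RHom(\id_{M},\id_{N}))=\Euch(\RHom_{A}(N,M))=\ERagl{[N],[M]} .
\]

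Two small points must be checked to justify this. First, $V=\RHom_{A}(N,M)$ lies in $\perf\kk$. This holds because $A$ is proper and $M,N\in\perf A$, so $V$ is built from finitely many shifts and summands of $A$ and has finite-dimensional total cohomology. Second, the Mukai pairing here takes values in $\kk$, while $\ERagl{-,+}$ is $\ZZ$-valued, so the equality is understood through the image of $\ZZ\to\kk$. This matches the fact that the super trace of the identity is $\sum(-1)^{i}\dim\tuH^{i}(V)\cdot 1_{\kk}$.

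I expect essentially no obstacle. The only delicate point is the convention in Theorem \ref{202508261418}: the order $\RHom_{A}(N,M)$, with $M$ in the first slot of the pairing, is what yields $\ERagl{[N],[M]}$ rather than $\ERagl{[M],[N]}$. This ordering should be read off directly from the theorem as stated.
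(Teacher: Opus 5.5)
Your proposal is correct and is exactly the paper's argument: the corollary is obtained by specializing Theorem~\ref{202508261418} to $f=\id_{M}$ and $g=\id_{N}$, so that $\RHom(\id_{M},\id_{N})=\id_{\RHom_{A}(N,M)}$, and the identity $\Euch(V)=\Tr(\id_{V})$ from Section~\ref{202509171618} then gives $\ERagl{[N],[M]}$. Your side remarks, that $\RHom_{A}(N,M)\in\perf\kk$ and that the $\ZZ$-valued Euler--Ringel form is read in $\kk$, are appropriate.
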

What plays a key role in the sequel is the following formula for computing the Serre pairing, which is obtained by combining Theorem \ref{202508171830} and Theorem \ref{202508261418}.

\begin{corollary}\label{202508301745}
Let $A$ be a smooth and proper dg-algebra and $M, N \in \perf A$ 
and  $f: M \to M $ a morphism in $\perf A$. 
Then we have 
\[
\invserreagl{f, \sigma_{M}(\ch_{N}) } = \Tr\left[f\circ- :\RHom_{A}(N,M) \to \RHom_{A}(N,M) \right]. 
\]
\end{corollary}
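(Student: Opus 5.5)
We combine the adjointness of Theorem \ref{202508171830} with the Hirzebruch--Riemann--Roch type formula of Theorem \ref{202508261418}. Apply Theorem \ref{202508171830} to the pair $(f, \phi)$ with $\phi := \ch_{N} = \tau_{N}(\id_{N}) \in \tuHH_{0}(A)$. Commutativity of the square gives
\[
\invserreagl{f, \sigma_{M}(\ch_{N})} = \mukaiagl{\tau_{M}(f), \ch_{N}} = \mukaiagl{\tau_{M}(f), \tau_{N}(\id_{N})}.
\]
We pass to cohomology here, taking $H^{0}$ of the morphisms in the diagram. This is legitimate because $f$ is a morphism in $\perf A$, hence a degree $0$ cohomology class of $\RHom_{A}(M,M)$. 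Also $\ch_{N}$ is a degree $0$ class, so no Koszul signs appear in evaluating the tensor product of classes.

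Next apply Theorem \ref{202508261418} with $g = \id_{N}$, which is homogeneous of degree $0$. It gives
\[
\mukaiagl{\tau_{M}(f), \tau_{N}(\id_{N})} = \Tr\bigl(\RHom(f, \id_{N})\bigr).
\]
By definition, $\RHom(f,\id_{N})(\phi) = (-1)^{0 \cdot |\phi|} f \circ \phi \circ \id_{N} = f \circ \phi$. So $\RHom(f,\id_{N})$ is exactly the endomorphism $f \circ - $ of $\RHom_{A}(N,M)$, and the super trace of Section \ref{202509171618} is the trace on the right-hand side of the statement. Chaining the two equalities yields the corollary.

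The main point needing care is the order of arguments. Theorem \ref{202508171830} puts $\tau_{M}(f)$ in the first slot of $\mukaiagl{-,+}$, while Theorem \ref{202508261418} is stated as $\mukaiagl{\tau_{M}(f), \tau_{N}(g)}$ with the trace on $\RHom_{A}(N,M)$. These orderings match as they stand, so we need neither a symmetry property of the Mukai pairing nor the flip used in the Căldăraru--Willerton definition. We also check that the identification $\HHH_{\bullet}(A) \cong \RHom_{A^{\mre}}(A^{\vvee},A)$ used in defining $\sigma_{M}$ agrees with the one implicit in the second slot of $\mukaiagl{-,+}$. This holds because both come from \eqref{202508171817}, and Theorem \ref{202508171830} was derived from the reciprocity law \eqref{202508171549} with that same identification. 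Beyond this bookkeeping the argument is a direct substitution.
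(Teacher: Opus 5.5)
Your proposal is correct and is exactly the paper's argument. The paper obtains this corollary by combining Theorem \ref{202508171830} at $\phi=\ch_{N}=\tau_{N}(\id_{N})$ with Theorem \ref{202508261418} at $g=\id_{N}$. Since $|g|=0$, the sign in $\RHom(f,\id_{N})$ is trivial and that map is just $f\circ-$; your extra bookkeeping about the order of arguments and the identification of $\HHH_{\bullet}(A)$ is harmless but not needed, because Theorem \ref{202508171830} is already stated directly in terms of $\sigma_{M}$ and $\mukaiagl{-,+}$.
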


\section{Universal Auslander--Reiten triangle}\label{202602050956}

\subsection{Preliminary}\label{202509181437}

In this Section \ref{202509181437}, 
we prepare basic notions of a smooth and  proper  dg-algebra $A$ which is non-positive, i.e., $\tuH^{>0}(A) = 0$.
(We remark that  several arguments work even if we drop the smoothness assumption.)
 
For simplicity, we set denote the cohomology algebra of $A$ by $H := \tuH(A)$ and 
the $0$-th component algebra by $H^{0} := \tuH^{0}(A)$. 
Replacing $A$ by the smart truncation $\tau^{\leq 0 }A$, which is quasi-isomorphic to $A$,   
we may assume that there is a morphism $p: A \to H^{0}$ of dg-algebras. 
We regard $H^{0}$-modules as dg-$A$-modules by restriction of scalar  along $p$.  
This morphism induces an adjoint pair $p^{*}; \sfD(A) \rightleftarrows \sfD(H^{0}): p_{*}$.
The heart of the standard $t$-structure of $\sfD(A)$ may be identified with the modules category $H^{0}\Mod$, 
which is the heart of the standard $t$-structure of $\sfD(H^{0})$, by the restriction functor $p_{*}$.

Let $e_{1}, e_{2}, \dots, e_{r}$ be a set of primitive orthogonal idempotent elements of $H^{0}$
such that the corresponding indecomposable projective modules $H^{0}e_{1},$  $H^{0}e_{2},$ $ \dots,$  
$H^{0}e_{r}$ gives a basis of $K_{0}(H^{0})$. 
Let $S_{1}, S_{2}, \dots, S_{r}$ be simple $H^{0}$-modules corresponding to $e_{1}, e_{2}, \dots, e_{r}$. 
To simplify arguments, we assume that $\dim S_{i} = 1$ for all $i =1,2, \dots, r$.

Since $\End_{A}(A)^{\op} \cong H^{0}$, 
the idempotent elements $e_{1}, e_{2}, \dots, e_{r}$ induce idempotent endomorphisms of $A$ in $\perf A$. 
We denote the direct summands of $A$  in  $\perf A$ 
that corresponds to these idempotent endomorphisms by $P_{1}, P_{2}, \dots, P_{r}$. 
We note that if the idempotent elements $e_{1}, e_{2}, \dots, e_{r}$ of $H^{0}$ 
lift up to idempotent elements of  $A$ along the morphism $p: A \to H^{0}$
 and we denote them by the same symbol, then $P_{1} =A e_{1}, P_{2} = A e_{2}, \dots, P_{r} = Ae_{r}$.
 
For $M\in \perf A$,  
under the canonical isomorphism $\tuH(\RHom_{A}(A, M))  \cong \tuH( M)$, 
we have $\tuH(\RHom_{A}(P_{i}, M)) \cong e_{i}\tuH(M)$. 
For simplicity, we set 
\[
\Euch_{i}(M) := \ERagl{P_{i},M }=\Euch(\RHom_{A}(P_{i}, M))= \sum_{n \in \ZZ} (-1)^{n} \dim e_{i}\tuH^{n}(M).
\]

We collect basic properties in the next lemma. 

\begin{lemma}\label{202509181508}
The following statements hold. 
\begin{enumerate}[(1)]
\item 
$\Euch_{i}(S_{j}) = \ERagl{P_{i}, S_{j} } = \dim e_{i}S_{j} = \delta_{ij}$ for $i,j = 1,2, \dots, r$.

\item 
The subsets $\{ [P_{i}] \}_{i=1}^{r}$ and $\{ [S_{i}] \}_{i =1}^{r}$ are  basis of $K_{0}(A)$ 
which are dual to each other with respect to the Euler-Ringel form. 

\item 
In $K_{0}(A)$, we have the following equality 
\[
[M] = \sum_{i =1}^{r} \Euch_{i}(M) [S_{i}]. 
\]

\end{enumerate}
\end{lemma}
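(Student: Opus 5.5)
The plan is to prove (1) directly, deduce (3) from (1) together with a dévissage argument, and obtain (2) by combining the two.

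For (1), I would use the identification $\tuH(\RHom_{A}(P_{i},M)) \cong e_{i}\tuH(M)$ recalled just before the lemma, applied to $M = S_{j}$. Since $S_{j}$ is an $H^{0}$-module regarded as a dg-$A$-module via $p$, its cohomology is $S_{j}$ concentrated in degree $0$. Hence $\Euch_{i}(S_{j}) = \dim e_{i}S_{j}$. This equals $\delta_{ij}$ because the $e_{i}$ are primitive orthogonal idempotents, $S_{j}$ is the simple module attached to $e_{j}$, and $\dim S_{j}=1$ by our simplifying assumption. Note that $S_{j}\in\perf A$ needs justification: since $A$ is smooth and proper, a dg-module with finite-dimensional total cohomology is perfect (properness gives $\sfD_{\mathrm{fd}}(A)$, and smoothness gives $\sfD_{\mathrm{fd}}(A)\subset \perf A$). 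I would cite this standard fact.

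For (3), I would first show that $K_{0}(A)$ is generated by the classes $[S_{i}]$. Take $M\in\perf A$. By properness, $\tuH(M)$ is finite-dimensional, so $M$ has only finitely many nonzero cohomologies. Since $A$ is non-positive, the standard $t$-structure on $\sfD(A)$ exists and has heart $H^{0}\Mod$. Using truncation triangles $\tau^{<n}M \to M \to \tau^{\geq n}M$, I would obtain
\[
[M] = \sum_{n}(-1)^{n}[\tuH^{n}(M)]
\]
in $K_{0}$, where each $\tuH^{n}(M)$ is a finite-dimensional $H^{0}$-module, hence perfect by the fact above. Taking composition series then expresses $[\tuH^{n}(M)]$ as an integer combination of the $[S_{j}]$. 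So $[M] = \sum_{j} c_{j}[S_{j}]$ for some integers $c_{j}$. Pairing with $[P_{i}]$ through the Euler--Ringel form, which is additive on triangles, and using (1) gives $c_{i} = \ERagl{P_{i},M} = \Euch_{i}(M)$. This proves (3).

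For (2), by (3) the $[S_{i}]$ span $K_{0}(A)$, and by (1) they are linearly independent since $\ERagl{P_{i},S_{j}}=\delta_{ij}$. The same duality shows the $[P_{i}]$ are independent. To see that the $[P_{i}]$ span, I would note that $A = \bigoplus_{i} P_{i}$ up to multiplicities and that $\perf A$ is thick-generated by $A$, so every class is a combination of the $[P_{i}]$. Alternatively, the dual-basis relation together with the fact that both sets have $r$ elements over the free abelian group spanned by the $[S_{i}]$ suffices, once one checks that the pairing matrix is unimodular, which is immediate since it is the identity. The main point requiring care is the perfectness of the finite-dimensional modules $S_{j}$ and $\tuH^{n}(M)$, which is where smoothness is used.
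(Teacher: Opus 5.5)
Your proposal is correct and follows essentially the same route as the paper. You prove (1) by direct computation and (3) by d\'evissage of $M$ into the shifted cohomology modules $\tuH^{n}(M)[-n]$ and their composition factors; your identification of the coefficients by pairing with $[P_{i}]$ is equivalent to the paper's count via $\dim e_{i}\tuH^{n}(M)$. You then get (2) from spanning plus the duality in (1).

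For the spanning of the $[P_{i}]$, rely on your unimodularity argument: the map $K_{0}(A)\to\ZZ^{r}$, $x\mapsto(\ERagl{x,[S_{j}]})_{j}$, is a surjection between free groups of rank $r$ sending $[P_{i}]$ to the standard basis, hence an isomorphism. Thick generation by $A$ alone does not control the classes of direct summands, so your first justification is incomplete as stated. The paper passes over the same point when it asserts that the $P_{i}$ generate $\perf A$ as a triangulated category.
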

 
 \begin{proof}
 (1) is clear. 
 
 (2) 
 Since the sets $\{ P_{i}  \}_{i=1}^{r}$ and $\{ S_{i} \}_{i =1}^{r}$ of objects of $\perf A$ generate the triangulated category 
 $\perf A$,  the subset $\{ [P_{i}] \}_{i=1}^{r}$ and $\{ [S_{i}] \}_{i =1}^{r}$ are generators of $K_{0}(A)$. 
 It follows from (1) that these subsets are linearly independent. 
 
 (3) 
 Since $M$ is obtained as an extension of the shift of the cohomology groups $\tuH^{n}(M)[-n]$, 
 all but finitely many of which  are zero, 
we have the first equality in the following equations that yields the desired equality:
\[
\begin{split}
[M] 
  = \sum_{n \in \ZZ} (-1)^{n} [\tuH^{n}(M)] 
  = \sum_{n \in \ZZ}\sum_{i =1}^{r}  (-1)^{n} \dim(e_{i}\tuH^{n}(M)) [S_{i}] 
  = \sum_{i =1}^{r}  \Euch_{i}(M)[S_{i}]. \\ 
 \end{split}
 \] 
 \end{proof}

\subsubsection{The Cartan matrix and the Coxeter matrix}\label{202105081902}

From now we follow notation of Auslander--Reiten theory of a triangulated category \cite{Happel Book},  
and set  
\[\nu:= \sfS=\tuD(A)\lotimes_{A} -, \ \nu_{1} :=  \nu [-1] =\tuD(A) \lotimes_{A} (\Sigma^{-1} A) \lotimes_{A} -. 
\]
It is the Auslander--Reiten translation of the triangulated category $\perf A$.   
We may identify the quasi-inverse  functor $\nu^{-1}$ with $A^{\vvee} \lotimes_{A}-$.
To simplify the notation, we set $\Pi_{1} :=\Sigma A^{\vvee}= (\Sigma A) \lotimes_{A} A^{\vvee}$ so that  $\nu_{1}^{-1} =\Pi_{1} \lotimes_{A} -$.

We recall that  the Cartan matrix $C$  is a matrix defined by 
\[
C := (\Euch(e_{i} \tuH(A) e_{j}) )_{i,j} =(\Euch(\RHom_{A}(P_{i}, P_{j}))_{i,j}.
\]
It follows from Lemma \ref{202509181508} that 
the matrix $C$ is the transition matrix from the basis $\{ [S_{i}]\}_{i}$ to the basis $\{ [P_{i} ]\}_{i}$ 
and hence that it is a nonsingular matrix. 

Then the Coxeter matrix $\Phi$ (for left modules) is defined by 
\[
\Phi := - C^{t} C^{-1}. 
\]

 We identify $K_{0}(A)$ with $\ZZ^{\oplus r}$ by the ordered basis $\{ [S_{i}] \}_{i=1}^{r}$. (Note that later we will use a different identification by the ordered basis $\{ [P_{i}] \}_{i=1}^{r}$.)
Then  Lemma \ref{202509181508}(3) shows that  for $M \in \perf A$, the corresponding element $[M] \in K_{0}(A)$ is identified with  
the dimension vector $\Euvect(M)$: 
\[ 
\Euvect(M) := (\Euch_{1}(M), \Euch_{2}(M), \ldots, \Euch_{r}(M) )^{t}  \in \ZZ^{\oplus r}
\]
where $(-)^{t}$ denotes the transpose of column vectors.

Let  $F$ be an exact endofunctor   of $\perf A$. 
We denote by $\underline{F}$ the representation matrix of 
the induced map $[F]: K_{0}(A) \to K_{0}(A), [M] \mapsto [F(M)]$  with respect to the ordered basis $\{[S_{i}]\}_{i=1}^{r}$. 
In other words, $\underline{F}$ is the square matrix satisfying  
\[
\underline{F}\Euvect(M) = \Euvect(F(M))  
\]
for all $M \in \perf A$.

The representation matrix of 
the  map $[F]: K_{0}(A) \to K_{0}(A)$ with respect to the ordered basis $\{[P_{i}]\}_{i=1}^{r}$ is 
given by $C^{-1}\underline{F}C$. 
 Using the orthogonality in Lemma \ref{202509181508}(1), we obtain the following lemma. 

\begin{lemma}\label{2025091822221}
If   $F$ is an exact autofunctor  of $\perf A$. Then, 
The representation matrix of 
the  map $[F]: K_{0}(A) \to K_{0}(A)$ with respect to the ordered basis $\{[P_{i}]\}_{i=1}^{r}$ is 
given by 
$\underline{F}^{-t}$.
\end{lemma}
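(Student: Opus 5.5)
The plan is pure linear algebra on $K_{0}(A)$. The two matrices of $[F]$ are already related: with respect to $\{[P_{i}]\}_{i}$ the matrix is $C^{-1}\underline{F}C$. So it suffices to prove
\[
C^{-1}\underline{F}C=\underline{F}^{-t},
\]
or equivalently $\underline{F}^{t}C\,\underline{F}=C$. The idea is that this identity expresses invariance of the Euler--Ringel form under an autoequivalence.

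First I rewrite the Euler--Ringel form in coordinates. Lemma \ref{202509181508}(2) says that $\{[P_{i}]\}_{i}$ and $\{[S_{i}]\}_{i}$ are dual bases. Lemma \ref{202509181508}(3) says that $[M]$ has coordinate vector $\Euvect(M)$ in the basis $\{[S_{i}]\}_{i}$. By definition $[P_{j}]=\sum_{i}C_{ij}[S_{i}]$, i.e. $\Euvect(P_{j})$ is the $j$-th column of $C$. Bilinearity then gives
\[
\ERagl{[N],[M]}=\Euvect(N)^{t}\,C^{-t}\,\Euvect(M).
\]
To see this, write $[N]=\sum_{k}y_{k}[P_{k}]$ with $y=C^{-1}\Euvect(N)$. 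By orthogonality, $\ERagl{[P_{k}],[M]}=\Euch_{k}(M)$.

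Second, I use that $F$ is an exact autoequivalence. Then $\RHom_{A}(F(N),F(M))\cong\RHom_{A}(N,M)$, at least on cohomology, which is all that $\Euch$ needs. Hence $\ERagl{[F N],[F M]}=\ERagl{[N],[M]}$. Since classes of objects span $K_{0}(A)$, the coordinate formula gives $\underline{F}^{t}C^{-t}\underline{F}=C^{-t}$. Transposing yields $\underline{F}^{t}C^{-1}\underline{F}=C^{-1}$, so $C^{-1}\underline{F}=\underline{F}^{-t}C^{-1}$.

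This is not yet the desired identity: it is the one for $C^{-t}$ in place of $C$. To finish, I compute directly the matrix of $[F]$ in the $P$-basis. The $(k,j)$ entry is the $[P_{k}]$-coefficient of $[F(P_{j})]$. By duality this coefficient is $\ERagl{[F P_{j}],[S_{k}]}$. By invariance under $F$ it equals $\ERagl{[P_{j}],[F^{-1}S_{k}]}=\Euch_{j}(F^{-1}S_{k})=(\underline{F^{-1}})_{jk}$. So the matrix is $(\underline{F}^{-1})^{t}=\underline{F}^{-t}$, as required.

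The main obstacle is bookkeeping with the order of arguments in the Euler--Ringel form. One has to use $\ERagl{P_{i},S_{j}}=\delta_{ij}$ in the correct slot: the $P$'s go in the first argument, which is what makes the $[P_{k}]$-coefficient of $[X]$ equal to $\ERagl{[X],[S_{k}]}$. With that in place, the argument uses only that $F$ preserves $\RHom$ up to quasi-isomorphism and that $\underline{F^{-1}}=\underline{F}^{-1}$.
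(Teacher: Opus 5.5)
Your proposal is correct, and your final paragraph is essentially the paper's argument: the paper only notes that the $\{[P_i]\}$-matrix is $C^{-1}\underline{F}C$ and invokes the orthogonality $\ERagl{P_i,S_j}=\delta_{ij}$, leaving implicit the invariance of the Euler--Ringel form under the autoequivalence $F$, which you make explicit. One slip: $C^{-1}\underline{F}C=\underline{F}^{-t}$ is equivalent to $\underline{F}^{t}C^{-1}\underline{F}=C^{-1}$, not to $\underline{F}^{t}C\underline{F}=C$ (the latter fails in general, e.g.\ for $F=\nu$ with $\underline{\nu}=C^{t}C^{-1}$ and $A$ the path algebra of $A_{2}$), so the identity $C^{-1}\underline{F}=\underline{F}^{-t}C^{-1}$ from your second step already finishes the proof after right multiplication by $C$, contrary to your remark that it is ``not yet the desired identity''.
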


We leave the verification of the following lemma to the readers. 

\begin{lemma}\label{202509182222}

The following statement hold. 

\begin{enumerate}[(1)]

\item 
Let  $F$ be an exact endofunctor   of $\perf A$. Then, 
$\underline{F} = (\ERagl{P_{i}, FS_{j}})_{i,j}$.

\item $\underline{\nu} = - \Phi$ and hence $\underline{\nu_{1}} = \Phi$.

\end{enumerate}
\end{lemma}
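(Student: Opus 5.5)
Part (1) is a bookkeeping identity. By definition $\underline{F}$ is the matrix of $[F]$ in the ordered basis $\{[S_j]\}_j$, so its $j$-th column is the coordinate vector of $[F(S_j)]$ in that basis. Lemma \ref{202509181508}(3) says this coordinate vector is $\Euvect(F(S_j))$, whose $i$-th entry is $\Euch_i(F(S_j)) = \ERagl{P_i, FS_j}$. This gives $\underline{F}=(\ERagl{P_i,FS_j})_{i,j}$ directly.

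For (2), I apply (1) with $F=\nu=\tuD(A)\lotimes_A-$. Then I use Serre duality (Proposition \ref{202508171742}) with $M=P_i$ and $N=S_j$:
\[
\RHom_A(P_i,\nu S_j)\cong \tuD\RHom_A(S_j,P_i).
\]
Since $\Euch(\tuD V)=\Euch(V)$, this gives $\ERagl{P_i,\nu S_j}=\ERagl{S_j,P_i}$. So $\underline{\nu}$ is the transpose of the matrix $E:=(\ERagl{S_i,P_j})_{i,j}$, and it remains to identify $E$ in terms of $C$.

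To do this, expand $[P_j]=\sum_k C_{kj}[S_k]$; this is Lemma \ref{202509181508}(3) together with $\Euch_k(P_j)=\Euch(\RHom_A(P_k,P_j))=C_{kj}$. Hence, writing $G:=(\ERagl{S_i,S_k})_{i,k}$, we get $E=GC$. Next expand $[S_k]=\sum_l (C^{-1})_{lk}[P_l]$ in the first slot and use the duality $\ERagl{P_l,S_m}=\delta_{lm}$ from Lemma \ref{202509181508}(1). This gives $G=C^{-t}$, hence $E=C^{-t}C$ and $\underline{\nu}=E^{t}=C^{t}C^{-1}=-\Phi$.

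The only delicate point is keeping the index order and transposes consistent with the chosen convention $\Phi=-C^tC^{-1}$. Here the transpose produced by Serre duality matches the one in that convention. Finally $\nu_1=\nu\circ\Sigma^{-1}$, and $[\Sigma^{-1}M]=-[M]$ in $K_0(A)$, so $\underline{\nu_1}=-\underline{\nu}=\Phi$.
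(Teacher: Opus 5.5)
Your argument is correct, and it is exactly the routine verification the paper leaves to the reader. Part (1) is read off column by column from Lemma~\ref{202509181508}, and part (2) follows from Serre duality together with $[P_j]=\sum_k C_{kj}[S_k]$ and the duality $\ERagl{P_l,S_m}=\delta_{lm}$, giving $\underline{\nu}=C^{t}C^{-1}=-\Phi$. The one slip is in the labelling: Proposition~\ref{202508171742} states $\tuD\RHom_A(N,\tuD(A)\lotimes_A M)\cong\RHom_A(M,N)$, so your displayed isomorphism comes from $N=P_i$, $M=S_j$ followed by applying $\tuD$ (harmless, since $\RHom_A(S_j,P_i)$ has finite-dimensional total cohomology), not from $M=P_i$, $N=S_j$.
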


It follows from Lemma \ref{202509182222}(2) that we have 
\begin{equation}\label{202509211245}
\Euvect(\nu(M)) = -\Phi \Euvect(M)
\end{equation}
for $M \in \perf A$.

\subsubsection{Weighted trace and  weighted Euler characteristic}

It is convenient to introduce the notions of the weighted trace and the weighted Euler characteristic.

For $M \in \perf A$, we define the \emph{weighted Euler characteristic} ${}^{v}\!\Euch(M)$ to be  
\[
{}^{v}\!\Euch(M)  = v^{t}\Euvect(M)= \sum_{i=1}^{r}v_{i} \Euch_{i}(M),
\]
i.e., the product of the row vector $v^{t}$ and the column vector $\Euvect(M)$. 
It follows that 
\begin{equation}\label{202102061837}
{}^{v}\!\Euch(F(M)) = {}^{\underline{F}^{t}(v)}\Euch(M). 
\end{equation}

In particular, it follows from \eqref{202509211245} that setting $\Psi := \Phi^{-t}$, we have 
\begin{equation}\label{202111101530}
{}^{v}\!\Euch(\nu_{1}^{-1}(M) )  = {}^{\Psi(v)} \Euch(M). 
\end{equation}

Let $M \in \perf A$ and $f: M \to M$. 
For $v =(v_{i})_{i=1}^{r} \in \kk^{\oplus r}$, 
we define the \emph{weighted trace} ${}^{v}\!\Tr(f)$ of $f$ to be the weighted sum of traces of $e_{i} \tuH(f): e_{i}\tuH(M) \to e_{i}\tuH(M)$. 
\[
{}^{v}\!\Tr(f) := \sum_{i=1}^{r} v_{i} \Tr\left[e_{i}\tuH(f): e_{i}\tuH(M) \to e_{i}\tuH(M)  \right]. 
\]
We point out the equality  ${}^{v}\!\Tr(\id_{M} ) = {}^{v}\!\Euch(M)$.

%
%
%
%
%

\subsection{Universal Auslander--Reiten triangle}

From now, we identify $K_{0}(A)$ with $\ZZ^{\oplus r}$ by the ordered basis $\{ [P_{i}] \}_{i=1}^{r}$. 
We set $K_{0}(A)_{\kk} := K_{0} (A) \otimes_{\ZZ} \kk \cong \kk^{\oplus r}$.  
We denote the extension of coefficient to $\kk$  of the Chern character map by the same symbol: 
\[
\ch: K_{0} (A)_{\kk}  \to \tuHH_{0}(A).
\]

For $v \in K_{0}(A)_{\kk}$, we denote by ${}^{v}\!\theta: A^{\vvee} \to A$ the element of $\Hom_{A^{\mre}}(A^{\vvee}, A)$ 
that corresponds to $\ch_{v} \in \tuHH_{0}(A)$ under the isomorphism $\tuHH_{0}(A) \cong \Hom_{A^{\mre}}(A^{\vvee}, A)$.
We denote a cone of ${}^{v}\!\theta$ by ${}^{v}\Xi$ and  set the induced exact triangle in $\perf A^{\mre}$ as follows: 
\[
{}^{v}\!\sfAR: A \xrightarrow{  \ {}^{v}\!\varrho \ } {}^{v}\Xi \xrightarrow{ \ {}^{v}\!\pi \ }\Pi_{1} \xrightarrow{ \ - {}^{v}\!\theta \ } \Sigma A. 
\]

For simplicity, we set $\ResEnd_{A}(M) := \End_{A}(M)/\rad \End_{A}(M)$ for $M \in \perf A$. 
We adopt the following notation: for a morphism $f: X \to Y $ in $\perf A^{\mre}$ and $M \in \perf A$, 
we set $f_{M} := f \lotimes M: X \lotimes_{A} M \to Y \lotimes_{A} M$. 
We also use  similar notation in the case that $M$ is a right dg $A$-module. 

\begin{theorem}\label{202508302231}
Let $A$ be a non-positive smooth and proper dg-algebra, 
$M$ an indecomposable object of $\perf A$ and $v \in K_{0}(A)_{\kk}$. 
Assume that ${}^{v}\!\Euch(M) \neq 0$. 
Then the exact triangle 
\[
{}^{v}\!\sfAR_{M}: M  \xrightarrow{  \ {}^{v}\!\rho_{M} \ } {}^{v}\Xi \lotimes_{A} M 
\xrightarrow{ \ {}^{v}\!\pi_{M} \ } \nu_{1}^{-1}( M )\xrightarrow{ \ - {}^{v}\!\theta_{M} \ } \Sigma M. 
\]
is AR-triangle in $\perf A$.

Moreover, if $\dim \ResEnd_{A}(M)= 1$, then 
the exact triangle ${}^{v}\!\sfAR_{M}$ splits if and only if ${}^{v}\!\Euch(M) = 0$. 
\end{theorem}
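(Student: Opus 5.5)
We apply Happel's criterion (Theorem \ref{Happel's criterion}). In the form we use, it says: an exact triangle $M \to E \to \nu_{1}^{-1}(M) \xrightarrow{w} \Sigma M$ with $M$ indecomposable is an AR-triangle once $w \neq 0$ and $w$ lies in the socle of $\Hom_{A}(\nu_{1}^{-1}M, \Sigma M)$ as an $\End_{A}(M)$-module. Equivalently, $w\circ \nu_1^{-1}(r)=0$ for every $r\in\rad\End_A(M)$, or $\Sigma r \circ w=0$. So the work is to identify the connecting morphism ${}^{v}\!\theta_{M}$ via Serre duality.

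First we identify ${}^{v}\!\theta_{M}$. The morphism ${}^{v}\!\theta\in\Hom_{A^{\mre}}(A^{\vvee},A)$ corresponds to $\ch_{v}$. By Lemma \ref{202508171844}, its image under $-\lotimes_{A}M$ is $\sigma_{M}(\ch_{v})\in\Hom_{A}(A^{\vvee}\lotimes_A M,M)$. After applying $\Sigma$ (suspending both sides, with the sign from Lemma \ref{202511031307} absorbed into the $-$ in the triangle), ${}^{v}\!\theta_{M}$ is therefore the shift of $\sigma_M(\ch_v)$.

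Next we pair with endomorphisms. Write $v=\sum v_{i}[P_{i}]$, so that $\ch_{v}=\sum v_i\ch_{P_i}$ by linearity. For $f\in\End_{A}(M)$, Corollary \ref{202508301745} gives
\[
\invserreagl{f,\sigma_{M}(\ch_{v})}=\sum_i v_i\Tr\bigl[f\circ-:\RHom_A(P_i,M)\to\RHom_A(P_i,M)\bigr]={}^{v}\!\Tr(f),
\]
since $\tuH\RHom_A(P_i,M)=e_i\tuH(M)$. From this we draw two consequences:
\begin{itemize}
\item Taking $f=\id_M$ gives $\invserreagl{\id_M,\sigma_M(\ch_v)}={}^{v}\!\Euch(M)\neq0$, so $\sigma_{M}(\ch_{v})\neq 0$.
\item Now let $r\in\rad\End_A(M)$. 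Since $\End_A(M)$ is finite dimensional and local, $r$ is nilpotent. Hence $\tuH(r)$ is nilpotent on each $e_i\tuH(M)$, so ${}^{v}\!\Tr(r)=0$.
\end{itemize}

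We then use the nondegeneracy of the Serre pairing together with its compatibility with composition: $\invserreagl{f,g\circ h}=\invserreagl{h\circ f,g}$ up to sign, which comes from the naturality \eqref{202508171549}. For any $g\in\End_A(M)$ we get $\invserreagl{g,\sigma_M(\ch_v)\circ\nu^{-1}(r)}=\pm\invserreagl{rg,\sigma_M(\ch_v)}=\pm{}^{v}\!\Tr(rg)=0$, because $rg\in\rad$. Since the pairing is perfect in the $\End_A(M)$ slot, $\sigma_M(\ch_v)\circ\nu^{-1}(r)=0$. This is exactly the socle condition, so Happel's criterion yields the first statement.

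For the "moreover" part, assume $\dim\ResEnd_A(M)=1$. Then every $f\in\End_A(M)$ has the form $\lambda\id_M+r$ with $r\in\rad\End_A(M)$, so ${}^{v}\!\Tr(f)=\lambda\,{}^{v}\!\Euch(M)$. If ${}^{v}\!\Euch(M)=0$, the functional $\invserreagl{-,\sigma_M(\ch_v)}$ vanishes on all of $\End_A(M)$. By nondegeneracy $\sigma_M(\ch_v)=0$, so the connecting morphism is zero and the triangle splits. The converse is the first part, since an AR-triangle has nonzero connecting morphism.

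The main obstacle is the bookkeeping of signs and identifications. One has to check that the $-$ sign and the suspension $\Pi_1=\Sigma A^{\vvee}$ do not spoil the identification with $\sigma_M(\ch_v)$; this should follow from Lemma \ref{202511031307}. One also has to confirm that the Serre pairing's compatibility with pre- and post-composition holds up to sign, so that vanishing is unaffected.
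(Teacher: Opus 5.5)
Your proposal is correct and follows the paper's proof. It identifies ${}^{v}\!\theta_{M}$ with $\sigma_{M}(\ch_{v})$ via Lemma \ref{202508171844}, uses the trace formula (Corollary \ref{2025083017451}) to get $\invserreagl{\id_{M},{}^{v}\!\theta_{M}}={}^{v}\!\Euch(M)$ and $\invserreagl{r,{}^{v}\!\theta_{M}}=0$ for nilpotent $r\in\rad\End_{A}(M)$, applies Happel's criterion, and then uses that $(\rad\End_{A}(M))^{\perp}$ is one-dimensional for the splitting statement. The only difference is your detour through the socle condition $w\circ\nu_{1}^{-1}(r)=0$, which is unnecessary because the two pairing values you compute are exactly the hypotheses of Theorem \ref{Happel's criterion} as stated in the paper. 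Likewise, Lemma \ref{202511031307} plays no role in the sign bookkeeping, which is harmless in any case.
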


A key to the proof  is  the following result which is immediately obtained from   Lemma \ref{202508171844} and  Corollary \ref{202508301745}. 

\begin{corollary}[]\label{2025083017451}
In the above setting,
 let  $v :=\sum_{i=1}^{r}v_{i} [P_{i}] \in K_{0}(A)_{\kk}$. 
For a morphism  $f: M \to M $,  we have  
\[
\invserreagl{f, {}^{v}\!\theta_{M} } ={}^{v}\!\Tr(f). 
\]
\end{corollary}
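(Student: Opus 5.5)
We prove Corollary \ref{2025083017451} by combining Lemma \ref{202508171844} with Corollary \ref{202508301745} and then using linearity in $v$.

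First we identify ${}^{v}\!\theta_{M}$ with the bulk--boundary map. By definition, ${}^{v}\!\theta\in\Hom_{A^{\mre}}(A^{\vvee},A)$ corresponds to $\ch_{v}\in\tuHH_{0}(A)$. Lemma \ref{202508171844} says that, under this identification, $\sigma_{M}$ is the map induced by $-\lotimes_{A}M$. Hence
\[
{}^{v}\!\theta_{M}={}^{v}\!\theta\lotimes_{A}M=\sigma_{M}(\ch_{v})\in\Hom_{A}(A^{\vvee}\lotimes_{A}M,M).
\]
We must check that the sign and shift conventions match: the AR triangle uses $-{}^{v}\!\theta$ and $\Pi_{1}=\Sigma A^{\vvee}$, but the statement concerns ${}^{v}\!\theta_{M}$ itself as a map $A^{\vvee}\lotimes_{A} M\to M$, so no sign from Lemma \ref{202511031307} enters.

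Next we reduce to a single projective. Since $\ch$ is extended $\kk$-linearly, $\ch_{v}=\sum_{i}v_{i}\ch_{P_{i}}$. The maps $\sigma_{M}$ and $\invserreagl{f,-}$ are $\kk$-linear, so
\[
\invserreagl{f,{}^{v}\!\theta_{M}}=\sum_{i}v_{i}\,\invserreagl{f,\sigma_{M}(\ch_{P_{i}})}.
\]
Corollary \ref{202508301745} with $N=P_{i}$ gives
\[
\invserreagl{f,\sigma_{M}(\ch_{P_{i}})}=\Tr\bigl[f\circ-:\RHom_{A}(P_{i},M)\to\RHom_{A}(P_{i},M)\bigr].
\]

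It remains to compute this trace. Under $\tuH(\RHom_{A}(P_{i},M))\cong e_{i}\tuH(M)$, postcomposition with $f$ becomes $e_{i}\tuH(f)$; this holds because the isomorphism $\RHom_{A}(A,M)\cong M$ is natural in $M$ and compatible with the idempotent $e_{i}$. Taking the super trace on cohomology gives $\Tr[e_{i}\tuH(f)]$, and summing with weights $v_{i}$ yields ${}^{v}\!\Tr(f)$.

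The only real obstacle is bookkeeping. We must confirm that Corollary \ref{202508301745} is stated for the pairing $\invserreagl{-,+}$ in the same slot order used here, and that the super-trace signs of Section \ref{202509171618} agree with the definition of ${}^{v}\!\Tr$. Both are direct from the definitions, so no further argument is needed.
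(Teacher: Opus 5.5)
Your proposal is correct and is essentially the paper's argument: the paper obtains this corollary immediately by combining Lemma \ref{202508171844}, which identifies ${}^{v}\!\theta_{M}$ with $\sigma_{M}(\ch_{v})$, with Corollary \ref{202508301745} applied to $N=P_{i}$. The remaining steps are the same as yours, namely expanding linearly in $v$ and using $\tuH(\RHom_{A}(P_{i},M))\cong e_{i}\tuH(M)$, naturally in $M$, to identify the trace of $f\circ-$ with that of $e_{i}\tuH(f)$.
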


\begin{remark}
This formula and  the universal Auslander--Reiten triangle were first established 
in \cite{QHA} for  the case where $A=\kk Q$ is the path algebra of a finite acyclic quiver $Q$, 
by direct computation of complexes. 
\end{remark}

\begin{proof}[Proof of Theorem \ref{202508302231}]
We apply Happel's criterion (Theorem \ref{Happel's criterion}) to $s:= {}^{v}\! \theta_{M}$.
If an endomorphism $f: M \to M$ belongs to the radical   $\rad \End_{A}(M)$, 
then it is nilpotent and hence $\Tr\left[e_{i}\tuH(f): e_{i}\tuH(M) \to e_{i}\tuH(M)  \right] = 0$ for all $i=1,2,\dots, r$. 
Consequently, by Corollary \ref{2025083017451}, we have $\invserreagl{f,s} = 0$ for any $f \in \rad \End_{A}(M)$. 
On the other hand, by Corollary \ref{2025083017451}, we have $\invserreagl{\id_{M},s} ={}^{v}\!\Euch(M)$. 
Thus by Happel's criterion, in the case ${}^{v}\!\Euch(M) \neq 0$, the exact triangle ${}^{v}\!\sfAR_{M}$ is an AR-triangle. 

Assume that $\dim \ResEnd_{A}(M) = 1$.  
Then, it is immediately follows from  Lemma \ref{202102071323} that  ${}^{v}\!\Euch(M) = 0$ if and only if  $s =0$.  
\end{proof}

We call an element $v$ of $K_{0}(A) \cong \kk^{\oplus r}$ a weight. 

\begin{definition}\label{202509171540}
A weight $v \in K_{0}(A)$ is said to be \emph{regular} 
if ${}^{v}\!\Euch(M)\neq 0$ for any indecomposable object $M$ of $\perf A$. 
\end{definition}

\begin{remark}
(1) 
We explain  the terminology. 
Assume that the base field $\kk$ is algebraically closed and of characteristic $0$. 
Let $Q$ be a Dynkin quiver and $A := \kk Q$ the path algebra. 
Then the vector space $K_{0}(Q) \cong \kk Q_{0}$ may be identified with the Cartan subalgebra $\mathfrak{h}$ of the semi-simple Lie algebra $\mathfrak{g}$ corresponding to $Q$. 
By Gabriel's theorem \cite{Gabriel}, which states that  the dimension vectors of indecomposable $A$-modules are precisely the positive roots of $\mathfrak{g}$, 
the regularity defined above   coincides with the  usual notion of regularity for an element  of the Cartan subalgebra $\mathfrak{h}$. 

(2) Since $\Euvect(M)$ belong to $\ZZ^{\oplus r}$ for any $M\in \perf A$, 
a regular weight $v$ exists provided that $[\kk: P] \geq r$ where $P$ denotes the prime field of $\kk$.  
\end{remark} 

Finally we obtain the following result.

\begin{corollary}[{Universal Auslander--Reiten triangle}]\label{202509171554}
Let $A$ be a non-positive smooth and proper dg-algebra 
and $v \in K_{0}(A)_{\kk}$ a regular weight. 
Then for any indecomposable object 
$M \in \perf A$, the exact triangle ${}^{v}\!\sfAR_{M}$ is an Auslander--Reiten triangle. 
\[
{}^{v}\!\sfAR_{M} : M  \xrightarrow{  \ {}^{v}\!\rho_{M} \ } {}^{v}\Xi \lotimes_{A} M 
\xrightarrow{ \ {}^{v}\!\pi_{M}  \ } \Sigma A^{\vvee} \lotimes_{A} M \xrightarrow{ \ - {}^{v}\!\theta_{M}  \ } \Sigma  M. 
\] 

In particular, AR-triangles admit a functorial construction.
\end{corollary}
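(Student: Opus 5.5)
This corollary follows directly from Theorem \ref{202508302231}; the plan is to check its hypothesis for every indecomposable $M$ and then identify the third term of the triangle.

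First, fix an indecomposable object $M \in \perf A$. By Definition \ref{202509171540}, regularity of $v$ means exactly that ${}^{v}\!\Euch(M) \neq 0$ for every indecomposable $M$. This is the only hypothesis of Theorem \ref{202508302231} beyond $A$ being non-positive, smooth and proper and $M$ being indecomposable. So Theorem \ref{202508302231} applies and shows that ${}^{v}\!\sfAR_{M} = {}^{v}\!\sfAR \lotimes_{A} M$ is an Auslander--Reiten triangle.

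Second, I will match the displayed form of the triangle with the one in Theorem \ref{202508302231}. By definition $\Pi_{1} = \Sigma A^{\vvee}$ and $\nu_{1}^{-1} = \Pi_{1} \lotimes_{A} -$, so $\nu_{1}^{-1}(M) = \Sigma A^{\vvee} \lotimes_{A} M$. Likewise $(\Sigma A)\lotimes_{A} M \cong \Sigma M$ via the canonical isomorphism $\delta$. Applying the exact functor $-\lotimes_{A} M$ to the triangle ${}^{v}\!\sfAR$ in $\perf A^{\mre}$ gives an exact triangle in $\perf A$, because derived tensor product is a triangulated functor. Its maps are by notation ${}^{v}\!\rho_{M}$, ${}^{v}\!\pi_{M}$ and $-{}^{v}\!\theta_{M}$. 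One should only check that the sign convention coming from $\delta$ is the same as the one already used in Theorem \ref{202508302231}, and it is, since both triangles are the same object.

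Third, for the functoriality claim: the triangle ${}^{v}\!\sfAR$ is a single triangle of bimodules, independent of $M$. So $M \mapsto {}^{v}\!\sfAR \lotimes_{A} M$ is given by the functor $-\lotimes_{A} M$, which is natural in $M$, and it produces the AR-triangle starting at every indecomposable $M$ simultaneously.

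I do not expect a real obstacle, since all the work was done in Theorem \ref{202508302231} through Corollary \ref{2025083017451} and Happel's criterion. The only point to watch is the identification of $\nu_{1}^{-1}$ with $\Sigma A^{\vvee}\lotimes_{A}-$, which uses Corollary \ref{202508181504}.
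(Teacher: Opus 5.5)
Your proposal is correct and follows the paper's route: the corollary is Theorem \ref{202508302231} applied to each indecomposable $M$, with the hypothesis ${}^{v}\!\Euch(M)\neq 0$ supplied exactly by the regularity of $v$ (Definition \ref{202509171540}). The displayed third term is just the notational identification $\nu_{1}^{-1}(M)=\Pi_{1}\lotimes_{A}M=\Sigma A^{\vvee}\lotimes_{A}M$, and functoriality holds because every triangle ${}^{v}\!\sfAR_{M}$ is the image of the single bimodule triangle ${}^{v}\!\sfAR$ under $-\lotimes_{A}M$.
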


\subsubsection{Compatibility with the action of the derived Picard group}

Recall from the discussion before Corollary \ref{202509171827} that 
$\Hom_{A^{\mre}}(A^{\vvee}, A) \cong \tuHH_{0}(A)$ acquires the action of the derived Picard group $\DPic_{\kk} (A)$, 
denoted  $T \cdot \phi$ for $\phi \in \Hom_{A^{\mre}}(A^{\vvee}, A)$ and $T \in \DPic_{\kk}A$. 
Abusing the notation, if  we write  $F:= T\lotimes_{A} -$  for the functor induced by $T \in \DPic_{\kk}(A)$, 
we simply set $F\cdot \phi := T\cdot \phi$. 
Similarly, we denote by $\gamma_{F}:  \sfS^{-1}F  \to  F\sfS^{-1}$ the natural isomorphism 
induced from the canonical isomorphism $\gamma_{T}: A^{\vvee} \lotimes_{A} T \to T\lotimes_{A} A^{\vvee}$ 
given in the discussion before Corollary   \ref{202509201321}. 

As in the final part of the proof of Lemma \ref{202508171844}, 
we can show that the morphism $(\lcounit_{T})^{\vvee}:A^{\vvee} \to (T\lotimes_{A} T^{\lvvee})^{\vvee}$ 
coincides with the composition 
\[
A^{\vvee} \cong A \lotimes_{A} A^{\vvee} \xrightarrow{ \ \runit_{T} \lotimes \id \  } (T \lotimes_{A} T^{\rvvee})\lotimes_{A}A^{\vvee} 
\cong (T \lotimes_{A} T^{\lvvee})^{\vvee}.
\]
It follows from \eqref{202509210026} that 
the natural transformation $F \cdot \phi: \sfS^{-1} \to \id$ is  given by the  composition 
\[
F\cdot \phi : \nu^{-1}\xrightarrow{\cong} FF^{-1} \sfS^{-1}  \xrightarrow{ F((\gamma_{F^{-1}})^{-1}) } F\sfS^{-1} F^{-1} \xrightarrow{ \ F \phi F^{-1} \ } F F^{-1}\xrightarrow{\cong} \id.
\] 
Now it is immediate to deduce the following lemma.

\begin{lemma}\label{202101131703}
Let $T \in \DPic_{\kk}(A)$ and 
 $F:= T \lotimes_{\Pa}-$  the associated  exact autoequivalence of $\perf A$. 
Then for $ \phi \in \Hom_{A^{\mre}}(A^{\vvee}, A)$, we have 
\[
F( ( F^{-1}\cdot\phi)_{M}) = (\phi_{F(M)} ) \gamma_{F, M}. 
\]
In other words, the following diagram is commutative
\[
\begin{xymatrix}@C=60pt{
\sfS^{-1}F(M) \ar[r]^{\phi_{F(M)} }  \ar[d]_{\gamma_{F,M}} 
& F(M) \ar@{=}[d] \\
F\sfS^{-1}(M) \ar[r]_{ \  F( (F^{-1} \cdot \phi)_{M}) } &F(M)
}\end{xymatrix}
\]
\end{lemma}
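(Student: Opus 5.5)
We derive the lemma from the composite description of $F\cdot\phi$ displayed just before it, applied to $F^{-1}$ in place of $F$. The only extra ingredient is the compatibility of $\gamma$ with composition and inversion of invertible bimodules. The paper states that compatibility only tacitly, so it has to be made explicit.

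Applying the displayed formula to $F^{-1}=T^{\lvvee}\lotimes_A-$ (an invertible bimodule by Lemma \ref{202509031414}) gives
\[
F^{-1}\cdot\phi:\ \sfS^{-1}\cong F^{-1}F\sfS^{-1}\xrightarrow{F^{-1}((\gamma_{F})^{-1})}F^{-1}\sfS^{-1}F\xrightarrow{F^{-1}\phi F}F^{-1}F\cong \id .
\]
Evaluate this at $M$ and apply $F$. Using the unit/counit isomorphisms $FF^{-1}\cong\id$ (triangle identities for the adjoint pair $(T,T^{\lvvee})$), the map $F((F^{-1}\cdot\phi)_M)$ becomes the composite
\[
F\sfS^{-1}M\xrightarrow{(\gamma_{F,M})^{-1}}\sfS^{-1}FM\xrightarrow{\phi_{FM}}FM .
\]
Precomposing with $\gamma_{F,M}$ then yields $\phi_{F(M)}=F((F^{-1}\cdot\phi)_M)\,\gamma_{F,M}$, which is the claim.

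The steps I would carry out are these.
\begin{enumerate}[(1)]
\item Justify that the formula for $F\cdot\phi$ may be specialized to $F^{-1}$, with $(\gamma_{F^{-1}})$ in the formula replaced by $\gamma_{F}$ for $(F^{-1})^{-1}=F$.
\item Check that conjugating by $F$ and then cancelling $FF^{-1}$ against $F^{-1}F$ uses exactly the triangle identities, so that no extra automorphism appears.
\item Check naturality of $\gamma_F$ in $M$, which holds because it is induced from a bimodule isomorphism $\gamma_T$.
\end{enumerate}

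The main obstacle is step (1) together with the cancellation in step (2). Concretely, one must verify that $F(\gamma_{F^{-1}})$ and $\gamma_F$ are mutually inverse modulo the canonical isomorphisms $FF^{-1}\cong\id\cong F^{-1}F$; that is, $\gamma_{T\lotimes T^{\lvvee}}\cong\gamma_A=\id$ splits as the composite of $\gamma_T$ and $\gamma_{T^{\lvvee}}$.

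I would prove this by the same $2$-categorical uniqueness argument used for Lemma \ref{202508201805}. Each $\gamma$ is built from the isomorphisms $(X\lotimes Y)^{\vvee}\cong Y^{\rvvee}\lotimes X^{\vvee}\cong Y^{\vvee}\lotimes X^{\lvvee}$. These are compatible with composition of $1$-morphisms, as in the commutative diagram displayed after Lemma \ref{202508201805}, so $\gamma_{X\lotimes_B Y}=(X\lotimes\gamma_Y)(\gamma_X\lotimes Y)$. Combining this with $\gamma_A=\id$ gives the needed inverse relation.
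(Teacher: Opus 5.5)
This is correct and is exactly the paper's route: you specialize the displayed formula for $F\cdot\phi$ to $F^{-1}$, apply $F$, and cancel $FF^{-1}\cong\id$ by the triangle identities, which gives the diagram's type-correct form $\phi_{F(M)}=F((F^{-1}\cdot\phi)_M)\,\gamma_{F,M}$. The compatibility $\gamma_{X\lotimes_B Y}=(X\lotimes\gamma_Y)(\gamma_X\lotimes Y)$ that you flag as the main obstacle is not needed on this route (only naturality of $\gamma$ along the canonical identification $(T^{\lvvee})^{\lvvee}\cong T$ is), and it would enter only if you instead expanded $\phi=F\cdot(F^{-1}\cdot\phi)$.
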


The next lemma  that provides a way to compute $F\cdot ({}^{v}\!\theta)$ is an immediate consequence of Corollary \ref{202509171827} and   Lemma \ref{2025091822221}.
\begin{lemma}\label{202101081603}
Let $T \in \DPic_{\kk}(A)$ and 
 $F:= T \lotimes_{\Pa}-$  the associated  exact autoequivalence of $\perf A$. 
Then, for $v \in K_{0}(A)$, 
we have the following equality in $\Hom_{A^{\mre}}(A^{\vvee}, A)$ 
\[
F\cdot ( {}^{v}\!\theta) = {}^{\underline{F}^{-t}(v)} \theta. 
\]
\end{lemma}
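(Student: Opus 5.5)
The plan is to reduce the statement to an identity of Chern characters and then apply the injectivity of the identification $\tuHH_{0}(A) \cong \Hom_{A^{\mre}}(A^{\vvee}, A)$. By definition, ${}^{v}\!\theta$ is the element corresponding to $\ch_{v}$, where $\ch: K_{0}(A)_{\kk} \to \tuHH_{0}(A)$ is the $\kk$-linear extension of the Chern character. By construction, the action $F\cdot(-) = T\cdot(-) = \tuH^{0}(\Phi_{T})(-)$ on $\Hom_{A^{\mre}}(A^{\vvee},A)$ is transported from the action on $\tuHH_{0}(A)$ through this same identification. So it suffices to prove
\[
T\cdot \ch_{v} = \ch_{\underline{F}^{-t}(v)} \quad \text{in } \tuHH_{0}(A).
\]

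First, I will prove this on the basis. Write $v = \sum_{i} v_{i}[P_{i}]$; both sides are $\kk$-linear in $v$, since $\tuH^{0}(\Phi_{T})$ is linear. For each $i$, Corollary \ref{202509171827} gives $T\cdot \ch_{P_{i}} = \ch_{T\lotimes_{A} P_{i}} = \ch_{F(P_{i})}$. Since $\ch$ is additive on $K_{0}(A)$ by Corollary \ref{202508301624}, we get $\ch_{F(P_{i})} = \ch([F][P_{i}])$. Hence $T\cdot\ch_{v} = \ch([F](v))$, where $[F]$ is the induced map on $K_{0}(A)_{\kk}$.

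Second, I will express $[F]$ in coordinates. The paper now identifies $K_{0}(A)_{\kk} \cong \kk^{\oplus r}$ via the ordered basis $\{[P_{i}]\}$. In these coordinates, Lemma \ref{2025091822221} says that the matrix of $[F]$ is $\underline{F}^{-t}$; this lemma applies because $F$ is an exact autoequivalence, as $T$ is invertible. Therefore $[F](v) = \underline{F}^{-t}(v)$, and so $T\cdot \ch_{v} = \ch_{\underline{F}^{-t}(v)}$. Translating back through the identification yields $F\cdot({}^{v}\!\theta) = {}^{\underline{F}^{-t}(v)}\theta$.

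The only point needing care is bookkeeping of coordinates. The matrix $\underline{F}$ is defined with respect to the basis $\{[S_{i}]\}$, while weights are written in the basis $\{[P_{i}]\}$. I must check that Lemma \ref{2025091822221} is exactly the change-of-basis statement required, namely $C^{-1}\underline{F}C = \underline{F}^{-t}$. This follows from the Euler--Ringel duality of the two bases (Lemma \ref{202509181508}) together with the fact that the autoequivalence $F$ preserves the Euler form, so its matrix in a basis and in the dual basis are inverse-transpose to each other. I must also check that the $\kk$-linear extension of $\ch$ is compatible with the action; both maps are linear, so this is formal.
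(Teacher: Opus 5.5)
Your proposal is correct and takes the same route as the paper. The paper treats this lemma as an immediate consequence of the equivariance $\ch_{T\lotimes_{A} M} = T\cdot \ch_{M}$ (Corollary \ref{202509171827}), combined with Lemma \ref{2025091822221}, which says the matrix of $[F]$ in the basis $\{[P_{i}]\}$ is $\underline{F}^{-t}$; your argument does exactly this, spelling out the linearity in $v$ and the Euler-form isometry behind the change of basis.
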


Now, we give a functoriality of universal Auslander--Reiten triangle.

\begin{theorem}\label{202101131644}
Let $v \in K_{0}(A)$ and $T \in \DPic (A)$.  
We denote by 
 $F:= T \lotimes_{\Pa}-$  the associated  exact autoequivalence of $\perf A$. 
Then for $M \in \ind\perf A$ such that $\dim \ResEnd_{A}(M) =1$ and ${}^{v}\!\Euch(M) \neq 0$, the following holds. 

\begin{enumerate}[(1)] 
\item 
We have the following equality in $\Hom_{A}(\sfS^{-1}(M), M)$ 
\[
(F^{-1}\cdot{}^{v}\!\theta)_{M} = \frac{{}^{v}\!\Euch (F(M))}{{}^{v}\!\Euch (M)} {}^{v}\!\theta_{M}.
\]

\item We have the following equality in $\Hom_{A}(\sfS^{-1}F(M), F(M))$ 
\[
\frac{{}^{v}\!\Euch (F(M))}{{}^{v}\!\Euch (M)} F( {}^{v}\!\theta_{M}) \gamma_{F, M} = {}^{v}\!\theta_{F(M)}. 
\]


\item 
Assume moreover that ${}^{v}\!\Euch(F(M)) \neq 0$. 
Then,   
there exists an isomorphism ${}^{v}\!\sfa_{F, M}: {}^{v}\!\Xi \lotimes_{A} F(M) \to F({}^{v}\!\Xi\lotimes_{A} M)$ 
that gives the following isomorphism of exact triangles:
\[
\begin{xymatrix}@C=55pt{
\nu^{-1}F(M) \ar[r]^{{}^{v}\!\theta_{F(M)}}  \ar[d]_{\gamma_{F, M}} & 
F(M) \ar[r]^{{}^{v}\!\varrho_{F(M)}} \ar@{=}[d]  &
{}^{v}\Xi \lotimes_{A}F(M) \ar[d]^{{}^{v}\!\sfa_{F, M} } \ar[r]^{{}^{v}\!\pi_{F(M)} } &
\Sigma\nu^{-1}F(M) \ar[d]^{ \Sigma \gamma_{F, M}}    \\
F\nu^{-1}(M) \ar[r]_{x F({}^{v}\!\theta_{M}) } & 
F(M) \ar[r]_{F({}^{v}\!\varrho_{M})}  &
F({}^{v}\Xi \lotimes_{A} M) \ar[r]_{x^{-1}F({}^{v}\!\pi_{M}) } &
\Sigma F\nu^{-1}(M)   \\
}\end{xymatrix}
\]
where \[x: =\frac{{}^{v}\!\Euch (F(M))}{{}^{v}\!\Euch (M)}. 
\]
\end{enumerate}
\end{theorem}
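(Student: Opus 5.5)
The plan is to reduce everything to the one-dimensionality of the space of morphisms $\nu^{-1}(M)\to M$ that vanish on the radical (the socle), and to detect scalars using the trace formula of Corollary \ref{2025083017451}.

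\textbf{Step 1: a characterization of ${}^{v}\!\theta_{M}$.} By Serre duality (Proposition \ref{202508171742}), the pairing $\invserreagl{-,+}$ identifies $\Hom_{A}(\sfS^{-1}(M),M)$ with $\tuD\End_{A}(M)$. Since $\dim\ResEnd_{A}(M)=1$, the functionals on $\End_{A}(M)$ vanishing on $\rad\End_{A}(M)$ form a one-dimensional space. This is presumably the content of Lemma \ref{202102071323}, which I will take as the tool. Hence any $s\in\Hom_{A}(\sfS^{-1}M,M)$ with $\invserreagl{\rad\End(M),s}=0$ is determined by the single value $\invserreagl{\id_{M},s}$.

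By Corollary \ref{2025083017451}, ${}^{v}\!\theta_{M}$ has this property, and $\invserreagl{\id_M,{}^{v}\!\theta_M}={}^{v}\!\Euch(M)\ne0$. The same corollary holds for every weight $w$: $(\,{}^{w}\!\theta)_M$ kills the radical and pairs with $\id_M$ to give ${}^{w}\!\Euch(M)$.

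\textbf{Step 2: proof of (1).} Lemma \ref{202101081603} gives $F^{-1}\cdot{}^{v}\!\theta={}^{w}\!\theta$ with $w=\underline{F^{-1}}^{-t}(v)=\underline{F}^{t}(v)$, since $\underline{F^{-1}}=\underline{F}^{-1}$. By \eqref{202102061837}, ${}^{w}\!\Euch(M)={}^{\underline{F}^t(v)}\Euch(M)={}^{v}\!\Euch(F(M))$. Both $(F^{-1}\cdot{}^{v}\!\theta)_M$ and ${}^{v}\!\theta_M$ kill the radical, and their values on $\id_M$ are ${}^{v}\!\Euch(F(M))$ and ${}^{v}\!\Euch(M)$ respectively. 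Step 1 then yields (1). The matrix bookkeeping here, namely keeping track of the basis $\{[P_i]\}$ versus $\{[S_i]\}$ and the transposes, is where I expect sign or transposition slips. I would double check this against the explicit formula $\ch_{F(M)}=T\cdot\ch_M$ of Corollary \ref{202509171827}.

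\textbf{Step 3: proof of (2).} Apply $F$ to (1) and use Lemma \ref{202101131703} with $\phi={}^{v}\!\theta$. This gives $F((F^{-1}\cdot{}^{v}\!\theta)_M)={}^{v}\!\theta_{F(M)}\gamma_{F,M}$, so
\[
x\,F({}^{v}\!\theta_M)={}^{v}\!\theta_{F(M)}\gamma_{F,M}.
\]
Composing with $\gamma_{F,M}^{-1}$ on the right gives (2). Note that the equation displayed in (2) places $\gamma_{F,M}$ on the other side; I would reconcile this by reading $\gamma$ as a map in the appropriate direction, or by inverting it.

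\textbf{Step 4: proof of (3).} From Step 3 the leftmost square commutes with $x F({}^{v}\!\theta_M)$, using that $x\neq0$ because ${}^{v}\!\Euch(F(M))\neq0$. The top row is the rotation of ${}^{v}\!\sfAR_{F(M)}$, up to the sign convention for $-\theta$. The bottom row is $F$ applied to the rotated ${}^{v}\!\sfAR_M$, rescaled by $x$ on the first map and $x^{-1}$ on the third. Rescaling two maps by inverse scalars preserves exactness, so the bottom row is an exact triangle. The axiom TR3 then supplies ${}^{v}\!\sfa_{F,M}$ completing the morphism of triangles, and since the two outer vertical maps are isomorphisms, so is ${}^{v}\!\sfa_{F,M}$ by the five lemma for triangles.

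The main obstacle is Step 2, specifically pinning down the correct weight transformation and confirming that Lemma \ref{202102071323} gives exactly the one-dimensionality statement needed.
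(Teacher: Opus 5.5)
Your proposal is correct and follows the paper's proof. Part (1) comes from the trace formula (Corollary \ref{2025083017451}) plus the one-dimensionality of the orthogonal of $\rad\End_{A}(M)$. You move the weight via Lemma \ref{202101081603} and \eqref{202102061837}, whereas the paper directly computes $\sum_{i}v_{i}\ERagl{F^{-1}(P_{i}),M}=\sum_{i}v_{i}\ERagl{P_{i},F(M)}$; your formulation also correctly covers the case ${}^{v}\!\Euch(F(M))=0$. Part (2) comes from Lemma \ref{202101131703}, and your TR3-plus-five-lemma argument for (3) is just the unwound form of the paper's appeal to uniqueness of AR-triangles.

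The only fix needed is in Step 3. Read Lemma \ref{202101131703} off its commutative square, ${}^{v}\!\theta_{F(M)}=F\bigl((F^{-1}\cdot{}^{v}\!\theta)_{M}\bigr)\circ\gamma_{F,M}$; the inline formula there is missing an inverse on $\gamma_{F,M}$. Substituting (1) into this square gives (2) directly. By contrast, both the equation you wrote and your proposed composition with $\gamma_{F,M}^{-1}$ do not typecheck.
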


\begin{proof}
(1) By Lemma \ref{202508171844}, we have $(F^{-1}\cdot {}^{v}\!\theta)_{M} = \sigma_{M}(\ch_{F^{-1}(v)})$. 
Thus by Corollary \ref{2025083017451}, we have 
\[
\begin{split}
\invserreagl{ \id_{M},  ( F^{-1}\cdot {}^{v}\!\theta)_{M} } 
&= 
\invserreagl{ \id_{M},    \sigma_{M}(\ch_{F^{-1}(v)}) } \\ 
&= \sum_{i =1}^{r} v_{i} \ERagl{F^{-1}(P_{i}), M }\\
& = \sum_{i =1}^{r} v_{i} \ERagl{P_{i}, F(M)}
= {}^{v}\Euch(F(M))
\end{split}
\]
We deduce the desired equation by using Lemma \ref{202102071323}. 

(2) follows from (1) together with Lemma \ref{202101131703}. 

(3) follows from (2) and the uniqueness of Auslander--Reiten triangle. 
\end{proof}
%
%
%
%

We remark that the morphism ${}^{v}\!\sfa_{F, M}$ is not uniquely determined, but it is determined modulo the radical bi-functor $\rad$.


  \subsection{Right modules and dualities}\label{Remarks on the right versions}

  Applying the same argument for $A^{\op}$ we can obtain  right versions of above results. 
 In this subsection, we discuss relationships between the previous results and their right versions 
 through dualities. 
 For this purpose first we fix notations and terminologies for right modules.

\subsubsection{The weighted Euler characteristic for right modules}

 Let $N \in \perf A$.   
  We set \[
  \Euvect(N) : = (\Euch(\tuH(N) e_{1} ), \ldots, \Euch(\tuH(N) e_{r}))\]
   and regard it as a row vector. 
    The weighted Euler characteristic for   $v \in \kk^{r}=K_{0}(A)$ is defined by the formula 
  \[
  {}^{v}\!\Euch(N) := \Euvect(N) v = \sum_{i =1}^{r}v_{i} \Euch(N e_{i}). 
  \]

  \subsubsection{Regularity for right dg-modules } 
 
The $\kk$-duality $\tuD(-)$ descents to the transpose of dimension vectors i.e, $\Euvect(\tuD(M)) = \Euvect(M)^{t}$. 
It follows that  ${}^{v}\!\Euch(\tuD(M)) = {}^{v}\!\Euch(M)$. 
Since the functor $\tuD(-):  \perf A \to \perf A^{\op}$ gives a contravariant equivalence of categories, 
we obtain the following lemma.

\begin{lemma}\label{202103021753}
An element  $v \in K_{0}(A)_{\kk}$  is regular (for left modules)
if and only if it satisfies regularity for right modules, 
i.e., 
we have ${}^{v}\!\Euch(N) \neq 0$ in $\kk$ for any indecomposable object $N$ of $\perf A^{\op}$.

\end{lemma}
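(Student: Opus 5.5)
The plan is to transport regularity through the contravariant equivalence $\tuD(-)\colon \perf A \to \perf A^{\op}$, using the identity ${}^{v}\!\Euch(\tuD(M)) = {}^{v}\!\Euch(M)$ noted just before the statement.

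First I verify the two ingredients.

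\emph{Duality is an equivalence.} Since $A$ is proper, $\tuD$ sends $\perf A$ into $\perf A^{\op}$, and likewise in the other direction. The biduality morphism $M \to \tuD\tuD(M)$ is an isomorphism for perfect $M$: it is one for $M=A$ because $A$ has finite-dimensional total cohomology, and the class of objects where it holds is closed under shifts, cones and direct summands. Hence $\tuD$ is a contravariant equivalence with quasi-inverse $\tuD$. A contravariant equivalence preserves and reflects indecomposability, because it induces an anti-isomorphism of endomorphism rings and direct sum decompositions correspond to idempotents. Consequently, $N\in\perf A^{\op}$ is indecomposable if and only if $N\cong \tuD(M)$ for some indecomposable $M\in\perf A$, namely $M=\tuD(N)$.

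\emph{The Euler characteristic identity.} For $M\in \perf A$ we have $\tuH^{n}(\tuD(M)) \cong \tuD(\tuH^{-n}(M))$ as right $H$-modules. Therefore
\[
\tuH(\tuD(M))e_{i} \cong \tuD(e_{i}\tuH(M)) \quad \text{(degrees negated)},
\]
and since $(-1)^{-n}=(-1)^{n}$ this gives $\Euch(\tuH(\tuD(M))e_{i}) = \Euch_{i}(M)$. It follows that $\Euvect(\tuD(M)) = \Euvect(M)^{t}$, and so ${}^{v}\!\Euch(\tuD(M)) = \Euvect(M)^{t}v = v^{t}\Euvect(M) = {}^{v}\!\Euch(M)$.

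\emph{Conclusion.} Suppose $v$ is regular for left modules, and let $N\in\perf A^{\op}$ be indecomposable. Write $N\cong\tuD(M)$ with $M$ indecomposable; then ${}^{v}\!\Euch(N) = {}^{v}\!\Euch(M)\neq 0$. For the converse, let $M\in\perf A$ be indecomposable. Then $\tuD(M)$ is indecomposable in $\perf A^{\op}$, so ${}^{v}\!\Euch(M) = {}^{v}\!\Euch(\tuD(M))\neq 0$.

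The only step requiring care is the biduality on $\perf A$. It is routine given properness, so I expect no real obstacle.
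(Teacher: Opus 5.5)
Your proof is correct and follows the paper's argument: $\tuD$ is a contravariant equivalence $\perf A\to\perf A^{\op}$, so it preserves and reflects indecomposability, and $\Euvect(\tuD(M))=\Euvect(M)^{t}$ gives ${}^{v}\!\Euch(\tuD(M))={}^{v}\!\Euch(M)$.

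One justification needs correcting. Properness gives biduality, but it does not by itself make $\tuD$ land in $\perf A^{\op}$; for example, for a non-Gorenstein finite-dimensional algebra, $\tuD$ of the regular module is not perfect on at least one side. This step needs the standing smoothness hypothesis, which you can invoke via the isomorphism $\tuD(M)\cong M^{\lvvee}\lotimes_{A}\tuD(A)$ in $\perf A^{\op}$ from Corollary~\ref{202508181452}.
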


  \subsubsection{The Coxeter matrix $\Phi_{\textup{right}}$ for right modules}
 We remark that the Coxeter matrix $\Phi_{\textup{right}}$ for right modules is different from that for left modules which we wrote $\Phi$. 
  In \cite{QHA}, we will deal with left modules and right modules at the same time, for this we need to know  relationship between $\Phi$ and $\Phi_{\textup{right}}$.

  Let $C$ be the Cartan matrix of $\Pa$. 
  Then the Coxeter matrix for right modules is given as 
   $\Phi_{\textup{right}}: = - C^{-1} C^{t}$. 
   Observe that $\Phi_{\textup{right}}  =\Phi^{-t} = \Psi$. 
   Thus, we have the equality below for all $N \in \perf A^{\op}$. 
  \[
  \begin{split}
 & \Euvect(\nu_{1}(N)) = \Euvect(N) \Phi_{\textup{right}}= \Euvect(N) \Psi, \ {}^{v}\Euvect(\nu_{1}(N)) = {}^{\Psi(v)}\Euvect(N),\\
&\Euvect(\nu_{1}^{-1}(N) ) = \Euvect(N) \Phi_{\textup{right}}^{-1} =\Euvect(N)\Psi^{-1}, \
{}^{v}\!\Euvect(\nu_{1}^{-1}(N) ) = {}^{\Psi^{-1}(v)}\Euvect(N).
\end{split}
\]

In particular we deduce the following assertion. 
\begin{lemma}\label{202103021744}
Assume that $v \in K_{0}(A)_{\kk}$ is an eigenvector of $\Psi$ with the eigenvalue $\lambda$. 
Then for $N \in \perf A^{\op}$, we have 
\[
{}^{v}\!\Euch(\nu_{1}^{-1}(N) ) = \frac{1}{\lambda} {}^{v}\!\Euch(N). 
\]
\end{lemma}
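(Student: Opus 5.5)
This is a direct computation from the displayed identities just before the statement. The plan is to use ${}^{v}\!\Euch(\nu_{1}^{-1}(N)) = {}^{\Psi^{-1}(v)}\Euch(N)$ for $N \in \perf A^{\op}$, and then invert the eigenvalue relation.

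First, since $\Psi = \Phi^{-t}$ is invertible (the Coxeter matrix $\Phi = -C^{t}C^{-1}$ is nonsingular because $C$ is), any eigenvalue $\lambda$ of $\Psi$ is nonzero. So from $\Psi(v) = \lambda v$ we get $\Psi^{-1}(v) = \lambda^{-1} v$.

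Second, the weighted Euler characteristic for right modules, ${}^{v}\!\Euch(N) = \Euvect(N)\,v$, is $\kk$-linear in the weight $v$. Combining this with the displayed formula gives
\[
{}^{v}\!\Euch(\nu_{1}^{-1}(N)) = \Euvect(N)\,\Psi^{-1}(v) = \Euvect(N)\,\lambda^{-1}v = \frac{1}{\lambda}\,{}^{v}\!\Euch(N).
\]

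The only point that needs care is the justification of the displayed formula $\Euvect(\nu_{1}^{-1}(N)) = \Euvect(N)\Psi^{-1}$ itself. This in turn rests on $\Phi_{\textup{right}} = -C^{-1}C^{t} = \Phi^{-t}$, which one checks directly:
\[
\Phi^{-t} = \bigl(-C^{-t}C\bigr)^{-1} = -C^{-1}C^{t}.
\]
It also rests on the right-module analogue of Lemma \ref{202509182222}(2), namely that $\nu_{1}$ acts on dimension row vectors by $\Phi_{\textup{right}}$. That analogue follows by applying the left-module results to $A^{\op}$, whose Cartan matrix is $C^{t}$, and transposing. Taking these as established in the preceding paragraph, the lemma is immediate.
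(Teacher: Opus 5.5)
Your proposal is correct and follows essentially the paper's route: the paper also deduces the lemma directly from the displayed identity ${}^{v}\!\Euch(\nu_{1}^{-1}(N)) = {}^{\Psi^{-1}(v)}\Euch(N)$, using $\Psi^{-1}(v)=\lambda^{-1}v$ and linearity in the weight. Your additional checks ($\lambda\neq 0$ because $\Psi$ is invertible, $\Phi_{\textup{right}}=-C^{-1}C^{t}=\Phi^{-t}$, and the reduction to $A^{\op}$ with Cartan matrix $C^{t}$) are accurate and spell out steps the paper leaves implicit.
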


  \subsubsection{}

  We point out   relationships of the weighted Euler characteristics and $A$-dualities

For $M \in \perf A$,   
by combining 
  Corollary \ref{202508181452} and Corollary \ref{202508181504}
we obtain the canonical  isomorphism $M^{\lvvee} \cong  \tuD(M) \lotimes_{A} A^{\vvee} $,   
from which we  deduce the following identities:  
\begin{equation}\label{202104021807} 
\begin{split} 
&\Euvect(M^{\lvvee} ) = (-\Phi^{-1} \Euvect(M))^{t}, \ \ 
 {}^{v}\!\Euch(M^{\lvvee}) = - {}^{\Phi^{-1}(v)}\Euch(M). 
  \end{split}
  \end{equation}

Similarly, for $N \in \perf A^{\op}$, we have the following identities:
\begin{equation}\label{2021040218072} 
\begin{split} 
&\Euvect(N^{\rvvee} ) = (- \Euvect(N)\Psi)^{t}, \ \ 
 {}^{v}\!\Euch(N^{\rvvee}) = - {}^{\Psi(v)}\Euch(N). 
  \end{split}
  \end{equation}

\subsubsection{Duality}

\begin{proposition}\label{202103051610}
Let $N \in \ind \perf A^{\op}$. 
Assume that ${}^{\Psi(v)} \!\Euch(N)\neq 0, {}^{\Psi^{2}(v)} \!\Euch(N)\neq 0$. 
Then,  
there exists an isomorphism 
\[
{}^{v}\!\sfb_{N}: \RHom_{A^{\op}}(N \lotimes_{A} {}^{v} \Xi, \PPi_{1}) \to {}^{v} \Xi \lotimes_{A} N^{\rvvee}
\] 
that completes the following commutative diagram 
\begin{equation}\label{202103051621}
\begin{xymatrix}{
 (\Sigma  N, \PPi_{1}) \ar[r] \ar[d]_{\cong}  &
  (N\lotimes_{A} \PPi_{1}, \PPi_{1}) \ar[d]_{\cong} \ar[r] &
  (N\lotimes_{A} {}^{v} \Xi, \PPi_{1}) \ar[d]_{\cong}^{{}^{v}\!\sfb_{N}} \ar[r] &
  \Sigma (\Sigma N, \PPi_{1}) \ar[d]_{\cong} \\
A^{\vvee} \lotimes_{A} N^{\rvvee} 
\ar[r]_{  -y({}^{v}\!\theta_{N^{\rvvee} })} 
&  N^{\rvvee} \ar[r]_{{}^{v}\!\varrho_{N^{\rvvee}}} 
& {}^{v} \Xi \lotimes_{A} N^{\rvvee} \ar[r]_{-y^{-1}({}^{v}\!\ppi_{1, N^{\rvvee}}) }
& \PPi_{1} \lotimes_{A} N^{\rvvee}
}\end{xymatrix} 
\end{equation}
where we use the  abbreviation  $(-,+) =\RHom_{\Pa^{\op}}(-, +)$, 
the top row is the exact triangle $\RHom( {}_{N}{}^{v}\!\sfAR, \PPi_{1})$ 
and we set
\[
y := \frac{{}^{\Psi^{2}(v)} \!\Euch(N)}{{}^{\Psi(v)} \!\Euch(N)}.
\]
\end{proposition}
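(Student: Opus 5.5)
The plan is to identify both rows of \eqref{202103051621} as exact triangles in $\perf A$ that start, after rotation, at the indecomposable object $N^{\rvvee}$. Both will turn out to be Auslander--Reiten triangles, and the isomorphism ${}^{v}\!\sfb_{N}$ will come from the uniqueness of AR-triangles, exactly as in Theorem \ref{202101131644}(3).

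\textbf{Step 1: identify the top row.} The top row is obtained by applying $\RHom_{A^{\op}}(-,\PPi_{1})$ to the right version ${}_{N}{}^{v}\!\sfAR = N\lotimes_{A}{}^{v}\!\sfAR$. For the vertical isomorphisms I will use Lemma \ref{202508201805}, the canonical identification $\RHom_{A^{\op}}(N\lotimes_{A} X, \PPi_1)\cong X^{\rvvee}\lotimes_A\RHom_{A^{\op}}(N,\PPi_1)$ for the bimodule $X$, and $\RHom_{A^{\op}}(N,\PPi_1)\cong \PPi_1\lotimes_A N^{\rvvee}$. This gives:
\[
(\Sigma N,\PPi_1)\cong A^{\vvee}\lotimes_A N^{\rvvee}, \qquad (N\lotimes_A\PPi_1,\PPi_1)\cong N^{\rvvee}.
\]
The second isomorphism uses Corollary \ref{202508181504} together with $\Pi_1^{\rvvee}\lotimes_A\Pi_1\cong A$. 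Under these identifications, the first map of the top row is $\RHom(\theta\text{-part},\PPi_1)$. This is a morphism $A^{\vvee}\lotimes_A N^{\rvvee}\to N^{\rvvee}$, i.e.\ an element of $\Hom_A(\nu^{-1}N^{\rvvee},N^{\rvvee})$, and it is of the form $\phi_{N^{\rvvee}}$ for the bimodule map $\phi=({}^{v}\!\theta)^{\rvvee}$, suitably transported.

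The key claim is that $\phi$, viewed in $\Hom_{A^{\mre}}(A^{\vvee},A)\cong \tuHH_0(A)$, equals ${}^{w}\!\theta$ for an explicit weight $w$. To prove it, I will apply the duality $(-)^{\rvvee}$ on $\perf A^{\mre}$, or equivalently the $\DPic$-action by the invertible bimodule $\Pi_1$, to ${}^{v}\!\theta$. Then I will use Lemma \ref{202101081603} with $F=\nu_1^{-1}$, possibly composed with $(-)^{\op}$ on $\tuHH_0$ via Lemma \ref{202508261812}. The expected outcome is that $w$ is a $\Psi$-power of $v$, up to sign.

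\textbf{Step 2: show both rows are AR-triangles and compare scalars.} The bottom row is ${}^{v}\!\sfAR_{N^{\rvvee}}$, rotated, with its first map rescaled by $-y$. Since $N^{\rvvee}$ is indecomposable, Theorem \ref{202508302231} makes it an AR-triangle once ${}^{v}\!\Euch(N^{\rvvee})\neq0$. By \eqref{2021040218072}, ${}^{v}\!\Euch(N^{\rvvee}) = -{}^{\Psi(v)}\!\Euch(N)\neq 0$.

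The top row is the AR-triangle ${}^{w}\!\sfAR_{N^{\rvvee}}$, again by Step 1 and Theorem \ref{202508302231}, which needs ${}^{w}\!\Euch(N^{\rvvee})\neq 0$. This is where the hypothesis ${}^{\Psi^2(v)}\!\Euch(N)\neq0$ enters.

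To compare the two, I will use Theorem \ref{202101131644}(1) in the form ${}^{w}\!\theta_{K} = \frac{{}^{w}\!\Euch(K)}{{}^{v}\!\Euch(K)}{}^{v}\!\theta_K$ for $K=N^{\rvvee}$. This requires $\dim\ResEnd(K)=1$, which holds under the running assumption that the simples are one-dimensional. Alternatively, I can evaluate both morphisms against $\id_K$ through Corollary \ref{2025083017451} and use Lemma \ref{202102071323}. Either way the scalar should come out as
\[
y = \frac{{}^{\Psi^2(v)}\!\Euch(N)}{{}^{\Psi(v)}\!\Euch(N)},
\]
after converting via \eqref{2021040218072}. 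The sign $-1$ in the first vertical arrow comes from the rotation sign $-{}^{v}\!\theta$ and from Lemma \ref{202511031307}, which governs the commutation of the shift with $A^{\vvee}$.

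\textbf{Step 3: construct ${}^{v}\!\sfb_N$.} Once the first squares commute, ${}^{v}\!\sfb_N$ exists by axiom TR3, completing a morphism of triangles. It is an isomorphism by the five lemma. The third square then forces the factor $-y^{-1}$ on ${}^{v}\!\ppi$, by rescaling the triangle $(\,-y\,\theta,\ \varrho,\ -y^{-1}\pi\,)$ so that it remains exact.

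\textbf{Main obstacle.} I expect the main difficulty to be Step 1: computing exactly which weight $w$ and which sign arise when ${}^{v}\!\theta$ is dualized. This requires tracking the identifications of Lemma \ref{202508201805} and the commutation isomorphisms $\gamma$ through the Chern character. My first attempt will be to express the dualized map via \eqref{202509210026} for $X=\Pi_1$, and then to read off the weight using Corollary \ref{202509171827} and Lemma \ref{2025091822221}.
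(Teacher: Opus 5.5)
Your architecture matches the paper's: dualize ${}_{N}{}^{v}\!\sfAR$, identify the transported first map with $-y\,{}^{v}\!\theta_{N^{\rvvee}}$, and complete by TR3 against the rescaled (still exact) bottom row. The gap is in Step~1, which you flag as the main obstacle but do not resolve. Nothing in your plan determines what the dual of ${}^{v}\!\theta$ is as a class in $\Hom_{A^{\mre}}(A^{\vvee},A)$. Formula \eqref{202509210026}, Corollary \ref{202509171827} and Lemma \ref{2025091822221} describe the covariant maps $\Phi_{X}$, i.e.\ the $\DPic$-action by conjugation. Lemma \ref{202508261812} concerns $\tau_M$ under module duality. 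None of them says anything about the contravariant bimodule dual of ${}^{v}\!\theta$. Your phrase ``or equivalently the $\DPic$-action by $\Pi_1$'' asserts exactly what has to be proved. Your fallback, pairing the top map with $\id_K$ via Corollary \ref{2025083017451}, presupposes that the map is already of the form ${}^{w}\theta_{K}$.

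The missing ingredient is van den Bergh's Proposition \ref{202511221600}: $(-)^{\vvee}_{\Hom}$ is the identity on $\Hom_{A^{\mre}}(A^{\vvee},A)$, so $({}^{v}\!\theta)^{\vvee}={}^{v}\!\theta$. The paper's route is as follows. It uses $(L\lotimes_{A}X)^{\rvvee}\cong X^{\vvee}\lotimes_{A}\tuD(A)\lotimes_{A}L^{\rvvee}$ (Lemma \ref{202508201805}) to turn $(-\Sigma({}_{N}{}^{v}\!\theta))^{\rvvee}$ into a bimodule dual of $\Sigma\,{}^{v}\!\theta$ tensored with $\tuD(A)\lotimes_{A}N^{\rvvee}$. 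After the shift bookkeeping and Proposition \ref{202511221600}, this becomes $-{}^{v}\!\theta_{\Sigma^{-1}\tuD(A)\lotimes_{A}N^{\rvvee}}$. Only then does the $\DPic$ machinery enter, via Theorem \ref{202101131644}(2) with $F=\nu_{1}=\Sigma^{-1}\tuD(A)\lotimes_{A}-$ and $M=N^{\rvvee}$. This moves $\Sigma^{-1}\tuD(A)$ past $\theta$ at the cost of $\gamma$ and the scalar. So the identification ``duality equals a $\DPic$-twist'' that you hope for is true, but only because of Proposition \ref{202511221600}. Also, once the first square commutes, showing that the top row is an AR-triangle is unnecessary.

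Two smaller points. First, $\dim\ResEnd_{A}(N^{\rvvee})=1$ does not follow from the simples being one-dimensional: over $\kk=\mathbb{R}$ the Kronecker algebra has regular indecomposables with $\ResEnd\cong\mathbb{C}$. The paper's proof needs this condition too, through Theorem \ref{202101131644}, so it must be assumed rather than derived. Second, do not take the final conversion to $y$ on faith. The scalar produced by Theorem \ref{202101131644}(2) is ${}^{v}\Euch(\nu_{1}(N^{\rvvee}))/{}^{v}\Euch(N^{\rvvee})$.
\begin{itemize}
\item Since $\tuD(A)\lotimes_{A}N^{\rvvee}\cong\tuD(N)$ by Lemma \ref{202508201805}, we get ${}^{v}\Euch(\nu_{1}(N^{\rvvee}))=-{}^{v}\Euch(N)$.
\item By \eqref{2021040218072}, ${}^{v}\Euch(N^{\rvvee})=-{}^{\Psi(v)}\Euch(N)$.
\end{itemize}
The ratio is therefore ${}^{v}\Euch(N)/{}^{\Psi(v)}\Euch(N)$. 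This differs from the $\Psi^{2}$ expression in general, for example for $N=e_{1}A$ over the path algebra of $1\to 2$. The paper's opening identity ${}^{\Psi^{2}(v)}\Euch(N)={}^{v}\Euch(\nu_{1}(N^{\rvvee}))$ appears to apply the right-module rule for $\nu_{1}$ to the left module $N^{\rvvee}$.
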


To prove the proposition, 
we need to recall a nice and important observation due to  van den Bergh. 

\begin{proposition}[{\cite[Proposition 14.1]{Van den Bergh: Calabi-Yau}}]\label{202511221600}
The  map $(-)^{\vvee}_{\Hom}: \Hom_{A^{\mre}}(A^{\vvee}, A) \to \Hom_{A^{\mre}}(A^{\vvee}, A)$ associated to the endofunctor $(-)^{\vvee}$ of 
$\perf A$ is the identity map. 
Thus in particular, $({}^{v}\!\theta)^{\vvee} = {}^{v}\theta$.  
\end{proposition}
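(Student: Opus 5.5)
The plan is to transport the map $(-)^{\vvee}_{\Hom}$ along the canonical identification $\RHom_{A^{\mre}}(X,Y) \cong X^{\vvee} \lotimes_{A^{\mre}} Y$ for $X, Y \in \perf A^{\mre}$. The aim is to recognise it as one of the canonical ``cyclic'' isomorphisms \eqref{202509031502}, and then to show that this particular cyclic isomorphism is the identity.

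\textbf{Setting up the identification.} For $X, Y \in \perf A^{\mre}$, the functor $(-)^{\vvee}$ induces
\[
\RHom_{A^{\mre}}(X,Y) \to \RHom_{(A^{\mre})^{\op}}(Y^{\vvee}, X^{\vvee}).
\]
Here we use the identifications $(A^{\mre})^{\op} = (A^{\op})^{\mre}$ and $(-)^{\op}_{A-A}$ of Section \ref{202509141802} to land back in $\RHom_{A^{\mre}}$. Via $\RHom_{A^{\mre}}(X,Y) \cong X^{\vvee} \lotimes_{A^{\mre}} Y$ (an instance of the adjunction $(X, X^{\vvee})$) and $Y^{\vvee\vvee} \cong Y$, the map becomes a morphism
\[
X^{\vvee} \lotimes_{A^{\mre}} Y \to Y \lotimes_{A^{\mre}} X^{\vvee}.
\]
I would first show that this morphism is the symmetry isomorphism $x \otimes y \mapsto (-1)^{|x||y|} y \otimes x$. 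The tool is the uniqueness of adjoints (Proposition \ref{202509221237}) together with Lemma \ref{202508211758}, which says that applying the opposite duality $(-)^{\rvvee}$ to the unit and counit of $(X, X^{\lvvee})$ produces the unit and counit of the other adjunction. The morphism induced by the functor $(-)^{\vvee}$ is built from exactly these units and counits, so it agrees with the swap.

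\textbf{Specialising.} Take $X = A^{\vvee}$ and $Y = A$. Then $X^{\vvee} = A^{\vvee\vvee} \cong A$, and the map $(-)^{\vvee}_{\RHom}$ on $\RHom_{A^{\mre}}(A^{\vvee}, A) \cong A \lotimes_{A^{\mre}} A = \HHH_{\bullet}(A)$ becomes the self-map of $A \lotimes_{A^{\mre}} A$ that interchanges the two copies of the diagonal bimodule. This self-map is also twisted by $(-)^{\op}_{A-A}$, which sends the diagonal $A$ to $A^{\op}$.

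\textbf{Showing the swap is the identity.} In string-diagram language this self-map is the isomorphism \eqref{202509031502} for $X = Y = A$ (with $B = A$), composed with the unit isomorphisms $A \lotimes_A A \cong A$. The diagram of $(A \lotimes_A A)_{\natural}$ with the two beads interchanged is isotopic to the original. I would argue exactly as in the proof of Lemma \ref{202509010738}: the composite
\[
A \lotimes_A A \cong A \cong A \lotimes_A A
\]
is the identity, and the naturality square there identifies the swap with this composite. For an independent check at chain level, one can resolve $A$ by the bar resolution $\mathrm{B}(A)$ and compute both $\mathrm{B}(A) \otimes_{A^{\mre}} A$ and $A \otimes_{A^{\mre}} \mathrm{B}(A)$ as the Hochschild complex. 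The swap is then the map comparing the two resolutions, which is homotopic to the identity since both lift $\id_A$.

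\textbf{Main obstacle.} I expect the hardest step to be the bookkeeping in the first reduction. One must verify that the double-dual isomorphism $A^{\vvee\vvee} \cong A$, the identification $(A^{\mre})^{\op} = (A^{\op})^{\mre}$, and the Koszul signs combine into exactly the plain swap, with no residual sign such as $-1$ or a twist by the flip $a \otimes b \mapsto \pm b \otimes a$ of $A^{\mre}$. I would first test this in the case $A = \kk$, where every identification is explicit. I would then rely on Lemma \ref{202509151654} and Lemma \ref{202508211758} to guarantee that the canonical isomorphisms compose correctly in general. Once this is done, the consequence $({}^{v}\!\theta)^{\vvee} = {}^{v}\theta$ is immediate, because ${}^{v}\!\theta$ lies in $\Hom_{A^{\mre}}(A^{\vvee}, A)$.
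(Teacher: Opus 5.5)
The paper gives no proof of this statement; it quotes it from Van den Bergh. Your proposal is therefore a genuinely different, self-contained route, and its skeleton is correct.

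\textbf{The skeleton.}
\begin{itemize}
\item Under $\RHom_{A^{\mre}}(X,Y)\cong X^{\vvee}\lotimes_{A^{\mre}}Y$, dualization is the interchange of the two tensor factors. On a cofibrant model, $f=\sum_i\xi_i\otimes y_i$ goes to $f^{\vvee}=\sum_i\pm\hat{y}_i\otimes\xi_i$.
\item For $X=A^{\vvee}$, $Y=A$, the two double-dual isomorphisms cancel by naturality.
\item What remains is the interchange of the two copies of the diagonal bimodule in $\HHH_{\bullet}(A)$. This is trivial because $A$ is the unit $1$-morphism.
\end{itemize}
Compared with the citation, this exhibits the proposition as a consequence of the same unit coherence that drives Lemma~\ref{202509010738}.

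\textbf{Where the sign lives.} Your argument also shows exactly where a sign could hide. Your first step holds when $A\cong A^{\vvee\vvee}$ is normalized as evaluation followed by the flip of $A^{\mre}$ applied to values, with Koszul signs. You must check that this is the canonical isomorphism the paper uses, because rescaling it by $-1$ turns the answer into $-\id$. So the ``bookkeeping'' you defer is the real content of the proposition. Carry it out by the explicit element computation, or via mates, rather than asserting it. The test case $A=\kk$ detects nothing.

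\textbf{Two arguments in your final step should be replaced.}

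First, isotopy of string diagrams is not a reason. For $A=B=\kk$ and $X=Y=V$, the rotation \eqref{202509031502} is the symmetry of $V\otimes V$, which is not the identity. What you need is the unit axiom for the rotation: for every $X\in\sfD(A^{\mre})$, the composite $(X\lotimes_A A)_{\natural}\to(A\lotimes_A X)_{\natural}\to X_{\natural}$ equals the map induced by $X\lotimes_A A\cong X$. Combine this with the equality of the two unitors $A\lotimes_A A\to A$. This follows the pattern of Lemma~\ref{202509010738}, but it is not that lemma's diagram, which concerns $\vcounit$.

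Second, your bar-resolution check fails as stated. The interchange also swaps which argument is resolved, so it is not a self-map of a single resolution over $\id_A$, and ``both lift $\id_A$'' proves nothing. Take $R=\kk[t]$, $M=\kk$ and the Koszul resolution $K$, and compute $\mathrm{Tor}^R_{\ast}(M,M)$ both as $K\otimes_R M$ and as $M\otimes_R K$. The same reasoning would then show that the symmetry of $\mathrm{Tor}^R_1(M,M)$ is trivial, whereas it is $-1$.

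A correct chain-level version again uses that $A$ is the unit. The two augmentations $\mathrm{B}(A)\otimes_A\mathrm{B}(A)\to\mathrm{B}(A)$ cover the same map to $A$, since the two unitors agree. Hence they are homotopic, and the rotation of $(\mathrm{B}(A)\otimes_A\mathrm{B}(A))_{\natural}$ exchanges them.
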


\begin{proof}[Proof of Proposition \ref{202103051610}]
First note that we have 
${}^{\Psi(v)} \!\Euch(N) ={}^{v}\!\Euch(N^{\rvvee} ), 
{}^{\Psi^{2}(v)} \!\Euch(N) =
{}^{v}\!\Euch( \nu_{1}( N^{\rvvee}) )$. 

We claim that we have the following commutative diagram 
\begin{equation}\label{202105231814}
\begin{xymatrix}@C=70pt{
(\Sigma N)^{\rvvee} \ar[d]_{\cong}\ar[r]^{(-\Sigma ({}_{N}\!\! {}^{v}\! \theta))^{\rvvee}} 
&  (N \lotimes \PPi_{1})^{\rvvee}\ar[d]_{\cong}\\ 
\Sigma^{-1} \tuD(A) \lotimes_{A}  A^{\vvee} \lotimes_{A} N^{\rvvee} 
\ar[r]^{  -y({}_{ \Sigma^{-1} \tuD(A)}\!\!{}^{v}\!\theta_{N^{\rvvee} })} 
&  \Sigma^{-1} \tuD(A)  \lotimes_{A} N^{\rvvee} 
}\end{xymatrix}
\end{equation} 
Indeed we obtain this diagram in the following way:  
\[
\begin{xymatrix}@C=70pt{
( \Sigma N)^{\rvvee} \ar[d]_{\cong}\ar[r]^{(- \Sigma ( {}_{N}\!\!{}^{v}\! \theta))^{\rvvee}} 
&  (N \lotimes \PPi_{1})^{\rvvee}\ar[d]_{\cong}\\ 
(\Sigma A)^{\vvee} \lotimes_{A} \tuD(A) \lotimes_{A} N^{\rvvee} 
\ar[d]_{\cong}\ar[r]^{ (- \Sigma ( {}^{v}\!\theta) )^{\vvee}_{\tuD(A) \lotimes N^{\rvvee} }} 
&  (\Sigma A^{\vvee} )^{\vvee} \lotimes_{A} \tuD(A) \lotimes_{A} N^{\rvvee} \ar[d]_{\cong}\\ 
A^{\vvee} \lotimes_{A} \Sigma^{-1} \tuD(A) \lotimes_{A} N^{\rvvee} 
\ar[d]_{\cong}\ar[r]^{ (-{}^{v}\!\theta)^{\vvee}_{\Sigma^{-1} \tuD(A)  \lotimes N^{\rvvee} }} 
&  (A^{\vvee} )^{\vvee} \lotimes_{A} \Sigma^{-1} \tuD(A)  \lotimes_{A} N^{\rvvee} \ar[d]_{\cong}\\ 
A^{\vvee} \lotimes_{A} \Sigma^{-1} \tuD(A) \lotimes_{A} N^{\rvvee} 
\ar[d]_{\cong}^{\gamma_{ \Sigma^{-1} \tuD(A)} } \ar[r]^{ -{}^{v}\!\theta_{ \Sigma^{-1}\tuD(A) \lotimes N^{\rvvee} }} 
&  \Sigma^{-1} \tuD(A)   \lotimes_{A} N^{\rvvee} \ar@{=}[d]\\ 
\Sigma^{-1}\tuD(A) \lotimes_{A}  A^{\vvee} \lotimes_{A} N^{\rvvee} 
\ar[r]^{  -y\Sigma^{-1}({}_{\tuD(A)}\!\!{}^{v}\!\theta_{N^{\rvvee} })} 
& \Sigma^{-1}\tuD(A)   \lotimes_{A} N^{\rvvee} 
}\end{xymatrix}
\]
where for the commutativity of the first square 
we use the  canonical isomorphism   
\[
(L \lotimes_{A} X)^{\rvvee} 
\cong X^{\rvvee} \lotimes_{A} L^{\rvvee} \cong X^{\vvee} \lotimes_{A} \tuD(A) \lotimes_{A} L^{\rvvee}
\]
obtained from Lemma \ref{202508201805}.
For the third square we use Proposition \ref{202511221600}.  
Finally the commutativity of the fourth  square follows from Proposition \ref{202101131644}.

We apply $\PPi_{1}\lotimes_{\Pa} - $ to \eqref{202105231814} and use
  canonical isomorphisms 
\[
\RHom_{A^{\op}}(- ,\PPi_{1} ) \cong \PPi_{1} \lotimes_{A} (-)^{\rvvee}, \ 
\PPi_{1} \lotimes_{\Pa} \Sigma^{-1}\tuD(A) \cong \Pa,  
\]
then we obtain a commutative square that appeared in the  diagram \eqref{202103051621}.  
\end{proof}

We remark that $\sfb_{N}$ is not uniquely determined, but it is unique modulo $\rad$. 
We point out that $\sfb_{N}$ has functoriality modulo $\rad$.

\subsection{Bimodules version}

In this subsection, we establish isomorphisms over $\Pa^{\mre}$ that involve ${}^{v} \Xi$. 

\subsubsection{} 

We deduce the following result  from Lemma \ref{202101081603}. 
\begin{proposition}\label{2021010816031}
Let $v \in K_{0}(A)_{\kk}$ and $T \in \DPic_{k} (A)$. 
Then, 
there exists an isomorphism 
\[
{}^{v}\!\sfc_{ T}:  {}^{v} \Xi \lotimes_{\Pa} T \to T  \lotimes_{\Pa} {}^{\underline{T}^{t}(v)}\Xi
\]
in $\perf  \Pa^{\mre}$ that gives the following isomorphism of exact triangles:
\[
\begin{xymatrix}@C=60pt{
\Pa^{\vvee} \lotimes_{\Pa} T \ar[r]^{ {}^{v}\!\theta_{T} }  \ar[d]_{\gamma_{T}} 
& 
T\ar[r]^{{}^{v}\!\rrho_{T}} \ar@{=}[d]  &
{}^{v} \Xi \lotimes_{A}T \ar[d]^{{}^{v}\!\sfc_{T} } \ar[r]^{{}^{v}\!\ppi_{T} } &
\PPi_{1} \lotimes_{\Pa} T  \ar[d]^{\tilde{\gamma}_{T} } \\
T \lotimes_{\Pa} \Pa^{\vvee} \ar[r]_{ {}_{T}(  {}^{\underline{T}^{t}(v) } \theta) } & 
T  \ar[r]_{  {}_{T} ({}^{\underline{T}^{t}(v)} \rrho) }  &
T \lotimes_{\Pa}  {}^{\underline{T}^{t}(v)} \Xi \ar[r]_{ {}_{T} ({}^{\underline{T}^{t}(v)}\ppi) } &
T \lotimes_{\Pa} \PPi_{1}.   \\
}\end{xymatrix}
\]
where $\tilde{\gamma}_{T}$ is the composition 
\[
\tilde{\gamma}_{T} : ( \Sigma A) \lotimes_{A}   A^{\vvee} \lotimes_{A} T \xrightarrow{ \ (\Sigma A)\lotimes   \gamma_{T}  \ } \Sigma A \lotimes_{A} T \lotimes_{A} A^{\vvee} 
\xrightarrow{ \ \delta_{T}^{-1} \lotimes  A^{\vvee}  \ } 
T \lotimes_{A} ( \Sigma A ) \lotimes_{A} A^{\vvee}. 
\]
\end{proposition}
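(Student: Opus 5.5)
The plan is to show that the left square of the diagram commutes in $\perf A^{\mre}$, i.e.
\[
{}_{T}({}^{\underline{T}^{t}(v)}\theta)\circ \gamma_{T} = {}^{v}\!\theta_{T}
\quad\text{as morphisms } A^{\vvee}\lotimes_{A} T \to T .
\]
Once this holds, the axiom (TR3) in the triangulated category $\perf A^{\mre}$, applied to the two rows, produces the required isomorphism ${}^{v}\!\sfc_{T}$. Both rows are exact triangles: the top one is ${}^{v}\!\sfAR\lotimes_{A}T$ rotated, and the bottom one is $T\lotimes_{A}{}^{\underline{T}^{t}(v)}\!\sfAR$ rotated. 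The outer vertical maps $\gamma_{T}$, $\id_{T}$ and $\tilde\gamma_{T}$ are isomorphisms, so ${}^{v}\!\sfc_{T}$ is an isomorphism by the five lemma. Any choice of ${}^{v}\!\sfc_{T}$ works, since the statement only asserts existence.

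Two compatibilities remain to be checked.

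First, the square relating the third morphisms: the top row's third map is $-\Sigma(\cdot)$ of the first up to the identifications $\delta$. Compatibility of $\tilde\gamma_{T}$ with the shift is the place where Lemma \ref{202511031307} (the sign $\gamma_{\Sigma A}=-\delta_{A^{\vvee}}$) and the definition of $\tilde\gamma_{T}$ via $\delta_{T}^{-1}$ enter. I expect this to reduce to the naturality of $\delta$ together with the fact that the $\gamma$'s are compatible with composition, which follows from the uniqueness-of-adjoints argument of Lemma \ref{202508201805}.

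Second, the left square itself, which is the main step. I would use the bimodule version of Lemma \ref{202101131703}. The discussion preceding that lemma gives, for any $\phi\in\Hom_{A^{\mre}}(A^{\vvee},A)$ and $F=T\lotimes_{A}-$,
\[
F\cdot\phi=\Bigl[\,A^{\vvee}\cong T\lotimes T^{-1}\lotimes A^{\vvee}\xrightarrow{(\gamma_{T^{-1}})^{-1}}T\lotimes A^{\vvee}\lotimes T^{-1}\xrightarrow{\phi}T\lotimes T^{-1}\cong A\,\Bigr].
\]
This formula is obtained from \eqref{202509210026}. Tensoring on the right with $T$ and using $\gamma_{T}$ in place of $\gamma_{T^{-1}}$ (via the compatibility $\gamma_{T}\lotimes T^{-1}$ with $\gamma_{T^{-1}}$, again from uniqueness of adjoints) gives the bimodule identity
\[
(F\cdot\phi)_{T}=({}_{T}\phi)\circ\gamma_{T}\quad\text{in }\Hom_{A^{\mre}}(A^{\vvee}\lotimes T,T).
\]
This is Lemma \ref{202101131703} with $M$ replaced by the bimodule $T$, and the same proof applies since everything there is bimodule-level.

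Now take $\phi={}^{w}\theta$ with $w=\underline{T}^{t}(v)$. Lemma \ref{202101081603} gives $F\cdot{}^{w}\theta={}^{\underline{F}^{-t}(w)}\theta={}^{v}\theta$, because $\underline{F}^{-t}\underline{F}^{t}=\id$. Here I assume the paper's $\underline{T}$ denotes $\underline{F}$. This is exactly the required commutativity.

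The hardest part is justifying that the formula for $F\cdot\phi$ holds at the bimodule level, with the correct isomorphism $\gamma_{T}$ rather than some other identification. I would verify it by the $2$-categorical uniqueness of adjoint morphisms (Proposition \ref{202509221237}), comparing both sides as morphisms built from units and counits of $T$ and $T^{\lvvee}\cong T^{-1}$. The sign bookkeeping in $\tilde\gamma_{T}$ is secondary.
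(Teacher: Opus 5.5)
Your proposal is correct and is essentially the paper's argument, which the paper compresses into one line. The left square commutes by combining two ingredients: the bimodule-level form of Lemma \ref{202101131703}, applied with $M=A$ (so that $F(M)=T$, not $M=T$ as you wrote), and Lemma \ref{202101081603}; TR3 then supplies ${}^{v}\!\sfc_{T}$. One small correction: Lemma \ref{202511031307} plays no role for general $T$, because $\tilde{\gamma}_{T}$ is by construction $\Sigma\gamma_{T}$ transported along $\delta_{T}$, so the right-hand square holds by naturality of $\delta$ alone, and that lemma is needed only in the special case $T=\PPi_{1}$ to see $\tilde{\gamma}_{\PPi_{1}}=\id$.
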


In the case where the weight $v \in K_{0}(A)_{\kk}$ is an eigenvector of $\underline{T}^{t}$ we obtain the following corollary.

\begin{corollary}\label{2021010816032}
Let $v \in K_{0}(A)_{\kk}$ and $T$ be a two-sided tilting complex over $\Pa$. 
Assume that $v$ is an eigenvector of  $\underline{T}^{t}$ with the eigenvalue $\lambda$. 
Then, 
there exists an isomorphism 
\[{}^{v}\!\sfc'_{T}: {}^{v} \Xi \lotimes_{\Pa} T \to T \lotimes_{\Pa} {}^{v} \Xi
\] in $\sfD(\Pa^{\mre})$ 
that gives the following isomorphism of exact triangles:
\[
\begin{xymatrix}@C=60pt{
\Pa^{\vvee} \lotimes_{\Pa} T \ar[r]^{{}^{v}\!\theta_{T} }  \ar[d]_{\gamma_{T}} 
& 
T   \ar[r]^{{}^{v}\!\rrho_{T}} \ar@{=}[d]  &
{}^{v} \Xi \lotimes_{A}T \ar[d]^{{}^{v}\!\sfc'_{T} } \ar[r]^{{}^{v}\!\ppi_{1, T} } &
\PPi_{1} \lotimes_{\Pa} T  \ar[d]^{\tilde{\gamma}_{T}}    \\
T \lotimes_{\Pa} \Pa^{\vvee} \ar[r]_{  \lambda ( {}_{T}  \!{}^{v}\!\theta) } & 
T  \ar[r]_{  {}_{T} {}^{v}\!\rrho }  &
T \lotimes_{\Pa}  {}^{v} \Xi \ar[r]_{ \lambda^{-1}( {}_{T}{}^{v}\!\ppi)} &
T \lotimes_{\Pa} \PPi_{1}   \\
}\end{xymatrix}
\]
\end{corollary}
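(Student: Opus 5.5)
The plan is to specialize Proposition \ref{2021010816031} to the case where $v$ is an eigenvector. If $\underline{T}^{t}(v) = \lambda v$, then we have ${}^{\underline{T}^{t}(v)}\theta = {}^{\lambda v}\theta$. Linearity of the Chern character $\ch: K_{0}(A)_{\kk} \to \tuHH_{0}(A)$, together with the $\kk$-linear identification $\tuHH_{0}(A)\cong \Hom_{A^{\mre}}(A^{\vvee},A)$, gives ${}^{\lambda v}\theta = \lambda\, {}^{v}\theta$. We may assume $\lambda \neq 0$. This is automatic: $T$ is invertible, so $\underline{T}^{t}$ is an invertible matrix and every eigenvalue is nonzero.

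The key step is to compare the two exact triangles
\[
A^{\vvee} \xrightarrow{\lambda\,{}^{v}\!\theta} A \xrightarrow{{}^{v}\!\varrho} {}^{v}\Xi \xrightarrow{\lambda^{-1}\,{}^{v}\!\pi} \Pi_{1}
\qquad\text{and}\qquad
A^{\vvee} \xrightarrow{{}^{\lambda v}\theta} A \xrightarrow{{}^{\lambda v}\varrho} {}^{\lambda v}\Xi \xrightarrow{{}^{\lambda v}\pi} \Pi_{1}
\]
in $\perf A^{\mre}$ (written in their rotated form). The first is an exact triangle: rescaling the first morphism of ${}^{v}\!\sfAR$ by $\lambda$ and the third by $\lambda^{-1}$ is the image of ${}^{v}\!\sfAR$ under the isomorphism of triangles given by $(\lambda^{-1}\cdot\id_{A^{\vvee}}, \id_{A}, \id_{{}^{v}\Xi})$, suitably placed. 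Both triangles are cone triangles on the same morphism $\lambda\,{}^{v}\theta = {}^{\lambda v}\theta$. So by axiom TR3 and the five lemma there is an isomorphism $u:{}^{v}\Xi \to {}^{\lambda v}\Xi$ which is identity-compatible on $A$ and $\Pi_{1}$. It satisfies $u\circ{}^{v}\!\varrho = {}^{\lambda v}\varrho$ and ${}^{\lambda v}\pi\circ u = \lambda^{-1}\,{}^{v}\!\pi$.

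Next, I tensor $u$ on the left with $T$ and set ${}^{v}\!\sfc'_{T} := (T\lotimes_{A} u^{-1})\circ {}^{v}\!\sfc_{T}$. The bottom row of the diagram in Proposition \ref{2021010816031} is $T\lotimes_{A}(-)$ applied to the triangle for ${}^{\lambda v}\theta$. Replacing it via $T\lotimes u^{-1}$ turns the three bottom maps into $\lambda({}_{T}{}^{v}\theta)$, ${}_{T}{}^{v}\!\varrho$, and $\lambda^{-1}({}_{T}{}^{v}\!\pi)$. The vertical maps $\gamma_{T}$, $\id$, $\tilde\gamma_{T}$ are unchanged. Commutativity of each square then follows by pasting the squares of Proposition \ref{2021010816031} with the squares coming from $u$.

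The only delicate point is the bookkeeping of $\lambda$ versus $\lambda^{-1}$ and the sign in the rotated triangle, where $-{}^{v}\theta$ appears. I expect this to be routine, since the scalars commute with all the canonical isomorphisms. The hypothesis ``two-sided tilting complex'' is just $T\in\DPic_{\kk}(A)$, so Proposition \ref{2021010816031} applies verbatim.
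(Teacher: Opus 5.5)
Your proposal is correct and follows the paper's route. The paper obtains the corollary directly from Proposition \ref{2021010816031} by using ${}^{\lambda v}\theta=\lambda\,{}^{v}\theta$ and identifying ${}^{\lambda v}\Xi$ with ${}^{v}\Xi$ through the rescaled cone triangle. You have written out the comparison isomorphism $u$ and the $\lambda$ versus $\lambda^{-1}$ bookkeeping that the paper leaves implicit.
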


\subsubsection{}

We collect the case $T = \PPi_{1}$ of Corollary \ref{2021010816032}, since it plays a key role in \cite{QHA}.
A point here is the equality $\tilde{\gamma}_{\Pi_{1}} =\id_{\Pi_{1} \lotimes \Pi_{1}}$, which follows from Lemma \ref{202511031307}. 

\begin{corollary}\label{20210108160321}
Assume that $v \in K_{0}(A)_{\kk}$ is an eigenvector of  $\Psi$ with the eigenvalue $\lambda$. 
Then, 
there exists an isomorphism 
\[{}^{v}\!\sfc'_{\PPi_{1}}: {}^{v} \Xi \lotimes_{\Pa} \PPi_{1} \to \PPi_{1} \lotimes_{\Pa} {}^{v} \Xi
\] in $\perf A^{\mre}$ 
that gives the following isomorphism of exact triangles:
\begin{equation}\label{202306211412}
\begin{xymatrix}@C=60pt{
\Pa^{\vvee} \lotimes_{\Pa} \PPi_{1} \ar[r]^{{}^{v}\!\theta_{\PPi_{1}} }  \ar[d]_{\gamma_{\PPi_{1}}} 
& 
\PPi_{1}   \ar[r]^{{}^{v}\!\rrho_{\PPi_{1}}} \ar@{=}[d]  &
{}^{v} \Xi \lotimes_{A}\PPi_{1} \ar[d]^{{}^{v}\!\sfc'_{\PPi_{1}} } \ar[r]^{{}^{v}\!\ppi_{\PPi_{1}} } &
\PPi_{1} \lotimes_{\Pa} \PPi_{1}  \ar@{=}[d]   \\
\PPi_{1} \lotimes_{\Pa} \Pa^{\vvee} \ar[r]_{  \lambda ({}_{\PPi_{1}}  {}^{v}\!\theta) } & 
\PPi_{1}  \ar[r]_{  {}_{\PPi_{1}} {}^{v}\!\rrho }  &
\PPi_{1} \lotimes_{\Pa}  {}^{v} \Xi \ar[r]_{ \lambda^{-1}( {}_{\PPi_{1}} {}^{v}\!\ppi) }  &
\PPi_{1} \lotimes_{\Pa} \PPi_{1}   \\
}\end{xymatrix}
\end{equation}
\end{corollary}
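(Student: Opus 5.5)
We specialize Corollary \ref{2021010816032} to $T = \PPi_{1} = \Sigma A^{\vvee}$, which is invertible with inverse $\Sigma^{-1}\tuD(A)$ by Corollary \ref{202508181504}. Two things must be checked: that the eigenvector hypothesis in Corollary \ref{2021010816032} becomes the one stated here, and that the right vertical map $\tilde{\gamma}_{\PPi_{1}}$ is the identity, as drawn in \eqref{202306211412}.

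For the eigenvector condition, recall that $\PPi_{1}\lotimes_{A}- = \nu_{1}^{-1}$. By Lemma \ref{202509182222}(2) we have $\underline{\nu_{1}} = \Phi$, hence $\underline{\PPi_{1}} = \Phi^{-1}$. Therefore $\underline{\PPi_{1}}^{t} = \Phi^{-t} = \Psi$. (This is consistent with \eqref{202111101530} and Lemma \ref{202101081603}.) So $v$ being an eigenvector of $\Psi$ with eigenvalue $\lambda$ is exactly the hypothesis of Corollary \ref{2021010816032} for $T=\PPi_{1}$. That corollary then yields ${}^{v}\!\sfc'_{\PPi_{1}}$ and the isomorphism of triangles, with right vertical arrow $\tilde{\gamma}_{\PPi_{1}}$.

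It remains to show $\tilde{\gamma}_{\PPi_{1}} = \id$, which is the main obstacle. By definition,
\[
\tilde{\gamma}_{\PPi_{1}} = (\delta_{\PPi_{1}}^{-1}\lotimes A^{\vvee})\circ(\Sigma A\lotimes \gamma_{\PPi_{1}}).
\]
I would first reduce $\gamma_{\PPi_{1}}$ to $\gamma_{\Sigma A}$ and $\gamma_{A^{\vvee}}$ using the canonicity from Lemma \ref{202508201805}. Since $\PPi_{1} = \Sigma A\lotimes_{A} A^{\vvee}$, uniqueness of adjoints gives
\[
\gamma_{X\lotimes Y} = (X\lotimes\gamma_{Y})(\gamma_{X}\lotimes Y).
\]
Next, $\gamma_{A^{\vvee}} : A^{\vvee}\lotimes A^{\vvee}\to A^{\vvee}\lotimes A^{\vvee}$ should be the identity. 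It is the canonical commutation of $\sfS^{-1}$ with itself, and it equals the identity by the same 2-categorical uniqueness. I would verify this by checking that both sides are built from identical unit/counit data, in the same way as the commutative diagram after Lemma \ref{202508201805}.

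Lemma \ref{202511031307} gives $\gamma_{\Sigma A} = -\delta_{A^{\vvee}}$. Thus $\Sigma A\lotimes\gamma_{\PPi_{1}}$ is, up to the sign $-1$, a composite of $\delta$-type swaps of two copies of $\Sigma A$ past $A^{\vvee}$. Composing with $\delta_{\PPi_{1}}^{-1}\lotimes A^{\vvee}$ undoes these swaps, with the remaining sign cancelled by the Koszul sign from interchanging $\downarrow\otimes\downarrow$. This sign bookkeeping is where care is needed. I would carry it out on elements $\downarrow\otimes\phi\otimes\downarrow\otimes\psi$, using the explicit formula $\gamma_{\Sigma A}(\phi\otimes\downarrow) = (-1)^{|\phi|+1}\downarrow\otimes\phi$ from the proof of Lemma \ref{202511031307} and $\delta_{X}(x\otimes\downarrow) = (-1)^{|x|}\downarrow x$.

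With $\tilde{\gamma}_{\PPi_{1}} = \id$, the diagram of Corollary \ref{2021010816032} becomes \eqref{202306211412}, which completes the argument.
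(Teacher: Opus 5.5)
Your proposal is correct and follows the paper's argument: specialize Corollary~\ref{2021010816032} to $T=\PPi_{1}$, where indeed $\underline{\PPi_{1}}^{t}=\Phi^{-t}=\Psi$, and use Lemma~\ref{202511031307} to get $\tilde{\gamma}_{\PPi_{1}}=\id$; your sign count $(-1)^{|\phi|+1}(-1)^{|\phi|-1}=1$ is right. The extra input you isolate, $\gamma_{A^{\vvee}}=\id_{A^{\vvee}\lotimes_{A}A^{\vvee}}$, is genuinely needed (the paper uses it only tacitly), and since it amounts to comparing the two isomorphisms $A^{\vvee}\lotimes_{A}A^{\vvee}\cong\tuD(A)^{\vvee}$ coming from Lemma~\ref{202508201805}, it deserves an explicit check rather than a bare appeal to uniqueness of adjoints.
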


\subsubsection{The right duality}

\begin{lemma}\label{202102230750} 
There exists an isomorphism 
\[{}^{v}\!\sfd: ({}^{v} \Xi)^{\rvvee} \to \Sigma^{-1} \tuD(A)\lotimes_{\Pa} ({}^{\Psi^{-1}(v)} \Xi) \]
in $\perf A^{\mre}$  
that completes the  following commutative diagram
\begin{equation}\label{202105232003}
\begin{xymatrix}@C=18pt{
(\Sigma A)^{\rvvee} \ar[d]_{\cong}\ar[r]^{(-{}^{v}\!\theta [1])^{\rvvee}} &
\PPi_{1}^{\rvvee}\ar[d]_{\cong}  \ar[r]^{ ({}^{v}\!\ppi_{1})^{\rvvee} } & 
({}^{v} \Xi)^{\rvvee} \ar[d]^{{}^{v}\!\sfd} \ar[r]^{ ({}^{v}\!\rrho)^{\rvvee}} & 
A^{\rvvee} \ar[d]^{\cong} \\ 
\Sigma^{-1} \tuD(A) \lotimes_{A}  A^{\vvee}  
\ar[r]_-{  -{}_{ \Sigma^{-1}\tuD(A) }^{\Psi^{-1}(v)}\theta } 
& \Sigma^{-1} \tuD(A)    \ar[r]_-{{}_{ \Sigma^{-1} \tuD(A)}^{\Psi^{-1}(v)} \rrho} & 
\Sigma^{-1} \tuD(A) \lotimes_{\Pa} ({}^{\Psi^{-1}(v)} \Xi )  \ar[r]_-{- {}_{\Sigma^{-1}\tuD(A)}^{\Psi^{-1}(v)}\ppi_{1} }
 & 
\Sigma^{-1} \tuD(A)  \lotimes_{\Pa} \PPi_{1}.
}\end{xymatrix} 
\end{equation}
\end{lemma}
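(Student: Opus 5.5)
Apply the contravariant exact functor $(-)^{\rvvee}$ to ${}^{v}\!\sfAR$. Since $(-)^{\rvvee}$ is a duality on $\perf A^{\mre}$, the rotated dual triangle
\[
(\Sigma A)^{\rvvee} \xrightarrow{(-{}^{v}\!\theta[1])^{\rvvee}} \PPi_{1}^{\rvvee} \xrightarrow{({}^{v}\!\ppi_{1})^{\rvvee}} ({}^{v}\Xi)^{\rvvee} \xrightarrow{({}^{v}\!\rrho)^{\rvvee}} A^{\rvvee}
\]
is again exact, and it is the top row of \eqref{202105232003}. The bottom row is $\Sigma^{-1}\tuD(A)\lotimes_{A} {}^{\Psi^{-1}(v)}\!\sfAR$, which is exact because tensoring with the invertible bimodule $\Sigma^{-1}\tuD(A)$ is exact. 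The sign $-$ on $\ppi_{1}$ comes from commuting $\Sigma^{-1}$ past the rotation.

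By the axiom (TR3), it therefore suffices to make the \emph{first} square commute with vertical isomorphisms; ${}^{v}\!\sfd$ then exists and is an isomorphism by the five lemma. The third square is automatic once the second is, up to the identification of the right vertical isomorphisms as the shifts compatible with rotation.

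For the vertical maps, use \eqref{202508261311} and Lemma \ref{202508201805} with $L=A$, exactly as in the proof of Proposition \ref{202103051610}:
\[
(\Sigma A)^{\rvvee}\cong (\Sigma A)^{\vvee}\lotimes_{A}\tuD(A) \cong A^{\vvee}\lotimes_{A}\Sigma^{-1}\tuD(A),
\qquad
\PPi_{1}^{\rvvee}\cong (\Sigma A^{\vvee})^{\vvee}\lotimes_{A}\tuD(A)\cong \Sigma^{-1}\tuD(A).
\]
The last step uses $(A^{\vvee})^{\vvee}\cong A$.

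Under these identifications, $(-{}^{v}\!\theta[1])^{\rvvee}$ becomes $-({}^{v}\!\theta)^{\vvee}$ tensored with $\Sigma^{-1}\tuD(A)$. By Proposition \ref{202511221600}, $({}^{v}\!\theta)^{\vvee}={}^{v}\!\theta$, so the top left map becomes $-{}^{v}\!\theta_{\Sigma^{-1}\tuD(A)}: A^{\vvee}\lotimes_{A}\Sigma^{-1}\tuD(A)\to \Sigma^{-1}\tuD(A)$.

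It remains to move $\Sigma^{-1}\tuD(A)$ to the left using $\gamma_{\Sigma^{-1}\tuD(A)}$. The bimodule $T:=\Sigma^{-1}\tuD(A)=\nu_{1}$ is invertible, with $T^{-1}=\PPi_{1}$. By Lemma \ref{202101131703} applied at the bimodule level, ${}^{v}\!\theta_{T}= {}_{T}(T^{-1}\cdot {}^{v}\!\theta)\circ\gamma_{T}$. By Lemma \ref{202101081603}, $T^{-1}\cdot{}^{v}\!\theta={}^{w}\theta$ with $w=\underline{\PPi_{1}}^{-t}(v)=\underline{\nu_{1}}^{t}(v)$. Since $\underline{\nu_{1}}=\Phi$ (Lemma \ref{202509182222}), this gives $w=\Phi^{t}(v)=\Psi^{-1}(v)$. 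Hence the first square commutes with the left vertical map $\gamma_{T}$ composed with the identifications above, and no scalar $y$ appears because we work at the bimodule level rather than evaluating at an object.

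The main obstacle is bookkeeping: signs from the shift and rotation, and matching the composite of canonical isomorphisms with the ones implicit in Lemma \ref{202101131703}. For the shift signs I would invoke Lemma \ref{202511031307} to handle $\gamma_{\Sigma A}=-\delta_{A^{\vvee}}$. I would also carefully check the inverse/transpose convention in $\underline{T}^{-t}$, since getting $\Psi$ versus $\Psi^{-1}$ wrong is the most likely error.
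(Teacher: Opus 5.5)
Your proposal is correct and is essentially the paper's proof. The paper also checks only the leftmost square, through the same chain of identifications:
\begin{itemize}
\item $X^{\rvvee}\cong X^{\vvee}\lotimes_{A}\tuD(A)$ from Lemma \ref{202508201805};
\item moving the shift onto $\Sigma^{-1}\tuD(A)$;
\item Proposition \ref{202511221600}, to replace $({}^{v}\!\theta)^{\vvee}$ by ${}^{v}\!\theta$;
\item $\gamma_{\Sigma^{-1}\tuD(A)}$ together with Lemmas \ref{202101131703} and \ref{202101081603}, to reach the weight $\Psi^{-1}(v)$.
\end{itemize}
The only difference is that you spell out the (TR3)/five-lemma step that produces ${}^{v}\!\sfd$, which the paper leaves implicit.
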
 

\begin{proof}
We  can verify commutativity of  the left most  square of \eqref{202105232003} in the following way  
\[ 
\begin{xymatrix}@C=70pt{
(\Sigma A)^{\rvvee} \ar[d]_{\cong}\ar[r]^{(- \Sigma({}^{v}\!\theta))^{\rvvee}} 
&  \PPi_{1}^{\rvvee}\ar[d]_{\cong}\\ 
(\Sigma A)^{\vvee} \lotimes_{A} \tuD(A) 
\ar[d]_{\cong}\ar[r]^{ (- \Sigma({}^{v}\!\theta ))^{\vvee}_{\tuD(A) }} 
&  (\Sigma A^{\vvee} )^{\vvee} \lotimes_{A} \tuD(A) \ar[d]_{\cong}\\ 
A^{\vvee} \lotimes_{A} \Sigma^{-1} \tuD(A)  
\ar[d]_{\cong}\ar[r]^{ (-{}^{v}\!\theta)^{\vvee}_{\Sigma^{-1} \tuD(A)   }} 
&  (A^{\vvee} )^{\vvee} \lotimes_{A} \Sigma^{-1} \tuD(A)    \ar[d]_{\cong}\\ 
A^{\vvee} \lotimes_{A} \Sigma^{-1}\tuD(A)  
\ar[d]_{\cong}^{\gamma_{ \Sigma^{-1}\tuD(A)} } \ar[r]^{ -{}^{v}\!\theta_{\Sigma^{-1}\tuD(A)   }} 
& \Sigma^{-1}\tuD(A)  \ar@{=}[d]\\ 
\Sigma^{-1} \tuD(A) \lotimes_{A}  A^{\vvee}  
\ar[r]^{  - {}_{\Sigma^{-1} \tuD(A) }^{\Psi^{-1}(v)}\theta } 
& \Sigma^{-1}\tuD(A)   
}\end{xymatrix}
\]
where for the commutativity of the first square  we use  the canonical isomorphism $X^{\rvvee} 
\cong X^{\vvee} \lotimes_{A} \tuD(A)$ given in  Lemma \ref{202508201805}.
For the third square we use Proposition \ref{202511221600}.  
Finally the commutativity of the fourth  square follows from Lemma \ref{202101131703} and Lemma \ref{202101081603}.
\end{proof}

Since $\RHom_{\Pa^{\op}}(-, \PPi_{1}) \cong \PPi_{1} \lotimes_{\Pa} (-)^{\rvvee}$, 
applying $\PPi_{1} \lotimes_{\Pa} -$ to the diagram of Lemma \ref{202102230750} 
we deduce the following lemma. 

\begin{lemma}\label{2021022307501} 

There exists an isomorphism 
\[
{}^{v}\!\sfe: \RHom_{\Pa^{\op}} ( {}^{v} \Xi, \PPi_{1} ) \to {}^{\Psi^{-1}(v)} \Xi 
\]
in $\perf A^{\mre}$  
that completes the  following commutative diagram
\[
\begin{xymatrix}@C=60pt{
(A[1], \PPi_{1}) \ar[r]^{(- {}^{v}\!\theta[1], \PPi_{1} ) }\ar[d]_{\cong}  &
 (\PPi_{1}, \PPi_{1}) \ar[d]_{\cong} \ar[r]^{({}^{v}\!\ppi_{1}, \PPi_{1} ) } &
 ({}^{v} \Xi, \PPi_{1}) \ar[d]_{\cong}^{{}^{v}\!\sfe} \ar[r]^{ ({}^{v}\!\rrho, \PPi_{1} ) } &
 (A, \PPi_{1}) \ar[d]_{\cong} \\
A^{\vvee} 
\ar[r]_{  -{}^{\Psi^{-1}(v)}\!\theta} 
& A  \ar[r]_{{}^{\Psi^{-1}(v)}\!\rrho } 
& {}^{\Psi^{-1}(v)} \Xi \ar[r]_{ -{}^{\Psi^{-1}(v)}\!\ppi_{1} }
& \PPi_{1} .
}\end{xymatrix} 
\]
where we use the  abbreviation  $(-,+) =\RHom_{\Pa^{\op}}(-, +)$.

\end{lemma}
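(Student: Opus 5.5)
The plan is to apply the contravariant functor $\Pi_{1}\lotimes_{A}(-)$ to the commutative diagram \eqref{202105232003} of Lemma \ref{202102230750}. We then identify each term with the corresponding term of the diagram in the statement, using the canonical isomorphism $\RHom_{A^{\op}}(X,\Pi_{1}) \cong \Pi_{1}\lotimes_{A} X^{\rvvee}$ for $X \in \perf A^{\mre}$. This isomorphism comes from the adjunction between $-\lotimes_{A}X$ and $-\lotimes_{A}X^{\rvvee}$ recorded in Section~2 for right modules, and it is natural in $X$. Applying it to ${}^{v}\!\sfAR$, the top row $\RHom_{A^{\op}}({}^{v}\!\sfAR,\Pi_{1})$ becomes $\Pi_{1}\lotimes_{A}({}^{v}\!\sfAR)^{\rvvee}$. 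This is exactly $\Pi_{1}\lotimes_{A}(-)$ applied to the top row of \eqref{202105232003}, with the maps matching by naturality.

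For the bottom row, apply $\Pi_{1}\lotimes_{A}-$ to the bottom row of \eqref{202105232003}. The prefactor becomes $\Pi_{1}\lotimes_{A}\Sigma^{-1}\tuD(A) = \Sigma A\lotimes_{A}A^{\vvee}\lotimes_{A}\Sigma^{-1}\tuD(A)$. By Corollary \ref{202508181504} together with the shift identifications, this is isomorphic to $A$. Under this isomorphism the terms become $A^{\vvee}$, $A$, ${}^{\Psi^{-1}(v)}\Xi$ and $\Pi_{1}$. The maps become $-{}^{\Psi^{-1}(v)}\theta$, ${}^{\Psi^{-1}(v)}\rrho$ and $-{}^{\Psi^{-1}(v)}\ppi_{1}$, because left tensoring with an invertible bimodule and then cancelling it is the identity on morphisms of bimodules. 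The outer vertical isomorphisms are the composites of the canonical ones: $(A[1],\Pi_{1})\cong\Pi_{1}\lotimes(\Sigma A)^{\rvvee}\cong A^{\vvee}$, and similarly for the other three. We define ${}^{v}\!\sfe$ as the composite of $\RHom_{A^{\op}}({}^{v}\Xi,\Pi_{1})\cong \Pi_{1}\lotimes({}^{v}\Xi)^{\rvvee}$, then $\Pi_{1}\lotimes {}^{v}\!\sfd$, then the cancellation isomorphism $\Pi_{1}\lotimes\Sigma^{-1}\tuD(A)\lotimes {}^{\Psi^{-1}(v)}\Xi \cong {}^{\Psi^{-1}(v)}\Xi$. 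Each square of the target diagram is then the image of a square of \eqref{202105232003} pasted with naturality squares, so it commutes.

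The main obstacle is bookkeeping of signs from the shift: we must check that the identification $\Pi_{1}\lotimes_{A}\Sigma^{-1}\tuD(A)\cong A$ introduces no extra sign on the relevant maps. We need the same cancellation isomorphism in all four columns. Then the signs $-$ on $\theta$ and $\ppi_{1}$ carry over verbatim from \eqref{202105232003}. If a discrepancy appears, we would compare with Lemma \ref{202511031307}, which computes $\gamma_{\Sigma A}=-\delta_{A^{\vvee}}$, and absorb the sign into the outer vertical isomorphisms. This is harmless, since those are only required to be isomorphisms.
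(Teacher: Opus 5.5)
Your proposal is correct and is essentially the paper's argument: the paper also invokes the canonical isomorphism $\RHom_{A^{\op}}(-,\Pi_{1})\cong \Pi_{1}\lotimes_{A}(-)^{\rvvee}$, applies $\Pi_{1}\lotimes_{A}-$ to the diagram of Lemma \ref{202102230750}, and cancels $\Pi_{1}\lotimes_{A}\Sigma^{-1}\tuD(A)\cong A$. Two minor remarks: $\Pi_{1}\lotimes_{A}-$ is covariant (the contravariance is already in $(-)^{\rvvee}$), and your sign hedge is unnecessary, since all horizontal maps are degree-zero bimodule morphisms and the cancellation isomorphism passes through them by bifunctoriality of $\lotimes$.
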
 

Finally, we deduce the following corollary.

\begin{corollary}\label{2021022307502} 
Assume that $v$ is an eigenvector of $\Psi$ with the eigenvalue $\lambda$. 
Then, there exists an isomorphism 
\[
{}^{v}\!\sfe': \RHom_{\Pa^{\op}} ( {}^{v} \Xi, \PPi_{1} ) \to {}^{v} \Xi 
\]
in $\sfD(\Pa^{\mre})$  
that completes the  following commutative diagram
\[
\begin{xymatrix}@C=60pt{
(A[1], \PPi_{1}) \ar[r]^{(- {}^{v}\!\theta[1], \PPi_{1} ) }\ar[d]_{\cong}  &
 (\PPi_{1}, \PPi_{1}) \ar[d]_{\cong} \ar[r]^{({}^{v}\!\ppi_{1}, \PPi_{1} ) } &
 ({}^{v} \Xi, \PPi_{1}) \ar[d]_{\cong}^{{}^{v}\!\sfe'} \ar[r]^{ ({}^{v}\!\rrho, \PPi_{1} ) } &
 (A, \PPi_{1}) \ar[d]_{\cong} \\
A^{\vvee} 
\ar[r]_{  -\lambda^{-1} {}^{v}\!\theta} 
& A  \ar[r]_{{}^{v}\!\rrho } 
& {}^{v} \Xi \ar[r]_{ -\lambda {}^{v}\!\ppi_{1} }
& \PPi_{1} .
}\end{xymatrix} 
\]
where we use the  abbreviation  $(-,+) =\RHom_{\Pa^{\op}}(-, +)$.

\end{corollary}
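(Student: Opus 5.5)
This will be deduced from Lemma \ref{2021022307501} by specializing it to an eigenvector $v$ of $\Psi$ and then rescaling the vertical isomorphism. Since $\Psi(v)=\lambda v$ with $\lambda\neq 0$ ($\Psi$ is invertible), we get $\Psi^{-1}(v)=\lambda^{-1}v$. The map $v\mapsto \ch_{v}$ is $\kk$-linear, and so is the identification $\tuHH_{0}(A)\cong \Hom_{A^{\mre}}(A^{\vvee},A)$. Hence ${}^{\Psi^{-1}(v)}\theta=\lambda^{-1}\,{}^{v}\theta$.

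Applying Lemma \ref{2021022307501} then gives an isomorphism ${}^{v}\sfe:\RHom_{A^{\op}}({}^{v}\Xi,\Pi_{1})\to {}^{\lambda^{-1}v}\Xi$. It completes the diagram whose bottom row is the triangle
\[
A^{\vvee}\xrightarrow{-\lambda^{-1}{}^{v}\theta}A\xrightarrow{{}^{\lambda^{-1}v}\rrho}{}^{\lambda^{-1}v}\Xi\xrightarrow{-{}^{\lambda^{-1}v}\ppi_{1}}\Pi_{1}.
\]
Here ${}^{\lambda^{-1}v}\Xi$ is a cone of $\lambda^{-1}\,{}^{v}\theta$. The bottom row of the target diagram is instead
\[
A^{\vvee}\xrightarrow{-\lambda^{-1}{}^{v}\theta}A\xrightarrow{{}^{v}\rrho}{}^{v}\Xi\xrightarrow{-\lambda\,{}^{v}\ppi_{1}}\Pi_{1}.
\]
This is an exact triangle: it is obtained from the rotated triangle ${}^{v}\sfAR$ by rescaling. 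Concretely, take the rotated triangle $A^{\vvee}\xrightarrow{-{}^{v}\theta}A\to{}^{v}\Xi\xrightarrow{-{}^{v}\ppi_{1}}\Pi_{1}$ (up to the sign conventions of Lemma \ref{2021022307501}). Compose it with the isomorphism of triangles given by $\lambda$ on $A^{\vvee}$ and $\Pi_{1}=\Sigma A^{\vvee}$ and by the identity on $A$ and ${}^{v}\Xi$. One checks that the squares commute: $(-{}^{v}\theta)=(-\lambda^{-1}{}^{v}\theta)\circ\lambda$ in the first square, and the last square is $\lambda\cdot(-{}^{v}\ppi_{1})$.

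Both rows are therefore exact triangles on the same morphism $-\lambda^{-1}{}^{v}\theta:A^{\vvee}\to A$, so by TR3 there is an isomorphism $\iota:{}^{\lambda^{-1}v}\Xi\to{}^{v}\Xi$ of triangles. Its components are the identity on $A^{\vvee}$ and on $A$, and $\iota$ on the cone; a cone morphism with identities on the other terms is an isomorphism by the five lemma. Since ${}^{\lambda^{-1}v}\rrho$ and ${}^{\lambda^{-1}v}\ppi_{1}$ were chosen for the triangle ${}^{\lambda^{-1}v}\sfAR$ with its own signs, the identity on $\Pi_{1}$ should be replaced by whatever scalar the two sign conventions force. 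The cleanest choice is to define ${}^{\lambda^{-1}v}\Xi$ to be ${}^{v}\Xi$ itself, with the rescaled maps above, since a cone is determined only up to isomorphism. Then $\iota=\id$ and no bookkeeping is needed. We set ${}^{v}\sfe':=\iota\circ{}^{v}\sfe$.

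The only point requiring care is the sign and scalar bookkeeping in the last square. We must check that the rightmost map is $-\lambda\,{}^{v}\ppi_{1}$ and not $-\lambda^{-1}{}^{v}\ppi_{1}$. This follows because the connecting map of the cone of $c\,\theta$ identified with the cone of $\theta$ scales inversely to the first map, as the commutativity check above shows. All of this takes place in $\perf A^{\mre}$, so ${}^{v}\sfe'$ is an isomorphism in $\sfD(A^{\mre})$ as claimed.
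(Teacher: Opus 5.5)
Your proposal is correct and is essentially the paper's intended deduction. The paper states the corollary right after Lemma \ref{2021022307501} with no further proof, in the same way Corollary \ref{2021010816032} follows from Proposition \ref{2021010816031}: specialize the lemma using $\Psi^{-1}(v)=\lambda^{-1}v$ and the $\kk$-linearity of $v\mapsto {}^{v}\theta$, then identify a cone of $\lambda^{-1}\,{}^{v}\theta$ with ${}^{v}\Xi$, with the connecting map rescaled by $\lambda$. Your hedge about an extra scalar on $\Pi_{1}$ is unnecessary, since TR3 with identities on $A^{\vvee}$ and $A$ forces the last component to be $\Sigma\,\id_{A^{\vvee}}=\id_{\Pi_{1}}$, as your rescaling check already shows.
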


\appendix 
\section{Happel's criterion}\label{subsection: Happel's criterion}\label{202602050957}

The aim of  this section is 
to recall Happel's criterion for AR-coconnecting morphisms from \cite{Happel Book} and to establish a lemma that is used in the main body of this paper. 

Let $\sfD$ be a $\Hom$-finite triangulated category and $\nu$ a Serre functor of $\sfD$.  
Recall that $\nu_{1}:= \nu[-1]$ is the Auslander--Reiten translation in the Auslander--Reiten theory of a triangulated category.
By $\ind \sfD$ we denote the set of indecomposable objects of $\sfD$. 
For simplicity, we set $\ResEnd_{\sfD}(X):= \End_{\sfD}(X)/\rad \End_{\sfD}(X)$ for an object $X$ of $\sfD$. 

Let  $X \in \ind  \sfD$.  
A morphism $s: \nu^{-1}(X) \to X$ is called \emph{Auslander--Reiten (AR)-coconnecting} 
if it is a coconnecting morphism of  an AR-triangle starting from $X$. 
In other words, we have an AR-triangle of the following form:
\[
X \to Y \to \nu_{1}^{-1}(X)  \xrightarrow{ \ -s[1] \ } X[1].
\]
%
%

\begin{theorem}[{\cite[p37]{Happel Book}}]\label{Happel's criterion}
Let $X \in \ind \sfD$. 
Then a morphism $s: \nu^{-1}(X) \to X$ is  an AR-coconnecting morphism  
if the following equations hold
\[
\isagl{\id_{X}, s} \neq 0, \ 
\isagl{f, s} = 0
\]
where  $f \in \rad \End_{\sfD}(X)$. 
The converse holds if $\dim \ResEnd_{\sfD}(X) = 1$. 
\end{theorem}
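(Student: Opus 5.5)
We follow Happel's original argument and reduce the criterion to the defining property of an AR-triangle. Recall that for $X\in\ind\sfD$ the space $\Hom_{\sfD}(\nu^{-1}X,X)$ is, by Serre duality, dual to $\End_{\sfD}(X)$ via $\isagl{-,+}$. Moreover, a triangle $X\xrightarrow{u}Y\to\nu_{1}^{-1}X\xrightarrow{-s[1]}X[1]$ is an AR-triangle exactly when $s\neq 0$ and $s$ lies in the socle of $\Hom(\nu^{-1}X,X)$ as a right (equivalently left) $\End(X)$-module. Concretely, the condition is that $f\circ s=0$ for every $f\in\rad\End(X)$. Indeed, $u$ is left almost split iff every non-split-mono $X\to X'$ factors through $u$, iff every such map kills $s$. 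Since $X$ is indecomposable, these maps are the ones whose components $X\to X'_{j}$ are all radical. By Serre duality it suffices to test the target $X'=X$, after composing with maps $X'\to X$ and using that the radical is an ideal.

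Given this, the plan for the forward direction runs as follows. Suppose $\isagl{\id_{X},s}\neq0$ and $\isagl{f,s}=0$ for all $f\in\rad\End(X)$. First, $s\neq0$. Next, for $f\in\rad\End(X)$ and arbitrary $g\in\End(X)$, naturality of the Serre pairing gives
\[
\isagl{g,f\circ s}=\isagl{g f,s}=0,
\]
since $gf\in\rad\End(X)$. Non-degeneracy then forces $f\circ s=0$. For a general radical map $h:X\to Z$ with $Z$ arbitrary, we decompose $Z$ into indecomposables and use $\isagl{k,h\circ s}=\isagl{kh,s}$ for $k:Z\to X$. Because $kh\in\rad\End(X)$, this vanishes, so $h\circ s=0$. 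Hence every non-split mono out of $X$ annihilates $s$, and therefore factors through the cone map $u$. By Happel's characterization (the third term is $\nu_{1}^{-1}X$, which is indecomposable), the triangle is an AR-triangle.

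For the converse, assume $\dim\ResEnd(X)=1$ and that $s$ is AR-coconnecting. Then $f\circ s=0$ for radical $f$, so $\isagl{f,s}=\isagl{\id,f\circ s}=0$. Since $s\neq0$, non-degeneracy gives some $g$ with $\isagl{g,s}\neq0$. Write $g=c\,\id_{X}+r$ with $r$ radical, which uses the one-dimensionality. This yields $c\,\isagl{\id_{X},s}\neq0$.

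The main obstacle is the sign and ordering convention of $\isagl{-,+}$ in its naturality, $\isagl{g,f\circ s}=\isagl{gf,s}$. We would take this bifunctoriality from Proposition \ref{202508171742} and the reciprocity law \eqref{202508171549}, checking that the composition is on the correct side.
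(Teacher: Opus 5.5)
Your argument is correct. The paper gives no proof of its own here and simply cites Happel's book; what you give is essentially the standard Serre-duality/socle argument behind that reference. Its only external inputs are that $\End_{\sfD}(X)$ is local (the Krull--Schmidt setting implicit in the statement) and, depending on which axiom is taken to define AR-triangles, the standard equivalence of the left and right almost-split conditions.

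The sign concern you raise does not arise. All morphisms involved have degree $0$, and naturality of the Serre duality isomorphism in its first variable gives exactly $\isagl{k h, s} = \isagl{k, h\circ s}$.
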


Let $X$ be an indecomposable object of $\sfD$. 
If $\dim \ResEnd_{\sfD}(X) = 1$, then
the  subspace $\rad \End_{\sfD}(X)^{\perp} \subset \Hom_{\sfD}(\nu^{-1}(X), X)$ which is 
orthogonal to $\rad \End_{\sfD}(X)$ 
 with respect to the pairing $\isagl{-,+}$ is one-dimensional. 
%
Thus we have the following lemma.

\begin{lemma}\label{202102071323}
Let $X$ be an  indecomposable object of $X$ such that $\dim \ResEnd_{\sfD}(X) = 1$. 
Then, the following holds. 
\begin{enumerate}[(1)] 
\item Two AR-coconnecting morphisms $s,t$ to $X$ are proportional over $\kk$ to each other. 
More precisely we have 
\[
t = \frac{\isagl{\id_{X}, t}}{\isagl{\id_{X}, s}} s. 
\]

\item An element $s \in \Hom_{\sfD}(\nu^{-1}(X), X)$ is $0$ if and only if it satisfies 
\[
\isagl{\id_{X}, s } \neq 0, \ \ \isagl{f, s} = 0 \textup{ for } f \in \rad\End_{\sfD}(X). 
\]
\end{enumerate}
\end{lemma}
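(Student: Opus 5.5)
The plan is to reduce both parts to linear algebra on the one-dimensional space $\rad \End_{\sfD}(X)^{\perp}$, using non-degeneracy of the Serre pairing $\isagl{-,+}\colon \End_{\sfD}(X) \otimes \Hom_{\sfD}(\nu^{-1}(X), X) \to \kk$.

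First I set up the linear algebra. Since $\sfD$ is $\Hom$-finite and $X$ is indecomposable, $\End_{\sfD}(X)$ is a finite-dimensional local algebra. The hypothesis $\dim \ResEnd_{\sfD}(X)=1$ then gives $\End_{\sfD}(X) = \kk\,\id_{X} \oplus \rad \End_{\sfD}(X)$ as vector spaces. Because the Serre pairing is non-degenerate between finite-dimensional spaces, the orthogonal $\rad \End_{\sfD}(X)^{\perp} \subset \Hom_{\sfD}(\nu^{-1}(X),X)$ has dimension $\dim \End_{\sfD}(X) - \dim \rad\End_{\sfD}(X) = 1$; this is the observation stated just before the lemma.

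The key step is to show that the linear functional $s \mapsto \isagl{\id_{X}, s}$ is injective on $\rad \End_{\sfD}(X)^{\perp}$. Take $s \in \rad\End_{\sfD}(X)^{\perp}$ with $\isagl{\id_{X},s}=0$. Then $s$ pairs to zero with $\id_{X}$ and with all of $\rad\End_{\sfD}(X)$, hence with all of $\End_{\sfD}(X)$ by the decomposition above. Non-degeneracy then forces $s=0$. So on the line $\rad\End_{\sfD}(X)^{\perp}$ this functional is an isomorphism onto $\kk$.

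With this in hand, part (2) is immediate once read correctly. The statement as printed characterizes nonvanishing of $s$ (equivalently, $s$ being AR-coconnecting); the intended content, as used in the proof of Theorem \ref{202508302231}, is that for $s$ orthogonal to $\rad\End_{\sfD}(X)$ one has $s=0$ if and only if $\isagl{\id_{X},s}=0$. This is exactly the injectivity just shown; the converse direction is trivial. For part (1), let $s,t$ be AR-coconnecting. By the converse direction of Happel's criterion (Theorem \ref{Happel's criterion}), valid since $\dim\ResEnd_{\sfD}(X)=1$, both $s$ and $t$ lie in $\rad\End_{\sfD}(X)^{\perp}$ and $\isagl{\id_{X},s}\neq 0$. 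Set $c = \isagl{\id_{X},t}/\isagl{\id_{X},s}$. Then $t - cs$ lies in $\rad\End_{\sfD}(X)^{\perp}$ and $\isagl{\id_{X}, t-cs}=0$, so $t = cs$ by the injectivity step.

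The only point needing care is invoking the converse of Happel's criterion and the locality of $\End_{\sfD}(X)$. Everything else is elementary, so I expect no real obstacle beyond keeping the one-dimensionality argument precise.
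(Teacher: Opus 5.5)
Your proof is correct and follows the paper's route. The paper's entire argument is the remark just before the lemma that $\rad\End_{\sfD}(X)^{\perp}$ is one-dimensional; your injectivity of $s \mapsto \isagl{\id_{X}, s}$ on this line, obtained from $\End_{\sfD}(X) = \kk\,\id_{X} \oplus \rad\End_{\sfD}(X)$ and non-degeneracy, only makes explicit how both parts follow. Your reading of the garbled part (2) as ``for $s \in \rad\End_{\sfD}(X)^{\perp}$, $s = 0$ if and only if $\isagl{\id_{X}, s} = 0$'' is exactly the form in which the paper uses it in the proof of Theorem \ref{202508302231}.
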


\section{Adjoint $1$-morphisms and their uniqueness}\label{202511221618}

We collect standard facts about adjoint $1$-morphisms using the example of dg-Morita (weak) $2$-category, 
since the uniqueness of adjoint $1$-morphisms plays an important role in the main body of the paper. 

In this section  $A, B$ denote dg-algebras.

\begin{definition}[{\cite[Definition 4.4.1(3)]{Asashiba:book},\cite[Definition 6.1.1]{NY}}]\label{202509221150}

An adjoint pair $(X, Y, \eta, \epsilon)$ is a $4$-tuple consisting of $X \in \sfD(A \otimes B^{\op}), \  Y \in \sfD(B\otimes A^{\op})$ 
and morphisms $\eta: B \to Y \lotimes_{A} X$ in $\sfD(B^{\mre})$ and 
$ \epsilon: X \lotimes_{B} Y \to A$ in $\sfD(A^{\mre})$ that satisfy the triangle identities 
$(\epsilon \lotimes_{A} X)(X \lotimes_{B} \eta ) = \id_{X}$ and 
$(Y \lotimes_{A} \epsilon)(\eta \lotimes_{B}Y) = \id_{Y}$.
\[
\begin{split}
&\id_{X} = \left[ \ X \xrightarrow{ \ X \lotimes_{B} \eta \ } X \lotimes_{B} Y \lotimes_{A} X \xrightarrow{ \ \epsilon \lotimes_{A} X \ } X  \ \right]\\ 
& \id_{Y} = \left[ \  Y \xrightarrow{ \ \eta \lotimes_{B} Y \ } Y \lotimes_{A} X \lotimes_{B} Y 
\xrightarrow{ \ Y \lotimes_{A} \epsilon \ } Y \ \right]
\end{split}
\]
If $(X, Y, \eta, \epsilon)$ is an adjoint pair, $X$ and  $Y$ are called the  left adjoint and the right adjoint $1$-morphisms  and 
$\eta$ and $\epsilon$ are called the unit morphism and the counit morphism.  
To write an adjoint pair $(X, Y, \eta, \epsilon)$, we usually write $(X, Y)$ and omit $\eta, \epsilon$
\end{definition}

We state the uniqueness of the right adjoint $1$-morphism. 
An analogues statement holds for the left adjoint $1$-morphism.

\begin{proposition}[{\cite[Proposition 4.4.10]{Asashiba:book},\cite[Lemma 6.1.6]{NY}}]\label{202509221237}
Let $X \in \sfD(A\otimes B^{\op})$. 
If there are two adjoint pairs 
$(X, Y, \eta, \epsilon)$ and $(X, Y', \eta', \epsilon')$, 
then there exists a unique isomorphism $f: Y \to Y'$ that satisfies 
the equations 
\begin{equation}\label{202509221403}
 (X \lotimes f) \eta = \eta', \ \ \epsilon = \epsilon'(f \lotimes X).
 \end{equation}
 Moreover, if an isomorphism $f: Y \to Y'$ satisfies one of the above equations, then it also satisfies the other. 
\end{proposition}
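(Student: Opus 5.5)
The plan is the standard Yoneda-type argument for uniqueness of adjoints. It is carried out with the derived tensor product in place of composition of $1$-morphisms, and uses only the two triangle identities for each adjoint pair together with the associativity and unit isomorphisms of $\lotimes$.

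First I construct candidate maps in both directions:
\[
f: Y \cong Y\lotimes_{A} A \xrightarrow{\ \eta'\lotimes Y\ } Y'\lotimes_{A} X\lotimes_{B} Y \xrightarrow{\ Y'\lotimes \epsilon\ } Y'\lotimes_A A\cong Y',
\]
\[
g: Y' \xrightarrow{\ \eta\lotimes Y'\ } Y\lotimes_{A} X\lotimes_{B} Y' \xrightarrow{\ Y\lotimes \epsilon'\ } Y.
\]
Here I write the unit as $B\to Y\lotimes_A X$, so the whiskerings sit on the appropriate sides; this mirrors the formula for $f$ used in the proof of Lemma \ref{202511031307}. Next I check that $gf=\id_Y$. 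The composite $gf$ is a string of four morphisms. Using functoriality of $\lotimes$ (the interchange law: morphisms acting on disjoint tensor factors commute), I slide $\eta\lotimes Y'$ past $Y'\lotimes\epsilon$. This puts the middle part into the form $(X\lotimes \epsilon')(\eta'\lotimes X)$ applied inside $Y\lotimes_A(-)\lotimes_B Y$, which is $\id_X$ by the triangle identity for $(X,Y',\eta',\epsilon')$. The remaining composite $(Y\lotimes\epsilon)(\eta\lotimes Y)$ is $\id_Y$ by the triangle identity for $(X,Y,\eta,\epsilon)$. By symmetry $fg=\id_{Y'}$, so $f$ is an isomorphism.

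Then I verify the two equations \eqref{202509221403}. For $(X\lotimes f)\eta=\eta'$, I expand $f$ and again use interchange to move $\eta$ next to $\epsilon$. This produces $(\epsilon\lotimes X)(X\lotimes\eta)=\id_X$ in the middle, leaving $\eta'$. The equation $\epsilon=\epsilon'(f\lotimes X)$ is checked the same way, now using the other triangle identity for $(X,Y',\eta',\epsilon')$.

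Uniqueness, together with the "moreover" clause, reduces to one observation. Suppose $h:Y\to Y'$ satisfies only the first equation $(X\lotimes h)\eta=\eta'$. Then
\[
f=(Y'\lotimes\epsilon)(\eta'\lotimes Y)=(Y'\lotimes\epsilon)((h\lotimes X)\eta\lotimes Y).
\]
By naturality I slide $h$ to the outside, which gives $h\circ(Y\lotimes\epsilon)(\eta\lotimes Y)=h$ by the triangle identity. Hence $h=f$, so $h$ also satisfies the second equation. The argument starting from the second equation is dual, using $g$ and the expression of $f^{-1}$.

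The main obstacle is bookkeeping rather than any conceptual difficulty. In the weak $2$-category $\sfD(-\otimes-^{\op})$, associators and unitors are only canonical isomorphisms, and the interchange law must be applied with the correct Koszul signs. However, all these morphisms have degree $0$ in $\sfD$, so no signs actually appear. Coherence of $\lotimes$, which follows from working with cofibrant replacements, justifies suppressing the associativity isomorphisms. I would state this once and then perform the calculations as string-diagram moves.
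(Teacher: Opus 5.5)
Your proposal is correct and follows the paper's route. The comparison map is $f=(Y'\lotimes\epsilon)(\eta'\lotimes Y)$, and uniqueness together with the ``moreover'' clause comes from showing that any map satisfying one of the equations must equal this $f$; the paper starts from the counit equation and you from the unit equation, each calling the other case similar. You also spell out the inverse $g=(Y\lotimes\epsilon')(\eta\lotimes Y')$ and the triangle-identity checks that the paper leaves to the references, with one small slip: the source of $\eta'\lotimes Y$ should be $B\lotimes_{B}Y$, not $Y\lotimes_{A}A$.
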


\begin{proof}
It is shown in \cite[Proposition 4.4.10]{Asashiba:book},\cite[Lemma 6.1.6]{NY} that 
an isomorphism $f: Y \to Y'$ is obtained as 
$f:= (Y' \lotimes \epsilon)(\eta' \lotimes Y)$. 
It is straightforward to check that this isomorphism satisfies the equations \eqref{202509221403}. 

To prove uniqueness, assume that there exists an isomorphism $f: Y \to Y'$ that satisfies 
$\epsilon = \epsilon'(f \lotimes X)$.
Then the following commutative diagram whose bottom line is the identity morphism by the triangle identity, 
shows $f= (Y' \lotimes \epsilon)(\eta' \lotimes Y)$.
\[
\begin{xymatrix}{ 
Y \ar[d]_{f} \ar[r]^-{\eta' \lotimes Y } & 
Y' \lotimes_{A} X \lotimes_{B} Y \ar[d]^{Y' \lotimes X \lotimes f } \ar[r]^-{Y'\lotimes \epsilon} &
Y' \ar@{=}[d] \\
Y'  \ar[r]_-{\eta' \lotimes Y' } & 
Y' \lotimes_{A} X \lotimes_{B} Y'  \ar[r]_-{Y'\lotimes \epsilon'} &
Y' 
}\end{xymatrix}
\]
Thus $f$ satisfies $ (X \lotimes f) \eta = \eta'$. 

The case in which  $f: Y \to Y'$ satisfies $ (X \lotimes f) \eta = \eta'$ can be proved in a similar way. 
\end{proof}

For  reference of the main body of the paper, 
we record the following proposition, 
which  shows that 
an equivalence $1$-morphism ({\cite[Definition 4.1.10]{Asashiba:book},\cite[Definition 5.1.18]{NY}}) 
can be promoted to  an adjoint equivalence ({\cite[Definition 4.4.1(4)]{Asashiba:book},\cite[Definition 6.2.1]{NY}}).

\begin{proposition}[{\cite[Theorem 4.4.4]{Asashiba:book},\cite[Proposition 6.2.4]{NY}}]\label{202509221331}
Let $X \in \sfD(A\otimes B^{\op})$ be an object. 
Assume that there exists $Y \in \sfD(B\otimes A^{\op})$ 
such that $X \lotimes_{B} Y \cong A$ in $\sfD(A^{\mre})$ and $Y \lotimes_{A} X \cong  B $ in $\sfD(B^{\mre})$. 
Fix an isomorphism $\eta: B \xrightarrow{ \cong } Y \lotimes_{A}X$, 
then there exists an isomorphism $\epsilon: X \lotimes_{B} Y \xrightarrow{ \cong } A$ such that the $4$-tuple $(X, Y, \eta, \epsilon)$ 
is an adjoint pair. 
\end{proposition}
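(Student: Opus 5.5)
We follow the standard argument for promoting an equivalence to an adjoint equivalence, written in the dg-Morita setting. Choose some isomorphism $\epsilon_{0}: X \lotimes_{B} Y \xrightarrow{\cong} A$ in $\sfD(A^{\mre})$; one exists by hypothesis. We then correct $\epsilon_{0}$ so that it is compatible with the fixed $\eta$. Consider the composite
\[
\phi := (\epsilon_{0} \lotimes_{A} X)(X \lotimes_{B} \eta) : X \to X \lotimes_{B} Y \lotimes_{A} X \to X .
\]
It is an isomorphism in $\sfD(A \otimes B^{\op})$, since both factors are. Define
\[
\epsilon := \epsilon_{0}\,(\phi^{-1} \lotimes_{B} Y) : X \lotimes_{B} Y \to X \lotimes_{B} Y \to A .
\]
This is again an isomorphism, and we claim that $(X,Y,\eta,\epsilon)$ satisfies both triangle identities.

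For the first identity we use naturality (interchange) of $\lotimes$:
\[
(\epsilon \lotimes_{A} X)(X \lotimes_{B} \eta) = (\epsilon_{0} \lotimes X)(\phi^{-1} \lotimes Y \lotimes X)(X \lotimes \eta) = (\epsilon_{0} \lotimes X)(X \lotimes \eta)\phi^{-1} = \phi\phi^{-1} = \id_{X}.
\]
Here we slide $\phi^{-1}$, which acts on the $X$-factor, past $X\lotimes \eta$, which acts on the $B$-factor. These commute because they act on disjoint tensor factors. This is the same kind of string-diagram manipulation used earlier in the paper.

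The second identity is the main step. Set
\[
\psi := (Y \lotimes_{A} \epsilon)(\eta \lotimes_{B} Y) : Y \to Y.
\]
It is an isomorphism, since $\eta$ and $\epsilon$ are isomorphisms. We want to show $\psi = \id_{Y}$. Since $X \lotimes_{B} - $ is an equivalence (its quasi-inverse is $Y\lotimes_{A}-$), it suffices to show $X \lotimes \psi = \id_{X \lotimes Y}$. Equivalently, since $\epsilon$ is an isomorphism, it suffices to show $\epsilon (X\lotimes \psi) = \epsilon$.

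To prove this, expand $\epsilon(X \lotimes \psi)$ as $\epsilon\,(X\lotimes Y\lotimes \epsilon)(X \lotimes \eta \lotimes Y)$. By interchange, $\epsilon\,(X\lotimes Y\lotimes\epsilon) = \epsilon\,(\epsilon \lotimes X\lotimes Y)$, both being $\epsilon\otimes\epsilon$ on $X\lotimes Y\lotimes X\lotimes Y$. Hence
\[
\epsilon(X\lotimes\psi) = \epsilon\,\bigl((\epsilon\lotimes X)(X\lotimes\eta)\lotimes Y\bigr) = \epsilon\,(\id_{X}\lotimes Y) = \epsilon,
\]
using the first triangle identity already proved. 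Thus $X\lotimes\psi=\id$, and so $\psi=\id_{Y}$.

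Strictly speaking, faithfulness is only needed on the single morphism $\psi$. It follows by applying $Y\lotimes_{A}-$ and conjugating by the isomorphism $\eta$ (naturality of $\eta\lotimes -$ identifies $Y\lotimes X\lotimes\psi$ with $\psi$ up to $\eta$). The one point that needs care is the interchange equalities. They hold in the derived category because $\lotimes$ is a bifunctor on $\sfD$ when computed with cofibrant resolutions, with the associativity coherence isomorphisms suppressed as elsewhere in the paper. Alternatively, the whole statement can be cited directly from the general $2$-categorical result \cite[Theorem 4.4.4]{Asashiba:book}, once one accepts that the dg-Morita bicategory is a weak $2$-category.
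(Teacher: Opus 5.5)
Your proof is correct. The paper gives no argument of its own for this statement: it only cites \cite[Theorem 4.4.4]{Asashiba:book} and \cite[Proposition 6.2.4]{NY}. Your corrected counit $\epsilon=\epsilon_{0}(\phi^{-1}\lotimes Y)=\epsilon_{0}(X\lotimes\eta^{-1}\lotimes Y)(\epsilon_{0}^{-1}\lotimes X\lotimes Y)$ is exactly the standard construction behind those references. Your check of the second triangle identity is also sound: the interchange step needs only that the left and right unitors $A\lotimes_{A}A\to A$ coincide (as noted in the proof of Lemma~\ref{202509010738}), and then faithfulness of $X\lotimes_{B}-$ finishes it. So you have essentially written out the cited route explicitly in the dg-Morita setting.
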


\end{document}